\newcommand{\arxiv}[1]{{\tt
		\href{http://www.arXiv.org/abs/#1}{arXiv:#1}}}
\theoremstyle{plain}
\newtheorem{thm}{Theorem}[section]
\newtheorem{prop}[thm]{Proposition}
\newtheorem{lemma}[thm]{Lemma}
\newtheorem{cor}[thm]{Corollary}
\theoremstyle{definition}
\newtheorem{definition}[thm]{Definition}
\theoremstyle{remark}
\newtheorem{remark}[thm]{Remark}
\newtheorem*{ack}{Acknowledgements}
\def\ad{\mathrm{ad}}
\newcommand{\bDelta}{{\boldsymbol{\Delta}}}
\newcommand{\bXi}{\boldsymbol{\Xi}}
\newcommand{\im}{\mathrm{Im}}
\newcommand{\End}{\mathrm{End}}
\newcommand{\Aut}{\mathrm{Aut}}
\newcommand{\Mat}{\mathrm{Mat}}
\newcommand{\eqdef}{\stackrel{{\rm def.}}{=}}
\newcommand{\frt}{\mathfrak{t}}
\DeclareFontFamily{U}{rsf}{}
\DeclareFontShape{U}{rsf}{m}{n}{<5> <6> rsfs5 <7> <8> <9> rsfs7 <10-> rsfs10}{}
\DeclareMathAlphabet\Scr{U}{rsf}{m}{n}
\def\grad{\mathrm{grad}}
\def\curl{\mathrm{curl}}
\def\div{\mathrm{div}}
\def\Z{\mathbb{Z}}
\def\C{\mathbb{C}}
\def\R{\mathbb{R}}
\def\H{\mathbb{H}}
\def\rk{{\rm rk}}
\def\GL{\mathrm{GL}}
\def\dd{\mathrm{d}}
\def\vol{\mathrm{vol}}
\def\l\Xi{\overrightarrow{\Xi}}
\def\r\Xi{\overleftarrow{\Xi}}
\def\tf{{\tilde f}}
\def\l{\partial^l}
\def\Stab{\mathrm{Stab}}
\def\Ad{\mathrm{Ad}}
\def\ad{\mathrm{ad}}
\def\Diff{\mathrm{Diff}}
\def\Conf{\mathrm{Conf}}
\def\Sol{\mathrm{Sol}}
\def\Aff{\mathrm{Aff}}
\def\Met{\mathrm{Met}}
\def\hol{\mathrm{hol}}
\def\htheta{\hat{\theta}}
\def\hpsi{\hat{ u}}
\def\pot{\mathrm{pot}}
\def\EMC{\mathrm{EMC}}
\def\tf{\mathrm{tf}}
\newcommand{\be}{\begin{equation*}}
\newcommand{\ee}{\end{equation*}}
\newcommand{\ben}{\begin{equation}}
\newcommand{\een}{\end{equation}}
\newcommand{\beqa}{\begin{eqnarray*}}
	\newcommand{\eeqa}{\end{eqnarray*}}
\newcommand{\beqan}{\begin{eqnarray}}
\newcommand{\eeqan}{\end{eqnarray}}
\newcommand{\nn}{\nonumber}
\newcommand{\twopartdef}[4]
{
	\left\{
	\begin{array}{ll}
		#1 & \mbox{ if } #2 \\
		#3 & \mbox{ if } #4
	\end{array}
	\right .
}
\newcommand{\id}{\mathrm{id}}
\def\cR{{\mathcal R}}
\def\hcR{{\hat \cR}}
\def\hcI{{\hat \cI}}
\def\hcN{{\hat \cN}}
\def\hF{{\hat F}}
\def\cC{{\mathcal C}}
\def\Spin{\mathrm{Spin}}
\def\Spin{\mathrm{Spin}}
\def\i{\mathbf{i}}
\def\SO{\mathrm{SO}}
\def\U{\mathrm{U}}
\def\Hol{\mathrm{Hol}}
\def\cD{\mathcal{D}}
\def\cJ{\mathcal{J}}
\def\cA{\mathcal{A}}
\def\cE{\mathcal{E}}
\def\cI{\mathcal{I}}
\def\cN{\mathcal{N}}
\def\cT{\mathcal{T}}
\def\cF{\mathcal{F}}
\def\cC{\mathcal{C}}
\def\Sp{\mathrm{Sp}}
\def\G_2{\mathrm{G_2}}
\def\cL{\mathcal{L}}
\def\cS{\mathcal{S}}
\def\cV{\mathcal{V}}
\def\P{\mathbb{P}}
\newcommand{\Hom}{{\rm Hom}}
\newcommand{\Iso}{{\rm Iso}}
\def\Aut{\mathrm{Aut}}
\def\Ob{\mathrm{Ob}}
\def\Fr{\mathrm{Fr}}
\def\Re{\mathrm{Re}}
\def\Im{\mathrm{Im}}
\def\im{\mathrm{im}}
\def\G{\mathrm{G}}
\def\T{\mathbb{T}}
\def\R{\mathbb{R}}
\def\rS{\mathrm{S}}
\def\Prin{\mathrm{Prin}}
\def\hP{\hat{P}}
\def\A{\mathbb{A}}
\def\cL{\mathcal{L}}
\def\dd{\mathrm{d}}
\def\mC{\mathrm{C}}
\def\cl{\mathrm{cl}}
\def\bT{\mathbf{T}}
\def\Sieg{\mathrm{Sieg}}
\def\Dual{\mathrm{Dual}}
\def\curv{\mathrm{curv}}
\def\rT{\mathrm{T}}
\def\rS{\mathrm{S}}
\def\Symp{\mathrm{Symp}}
\def\A{\mathbb{A}}
\def\Div{\mathrm{Div}}
\def\homega{\hat{\omega}}
\def\per{\mathrm{per}}
\def\aff{\mathfrak{aff}}
\def\bgamma{\boldsymbol{\gamma}}
\def\Sg{\mathrm{Sg}}
\def\cConf{\mathfrak{Conf}}
\def\cSol{\mathfrak{Sol}}
\def\cU{\mathcal{U}}
\def\Conn{\mathrm{Conn}}
\def\fg{\mathfrak{g}}
\def\fa{\mathfrak{a}}
\def\cl{{\mathrm{cl}}}
\def\ex{{\mathrm{ex}}}
\def\fl{{\mathrm{flat}}}
\def\Tor{\mathrm{Tor}}
\def\lcm{\mathrm{lcm}}
\def\gcd{\mathrm{gcd}}
\def\fT{\mathbb{T}}
\def\fA{\mathbb{A}}
\DeclareSymbolFont{bbold}{U}{bbold}{m}{n}
\DeclareSymbolFontAlphabet{\mathbbold}{bbold}
\def\bfA{\mathbbold{A}}
\def\fc{\mathfrak{c}}
\def\T{\mathfrak{T}}
\def\hM{\hat{M}}
\def\hDelta{\hat{\Delta}}
\def\hcS{\hat{\cS}}
\def\hcD{{\hat{\cD}}}
\def\bc{\mathbf{c}}
\def\hP{{\hat P}}
\def\hcA{{\hat \cA}}
\def\hP{{\hat P}}
\def\hcJ{\hat{\cJ}}
\def\fJ{\mathfrak{J}}
\def\SympRep{\mathrm{SympRep}}
\def\fR{\mathfrak{R}}
\def\bpi{\boldsymbol{\pi}}
\def\bA{\mathbf{A}}
\def\hc{{\hat c}}
\def\hpi{\hat{\pi}}
\def\tf{\mathrm{tf}}
\def\Dyons{\mathrm{Dyons}}
\def\dex{{\dd_{\cD}\!\mbox{-}\ex}}
\def\dcl{{\dd_{\cD}\!\mbox{-}\cl}}
\def\bP{\mathbf{P}}
\def\rB{\mathrm{B}}
\def\Per{\mathrm{Per}}
\def\disc{\mathrm{disc}}
\def\brho{\bar{\rho}}
\setlist[itemize]{leftmargin=*}
\newcolumntype{P}[1]{>{\centering\arraybackslash}p{#1}}
\begin{document}

\title[The duality covariant geometry of abelian gauge theory]{The
duality covariant geometry and DSZ quantization of abelian gauge theory}

\author[C. Lazaroiu]{C. Lazaroiu} \address{Department of Theoretical
Physics, Horia Hulubei National Institute for Physics and Nuclear
Engineering, Bucharest-Magurele, Romania}
\email{lcalin@theory.nipne.ro}

\author[C. S. Shahbazi]{C. S. Shahbazi} \address{Department of
Mathematics, University of Hamburg, Germany}
\email{carlos.shahbazi@uni-hamburg.de}

\thanks{2010 MSC. Primary: 53C80. Secondary: 53C07.}
\keywords{Abelian gauge theory, Electromagnetic duality, symplectic vector 
	bundles, character varieties, symplectic lattices}

\begin{abstract}
We develop the Dirac-Schwinger-Zwanziger (DSZ) quantization of
classical abelian gauge theories with general duality structure on
oriented and connected Lorentzian four-manifolds $(M,g)$ of arbitrary
topology, obtaining as a result the duality-covariant geometric formulation 
of such theories through connections on principal bundles. We implement 
the DSZ condition by restricting the field strengths of the theory 
to those which define classes originating in the degree-two cohomology 
of a local system valued in the groupoid of integral symplectic spaces. 
We prove that such field strengths are curvatures of connections $\cA$ 
defined on principal bundles $P$ whose structure group $G$ is the 
disconnected {\em non-abelian} group of automorphisms of an integral 
affine symplectic torus. The connected component of the identity of 
$G$ is a torus group, while its group of connected components is a
modified Siegel modular group which coincides with the group of local
duality transformations of the theory. This formulation includes
electromagnetic and magnetoelectric gauge potentials on an equal
footing and describes the equations of motion through a first-order
{\em polarized self-duality condition} for the curvature of $\cA$. 
The condition involves a combination of the Hodge operator of
$(M,g)$ with a taming of the duality structure determined by $P$,
whose choice encodes the self-couplings of the theory. This
description is reminiscent of the theory of four-dimensional euclidean
instantons, even though we consider a two-derivative theory in Lorentzian 
signature. We use this formulation to characterize the hierarchy of duality 
groups of abelian gauge theory, providing a gauge-theoretic description of 
the electromagnetic duality group as the discrete remnant of the gauge 
group of $P$. We also perform the time-like reduction of the polarized 
self-duality condition to a Riemannian three-manifold, obtaining a new type 
of Bogomolny equation which is modified by the given taming and duality 
structure induced by $P$. We give explicit examples of such solutions, 
which we call \emph{polarized dyons}.
\end{abstract}

\maketitle

\setcounter{tocdepth}{1} 
\tableofcontents


\section*{Introduction}


Abelian gauge theory on Lorentzian four-manifolds is a natural
extension of Maxwell electrodynamics, which locally describes a finite
number of abelian gauge fields interacting through couplings which are
allowed to vary over space-time. Such theories occur frequently in
high energy physics. For instance, the low energy limit of a
non-abelian gauge theory coupled to scalar fields which can be
maximally higgsed contains an abelian gauge theory sector; this occurs
in particular on the Coulomb branch of supersymmetric non-abelian gauge
theories. Moreover, the universal bosonic sector of four-dimensional
ungauged supergravity involves a fixed number of abelian gauge fields
interacting with each other through couplings which can vary over
space-time due to their dependence on the scalars of the theory. This
sector of four-dimensional supergravity can be described by pulling
back an abelian gauge theory defined on the target space of the sigma
model of the scalar fields \cite{gesm}.

The {\em local} behavior of abelian gauge theories (including their
supersymmetric extensions) was studied intensively in the physics
literature, where the subject has achieved the level of textbook
material \cite{Cecotti}. Despite intense activity, a global geometric
formulation of such theories on arbitrary spacetimes is still missing.
At the level of field strengths, such a description was given in
\cite{gesm} (see \cite{gesmproc} for a summary) in the wider context
of the geometric description of the universal sector of classical
supergravity in four dimensions. As explained there and recalled in
Section \ref{sec:classical}, the global formulation requires the
specification of a ``duality structure'', defined as a flat symplectic
vector bundle $\Delta=(\cS,\omega,\cD)$ on the spacetime manifold
$M$. The even rank $2n$ of $\cS$ equals the number of field strengths,
where {\em both} electromagnetic and magnetoelectric fields are
included. When the spacetime is not simply connected, such a bundle
need not be trivial and it `twists' the local formulation in such a
way that the combination of all electromagnetic and magnetoelectric
field strengths can be described globally by a $\dd_\cD$-closed
two-form $\cV$ defined on $M$ and valued in $\cS$, where $\dd_\cD$ is
the differential induced by the flat connection $\cD$ of the duality
structure. A classical electromagnetic field strength {\em
configuration} is a $\dd_\cD$-closed two-form $\cV$.  The classical
equations of motion are encoded by the condition that $\cV$ be
self-dual with respect to a `polarized Hodge operator' obtained by
tensoring the Hodge operator $\ast_g$ defined by the Lorentzian
spacetime metric $g$ with a (generally non-flat) taming $\cJ$ of the
symplectic bundle $(\cS,\omega)$, an object which encodes all
couplings and theta angles in a fully geometric manner. A classical
field strength {\em solution} is a field strength configuration which
obeys the polarized self-duality condition. This provides a global
formulation of the theory on oriented Lorentzian four manifolds of
arbitrary topology, which is manifestly covariant with respect to
electromagnetic duality. In this formulation, classical duality
transformations are described by (based) flat symplectic automorphisms
of $\Delta$. Such theories admits global solutions which correspond to
`classical electromagnetic U-folds' -- a notion which had been used
previously in the physics literature without being given a mathematically
clear definition.

While the treatment found in the physics literature discusses {\em
local} gauge potentials and {\em local} gauge transformations (which
are described using differential forms defined locally on spacetime),
a fully general and manifestly duality-covariant geometric formulation
of such theories in terms of connections on an appropriate principal
bundle has not yet been given. Such a formulation is required by the
Aharonov-Bohm effect \cite{Aharonov:1959fk} and by
Dirac-Schwinger-Zwanziger (DSZ) ``quantization''
\cite{Dirac:1931kp,Schwinger:1966nj,Zwanziger:1968rs}, which force the
field strengths to obey an \emph{integrality condition} implied by the
requirement of a consistent coupling between classical gauge fields and
quantum charge carriers. Imposing this condition restricts the set of
allowed field strengths, defining a so-called {\em prequantum  abelian
gauge theory} As pointed out in \cite{gesm}, the general formulation
of this condition involves the choice of a {\em Dirac system} $\cL$
for $\Delta$, defined as a $\cD$-flat fiber sub-bundle of $\cS$ whose
fibers are full symplectic lattices inside the fibers of $\cS$. Every
Dirac system has a {\em type} $\frt=(t_1,\ldots, t_n)$, where
$t_1,\ldots, t_n$ are positive integers such that $t_1|t_2|\ldots
|t_n$.  A duality structure is called {\em semiclassical} if it admits
a Dirac system.  The choice of a Dirac system $\cL$ refines a
semiclassical duality structure $\Delta$ to an {\em integral duality
structure} $\bDelta=(\cS,\omega,\cD,\cL)$ and reduces the group of
duality transformations to a discrete group, which generalizes the
arithmetic duality group known from the local formulation found in the
physics literature.  In the global setting, the Dirac system replaces
the ``Dirac lattice'' of the local approach and makes the DSZ
condition is rather subtle since it requires the use of cohomology
with local coefficients (see \cite{Hatcher, Spanier, Whitehead,
Steenrod}). In particular, the bundle-valued two-form $\cV$ (which
describes all electromagnetic and magnetoelectric field strengths
simultaneously) {\em cannot} in general be the curvature of a
connection defined on a principal torus bundle. Indeed, the vector
bundle $\cS$ is generally non-trivial, while the adjoint bundle of any
principal torus bundle is trivial.

In the present paper, we `solve' the DSZ integrality condition
determined by a Dirac system $\cL$ by giving the geometric formulation
of prequantum abelian gauge theory in terms of connections defined
on an adequate principal bundle $P$. More precisely, we show that the
combined field strength $\cV$ is the curvature of a connection defined
on a principal bundle with {\em non-abelian} and {\em disconnected}
structure group $G$, whose connected component of the identity is a
torus group and whose group of connected components is a modified
Siegel modular group. This shows that the manifestly duality-covariant
formulation of such gauge theories is not truly abelian, since it
involves a non-abelian structure group. Instead, the structure group
$G$ is {\em weakly abelian}, in the sense that only its {\em Lie
algebra} is abelian. As a consequence, the gauge group of $P$ has a
``discrete remnant'' which encodes equivariance of the theory under
electromagnetic duality transformations. Using this framework, we show 
that the electromagnetic duality transformations of the theory are given 
by (non-infinitesimal) gauge transformations, a fact that provides a 
geometric interpretation for the former. Principal connections $\cA$ on 
$P$ describe the combined electromagnetic and magnetoelectric {\em gauge 
potentials} of the theory. The polarized self-duality condition becomes 
a first-order differential equation for $\cA$ which is reminiscent of 
the instanton equations, though the signature of our spacetime is Lorentzian 
and the theory is of second order. These results provide a global geometric 
formulation of prequantum abelian gauge theory as a theory of principal 
connections, in a manner that is manifestly covariant under electromagnetic 
duality.

To extract the principal bundle description, we proceed as follows. We
first show that integral duality structures of rank $2n$ and type
$\frt$ are associated to {\em Siegel systems} of rank $2n$, which we
define to be local systems $Z$ of free abelian groups of rank $2n$
whose monodromy is contained in the {\em modified Siegel modular}
group $\Gamma=\Sp_\frt(2n,\Z)$ of type $\frt$. The latter is defined
as the group of automorphisms of a full symplectic lattice of rank
$2n$ and type $\frt$ and is a subgroup of $\Sp(2n,\R)$ which contains
the usual Siegel modular group $\Sp(2n,\Z)$, to which it reduces when
$\frt$ coincides with the {\em principal type}
$\delta=(1,\ldots,1)$. The vector bundle of the duality structure is
given by $\cS=Z\otimes_\Z \R$ while the flat connection $\cD$ is
induced by the monodromy connection of $Z$. A classical field strength
configuration $\cV$ is called {\em integral} if it satisfies the DSZ
quantization condition, which states that the cohomology class
$[\cV]_{\cD}\in H^2_\cD(M,\cS)$ of $\cV$ with respect to $\dd_\cD$
lies in the image of the local coefficient cohomology group $H^2(M,Z)$
through the natural morphism $H^2(M,Z)\rightarrow H^2_\cD(M,\cS)$,
where $ H^2_\cD(M,\cS)$ is the the second cohomology of the complex
$(\Omega^\ast(M,\cS),\dd_\cD)$.  We then show that $\cV$ is integral
if and only if it coincides with the adjoint curvature $\cV_\cA$ of a
connection $\cA$ defined on a principal bundle $P$ (called a {\em
Siegel bundle}) whose structure group is the group $G=\Aff_\frt$ of
automorphisms of an {\em integral affine symplectic torus} and whose
adjoint bundle identifies with $\cS$. The group $\Aff_\frt$ is a
semidirect product $A\rtimes \Gamma$ of the torus group
$A=\R^{2n}/\Z^{2n}$ with the modified Siegel modular group
$\Gamma=\Sp_\frt(2n,\Z)$ and hence has a countable group of components
which is isomorphic with $\Gamma$. The $\dd_\cD$-cohomology class of
$\cV_\cA$ coincides with the image in $H^2_\cD(M,\cS)$ of the {\em
twisted Chern class} $c(P)\in H^2(M,Z)$ of $P$. The Siegel system $Z$
(and hence the duality structure $\Delta$) is uniquely determined by
the Siegel bundle $P$ and Siegel bundles are determined up to
isomorphism by the pair $(Z,c)$. The classifying space of such
principal bundles is a {\em twisted} Eilenberg MacLane space
\cite{Gitler} of type $2$, namely a $K(\Z^{2n},2)$-fibration over
$K(\Sp_\frt(2n,\Z),1)$ whose $\kappa$-invariant is trivial and whose
monodromy is induced by the fundamental action of $\Sp_\frt(2n,\Z)$ on
$\Z^{2n}$.

There exists a large group of classical `pseudo-duality'
transformations which identifies the spaces of classical field
strength configurations and solutions of \emph{different} abelian
gauge theories. Locally, such transformations involve matrices $T\in
\Sp(2n,\R)$ acting on the field strengths and the DSZ integrality
condition with respect to a Dirac lattice of type $\frt$ restricts $T$
to lie in the arithmetic group $\Sp_\frt(2n,\Z)$.  As already pointed
out in \cite{gesm}, the global theory of such duality transformations
is much richer. We develop this theory from scratch, defining a
hierarchy of duality groups and providing short exact sequences to
compute them. We exploit this geometric framework to show that the 
electromagnetic duality transformations of abelian gauge theory 
correspond to gauge transformations of Siegel bundles, elucidating 
the geometric origin of electromagnetic duality. In particular, we emphasize 
the role played by the \emph{type} $\frt$ of the underlying Dirac system and 
by the monodromy representation of the Siegel system $Z$ in the correct 
definition and computation of the discrete duality group of the prequantum 
theory. The fact that a symplectic lattice need not be of principal type 
is well-known in the theory of symplectic tori as well as in  that of 
Abelian varieties, where non-principal types correspond to non-principal 
polarizations. The physical implications of non-principal types have been  
systematically explored only recently in the context of supersymmetric field
theories, see for instance \cite{Argyres:2015ffa,Argyres:2015gha,Argyres:2016yzz,Argyres:2019yyb,Argyres:2020wmq,Caorsi:2018zsq,Caorsi:2019vex} and references therein.

The construction of the present paper produces a class of geometric
gauge models which is amenable to the methods of mathematical gauge
theory \cite{Donaldson}. In particular, it allows for the study of
moduli spaces of solutions (which, as shown in \cite{gesm}, can be
viewed as `electromagnetic U-folds') using techniques borrowed from
the theory of instantons. In this spirit, we perform the time-like
reduction of the polarized self-duality equations, obtaining a novel
system of Bogomolny-like equations.  Solutions of these equations
define {\em polarized abelian dyons}, of which we describe a few
examples.

The paper is organized as follows. Section \ref{sec:classical} recalls
the description of classical abelian gauge theories with arbitrary
duality structure in terms of combined electromagnetic and
magnetoelectric field strengths, following \cite{gesm}. In the same
section, we describe the hierarchy of duality groups of such theories
and give a few short exact sequences to characterize them. Section
\ref{sec:DQsymplecticabelian} discusses the DSZ integrality condition
for general duality structures, relating the notion of Dirac system
(which appears in its formulation) to various equivalent objects.
Section \ref{sec:associatedbundle} discusses Siegel bundles and
connections showing in particular that a Siegel
induces a canonical integral duality structure.  In Section
\ref{sec:DSZquantization}, we give the formulation of
``prequantum'' abelian gauge theory (defined as classical abelian
gauge theory supplemented by the DSZ integrality condition) as a
theory of principal connections on a Siegel bundle. Section
\ref{sec:TimelikeReduction} discusses the time-like dimensional
reduction of the polarized self-duality equations, which leads to the
notion of polarized abelian dyon. In the same section, we construct examples
of polarized abelian dyons on the punctured affine 3-space. Appendix A recalls
the duality-covariant formulation of abelian gauge theory on
contractible Lorentzian four-manifolds, starting from the local
treatment found in the physics literature. Appendix B discusses
integral symplectic spaces and integral symplectic tori, introducing
certain notions used in the main text.

\subsection{Notations and conventions}

All manifolds and fiber bundles considered in the paper are smooth,
Hausdorff and paracompact.  The manifold denoted by $M$ is assumed
to be connected. In our convention, a Lorentzian four-manifold
$(M,g)$ has ``mostly plus'' signature $(3,1)$. If $E$ is a fiber
bundle defined on a manifold $M$, we denote by $\cC^\infty(M,E)$ the set
of globally-defined smooth sections of $E$ and by $\cC^\infty(E)$ the
sheaf of smooth sections of $E$.  

\begin{ack}
We thank V. Cort\'es for useful comments and discussions. C.S.S would like to thank P. C. Argyres, A. Bourget, M. Martone and R. Szabo for useful comments and suggestions. Part of the work of C. I. L. on this project was supported by grant IBS-R003-S1. The work of C.S.S. is supported by the Germany Excellence Strategy \emph{Quantum Universe} - 390833306.
\end{ack}


\section{Classical abelian gauge theory}
\label{sec:classical}


In this section we introduce the configuration space and equations of
motion defining {\em classical abelian gauge theory} on an oriented
Lorentzian four-manifold $(M,g)$ and discuss its global dualities and
symmetries. Appendix \ref{app:local} gives the description of this
theory for the special case when $M$ is contractible, which recovers
the local treatment found in the physics literature. The global
formulation presented in this section was proposed in a wider context
in reference \cite{gesm}, to which we refer the reader for certain
details. The definition of classical abelian gauge theory on $(M,g)$
is given in terms of field strengths and relies on the choice of a
\emph{duality structure} (defined as a flat symplectic vector bundle
$\Delta=(\cS,\omega,\cD)$ on $M$) equipped with a {\em taming} $\cJ$
of $(\cS,\omega)$, which encodes the gauge-kinetic functions (coupling
constants and theta angles) in a globally-correct and
frame-independent manner. The equations of motion of the theory are
encoded by the {\em $\cJ$-polarized self-duality condition} for the
combined electromagnetic and magnetoelectric {\em field strengths}, which are
modeled mathematically by a $\cD$-flat two-form valued in the
underlying vector bundle $\cS$ of the duality structure.


\subsection{Duality structures}


\noindent Let $M$ be a connected smooth manifold and $(\cS,\omega)$ be
a symplectic vector bundle defined on $M$ with symplectic structure $\omega$. Since 
$\Sp(2n,\R)$ and $\GL(n,\C)$ are homotopy equivalent to their common 
maximal compact subgroup $\U(n)$, the classification of symplectic, 
complex and Hermitian vector bundles defined on $M$ are equivalent. 
In particular, any complex vector bundle  admits a Hermitian pairing 
and any symplectic vector bundle admits a complex structure which is 
compatible with its symplectic pairing and  makes it into a Hermitian 
vector bundle. Thus a real vector bundle of even rank admits a symplectic 
pairing if and only if it admits a complex structure. The classifying 
spaces $\rB\Sp(2n,\R)$ and $\rB\U(n)$ are homotopy equivalent, hence 
the fundamental characteristic classes of a symplectic vector bundle
$(\cS,\omega)$ are Chern classes, which we denote by $c_k(\cS,\omega)$.

\begin{remark}
Suppose that $\dim M=4$ and let $\cS$ be an oriented real vector
bundle of rank $2n$ defined on $M$, thus $w_1(\cS)=0$. If $\cS$ admits
a complex structure $\cJ$ inducing its orientation, then its third
Stiefel-Whitney class $w_3(\cS)$ must vanish and its even
Stiefel-Whitney classes $w_2(\cS)$ and $w_4(\cS)$ must coincide with
the $\mathrm{mod}~2$ reduction of the Chern classes $c_1(\cS,\cJ)$ and
$c_2(\cS,\cJ)$ of the complex rank $n$ vector bundle defined
by $\cS$ and $\cJ$. In particular, the third {\em integral}
Stiefel-Whitney class $W_3(\cS)\in H^3(M,\Z)$ must vanish, i.e. $\cS$
must admit a $\Spin^c$ structure (notice that $W_5(\cS)$ vanishes for
dimension reasons). These conditions are not always sufficient. To
state the necessary and sufficient conditions (see \cite{Massey}),
we distinguish the cases:
\begin{itemize}
\item $n=1$, i.e. $\rk\cS=2$. Then $\cS$ always admits a complex
structure (equivalently, a symplectic pairing) which induces its
orientation (since $\SO(2)=\U(1)$).
\item $n=2$, i.e. $\rk\cS=4$. In this case, $\cS$ admits a complex
structure (equivalently, a symplectic pairing) which induces its
orientation if and only if it satisfies $W_3(\cS)=0$ and
$\fc^4(\cS)=0$, where $\fc^4(\cS)\in H^4(M,\Z)$ is an integral
obstruction class described in \cite[Theorem II]{Massey}.
\item $n\geq 3$, i.e. $\rk\cS\geq 6$. Then $\cS$ admits a complex
structure (equivalently, a symplectic pairing $\omega$) which induces
its orientation if and only if $W_3(\cS)=0$.
\end{itemize}
Notice that an oriented real vector bundle $\cS$ of rank four on a 
four-manifold $M$ is determined up to isomorphism by its first Pontryaghin 
class $p_1(\cS)\in H^4(M,\Z)$, its second Stiefel-Whitney class $w_2(\cS)\in
H^2(M,\Z_2)$ and its Euler class $e(\cS)\in
H^4(M,\Z)$. 
\end{remark}

\begin{definition}
A {\em duality structure} $\Delta \eqdef (\cS,\omega,\cD)$ on $M$ is a
flat symplectic vector bundle $(\cS,\omega)$ over $M$ equipped with a
flat connection $\cD:\cC^\infty(M,\cS)\rightarrow\Omega^1(M,\cS)$ which preserves 
$\omega$. The {\em rank} of $\Delta$ is the rank of the vector bundle $\cS$, 
which is necessarily even.
\end{definition}

\noindent Notice that the Chern classes $c_1(\cS,\omega)$ and
$c_2(\cS,\omega)$ of the underlying symplectic vector bundle of a
duality structure must be torsion classes.

\begin{definition}
A {\em based isomorphism of duality structures} from
$\Delta=(\cS,\omega,\cD)$ to $\Delta^{\prime}=(\cS^{\prime},\omega^{\prime}
,\cD^{\prime})$ is a based isomorphism of vector bundles 
$f:\cS\xrightarrow{\sim} \cS'$ which satisfies the conditions 
$\omega'\circ (f\otimes f)=\omega$ and 
$\cD'\circ f=(\id_{T^\ast M}\otimes f)\circ \cD$.
\end{definition}

\noindent Here and below, we let based morphisms of vector bundles act
on sections in the natural manner. We denote by $\Dual(M)$ be the groupoid 
of duality structures defined on $M$ and based isomorphisms of such.

The group $\Aut_b(\cS,\omega)$ of based automorphisms of a symplectic
vector bundle $(\cS,\omega)$ is called its group of {\em symplectic
gauge transformations}. Such transformations $\varphi$ act on the set
of linear connections $\cD$ defined on $\cS$ through:
\be
\cD\rightarrow (\id_{T^\ast M}\otimes \varphi)\circ \cD\circ \varphi^{-1}~~
\ee
and preserve the set of flat symplectic connections. The group
$\Aut_b(\Delta)$ of based automorphism of a duality structure
$\Delta=(\cS,\omega,\cD)$ coincides with the stabilizer of $\cD$ in
$\Aut_b(\cS,\omega)$. For any such duality structure, we have:
\be
c_1(\cS,\omega)=\delta(\hc_1(\cD))\, ,\quad c_2(\cS,\omega)=\delta(\hc_2(\cD))~~,
\ee
where $\hc_1(\cD)\in H^1(M,\U(1))$ and $\hc_2(\cD)\in H^3(M,\U(1))$
are the Cheeger-Chern-Simons invariants of the flat connection $\cD$ and
$\delta:H^i(M,\C/\Z)\rightarrow H^{i+1}(M,\Z)$ are the Bockstein
morphisms in the long exact sequence:
\be
\ldots \rightarrow H^i(M,\Z)\rightarrow H^i(M,\R)\stackrel{\exp_\ast}{\rightarrow}
H^i(M,\U(1))\stackrel{\delta}{\rightarrow}H^{i+1}(M,\Z)\rightarrow
H^{i+1}(M,\R)\rightarrow \ldots
\ee
induced by the exponential sequence:
\be
0\rightarrow \Z \rightarrow \R \rightarrow \U(1)\rightarrow 0~~.
\ee
The Cheeger-Chern-Simons invariants depend only on the gauge
equivalence class of $\cD$. 


\subsection{The twisted de Rham complex of a duality structure}


Given a duality structure $\Delta=(\cS,\omega,\cD)$ on a connected manifold $M$, let
$\dd_\cD:\Omega(M,\cS)\rightarrow \Omega(M,\cS)$ be the exterior
covariant derivative twisted by $\cD$ (notice that
$\dd_\cD\vert_{\Omega^0(M,\cS)}=\cD$). This defines a cochain complex:
\ben
\label{TwistedDeRhamComplex}
0\to\cC^\infty(M,\cS)\xrightarrow{\cD} \Omega^1(M,\cS) \xrightarrow{\dd_\cD}
\Omega^2(M,\cS) \xrightarrow{\dd_\cD} \Omega^3(M,\cS)
\xrightarrow{\dd_\cD} \Omega^4(M,\cS)\to 0\, ,
\een
whose total cohomology group (viewed as a $\Z$-graded abelian group)
we denote by $H_\cD^\ast(M,\cS)$.

\begin{definition}
The vector spaces $H_\cD^k(M,\cS)$ (where $k=0,\ldots, 4$) are called
the {\em twisted de Rham cohomology spaces} of the duality structure
$\Delta$.
\end{definition}

\noindent Let $\Omega^k_\fl(\cS)$ be the locally-constant sheaf of
$\cD$-flat $\cS$-valued $k$-forms. Then a straightforward
modification of the Poincar\'e lemma shows that the complex of sheaves:
\be
0\to\cC^\infty_\fl(\cS)\hookrightarrow \Omega^0(\cS)\xrightarrow{\cD}
\Omega^1(\cS) \xrightarrow{\dd_\cD} \Omega^2(\cS)
\xrightarrow{\dd_\cD} \Omega^3(\cS) \xrightarrow{\dd_\cD}
\Omega^4(\cS)\to 0
\ee
is exact and hence provides a resolution of the locally-constant sheaf
$\cC^\infty_\fl(\cS)$. Since $M$ is paracompact, each of the sheaves
$\Omega^k(\cS)$ is acyclic. Thus the sheaf
cohomology of $\Omega^0_\fl(\cS)$ can be computed as the cohomology of
the complex \eqref{TwistedDeRhamComplex}. This gives a natural
isomorphism of graded vector spaces:
\be
H^\ast_\cD(M,\cS)\simeq H^\ast(M,\cC^\infty_\fl(\cS))~~,
\ee
where the right hand side denotes sheaf cohomology.


\subsection{Flat systems of symplectic vector spaces}


Let $\Pi_1(M)$ be the fundamental groupoid of a connected manifold
$M$, whose objects are the points of $M$ and whose set of morphisms
$\Pi_1(m,m')$ from $m$ to $m'$ is the set of homotopy classes of
piecewise-smooth curves starting at $m$ and ending at $m'$. Let
$\Symp$ be the groupoid of finite-dimensional symplectic vector spaces
(see Appendix \ref{app:symp}). The functor category $[\Pi_1(M),\Symp]$
is a groupoid since all its morphisms (which are natural
transformations) are invertible.

\begin{definition}
A {\em flat system of symplectic vector spaces} (or {\em
$\Symp$-valued local system}) on $M$ is a functor
$F:\Pi_1(M)\rightarrow \Symp$, i.e. an object of the groupoid
$[\Pi_1(M),\Symp]$. An isomorphism of such systems is an isomorphism
in this groupoid. A flat system $F$ of symplectic vector spaces has
rank\footnote{The rank is constant on $M$ since $M$ is connected.}
$2n$ if $\dim F(m)=2n$ for all $m\in M$.
\end{definition}

\noindent Recall that a {\em symplectic representation} of a group $G$
is a representation $\rho:G\rightarrow \Aut(V,\omega)$ of $G$
through automorphisms of a finite-dimensional symplectic vector space
$(V,\omega)$. An {\em equivalence (or isomorphism) of symplectic
representations} from $\rho:G\rightarrow \Aut(V,\omega)$ to
$\rho^{\prime} :G\rightarrow \Aut(V',\omega')$ is an isomorphism of symplectic
vector spaces $\varphi:(V,\omega)\xrightarrow{\sim} (V',\omega')$ such
that $\varphi\circ\rho(g)=\rho'(g)\circ \varphi'$ for all $g\in
G$. We denote by $\SympRep(G)$ the groupoid of symplectic representations
of $G$ equipped with this notion of isomorphism.

\begin{definition}
Let $F$ be a flat system of symplectic vector spaces on $M$.
The {\em holonomy representation} of $F$ at a point $m\in M$ is the
morphism of groups $\hol_m(F):\pi_1(M,m)=\Pi_1(m,m)\rightarrow
\Aut(F(m))=\Symp(F(m),F(m))$ defined through:
\be
\hol_m(F)(\bc)=F(\bc)\, ,  \quad \forall\,\, \bc\in \pi_1(M,m)~~.
\ee
The {\em holonomy group} of $F$ at $m$ is the subgroup of $\Aut(F(m))$
defined through:
\be
\Hol_m(F)\eqdef \im (\hol_m(F))~~.
\ee
\end{definition}

\noindent Notice that $\hol_m(F)$ is a symplectic representation of
$\pi_1(M,m)$ on $F(m)$. Any homotopy class $\bgamma\in \Pi_1(m,m')$ of
paths from $m$ to $m'$ induces an isomorphism of symplectic vector
spaces $F(\bgamma):F(m)\stackrel{\sim}{\rightarrow} F(m')$ which
intertwines the holonomy representations of $F$ at the points $m$ and
$m'$:
\be
F(\bgamma)\circ \hol_m(F)(\bc)=\hol_{m'}(F)(\bgamma\, \bc
\,\bgamma^{-1})\circ F(\bgamma)\, , \quad \forall\,\, \bc\in \pi_1(M,m)~~.
\ee
Since $M$ is connected, it follows that the holonomy representation of
$F$ at a fixed basepoint $m_0\in M$ determines its holonomy
representation at any other point of $M$.  Moreover, the isomorphism
class of the holonomy group of $F$ at $m_0$ does not depend on the
choice of $m_0\in M$.

\begin{prop}
\label{prop:hol}
Let $m_0\in M$ be any point of $M$. Then the map $\hol_{m_0}$ which
sends a $\Symp$-valued local system $F$ defined on $M$ to its holonomy
representation $\hol_{m_0}(F)$ at $m_0$ and sends an isomorphism
$f:F\xrightarrow{\sim} F'$ of $\Symp$-valued local systems to the
equivalence of representations $\hol_{m_0}(f)\eqdef
f(m_0):\hol_{m_0}(F)\xrightarrow{\sim} \hol_{m_0}(F')$ defines an
equivalence of groupoids from $[\Pi_1(M),\Symp]$ to
$\SympRep(\pi_1(M,m_0))$. In particular, isomorphism classes of flat
systems of symplectic vector spaces of rank $2n$ defined on $M$ are in
bijection with the points of the character variety:
\ben
\label{fR}
\fR(\pi_1(M,m_0),\Sp(2n,\R))\eqdef \Hom(\pi_1(M,m_0),\Sp(2n,\R))/\Sp(2n,\R)\, ,
\een
where $\Sp(2n,\R)$ acts on $\Hom(\pi_1(M),\Sp(2n,\R))$ through its
adjoint representation.
\end{prop}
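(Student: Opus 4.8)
The plan is to identify $\hol_{m_0}$ with the restriction functor along the inclusion of the full one-object subgroupoid of $\Pi_1(M)$ determined by $m_0$, and then to deduce the statement from the categorical principle that precomposition with an equivalence of groupoids induces an equivalence of functor groupoids. Write $\iota\colon\pi_1(M,m_0)\hookrightarrow\Pi_1(M)$ for the inclusion that regards the group $\pi_1(M,m_0)=\Pi_1(m_0,m_0)$ as a one-object groupoid sitting over the object $m_0$. A functor $\pi_1(M,m_0)\to\Symp$ is precisely a symplectic representation of $\pi_1(M,m_0)$, and a natural transformation of two such functors is precisely an equivalence of symplectic representations, so $[\pi_1(M,m_0),\Symp]=\SympRep(\pi_1(M,m_0))$ verbatim. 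Under this identification the precomposition functor $\iota^\ast$ sends $F$ to the representation with underlying space $F(m_0)$ and structure maps $F(\bc)=\hol_{m_0}(F)(\bc)$, and sends a natural isomorphism $f$ to $f(m_0)=\hol_{m_0}(f)$; hence $\hol_{m_0}=\iota^\ast$, and it suffices to show that $\iota$ is an equivalence of groupoids.

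To see that $\iota$ is an equivalence, note that it is fully faithful because it is the inclusion of a full subgroupoid, while essential surjectivity is exactly the place where connectedness of $M$ enters: any $m\in M$ is joined to $m_0$ by a piecewise-smooth curve, whose homotopy class furnishes an isomorphism $m_0\xrightarrow{\sim} m$ in $\Pi_1(M)$, so every object of $\Pi_1(M)$ is isomorphic to one in the image of $\iota$. Applying the general fact that $\iota^\ast$ is then an equivalence of functor groupoids finishes this step. I expect the only real content to be this transport argument, which can also be carried out by hand: given $\rho\colon\pi_1(M,m_0)\to\Aut(V,\omega)$, choose for each $m$ a class $\bgamma_m\in\Pi_1(m_0,m)$ with $\bgamma_{m_0}$ trivial and set $F(m)\eqdef(V,\omega)$ and $F(\bc)\eqdef\rho(\bgamma_{m'}^{-1}\,\bc\,\bgamma_m)$ for $\bc\in\Pi_1(m,m')$; given an intertwiner $\phi$ of holonomy representations, set $f(m)\eqdef F'(\bgamma_m)\circ\phi\circ F(\bgamma_m)^{-1}$. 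The independence of $F$ and $f$ from the chosen paths, together with functoriality, naturality and uniqueness, follows from homotopy-invariance in $\Pi_1(M)$ and from the intertwining relation recorded immediately before the proposition; and by construction $\hol_{m_0}(F)=\rho$ and $f(m_0)=\phi$.

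Finally, an equivalence of groupoids induces a bijection on isomorphism classes of objects, so isomorphism classes of rank $2n$ $\Symp$-valued local systems on $M$ correspond bijectively to isomorphism classes of $2n$-dimensional symplectic representations of $\pi_1(M,m_0)$. Choosing a symplectic basis identifies the target symplectic space with $(\R^{2n},\omega_{\mathrm{std}})$ and hence such a representation with a homomorphism $\pi_1(M,m_0)\to\Sp(2n,\R)$; an equivalence of symplectic representations then corresponds to conjugation by an element of $\Sp(2n,\R)$. Therefore isomorphism classes are exactly the orbits of the conjugation (adjoint) action on $\Hom(\pi_1(M,m_0),\Sp(2n,\R))$, that is, the points of the character variety $\fR(\pi_1(M,m_0),\Sp(2n,\R))$, as claimed.
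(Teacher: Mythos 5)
Your proof is correct, and it takes a more structural route than the paper's. The paper argues directly: it observes that evaluating a natural isomorphism $f\colon F\xrightarrow{\sim}F'$ at $m_0$ intertwines the two holonomy representations, and then simply asserts that, conversely, any such equivalence of representations ``extends'' to an isomorphism of local systems --- precisely the transport construction you spell out --- leaving both this extension and essential surjectivity (the fact that \emph{every} symplectic representation of $\pi_1(M,m_0)$ arises as a holonomy representation) implicit. You instead factor the statement through the identification $\hol_{m_0}=\iota^\ast$, where $\iota$ is the inclusion of the one-object full subgroupoid at $m_0$, prove that $\iota$ is an equivalence of groupoids (which is exactly where connectedness of $M$ enters), and invoke the standard fact that precomposition with an equivalence induces an equivalence of functor groupoids. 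This buys two things: it isolates the role of connectedness cleanly, and it makes the argument independent of the coefficient groupoid, so the identical proof covers the integral case $[\Pi_1(M),\Symp_\Z]$ appearing later in the paper, where the authors only remark that the proof ``is similar''. Your by-hand construction also supplies the details the paper omits: building $F$ from $\rho$ gives essential surjectivity, and building $f$ from $\phi$ gives fullness. One small correction: the natural isomorphism $f(m)=F'(\bgamma_m)\circ\phi\circ F(\bgamma_m)^{-1}$ is indeed independent of the chosen paths (this follows from the intertwining relation for $\phi$), but the local system $F$ built from $\rho$ is \emph{not} --- different choices of $\bgamma_m$ yield different, albeit isomorphic, functors, all with holonomy exactly $\rho$ at $m_0$. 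This is harmless, since essential surjectivity only requires the existence of some $F$ with $\hol_{m_0}(F)=\rho$, which any fixed choice of paths provides.
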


\begin{proof}
An isomorphism $f:F\xrightarrow{\sim} F'$ of $\Symp$-valued local
systems is a collection of isomorphisms of symplectic vector spaces
$f(m):F(m)\rightarrow F'(m)$ for all $m\in M$ which satisfies:
\be
f(m)\circ F(\bgamma)=F'(\bgamma)\circ f(m)\, , \quad \forall\,\, \bgamma\in \Pi_1(m,m')\, , \quad \forall\,\, m,m'\in M~~.
\ee
Taking $m'=m=m_0$ in this relation shows that $f(m)$ is an equivalence
between the holonomy representations of $F$ and $F'$ at $m$:
\be
\hol_{m_0}(F')(\bc)=f(m)\circ \hol_{m_0}(F)(\bc) \circ f(m_0)^{-1}\, , \quad \forall\,\, \bc\in \pi_1(M,m_0)~~.
\ee
Conversely, it is easy to see that any such equivalence of
representations extends to an isomorphism from $F$ to $F'$.
\end{proof}


\subsection{The flat system of symplectic vector spaces defined by a duality structure}

 
\begin{definition}
The {\em parallel transport functor} $\cT_\Delta\in [\Pi_1(M),\Symp]$
of a duality structure $\Delta=(\cS,\omega,\cD)$ is the functor which
associates to each point $m\in M$ the symplectic vector space
$\cT_\Delta(m)\eqdef (\cS_m,\omega_m)$ and to the homotopy class (with
fixed endpoints) $\bc\in \Pi_1(M)$ of any piecewise-smooth curve
$c\colon [0,1] \to M$ the isomorphism of symplectic vector spaces: \be
\cT_\Delta(\bc):(\cS_{c(0)},\omega_{c(0)})
\xrightarrow{\sim} (\cS_{c(1)},\omega_{c(1)}) \ee given by
the parallel transport of $\cD$.
\end{definition}

\noindent Notice that $\cT_\Delta:\Pi_1(M)\rightarrow \Symp$ is a flat
system of symplectic vector spaces. The map which sends $\Delta$ to
$\cT_\Delta$ extends in an obvious manner to an equivalence of groupoids:
\be
\cT:\Dual(M)\stackrel{\sim}{\rightarrow} [\Pi_1(M),\Symp]~~,
\ee
which sends a based isomorphism
$f:\Delta=(\cS,\omega,\cD)\stackrel{\sim}{\rightarrow}
\Delta'=(\cS',\omega',\cD')$ of duality structures to the invertible
natural transformation $\cT(f):\cT_\Delta\xrightarrow{\sim} \cT_{\Delta'}$
given by the isomorphisms of symplectic vector spaces:
\be
\cT(f)(m)\eqdef f_m:(\cS_m,\omega_m)\stackrel{\sim}{\rightarrow} (\cS'_m,\omega_m)\, , \quad \forall\,\, m\in M\, .
\ee
These isomorphisms intertwine $\cT^\Delta(\bc)$ and
$\cT^{\Delta'}(\bc)$ for any $\bc\in \Pi_1(M)(m,m')$ since $f$
satisfies $\cD'\circ f=(\id_{T^\ast M}\otimes f)\circ \cD$. Hence one
can identify duality structures and systems of flat symplectic vector spaces
defined on $M$. For any duality structure $\Delta=(\cS,\omega,\cD)$ on
$M$, the holonomy representation $\hol_m(\cD)$ of the flat connection
$\cD$ at $m\in M$ coincides with the holonomy representation of the
flat system of symplectic vector spaces defined by $\Delta$:
\be
\hol_m(\cD)=\hol_m(\cT_\Delta)~~.
\ee
In particular, the holonomy groups of $\cD$ and $\cT_\Delta$ at $m\in M$ coincide:
\be
\Hol_m(\cD)=\Hol_m(\cT_\Delta)~~.
\ee
Since $M$ is connected, Proposition \ref{prop:hol} implies that
isomorphism classes of duality structures defined on $M$ are in
bijection with the character variety \eqref{fR}.


\subsection{Trivial duality structures}


\begin{definition}
Let $\Delta=(\cS,\omega,\cD)$ be a duality structure defined on $M$. We say
that $\Delta$ is:
\begin{enumerate}
\item {\em topologically trivial}, if the vector bundle $\cS$ is
trivializable, i.e. if it admits a global frame.
\item \emph{symplectically trivial} if the symplectic vector bundle
$(\cS,\omega)$ is isomorphic to the trivial symplectic vector
bundle, i.e. if it admits a global symplectic frame (a global frame in
which $\omega$ has the standard form).
\item {\em trivial} (or \emph{holonomy trivial}), if $\Delta$ admits a
global {\em flat} symplectic frame, i.e. if the holonomy group of $\cD$ is
the trivial group.
\end{enumerate}
\end{definition}

\noindent A symplectically trivial duality structure is automatically
topologically trivial, while a holonomy trivial duality structure is
symplectically trivial. If $M$ is simply-connected then every
duality structure is holonomy trivial. A global flat symplectic frame
of a holonomy-trivial duality structure $\Delta=(\cS,\omega,\cD)$ of
rank $2n$ has the form:
\be
\cE=(e_1,\ldots, e_n,f_1,\ldots, f_n)~~,
\ee
where $e_i,f_j$ are $\cD$-flat sections of $\cS$ such that:
\be
\omega(e_i,e_j)=\omega(f_i,f_j)=0~~,~~\omega(e_i,f_j)=-\omega(f_i,e_j)=\delta_{ij}\, , \quad \forall\,\, i,j=1,\ldots,n~~.
\ee
Any choice of such a frame induces a trivialization isomorphism
$\tau_\cE:\Delta\xrightarrow{\sim} \Delta_n$ between $\Delta$ and the
{\em canonical trivial duality structure}:
\ben
\label{Deltan}
\Delta_n\eqdef
(\underline{\R}^{2n},\underline{\omega}_{2n},\dd)\, ,
\een
of rank $2n$, where $\underline{\omega}_{2n}$ is the constant
symplectic pairing induced on the trivial vector bundle
$\underline{\R}^{2n}=M\times \R^{2n}$ by the canonical symplectic
pairing $\omega_{2n}$ of $\R^{2n}$ and $\dd\colon
\cC^\infty(M,\underline{\R}^{2n})\rightarrow\Omega^1(M,\underline{\R}^{2n})$
is the ordinary differential. Any two flat symplectic frames $\cE$ and
$\cE'$ of $\Delta$ are related by a based automorphism $T\in \Aut_b(\Delta)$:
\be
\cE'=T\cE\, ,
\ee
which corresponds to the constant automorphism $\tau_{\cE'}\circ
\tau_{\cE}^{-1}\in \Aut_b(\Delta_n)$ of $\Delta_n$ induced by an element
${\hat T}\in \Sp(2n,\R)$.


\subsection{Electromagnetic structures}


As before, let $M$ be a connected manifold.

\begin{definition}
An {\em electromagnetic structure} defined on $M$ is a pair $\Xi =
(\Delta,\cJ)$, where $\Delta = (\cS,\omega,\cD)$ is a duality
structure on $M$ and $\cJ\in \End(\cS)$ is a {\em taming} of the
symplectic vector bundle $(\cS,\omega)$, i.e. a complex structure on
$\cS$ which:
\begin{itemize}
\item is compatible with $\omega$, i.e. it satisfies:
\be
\omega(\cJ x,\cJ y)=\omega(x,y)~~\forall (x,y)\in \cS\times_M\cS
\ee
\item has the property that the symmetric bilinear pairing $Q_{\cJ,\omega}$
defined on $\cS$ through:
\ben
\label{Qdef}
Q_{\cJ,\omega}(x,y)\eqdef \omega(\cJ x,y)~~\forall (x,y)\in \cS\times_M \cS
\een
is positive-definite.
\end{itemize}
The {\em rank} of $\Xi$ is the rank of $\Delta$.
\end{definition}

\noindent Notice that the taming $\cJ$ is {\em not} required to be
flat with respect to $\cD$. The bundle-valued one-form:
\be
\Psi_\Xi\eqdef \cD^\ad(\cJ)=\cD\circ \cJ - (\id_{T^\ast M}\otimes \cJ)\circ \cD\in \Omega^1(M,\End(\cS))
\ee
is called the {\em fundamental form} of $\Xi$ and measures the failure
of $\cD$ to preserve $\cJ$. The electromagnetic structure $\Xi$ is
called {\em unitary} if $\Psi_\Xi=0$. We refer the reader to \cite{gesm} for
more detail on the fundamental form. 

\begin{definition}
Let $\Xi=(\Delta,\cJ)$ and $\Xi'=(\Delta',\cJ')$ be two electromagnetic
structures defined on $M$. A {\em based isomorphism of
electromagnetic structures} from $\Xi$ to $\Xi'$ is a based
isomorphism of duality structures $f:\Delta\xrightarrow{\sim}\Delta'$
such that $\cJ'=f\circ \cJ\circ f^{-1}$.
\end{definition}
 
\begin{remark}
A taming $\cJ$ of a duality structure $\Delta$ can also be described
using a \emph{positive complex polarization}. Let $\Delta_{\C} =
(\cS_{\C} , \omega_{\C} , \cD_{\C})$ be the complexification of
$\Delta$. Then $\cJ$ is equivalent to a complex Lagrangian
sub-bundle $L\subset \cS_{\C}$ such that:
\be
\i \omega(x,\bar{x}) > 0\, , \quad \forall\,\, x\in \dot{\cS}_{\C}\, ,
\ee
where $\dot{\cS}_{\C}$ is the complement of the zero section in
$\cS_\C$.  By definition, such a Lagrangian sub-bundle is a positive
complex polarization of the symplectic vector bundle $(\cS,\omega)$. A
detailed description of this correspondence can be found in
\cite[Appendix A]{gesm}. Note that the physics literature sometimes
uses a local version of positive polarizations when discussing Abelian
gauge theory. In our global framework this requires the supplementary
step of complexifying the vector bundle $\cS$.
\end{remark}
 
\begin{definition}
A {\em taming map} of size $2n$ defined on $M$ is a smooth map
$\cJ:M\rightarrow \Mat(2n,\R)$ such that $\cJ(m)$ is a taming of the
standard symplectic form $\omega_{2n}$ of $\R^{2n}$ for all $m\in M$
(in particular, we have $\cJ(m)\in \Sp(2n,\R)$ for all $m$). 
\end{definition}

\noindent We denote by $\fJ_n(M)$ the set of all taming maps of size $2n$. Let 
$\Xi=(\Delta,\cJ)$ be an electromagnetic structure of rank $2n$ defined 
on $M$ whose underlying duality structure $\Delta=(\cS,\omega,\cD)$ is 
holonomy trivial. Choosing a flat global symplectic frame $\cE$, we can identify 
$\Delta$ with the canonical trivial duality structure \eqref{Deltan} 
through an isomorphism $\tau_\cE:\Delta\xrightarrow{\sim}\Delta_n$. This 
identifies $\cJ$ with the section $\tau_{\cE}\circ \cJ\circ
\tau_{\cE}^{-1}$ of the trivial vector bundle $\underline{\R}^{2n}$, which
in turn can be viewed as a smooth map from $M$ to $\R^{2n}$. Since
$\tau_{\cE}$ transports $\omega$ to the constant symplectic form
induced on $\underline{\R}^{2n}$ by $\omega_{2n}$, it is easy to see that
this map is a taming map of size $2n$ defined on $M$. Hence any choice
of global flat symplectic frame identifies the set of tamings of
$\Delta$ with $\fJ_n(M)$.


\subsection{The polarized Hodge operator}


Let $(M,g)$ be an oriented Lorentzian four-manifold. Let
$\Xi=(\Delta,\cJ)$ be an electromagnetic structure of rank $2n$
defined on $M$ and let $Q:=Q_{\cJ,\omega}$ be the Euclidean scalar
product induced by $\cJ$ and $\omega$ on $\cS$ as in equation
\eqref{Qdef}. The Hodge operator $\ast_g$ of $(M,g)$ extends trivially
to an automorphism of the vector bundle
$\wedge^\ast(M,\cS)\eqdef\wedge^\ast T^\ast M\otimes \cS$, which we
denote by the same symbol.

\begin{definition}
The {\em $\cJ$-polarized Hodge operator} of $(M,g,\Xi)$ is the automorphism
$\star_{g,\cJ}$ of the vector bundle $\wedge^\ast(M,\cS)$ defined through:
\be
\star_{g,\cJ}\eqdef\ast_g\otimes \cJ= \cJ\otimes \ast_g \, .
\ee
\end{definition}

\noindent Let $(\cdot,\cdot)_{g}$ be the pseudo-Euclidean scalar
product induced by $g$ on the total exterior bundle
$\wedge^\ast(M)\eqdef \wedge^\ast T^\ast M$. Together with $Q$, this
pairing induces a pseudo-Euclidean scalar product
$(\cdot,\cdot)_{g,Q}$ on the vector bundle $\Lambda^\ast(M,\cS)$,
which is uniquely determined by the condition: \be (\rho_1\otimes
\xi_1,\rho_2\otimes \xi_2)_{g, Q} = \delta_{k_1,k_2}(-1)^{k_1}
Q(\xi_1,\xi_2) (\rho_1, \rho_2)_{g}\, \ee for all $\rho_1\in
\Omega^{k_1}(M)$, $\rho_2\in \Omega^{k_2}(M)$ and $\xi_1,\xi_2\in
\cC^\infty(M,\cS)$. The polarized Hodge operator $\star_{g,\cJ}$ induces
an involutive automorphism of the space $\Omega^2(M,\cS)$. We have a
$(\cdot,\cdot)_{g,Q}$-orthogonal splitting:
\be
\wedge^2(M,\cS) = \wedge^2_{+}(M,\cS) \oplus \wedge^2_{-}(M,\cS)
\ee
into eigenbundles of $\star_{g,\cJ}$ with eigenvalues $+1$ and $-1$
for the polarized Hodge operator. The sections of
$\wedge^2_{+}(M,\cS)$ and $\wedge^2_{-}(M,\cS)$ are called
respectively {\em polarized self-dual} and {\em polarized
anti-self-dual} $\cS$-valued two-forms. Notice that the definition of
these notions does {\em not} require complexification of $\cS$.


\subsection{Classical abelian gauge theory}


Let $(M,g)$ be an oriented Lorentzian four-manifold.

\begin{definition}
\label{def:confabeliangauge0}
The {\em classical configuration space} defined by the duality
structure $\Delta=(\cS,\omega,\cD)$ on $M$ is the vector space of
two-forms valued in $\cS$ which are closed with respect to
$\dd_{\cD}$:
\be
\Conf(M,\Delta) \eqdef  \left\{\cV\in \Omega^2(M,\cS) \,\, |\,\, \dd_\cD \cV= 0\right\} \, .
\ee
\end{definition}

\begin{definition}
\label{def:solabeliangauge0}
The {\em classical abelian gauge theory} determined by an 
electromagnetic structure $\Xi=(\Delta,\cJ)$ on $(M,g)$ is 
defined by the polarized self-duality condition:
\be
\star_{g,\cJ} \cV = \cV
\ee
for $\cV\in \Conf(M,\Delta)$. The solutions of this
equation are called {\em classical electromagnetic field strengths}
and form the vector space:
\be
\Sol(M,g,\Xi)=\Sol(M,g,\Delta,\cJ) \eqdef \left\{ \cV\in \Conf(M,\Delta)\, \vert \, \star_{g,\cJ}\cV=\cV   \right\}\, .
\ee
\end{definition}

\noindent 
Notice that classical abelian gauge theory is formulated in terms of
field strengths.  In later sections, we will formulate the pre-quantum
version of this theory (which is obtained by imposing an appropriate
DSZ quantization condition) in terms of connections defined on certain
principal bundles called \emph{Siegel bundles} (see Definition
\ref{def:affinesymplecticb}).  The classical theory simplifies when
$M$ is contractible, since in this case any duality structure defined
on $M$ is holonomy-trivial.  This case corresponds to the local
abelian gauge theory discussed in Appendix \ref{app:local}, which
makes contact with the formulation used in the physics
literature. Notice that the traditional physics formulation (which is
valid only locally or for holonomy-trivial duality structures) relies
on gauge-kinetic functions, leading to complicated formulas which
obscure the geometric structure displayed by Definition \ref{def:solabeliangauge0}. 
It was shown in \cite{gesm} that elements of $\Sol(M,g,\Xi)$ correspond to 
classical electromagnetic U-folds when the duality structure $\Delta$ is 
not holonomy trivial.


\subsection{Classical duality groups}
\label{subsec:classdual}


Let $(M,g)$ be an oriented Lorentzian four-manifold and $\cS$ be a
vector bundle defined on $M$. Let $\Aut(\cS)$ be the group of those
{\em unbased} vector bundle automorphisms of $\cS$ which cover
orientation-preserving diffeomorphisms of $M$ and let
$\Aut_b(\cS)\subset \Aut(\cS)$ the subgroup of based automorphisms,
i.e. those automorphisms which cover the identity of $M$. Let
$\Diff(M)$ be the group of {\em orientation-preserving}
diffeomorphisms of $M$. Given $u\in\Aut(\cS)$, let $f_u\in \Diff(M)$
be the orientation-preserving diffeomorphism of $M$ covered by
$u$. This defines a morphism of groups:
\be
\Aut(\cS)\ni u \rightarrow f_u\in \Diff(M)~~.
\ee
The group $\Aut(\cS)$ admits an $\R$-linear representation:
\be
\Aut(\cS)\ni u\rightarrow \A_u \in \Aut(\Omega^\ast(M,\cS))
\ee
given by push-forward; we will occasionally also use the notation:
$u\cdot (-) \eqdef \A_u(-)$. When $f_u$ is not the identity the 
push-forward action of $u\in \Aut(\cS)$ must be handled with care; we 
refer the reader to \cite[Appendix D]{gesm} for a detailed description 
of this operation and its properties. On decomposable elements of
$\Omega^\ast(M,\cS)=\Omega^\ast(M)\otimes \cC^\infty(M,\cS)$, it is
given by:
\be
\A_u(\alpha\otimes \xi) = u\cdot (\alpha\otimes \xi) 
 = (f_{u\ast}\alpha)\otimes (u\cdot\xi) = 
 (f_{u\ast}\alpha)\otimes (u\circ\xi\circ f^{-1}_u)\, ,
\ee
where $f_{u\ast}\alpha$ is the push-forward of $\alpha$ on $M$ 
as defined in \cite[Appendix D]{gesm}. For instance, if 
$\alpha\in \Omega^1(M)$ we have:
\begin{equation*}
(f_{u\ast}\alpha)(v) = \alpha(f_{u\ast}^{-1}(v))\circ f^{-1}_u = \alpha(\dd f_{u}^{-1}(v)\circ f_u) \circ f^{-1}_u  \in C^{\infty}(M)\, , \quad \forall\,\, v\in \mathfrak{X}(M)\, .
\end{equation*}
In particular, restricting to a given point $m\in M$ we obtain the familiar 
formula:
\begin{equation*}
(f_{u\ast}\alpha)(v)_m = \alpha(f_{u\ast}^{-1}(v))_{f^{-1}_u(m)} = \alpha_{f^{-1}_u(m)}(\dd f_{u}^{-1}\vert_m (v_{m}))\, .
\end{equation*}
Recall that $f_{u\ast}^{-1}(v)\in \mathfrak{X}(M)$ is again a vector 
field on $M$ whereas $\dd f_{u}^{-1}(v)\colon M\to TM$ is a vector 
field \emph{along} $f_u$. For any duality structure 
$\Delta=(\cS,\omega,\cD)$ defined on $M$, let:
\be
\Aut(\Delta) \eqdef \left\{ u\in \Aut(\cS)\,\, \vert\,\,  \omega_u = \omega\, ,\,\, \cD_u = \cD \right\}\, ,
\ee
be the group of {\em unbased} automorphisms of $\Delta$, defined as
the stabilizer of the pair $(\omega,\cD)$ in $\Aut(\cS)$. Here
$\cD_{u}$ is the push-forward of the connection $\cD$ by $u$, which is
defined through:
\be
(\cD_u)_v(s)= u\cdot \cD_{f_{u\ast}^{-1}(v)}(u^{-1}\cdot s) = u\circ (\cD_{f_{u\ast}^{-1}(v)}(u^{-1}\circ s \circ
f_u))\circ f^{-1}_u\,  , \quad  \forall\,\, s\in \Gamma(\cS)\quad  \forall\,\, v\in \mathfrak{X}(M)\, ,
\ee
where $f_{u\ast}^{-1} \cdot v = \dd f^{-1}_u(v)\circ f_u$. Note that if 
$s\in \Gamma(S)$ is a parallel section of $\cS$ with respect to
$\cD$, then $u\cdot s\in \Gamma(S)$ is parallel with respecto $\cD_u$ for 
all $u\in \Aut(\Delta)$. We have a short exact sequence:
\be
1 \to \Aut_b(\Delta) \to \Aut(\Delta) \to \Diff_\Delta(M)\to 1\, ,
\ee
where $\Diff_\Delta(M)\subset\Diff(M)$ is the subgroup formed by
those orientation-preserving diffeomorphisms of $M$ that can be
covered by elements of $\Aut(\Delta)$. Given a taming
$\cJ\in \End(\cS)$ of $(\cS,\omega)$, we define:
\be
\cJ_u\eqdef u\circ \cJ\circ u^{-1}\in \End(\cS)~~,
\ee
which is a taming of the duality structure $\Delta_u\eqdef (\cS,\omega_u,\cD_u)$.
On the other hand, if $g$ is a Lorentzian metric on $M$, we set:
\be
g_u\eqdef (f_u)_\ast(g)~~.
\ee
Finally, we define $\Xi_u\eqdef (\Delta_u, \cJ_u)$ and we denote by $\Iso(M,g)$ the group
of orientation-preserving isometries of $(M,g)$.

\begin{definition}
\label{def:dualitygroups}
\begin{itemize}
Let $\Xi=(\Delta,\cJ)$ be an electromagnetic structure defined on $M$.
\item The group $\Aut(\Delta)$ of unbased automorphisms of $\Delta$ is
called the {\em unbased pseudo-duality group} defined by
$\Delta$. The {\em unbased pseudo-duality transformation}
defined by $u\in \Aut(\Delta)$ is the linear isomorphism:
\ben
\label{Aconf}
\A_u\colon \Conf(M,\Delta) \xrightarrow{\sim} \Conf(M,\Delta)~~,
\een
which restricts to an isomorphism:
\be
\A_u\colon \Sol(M,g,\Delta,\cJ) \xrightarrow{\sim} \Sol(M,g_u, \Delta,\cJ_u)~~.
\ee
\item The {\em unbased duality group} $\Aut(g,\Delta)$ of
the pair $(g,\Delta)$ is the stabilizer of $g$ in $\Aut(\Delta)$:
\be
\Aut(g,\Delta)\eqdef \Stab_{\Aut(\Delta)}(g)=\left\{u\in \Aut(\Delta)\,\, |\,\, f_u\in \Iso(M,g) \right\}\, .
\ee
The {\em unbased duality transformation} defined by $u\in
\Aut(g,\Delta)$ is the linear isomorphism \eqref{Aconf},
which restricts to an isomorphism:
\be
\A_{u}\colon \Sol(M,g,\Delta,\cJ) \xrightarrow{\sim} \Sol(M,g,\Delta,\cJ_u)~~.
\ee
\item The group $\Aut_b(\Delta)$ is called the {\em (electromagnetic)
duality group}
defined by $\Delta$. The {\em duality transformation} defined by $u\in
\Aut_b(\Delta)$ is the linear isomorphism \eqref{Aconf}, which
restricts to an isomorphism:
\be
\A_{u}\colon \Sol(M,g,\Delta,\cJ) \to \Sol(M,g,\Delta,\cJ_u)~~.
\ee
\end{itemize}
\end{definition}

\begin{remark}
The fact that $\A_u$ restricts as stated above follows from a direct yet subtle computation.
Since the conditions involved are linear, it is enough to verify it on an homogeneous 
element. If $\cV = \alpha\otimes \xi \in \Sol(M,\Delta)$, $u\in \Aut(\Delta)$ and 
$v\in \mathfrak{X}(M)$ we compute:
\begin{eqnarray*}
&\dd_{(\cD_u)_v} \mathbb{A}_u(\cV) = \dd_{(\cD_u)_v}((f_{u \ast}\alpha)\otimes (u\cdot\xi)) =   (\iota_v f_{u \ast}\dd\alpha)\otimes ( u\cdot\xi) + (f_{u \ast}\alpha)\otimes ((\cD_u)_v u\cdot\xi) \\
& = (f_{u \ast}\iota_{f_{u\ast}^{-1}(v)} \dd\alpha)\otimes ( u\cdot\xi)  + (f_{u \ast}\alpha)\otimes (u\cdot \cD_{f_{u\ast}^{-1}(v)} \xi) = \A_u ((\iota_{f_{u\ast}^{-1}(v)}\dd\alpha)\otimes (\xi) + \alpha\otimes (\cD_{f_{u\ast}^{-1}(v)} \xi)) \\
& = \A_u (\dd_{\cD_{f_{u\ast}^{-1}(v)}} \cV) = 0\, .
\end{eqnarray*}

\noindent
On the other hand, if $\cV = \alpha\otimes \xi \in \Sol(M,g,\Xi)$ we compute:
\begin{equation*}
\star_{g_u,\cJ_{u}} \mathbb{A}_u(\cV) =    (\ast_{g_u}f_{u\ast}\alpha) \otimes (\cJ_{u}\circ u\cdot \xi) = f_{u\ast}(\ast_{g}\alpha) \otimes u\circ\cJ(\xi)\circ f_u^{-1} =  \mathbb{A}_u(\star_{g_,\cJ} \cV )  = \mathbb{A}_u(\cV )  \, ,
\end{equation*}
implying that the restrictions of $\A_u$ to $\Sol(M,\Delta)$ and $\Conf(M,g,\Xi)$ 
are well-defined as stated above.
\end{remark}

\noindent
We have obvious inclusions:
\ben
\label{DualitySequence}
\Aut_b(\Delta)\subset \Aut(g,\Delta)\subset \Aut(\Delta)
\een
and a short exact sequence:
\ben
\label{eq:dualitygroupsequence}
1 \to \Aut_b(\Delta) \to \Aut(g,\Delta) \to \Iso_\Delta(M,g)\to 1\, ,
\een
where $\Iso_\Delta(M,g)\subset \Iso(M,g)$ denotes the group formed by those
orientation-preserving isometries of $(M,g)$ that can be covered by
elements of $\Aut(g,\Delta)$.

The classical duality group $\Aut_b(\Delta)$ consists of all based
automorphisms of $\cS$ which preserve both $\omega$ and
$\cD$. Therefore (see \cite[Lemma 4.2.8]{Donaldson}) fixing a point
$m_0\in M$ it can be realized as the centralizer $C_{m_0}(\Delta)$ of
the holonomy group $\Hol_{m_0}(\cD)$ of $\cD$ at $m_0$
inside the group $\Aut(\cS_{m_0},\omega_{m_0})\simeq \Sp(2n,\R)$:
\be
\mathrm{Aut}_b(\Delta) \simeq C_{m_0}(\Delta)~~.
\ee
In particular, $\mathrm{Aut}_b(\Delta)$ is a closed subgroup of
$\Sp(2n,\R)$ and hence it is a finite-dimensional Lie group. The same
holds for $\Iso_\Delta(M,g)$, which is a closed subgroup of the
finite-dimensional\footnote{It is well-known that the isometry group
of any pseudo-Riemannian manifold is a finite-dimensional Lie group.
See for example \cite[Theorem 5.1, p. 22]{Kobayashi}.} Lie group
$\Iso(M,g)$. The sequence \eqref{eq:dualitygroupsequence} implies that
$\Aut(g,\Delta)$ is also a finite-dimensional Lie group. In general,
the groups defined above differ markedly from their local counterparts
described in Appendix \ref{sec:symmetrieslocal}. We stress that the
latter are not the adequate groups to consider when dealing with
electromagnetic U-folds (since in that case the duality structure is not
holonomy trivial).

\begin{definition}
\label{def:unitarygroups}
Let $\Xi=(\Delta,\cJ)$ be an electromagnetic structure defined on $M$.
\begin{itemize}
\item The group:
\be
\Aut(\Xi)\eqdef \{u\in \Aut(\Delta)~\vert~\cJ_u=\cJ\}
\ee
of unbased automorphisms of $\Xi$ is called the {\em unbased
unitary pseudo-duality group} defined by $\Xi$. The {\em unbased
unitary pseudo-duality transformation} defined by $u\in \Aut(\Xi)$
is the linear isomorphism:
\be
\A_u\colon \Sol(M,g,\Xi) \xrightarrow{\sim} \Sol(M,g_u, \Xi)\, .
\ee
\item The {\em unbased unitary duality group} of the pair
$(g, \Xi)$ is the stabilizer of $g$ in $\Aut(\Xi)$:
\ben
\label{eq:metricunitaryglobal}
\Aut(g,\Xi) \eqdef \Stab_{\Aut(\Xi)}(g)=\Stab_{\Aut(g,\Delta)}(\cJ)=\left\{ u\in
\Aut(\Delta) \,\, \vert \,\, \cJ_u = \cJ ~\&~ f_u\in \Iso(M,g)\right\}~~.
\een
The {\em unbased unitary duality transformation} defined by $u\in
\Aut(g,\Xi)$ is the linear automorphism:
\be
\A_{u}\colon \Sol(M,g,\Xi) \xrightarrow{\sim} \Sol(M,g, \Xi)~~.
\ee		
\item The group $\Aut_b(\Xi)$ of based automorphisms of $\Xi$ is
called the {\em classical unitary duality group} defined by $\Xi$:
\be
\Aut_b(\Xi)=\Stab_{\Aut_b(\Delta)}(\cJ) = \left\{ u \in \Aut_b(\Delta)
\,\, \vert \,\, \cJ_u = \cJ \right\}\, .
\ee
The {\em classical unitary duality transformation} defined by $u\in
\Aut_b(\Xi)$ is the linear automorphism:
\be
\A_{u}\colon \Sol(M,g,\Xi) \xrightarrow{\sim} \Sol(M,g, \Xi)
\ee
\end{itemize}
\end{definition}

\noindent We have obvious inclusions:
\be
\Aut_b(\Xi)\subset \Aut(g,\Xi)\subset \Aut(\Xi)
\ee
and a short exact sequence:
\ben
1 \to \Aut_b(\Xi) \to \Aut(g,\Xi) \to \Iso_\Xi(M,g)\to 1\, ,
\een
where $\Iso_\Xi(M,g)$ is the group formed by those
orientation-preserving isometries of $(M,g)$ which are covered by
elements of $\Aut(g,\Xi)$. Arguments similar to those above show that
$\Aut_b(\Xi)$, $\Aut(g,\Xi)$ and $\Iso_\Xi(M,g)$ are
finite-dimensional Lie groups. The previous definitions give global
mathematically rigorous descriptions of several types of \emph{duality
groups} associated to abelian gauge theory on Lorentzian
four-manifolds. In general, these can differ markedly from their
``local'' counterparts described in Appendix
\ref{sec:symmetrieslocal}, which are considered traditionally in the
physics literature.


\subsection{The case of trivial duality structure}


Let $(M,g)$ be an oriented Lorentzian four-manifold. For any $n\geq 0$,
the set $\cC^\infty(M,\Sp(2n,\R))$ of smooth $\Sp(2n,\R)$-valued functions
defined on $M$ is a group under pointwise multiplication, whose group of
automorphisms we denote by $\Aut(\cC^\infty(M,\Sp(2n,\R)))$. 

\begin{lemma}
\label{lemma:trivSunbased} Let $(\cS,\omega)$ be a symplectic vector
bundle of rank $2n$ defined on $M$ which is symplectically
trivializable. Then any symplectic trivialization of $(\cS,\omega)$
induces an isomorphism of groups:
\be
\Aut(\cS,\omega)\simeq \cC^\infty(M,\Sp(2n,\R))\rtimes_\alpha \Diff(M)~~,
\ee
where $\alpha:\Diff(M)\rightarrow \Aut(\cC^\infty(M,\Sp(2n,\R)))$ is the
morphism of groups defined through:
\be
\alpha(\varphi)(f)\eqdef f\circ \varphi^{-1}\, , \quad \forall \,\, \varphi\in
\Diff(M)\, , \,\, \forall\,\, f\in \cC^\infty(M,\Sp(2n,\R))~~.
\ee
In particular, we have a short exact sequence of groups:
\be
1\rightarrow \cC^\infty(M,\Sp(2n,\R))\rightarrow \Aut(\cS,\omega)\rightarrow \Diff(M)\rightarrow 1
\ee
which splits from the right.
\end{lemma}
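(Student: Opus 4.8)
The plan is to use the chosen symplectic trivialization to convert unbased automorphisms of $(\cS,\omega)$ into pairs consisting of an $\Sp(2n,\R)$-valued function and an orientation-preserving diffeomorphism, and then to match the resulting composition law with the stated semidirect product. First I would fix a symplectic trivialization $\Phi\colon \cS\xrightarrow{\sim}\underline{\R}^{2n}$ carrying $\omega$ to the constant pairing $\underline{\omega}_{2n}$, and for $u\in\Aut(\cS,\omega)$ transport it to $\hat u\eqdef \Phi\circ u\circ\Phi^{-1}$, an unbased automorphism of the trivial symplectic bundle covering $f_u\in\Diff(M)$. Any such $\hat u$ is fiber-linear and of the form $\hat u(m,v)=(f_u(m),F_u(m)v)$ for a unique smooth map $F_u\colon M\to\GL(2n,\R)$; since $\hat u$ preserves the constant symplectic form on each fiber, $F_u(m)$ lies in $\Sp(2n,\R)$ for every $m$, so $F_u\in\cC^\infty(M,\Sp(2n,\R))$. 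Conversely every pair $(F,f)$ arises this way (the bundle being trivial is exactly what allows one to cover an arbitrary $f$ by $(m,v)\mapsto(f(m),F(f(m))v)$), so $u\mapsto(F_u,f_u)$ is a bijection onto $\cC^\infty(M,\Sp(2n,\R))\times\Diff(M)$.

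Next I would compute the composition law. A short calculation gives $\widehat{u_1\circ u_2}(m,v)=(f_{u_1}(f_{u_2}(m)),\,F_{u_1}(f_{u_2}(m))\,F_{u_2}(m)\,v)$, so in terms of raw pairs the product has first slot $(F_{u_1}\circ f_{u_2})\,F_{u_2}$ and second slot $f_{u_1}\circ f_{u_2}$. This is \emph{not} the semidirect product multiplication as stated, because the twist involves $f_{u_2}$ rather than $f_{u_1}$. The key point — and the step most prone to error — is to correct this by the reparametrization $\Theta(u)\eqdef (F_u\circ f_u^{-1},\,f_u)$. Substituting $F_u=(F_u\circ f_u^{-1})\circ f_u$ into the raw product and simplifying, the factors of $f_{u_2}^{-1}$ cancel and one obtains precisely $\big((F_{u_1}\circ f_{u_1}^{-1})\cdot\big((F_{u_2}\circ f_{u_2}^{-1})\circ f_{u_1}^{-1}\big),\,f_{u_1}f_{u_2}\big)$, which is the multiplication in $\cC^\infty(M,\Sp(2n,\R))\rtimes_\alpha\Diff(M)$ with $\alpha(\varphi)(F)=F\circ\varphi^{-1}$. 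Equivalently, $F_u\circ f_u^{-1}$ is exactly the matrix function implementing the push-forward action $u\cdot(-)$ on sections recalled in Section \ref{subsec:classdual}, which is why this combination composes correctly. Hence $\Theta$ is a group isomorphism.

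Finally, the short exact sequence and its splitting are immediate consequences. Composing $\Theta$ with the projection onto the second factor recovers the homomorphism $u\mapsto f_u$, whose kernel is by definition the group $\Aut_b(\cS,\omega)$ of based automorphisms; under $\Theta$ this kernel is identified with the normal subgroup $\cC^\infty(M,\Sp(2n,\R))\times\{\id\}$, giving the sequence. Surjectivity onto $\Diff(M)$ uses triviality once more, and a right splitting is furnished by $f\mapsto \Theta^{-1}(1,f)$, i.e. the ``pure push-forward'' automorphism $(m,v)\mapsto(f(m),v)$ in the trivialization (with $1$ the constant identity matrix), which is visibly a homomorphic section of the projection. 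The only genuinely delicate part of the argument is fixing the direction of the twist $\alpha$; everything else is bookkeeping with the trivialization.
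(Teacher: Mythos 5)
Your proposal is correct and follows essentially the same route as the paper: conjugate by the symplectic trivialization, write the resulting automorphism of the trivial bundle as a pair (matrix-valued map, diffeomorphism), and observe that the matrix function must be composed with $f_u^{-1}$ (your $\Theta(u)=(F_u\circ f_u^{-1},f_u)$ is exactly the paper's substitution $h\eqdef \tilde f\circ\varphi^{-1}$) so that composition matches the semidirect product with twist $\alpha(\varphi)(F)=F\circ\varphi^{-1}$. The only difference is presentational — you compute the product in the raw source-point parametrization first and then reparametrize, while the paper reparametrizes up front — and your explicit identification of the kernel and the splitting section is a harmless elaboration of what the paper leaves implicit.
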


\begin{proof}
Let $\tau:\cS\stackrel{\sim}{\rightarrow} M\times \R^{2n}$ be a symplectic
trivialization of $(\cS,\omega)$. Then the map
$\Ad(\tau):\Aut(\cS,\omega)\rightarrow \Aut(M\times \R^{2n},\omega_{2n})$ defined
through:
\be
\Ad(\tau)(f)\eqdef \tau\circ f\circ \tau^{-1}\, , \quad \forall \,\, f\in \Aut(\cS,\omega)\, ,
\ee
is an isomorphism of groups. Let $f\in \Aut(\cS,\omega)$ be an unbased
automorphism of $(\cS,\omega)$ which covers the diffeomorphism $\varphi\in
\Diff(M)$. Then $\Ad(\tau)(f)$ is an unbased automorphism of $M\times
\R^{2n}$ which covers $\varphi$ and hence we have:
\be
\Ad(\tau)(f)(m,x)=(\varphi(m), {\tilde f}(m)(x))\, , \quad \forall\,\, (m,x)\in M\times \R^{2n}~~,
\ee
where ${\tilde f}:M\rightarrow \Sp(2n,\R)$ is a smooth map.
Setting $h\eqdef {\tilde f}\circ \varphi^{-1}\in \cC^\infty(M,\Sp(2n,\R))$, we have:
\ben
\label{fcomps}
\Ad(\tau)(f)(m,x)=(\varphi(m), h(\varphi(m))(x))\, , \quad \forall\,\, (m,x)\in M\times \R^{2n}
\een
and the correspondence $f\rightarrow (h,\varphi)$ gives a bijection
between $\Aut(\cS,\omega)$ and the set
$\cC^\infty(M,\Sp(2n,\R))\times \Diff(M)$. If $f_1,f_2\in \Aut(\cS,\omega)$ correspond
through this map to the pairs:
\be
(h_1,\varphi_1), (h_2,\varphi_2)\in \cC^\infty(M,\Sp(2n,\R))\times \Diff(M)\, ,
\ee
then direct computation using
\eqref{fcomps} gives:
\be
\Ad(\tau)(f_1\circ f_2)(m,x)=((\varphi_1\circ \varphi_2)(m), h_1(m) (h_2\circ \varphi_1^{-1})(m)(x))~~,
\ee
showing that $f_1\circ f_2$ corresponds to the pair $(h_1\cdot
\alpha(\varphi_1)(h_2),\varphi_1\circ \varphi_2)$.
\end{proof}

\begin{cor}
\label{cor:AutDelta}
Let $\Delta=(\cS,\omega,\cD)$ be a holonomy trivial duality structure defined on $M$.
Then any trivialization of $\Delta$ induces an isomorphism of groups:
\be
\Aut(\Delta) \simeq \Sp(2n,\R)\times \Diff(M)~~.
\ee
\end{cor}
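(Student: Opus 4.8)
The plan is to realize $\Aut(\Delta)$ as the subgroup of $\Aut(\cS,\omega)$ that additionally preserves the flat connection $\cD$, and then to read off this subgroup under the isomorphism of Lemma \ref{lemma:trivSunbased}. Since $\Delta$ is holonomy trivial it is in particular symplectically trivial, so I would fix a global flat symplectic frame $\cE$ and the induced trivialization $\tau=\tau_\cE\colon\Delta\xrightarrow{\sim}\Delta_n$ of \eqref{Deltan}. Under $\tau$ the pairing $\omega$ becomes the constant $\underline{\omega}_{2n}$ and, crucially, the connection $\cD$ becomes the ordinary differential $\dd$ on $\underline{\R}^{2n}$, precisely because the frame $\cE$ is $\cD$-flat. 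Applying Lemma \ref{lemma:trivSunbased} to $\tau$ then gives $\Aut(\cS,\omega)\simeq \cC^\infty(M,\Sp(2n,\R))\rtimes_\alpha \Diff(M)$, under which an automorphism $u$ covering $\varphi\in\Diff(M)$ corresponds to a pair $(h,\varphi)$ with $\Ad(\tau)(u)(m,x)=(\varphi(m),h(\varphi(m))(x))$.

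Next I would use the definition $\Aut(\Delta)=\{u\in\Aut(\cS)\mid \omega_u=\omega,\ \cD_u=\cD\}$. The condition $\omega_u=\omega$ is exactly membership in $\Aut(\cS,\omega)$, so $\Aut(\Delta)=\{u\in\Aut(\cS,\omega)\mid \cD_u=\cD\}$, and the whole problem reduces to translating $\cD_u=\cD$ into a condition on the pair $(h,\varphi)$. Since $\cD=\dd$ in the trivialization, I must single out those $u$ that push the trivial connection forward to itself.

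Rather than expand the push-forward formula for $\cD_u$ directly, I would argue through flat sections, which is cleaner. The push-forward $\cD_u$ is again flat, and (as recorded just before Definition \ref{def:dualitygroups}) $u$ carries $\cD$-parallel sections to $\cD_u$-parallel sections. In the trivialization the $\cD$-flat sections are the constant maps $c\in\R^{2n}$, and $u\cdot c$ is the function $m\mapsto h(m)\,c$; these $2n$ maps are pointwise linearly independent and hence constitute a full flat frame for $\cD_u$. Because a flat connection is determined by its sheaf of parallel sections, $\cD_u=\cD$ holds if and only if this frame is also $\dd$-flat, i.e. if and only if $h(m)c$ is independent of $m$ for every $c$; letting $c$ range over a basis, this is equivalent to $h$ being a constant map, so $h\in\Sp(2n,\R)$ (embedded as constant functions). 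Conversely, any constant $h$ together with any $\varphi$ yields an element of $\Aut(\Delta)$, so the projection $\Aut(\Delta)\to\Diff(M)$ is onto (with kernel the constants, recovering $\Aut_b(\Delta)\simeq\Sp(2n,\R)$). Finally, on constant functions the twisting morphism is trivial, $\alpha(\varphi)(h)=h\circ\varphi^{-1}=h$, so the induced multiplication is that of a direct product and the semidirect product collapses to $\Aut(\Delta)\simeq \Sp(2n,\R)\times\Diff(M)$.

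The main obstacle is the middle step: justifying that $\cD_u=\cD$ is equivalent to $h$ being constant. The subtlety is that $u$ is an \emph{unbased} automorphism covering a possibly nontrivial $\varphi$, so a naive computation of the push-forward connection form entangles the derivative of $h$ with the differential of $\varphi$. The flat-section argument circumvents this by using only that push-forward preserves flatness together with the rigidity of flat connections under their parallel frames, and this is exactly where it matters that $\tau$ turns $\cD$ into $\dd$ rather than into some other flat connection.
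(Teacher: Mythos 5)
Your proposal is correct and follows essentially the same route as the paper: trivialize via a global flat symplectic frame, apply Lemma \ref{lemma:trivSunbased}, and observe that the twisting action $\alpha$ restricts to the trivial action on constant $\Sp(2n,\R)$-valued maps, so the semidirect product collapses to a direct product. Your flat-section argument showing that $\cD_u=\cD$ forces $h$ to be constant is a valid justification of the one step the paper's brief proof leaves implicit.
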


\begin{proof}
Follows from Lemma \ref{lemma:trivSunbased} by noticing that the
action $\alpha$ of $\Diff(M)$ on $\cC^\infty(M,\Sp(2n,\R))$ restricts
to the trivial action on the subgroup:
\be
\{f\in \cC^\infty(M,\Sp(2n,\R))~\vert~\dd f=0\}\simeq \Sp(2n,\R)
\ee
of constant $\Sp(2n,\R)$-valued functions defined on $M$. 
\end{proof}

Fix an electromagnetic structure $\Xi = (\Delta,\cJ)$ of rank $2n$
defined on $M$ with holonomy-trivial underlying duality structure
$\Delta=(\cS,\omega,\cD)$. Choosing a global flat symplectic frame
$\cE$ of $\cS$, we identify $\Delta$ with the canonical trivial
duality structure \eqref{Deltan} and $\cJ$ with a taming map of size
$2n$. Then $\cD$ identifies with the trivial connection and $\dd_\cD$
identifies with the exterior derivative $\dd \colon
\Omega(M,\R^{2n})\to \Omega(M,\R^{2n})$ extended trivially to
vector-valued forms. Using Lemma \ref{lemma:trivSunbased} and its
obvious adaptation, we obtain:
\beqa
& \Aut_b(\cS) \equiv \cC^{\infty}(M,\GL(2n,\R))\, , \quad \Aut(\cS) \equiv
\cC^{\infty}(M,\GL(2n,\R))\rtimes_{\alpha} \Diff(M)\, ,\\
& \Aut_b(\cS,\omega) \equiv \cC^{\infty}(M,\Sp(2n,\R))\, , \quad \Aut(\cS,\omega) \equiv
\cC^{\infty}(M,\Sp(2n,\R))\rtimes_{\alpha} \Diff(M)\, .
\eeqa
On the other hand, Corollary \ref{cor:AutDelta} gives:
\beqa
& \Aut_b(\Delta)\equiv \Sp(2n,\R)\, , \quad \Aut(\Delta) \equiv \Sp(2n,\R)\times \Diff(M)\, ,\\
& \Aut(g,\Delta) = \Sp(2n,\R)\times \Iso(M,g)\, .
\eeqa
Moreover, we have:
\beqa
&\Aut_b(\Xi)\equiv \U_\cJ(n) \eqdef \{\gamma\in \Sp(2n,\R)\,\,\vert\,\,\gamma\cJ\gamma^{-1}=\cJ\}\, , \\
&\Aut(\Xi)\equiv \{(\gamma,f)\in \Sp(2n,\R)\times \Diff(M)\,\,\vert\,\,\gamma\cJ\gamma^{-1}=\cJ\circ f\}\,,
\eeqa
as well as:
\be
\Aut(g,\Xi) \equiv \{(\gamma,f)\in \Sp(2n,\R)\times \Iso(M)\,\,\vert\,\,\gamma\cJ\gamma^{-1}=\cJ\circ f\}~~.
\ee
Hence we recover the local formulas obtained in Appendix
\ref{sec:symmetrieslocal} for the duality groups of local abelian
gauge theory. Notice that $\Aut_b(\Xi)$ is isomorphic
with the unitary group $\U(n)$ when the electromagnetic structure
$\Xi$ is unitary, which amounts to the map $\cJ$ being constant.


\section{The Dirac-Schwinger-Zwanziger condition}
\label{sec:DQsymplecticabelian}


The previous section introduced classical abelian gauge
theory as a theory of \emph{field strengths}, i.e. a theory of
$\dd_\cD$-closed two-forms valued in the flat symplectic vector bundle
$\cS$ with equation of motion given by the polarized self-duality
condition. Well-known arguments originally due to Dirac as well as the
Aharonov-Bohm effect \cite{Aharonov:1959fk} imply that a consistent
coupling of the theory to quantum charge carriers imposes an
integrality condition on field strength configurations. This is
traditionally called the DSZ ``quantization'' condition, even though
it constrains {\em classical} field strength configurations -- in fact,
only particles which carry the corresponding charges 
are quantized in such arguments, but {\em not} the gauge fields themselves. 
To avoid confusion, we prefer to call it the {\em DSZ integrality
condition}. For local abelian gauge theories of rank $2n$
(which are discussed in Appendix \ref{app:local}), this condition can
be implemented using a full symplectic lattice in the standard
symplectic vector space $(\R^{2n},\omega_{2n})$, as usually done in the
physics literature \cite{Schwinger:1966nj,Zwanziger:1968rs}. For
abelian gauge theories with non-trivial electromagnetic
structure defined on an arbitrary Lorentzian four-manifold, we shall
implement this condition using a {\em Dirac system}, as originally
proposed in \cite{gesm}. We begin with some preliminaries.


\subsection{Principal bundles with discrete structure group}


Let $\Gamma$ be a discrete group and $Q$ be a principal bundle with
structure group $\Gamma$ and projection $p:Q\rightarrow M$. Then the
total space of $Q$ is a (generally disconnected) covering space of
$M$.  Let $U_Q:\Pi_1(M)\rightarrow \Phi_0(Q)$ be the monodromy
transport of the covering map $p:Q\rightarrow M$, where $\Phi_0(Q)$ is
the {\em bare fiber groupoid} of $Q$. By definition, the objects of
$\Phi_0(Q)$ are the fibers of $Q$ while its morphisms are arbitrary
bijections between the latter. By definition, the functor $U_Q$
associates to the homotopy class $\bc\in \Pi_1(M)(m,m')$ of any curve
$c:[0,1]\rightarrow M$ with $c(0)=m$ and $c(1)=m'$ the bijection
$U(\bc):Q_{m}\xrightarrow{\sim} Q_{m'}$ given by $U_Q(\bc)(x)\eqdef
{\tilde c}_x(1)\in Q_{m'}$, where ${\tilde c}_x$ is the the unique
lift of $c$ to $Q$ through the point $x\in Q$ (thus
$\tilde{c}_x(0)=x$). Notice that $x$ and $U(\bc)(x)$ lie on the same
connected component of $Q$ and hence the diffeomorphism
$U_Q(\bc):Q\xrightarrow{\sim} Q$ induces the trivial permutation of
$\pi_0(Q)$. For any $\gamma\in \Gamma$, the curve ${\tilde
c}^\gamma_x$ defined through ${\tilde c}_x^\gamma(t)\eqdef {\tilde
c}(t)\gamma$ for all $t\in [0,1]$ is a lift of $c$ through the point
$x\gamma$. The homotopy lifting property of $p$ implies that ${\tilde
c}^\gamma_x$ and ${\tilde c}_{x\gamma}$ are homotopic and hence
$U_Q(\bc)(x\gamma)=U(\bc)(x)\gamma$. This shows that $U_Q$ acts
through isomorphisms of $\Gamma$-spaces and hence it is in fact a
functor:
\be
U_Q:\Pi_1(M)\rightarrow \Phi(Q)~~,
\ee
where $\Phi(Q)$ is the {\em principal fiber groupoid} of $Q$ (whose
objects coincide with those of $\Phi_0(Q)$ but whose morphisms are
isomorphisms of $\Gamma$-spaces). This implies that $U_Q$ is the
parallel transport of a flat principal connection defined on $Q$,
which we shall call the {\em monodromy connection} of $Q$. The
holonomy morphism:
\be
\alpha_{m}(Q):\pi_1(M,m)\rightarrow \Aut_\Gamma(Q_{m})  
\ee
of this connection at a point $m\in M$ will be called the {\em
	monodromy morphism} of $Q$ at $m$, while its image:
\be
\Hol_{m}(Q)\eqdef \im(\alpha_{m}(Q))\subset \Aut_\Gamma(Q_{m})
\ee
will be called the {\em monodromy group} of $Q$ at $m$.
The monodromy morphism at a fixed point $m_0\in M$ induces a
bijection between the set of isomorphism classes of principal
$\Gamma$-bundles and the character variety:
\be
\fR(\pi_1(M,m_0),\Gamma)\eqdef \Hom(\pi_1(M,m),\Gamma)/\Gamma~~.
\ee

\begin{remark}
For the purposes of this work, the most important class of principal bundles 
with  discrete structure group are principal $\Sp_\frt(2n,\Z)$
bundles, which are naturally associated to Siegel bundles (see Section 
\ref{sec:associatedbundle}). 
\end{remark}


\subsection{Bundles of finitely-generated free abelian groups}


Let $\cF$ be a bundle of free abelian groups of rank $r$ defined on
$M$. Then $\cF$ is isomorphic with the bundle of groups with fiber
$\Z^r$ associated to a principal $\GL(r,\Z)$-bundle $Q$ through
the left action $\ell:\GL(r,\Z)\rightarrow \Aut_\Z(\Z^r)$:
\be
\cF\simeq \cF_r(Q)\eqdef Q \times_\ell \Z^r\simeq \hM \times_{\ell\circ
	\alpha_{m}(Q)} \Z^r~~,
\ee
where $\alpha_{m}(Q):\pi_1(M,m)\rightarrow \GL(r,\Z)$ is the
monodromy morphism of $Q$ at $m$. The monodromy connection of $Q$
induces a flat Ehresmann connection which we shall call the {\em
	monodromy connection of $\cF$} and whose parallel transport:
\be
U_\cF:\Pi_1(M)\rightarrow \Phi(\cF)
\ee
acts by isomorphisms of groups between the fibers
of $\cF$. The holonomy morphism:
\be
\sigma_{m}(\cF):\pi_1(M,m)\rightarrow \Aut_\Z(\cF_m)
\ee
at $m\in M$ can be identified with the morphism $\ell\circ
\alpha_{m}(P):\pi_1(M,m)\rightarrow \Aut_\Z(\Z^r)$ upon choosing
a basis of $\cF_m$. The holonomy group:
\be
\Hol_{m}(\cF)\eqdef \im(\sigma_{m}(\cF))\subset \Aut_\Z(\cF_m)\simeq \GL(r,\Z)
\ee
is called the {\em monodromy group} of $\cF$ at $m$ and identifies
with a subgroup of $\GL(r,\Z)$ upon choosing an appropriate basis of
$\cF_{m}$.

Conversely, let $\Fr(\Z^r)$ be the set of all bases of the free
$\Z$-module $\Z^r$. Then $\GL(r,\Z)$ has a natural free and
transitive left action $\mu$ on this set. Taking the set of bases of
each of fiber gives the {\em bundle of frames} $\Fr(\cF)$ of
$\cF$. This is a principal $\GL(r,\Z)$-bundle whose monodromy
morphism coincides with that of $\cF$. This gives the
following result.

\begin{prop}
The correspondences $Q\mapsto \cF_r(Q)$ and $\cF\mapsto \Fr(\cF)$ extend to
mutually quasi-inverse equivalences between the groupoid of bundles of
free abelian groups of rank $r$ defined on $M$ and the groupoid of
principal $\GL(r,\Z)$-bundles defined on $M$.
\end{prop}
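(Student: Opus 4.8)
The plan is to prove this as an instance of the standard reconstruction theorem identifying a principal bundle with the frame bundle of any associated bundle whose typical fiber has automorphism group equal to the structure group. The decisive algebraic input is that the defining action $\ell\colon \GL(r,\Z)\to \Aut_\Z(\Z^r)$ is an \emph{isomorphism} of groups: it is injective because $\GL(r,\Z)$ acts faithfully on $\Z^r$, and surjective because every automorphism of the free $\Z$-module $\Z^r$ is given by an invertible integral matrix. First I would record that both assignments are genuine functors on the respective groupoids: an isomorphism $\phi\colon \cF\xrightarrow{\sim}\cF'$ of bundles of free abelian groups induces the isomorphism $\Fr(\phi)\colon \Fr(\cF)\xrightarrow{\sim}\Fr(\cF')$ of principal bundles sending a frame $b$ of $\cF_m$ (viewed as an isomorphism $b\colon \Z^r\xrightarrow{\sim}\cF_m$) to $\phi_m\circ b$, which is equivariant for the action $\mu$; dually, an isomorphism $\psi\colon Q\xrightarrow{\sim}Q'$ of principal $\GL(r,\Z)$-bundles induces $\cF_r(\psi)=\psi\times_\ell \id_{\Z^r}$. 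Both constructions preserve identities and compositions and send locally trivial bundles to locally trivial bundles, so the fiberwise maps built below are automatically smooth bundle morphisms once checked on fibers.

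Next I would exhibit the two natural isomorphisms witnessing that the functors are quasi-inverse. For the composite $\Fr\circ \cF_r$, define over each $m\in M$ the map $Q_m\to \Fr(\cF_r(Q))_m$ sending $q\in Q_m$ to the frame $b_q\colon x\mapsto [q,x]$ of the fiber $(\cF_r(Q))_m=Q_m\times_\ell \Z^r$. For fixed $q$ this map $\Z^r\to (\cF_r(Q))_m$ is an isomorphism of abelian groups, hence a genuine frame. I claim this assignment is a bijection $Q_m\to \Fr(\cF_r(Q))_m$: injectivity uses that $\ell$ is injective (if $q$ and $q\gamma$ give the same frame then $\ell(\gamma)=\id$, forcing $\gamma=\id$), while surjectivity uses that $\ell$ is surjective (given any frame $c$ and any reference point $q_0\in Q_m$ with attached frame $b_{q_0}$, the automorphism $b_{q_0}^{-1}\circ c\in \Aut_\Z(\Z^r)$ equals $\ell(\gamma)$ for some $\gamma$, so $c$ is the frame attached to $q_0\gamma$). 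Equivariance with respect to the $\GL(r,\Z)$-actions is immediate from the definition of the associated bundle, so this is an isomorphism of principal bundles; naturality in $Q$ is a direct check against $\cF_r(\psi)$ and $\Fr(\cF_r(\psi))$.

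For the composite $\cF_r\circ \Fr$, define over each $m$ the evaluation map $\cF_r(\Fr(\cF))_m=\Fr(\cF)_m\times_\ell \Z^r\to \cF_m$ sending the class $[b,x]$ to $b(x)$, where the frame $b$ is read as an isomorphism $\Z^r\xrightarrow{\sim}\cF_m$. This is well defined on $\ell$-equivalence classes because $b(x)$ is invariant under replacing $(b,x)$ by an $\ell$-equivalent representative, the reparametrization of the frame exactly compensating the action of $\ell$ on $x$; and for fixed $b$ the map $x\mapsto b(x)$ is a group isomorphism, so evaluation is a fiberwise isomorphism $\cF_r(\Fr(\cF))_m\xrightarrow{\sim}\cF_m$. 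Naturality in $\cF$ follows because $\Fr(\phi)$ postcomposes frames with $\phi_m$, so the evaluation square commutes with $\phi$ on the nose. Together these two natural isomorphisms establish that $\cF_r$ and $\Fr$ are mutually quasi-inverse.

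The hard part will be bookkeeping rather than conceptual: one must fix and consistently use the left-action convention $\mu$ on frames and the associated-bundle convention for $\cF_r$ so that the equivariance in both natural isomorphisms comes out for the correct action, and one must confirm that the fiberwise bijections constructed above vary smoothly, i.e. restrict to bundle isomorphisms over each trivializing chart. A cleaner, essentially bookkeeping-free alternative would be to route everything through monodromy, exactly as in Proposition \ref{prop:hol}: the monodromy morphism identifies isomorphism classes of principal $\GL(r,\Z)$-bundles with $\fR(\pi_1(M,m_0),\GL(r,\Z))$ and isomorphism classes of rank-$r$ bundles of free abelian groups with conjugacy classes of representations $\pi_1(M,m_0)\to \Aut_\Z(\cF_{m_0})$; since $\ell$ is an isomorphism, postcomposition with $\ell$ identifies these two representation groupoids, and $\cF_r$, $\Fr$ realize this identification at the level of bundles. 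I would present the direct frame/associated-bundle argument as the main proof and remark on the monodromy shortcut.
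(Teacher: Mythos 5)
Your proof is correct, but its main line of argument is genuinely different from (and more explicit than) what the paper does. The paper never writes out unit and counit isomorphisms: it treats the proposition as an immediate consequence of the preceding monodromy discussion --- the frame bundle $\Fr(\cF)$ is a principal $\GL(r,\Z)$-bundle ``whose monodromy morphism coincides with that of $\cF$'', and bundles with discrete structure group, together with their isomorphisms, are governed by monodromy representations exactly as in Proposition \ref{prop:hol} for $\Symp$-valued local systems. That is precisely the shortcut you sketch in your final paragraph. Your main argument instead constructs the two natural isomorphisms directly: $Q\xrightarrow{\sim}\Fr(\cF_r(Q))$ via $q\mapsto b_q$ with $b_q(x)=[q,x]$, and the evaluation map $\cF_r(\Fr(\cF))\xrightarrow{\sim}\cF$, $[b,x]\mapsto b(x)$, with the key algebraic input isolated as the fact that $\ell\colon\GL(r,\Z)\to\Aut_\Z(\Z^r)$ is an isomorphism (faithfulness giving injectivity of $q\mapsto b_q$, surjectivity giving its surjectivity). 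Both routes are valid; what yours buys is self-containedness and generality --- it uses no connectedness of $M$, no monodromy connection, and no classification theorem for flat bundles, and it exhibits the equivalence at the level of objects and morphisms rather than of isomorphism classes. What the paper's route buys is brevity and uniformity with the rest of the section, where Siegel systems, integral duality structures and torus-group bundles are all funneled through character varieties; it also sidesteps the smoothness and action-convention bookkeeping that you rightly flag as the only delicate point of the direct construction.
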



\subsection{Dirac systems and integral duality structures}


\begin{definition}
Let $\Delta=(\cS,\omega,\cD)$ be a duality structure defined on $M$. A {\em
Dirac system} for $\Delta$ is a smooth fiber sub-bundle $\cL\subset
\cS$ of full symplectic lattices in $(\cS,\omega)$ which is preserved
by the parallel transport $\cT_{\Delta}$ of $\cD$. That is,
for any piece-wise smooth path $\gamma:[0,1]\rightarrow M$ we have:
\be
\cT_{\Delta}(\gamma)(\cL_{\gamma(0)}) = \cL_{\gamma(1)}\, .
\ee
A pair:
\be
\bDelta  \eqdef (\Delta, \cL)\, 
\ee
consisting of a duality structure $\Delta$ and a choice of Dirac
system $\cL$ for $\Delta$ is called an {\em integral duality
structure}. 
\end{definition}

\noindent 
Let $\Dual_\Z(M)$ be the groupoid of integral duality structures
defined on $M$, with the obvious notion of isomorphism. For every
$m\in M$, the fiber $(\cS_m,\omega_m,\cL_m)$ of an integral duality
structure $\bDelta = (\Delta, \cL)$ of rank $2n$ is an \emph{integral
symplectic space} of dimension $2n$ (see Appendix \ref{app:symp} for
details). Each such space defines an integral vector (called its
\emph{type}) belonging to a certain subset $\Div^n$ of $\Z_{>0}^n$
endowed with a partial order relation $\leq$ which makes it into a
complete meet semi-lattice. The type of an integral symplectic space
depends only on its isomorphism class (which it determines uniquely)
and every element of $\Div^n$ is realized as a type. Moreover, the
group of automorphisms of an integral symplectic space of type
$\frt\in \Div^n$ is isomorphic with the {\em modified Siegel modular
group} $\Sp_\frt(2n,\Z)$ of type $\frt$ (see Definition
\ref{def:Siegel}). This is a discrete subgroup of $\Sp(2n,\R)$ which
contains the Siegel modular group $\Sp(2n,\Z)$, to which it reduces
when $\frt$ equals the {\em principal type} $\delta=(1,\ldots, 1)$. If
$\frt$ and $\frt'$ are elements of $\Div^n$ such that $\frt\leq
\frt'$, then the lattice $\Lambda$ of any integral symplectic space
$(V,\omega,\Lambda)$ of type $\frt$ admits a full rank sublattice
$\Lambda'$ such that $(V,\omega,\Lambda')$ is an integral symplectic
space of type $\frt'$.  In this case, we have $\Sp_\frt(2n,\Z)\subset
\Sp_{\frt'}(2n,\Z)$.  Since we assume that $M$ is connected and that
$\cD$ preserves $\cL$, the integral symplectic spaces
$(\cS_m,\omega_m,\cL_m)$ are isomorphic to each other through the
parallel transport of $\cD$, hence their type does not depend on the
base-point $m\in M$.

\begin{definition}
The type $\frt_{\bDelta}\in \Div^n$ of an integral duality
structure $\bDelta=(\cS,\omega,\cD,\cL)$ (and of the corresponding
Dirac system $\cL$) is the common type of the integral symplectic
spaces $(\cS_m , \omega_m,\cL_m)$, where $m\in M$.
\end{definition}

\noindent Let $\Dual_\Z^\frt(M)$ be the full subgroupoid of $\Dual_\Z(M)$
consisting of integral duality structures of type $\frt$.

\begin{definition}
A duality structure $\Delta$ is called {\em semiclassical} if it
admits a Dirac system.
\end{definition}
 
\noindent 
Not every duality structure is semiclassical, as the following proposition shows. 

\begin{prop}
\label{prop:obstructiondiracsys}
A duality structure $\Delta=(\cS,\omega,\cD)$ admits a Dirac system of
type $\frt\in \Div^n$ if and only if the holonomy representation of
$\cD$ at some point point (equivalently, at any point) $m\in M$:
\be
\cT_{\Delta}\vert_{\pi_1(M,m)} \colon \pi_1(M,m)\to \Sp(\cS_m, \omega_m)\simeq \Sp(2n,\R)
\ee
can be conjugated so that its image lies inside the modified Siegel modular
group:
\be
\Sp_{\frt}(2n,\Z) \subset \Sp(2n,\R)
\ee
of type $\frt$. In this case, $\Delta$ is semiclassical and the
greatest lower bound of those $\frt\in \Div^n$ with this property is
called the {\em type of $\Delta$} and denoted by
$\frt_\Delta$.
\end{prop}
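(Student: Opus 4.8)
The plan is to translate everything into a statement about the holonomy representation at a fixed base point, using the equivalence between duality structures and flat systems of symplectic vector spaces together with the observation that a Dirac system is nothing but a parallel-transport-invariant field of full symplectic lattices. Fix $m_0\in M$ and abbreviate $\Hol\eqdef \Hol_{m_0}(\cD)=\Hol_{m_0}(\cT_\Delta)\subset \Sp(\cS_{m_0},\omega_{m_0})$, the image of the holonomy representation $\hol_{m_0}(\cD)$. The clause \emph{``at some point, equivalently at any point''} is handled first: as recorded before Proposition \ref{prop:hol}, parallel transport along any $\bgamma\in \Pi_1(m_0,m)$ intertwines the holonomy representations at $m_0$ and $m$, so the existence of a conjugation carrying the image into $\Sp_\frt(2n,\Z)$ is independent of the chosen base point.

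For the forward implication, suppose $\cL\subset\cS$ is a Dirac system of type $\frt$. By definition $\cT_\Delta(\bc)(\cL_{m_0})=\cL_{m_0}$ for every loop $\bc\in\pi_1(M,m_0)$, so every element of $\Hol$ is a symplectic automorphism of $(\cS_{m_0},\omega_{m_0})$ preserving the full symplectic lattice $\cL_{m_0}$; hence $\Hol$ lies in the automorphism group of the integral symplectic space $(\cS_{m_0},\omega_{m_0},\cL_{m_0})$, which has type $\frt$. Choosing a symplectic basis of $(\cS_{m_0},\omega_{m_0})$ adapted to $\cL_{m_0}$ (one putting this integral symplectic space in its standard type-$\frt$ normal form) yields an identification $\Sp(\cS_{m_0},\omega_{m_0})\simeq\Sp(2n,\R)$ under which the automorphism group becomes exactly $\Sp_\frt(2n,\Z)$, so the image of the holonomy representation lands in $\Sp_\frt(2n,\Z)$. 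Passing to a different adapted basis conjugates this identification, which is precisely the freedom recorded by the words \emph{``can be conjugated''}.

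For the reverse implication, assume that for some identification $\Sp(\cS_{m_0},\omega_{m_0})\simeq\Sp(2n,\R)$ the image of $\hol_{m_0}(\cD)$ lies in $\Sp_\frt(2n,\Z)$. Since $\Sp_\frt(2n,\Z)$ is by definition the automorphism group of a standard full symplectic lattice $\Lambda_\frt\subset\R^{2n}$ of type $\frt$, pulling $\Lambda_\frt$ back through this identification gives a full symplectic lattice $\cL_{m_0}\subset\cS_{m_0}$ of type $\frt$ which is invariant under $\Hol$. I then spread it over $M$ by parallel transport, setting $\cL_m\eqdef\cT_\Delta(\bgamma)(\cL_{m_0})$ for any $\bgamma\in\Pi_1(m_0,m)$; this is independent of $\bgamma$ exactly because $\Hol$ fixes $\cL_{m_0}$, and the resulting field of lattices is automatically invariant under parallel transport along arbitrary paths by the composition law in $\Pi_1(M)$. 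Each $\cL_m$ is a full symplectic lattice of the common type $\frt$ because parallel transport is a symplectic isomorphism. Smoothness of $\cL\eqdef\bigsqcup_{m}\cL_m$ as a fiber sub-bundle is checked on a simply connected open set $U$: parallel-transporting a $\Z$-basis of $\cL_{m_0}$ produces $\cD$-flat, hence smooth, local sections of $\cS$ whose fiberwise $\Z$-span is $\cL\vert_U$. Thus $\cL$ is a Dirac system of type $\frt$ and $\Delta$ is semiclassical.

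Finally, when $\Delta$ is semiclassical the set $S_\Delta\subset\Div^n$ of types $\frt$ with the conjugation property --- equivalently, by the above, the set of types of Dirac systems admitted by $\Delta$ --- is non-empty, so its greatest lower bound $\frt_\Delta\eqdef\bigwedge S_\Delta$ exists because $\Div^n$ is a complete meet semi-lattice; this is the definition of the type of $\Delta$. I would additionally verify that this infimum is actually attained, i.e. that $\Delta$ admits a Dirac system of type exactly $\frt_\Delta$. Here $S_\Delta$ is upward closed (if $\frt\le\frt'$ then $\Sp_\frt(2n,\Z)\subset\Sp_{\frt'}(2n,\Z)$, so $\Hol\subset\Sp_\frt(2n,\Z)$ already stabilizes a lattice of type $\frt'$), and I would show it is stable under the meet operation by producing, from two $\Hol$-invariant lattices, a third realizing the meet of their types (a natural candidate being an intersection or sum, which is again $\Hol$-invariant), using the lattice-theoretic description of $\Div^n$ from Appendix \ref{app:symp}. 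I expect this attainment --- passing from mere existence of the infimum to its realization by an invariant lattice --- to be the main obstacle, since it is the only step that genuinely invokes the structure theory of integral symplectic spaces; the iff itself and the semiclassicality claim follow formally from the holonomy/parallel-transport dictionary.
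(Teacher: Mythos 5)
Your proof is correct and takes essentially the same route as the paper: the forward direction identifies a fiber $(\cS_{m_0},\omega_{m_0},\cL_{m_0})$ of the Dirac system with the standard type-$\frt$ integral symplectic space so that the holonomy image lands in $\Sp_\frt(2n,\Z)$, and your converse is precisely the paper's ``associated bundle construction,'' which you merely spell out by spreading the $\Hol$-invariant lattice over $M$ via parallel transport. One remark: your final paragraph on \emph{attainment} of the infimum addresses more than the proposition asserts --- $\frt_\Delta$ is only \emph{defined} as the greatest lower bound, which exists because $(\Div^n,\leq)$ is a complete meet semi-lattice --- which is fortunate, since the construction you sketch there (sums or intersections of two $\Hol$-invariant full lattices) can fail: such sums need not be discrete and such intersections need not be full, so that step would not go through as stated, but it is not needed.
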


\begin{proof}
Assume $\Delta = (\cS,\omega,\cD)$ admits a Dirac system $\cL$ of type
$\frt$. Then (as explained in Appendix \ref{app:symp}) the
automorphism group of every fiber $(\cS_m,\omega_m,\cL_m)$ is
isomorphic to $\Sp_{\frt}(2n,\Z)$, which is the automorphism group of
the standard integral symplectic space
$(\R^{2n},\omega_{2n},\Lambda_\frt)$ of type $\frt$. Since $\cL$ is
preserved by the parallel transport of $\cD$, it follows that
we have $\cT_{\Delta}(\pi_1(M,m)) \subset \Sp_{\frt}(2n,\Z)$ after identifying
$(\cS_m,\omega_m,\cL_m)$ with $(\R^{2n},\omega_{2n},\Lambda_\frt)$ and
hence $\Sp(\cS_m , \omega_m,\cL_m)$ with $\Sp(2n,\R)$. The converse
follows immediately from the associated bundle construction.
\end{proof}

\begin{remark}
A duality structure $\Delta=(\cS,\omega,\cD)$ of rank $2n$ admits a
Dirac system $\cL$ of type $\frt=(t_1,\ldots, t_n)\in \Div^n$ if and
only if $M$ admits an open cover $\cU=(U_\alpha)_{\alpha\in I}$ such
that for each $\alpha\in I$ there exists a $\cD$-flat frame
$(e_1^{(\alpha)},\ldots, e_{2n}^{(\alpha)})$ of $\cS\vert_{U_\alpha}$
with the property:
\ben
\label{omegatframe}
\omega(e_i,e_j)=\omega(e_{n+i},e_{n+j})=0\, , \,\, \omega(e_i,e_{n+j})=t_i\delta_{ij}\, , \,\, \omega(e_{n+i},e_j)= -t_i\delta_{ij}\, , \quad \forall\, \, 
i,j=1,\ldots, n\, .
\een
For each $\alpha,\beta\in I$ with $U_\alpha\cap U_\beta\neq\emptyset$ we have:
\be
e_k^{(\beta)}=\sum_{l=1}^{2n}T^{(\alpha\beta)}_{lk} e^{(\alpha)}_l\, , \quad \forall\,\, k=1,\ldots, 2n~~\mathrm{on}~~U_\alpha\cap U_\beta\, ,
\ee
where $T^{(\alpha\beta)}_{lk}\in \Z$ for all $k,l=1,\ldots, 2n$. Furthermore:
\be
\oplus_{k=1}^{2n} \Z e^{(\alpha)}_k(m)=\cL_m\, , \quad \forall\,\,
\alpha\in I\, , \quad \forall\,\, m\in U_\alpha\, ,
\ee
and the matrices $T^{(\alpha\beta)}\eqdef
(T^{(\alpha\beta)}_{kl})_{k,l=1,\ldots, 2n}$ belong to
$\Sp_\frt(2n,\Z)$.
\end{remark}

\noindent 
Every integral duality structure $\bDelta=(\cS,\omega,\cD,\cL)$
on $M$ defines a parallel transport functor:
\be
\cT_{\bDelta}\colon \Pi_1(M) \to \Symp_\Z\, , 
\ee
where $\Symp_\Z$ is the groupoid of integral symplectic spaces defined in
Appendix \ref{app:symp}. This functor associates the integral
symplectic vector space $(\cS_m,\omega_m,\cL_m)$ to every point $m\in
M$ and the isomorphism of symplectic vector spaces $\cT_{\bDelta}(\bc)
\eqdef \cT_\Delta(\bc)$ to every homotopy class $\bc\in
\Pi_1(M)(m,m')$ of curves from $m\in M$ to $m'\in M$. The functor
$\cT_{\bDelta}$ defines a flat system of integral symplectic vector
spaces (that is, a $\Symp_\Z$-valued local system) on $M$. As in
Section \ref{sec:classical}, the correspondence $\bDelta \mapsto
\cT_{\bDelta}$ extends to an equivalence of groupoids:
\be
\cT:\Dual_\Z(M)\xrightarrow{\sim} [\Pi_1(M),\Symp_\Z]
\ee
between $\Dual_\Z(M)$ and the functor groupoid
$[\Pi_1(M),\Symp_\Z]$. Thus one can identify integral duality
structures with $\Symp_\Z$-valued local systems defined on $M$. This
implies the following result, whose proof is similar to that of
Proposition \ref{prop:hol}.

\begin{prop}
For any $m_0\in M$, the set of isomorphism classes of integral duality
structures of type $\frt$ defined on $M$ is in bijection with the
character variety:
\ben
\label{fRt}
\fR(\pi_1(M,m_0),\Sp_\frt(2n,\Z))\eqdef \Hom(\pi_1(M,m_0),\Sp_\frt(2n,\Z))/\Sp_\frt(2n,\Z)~~.
\een
\end{prop}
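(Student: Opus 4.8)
The plan is to follow the proof of Proposition \ref{prop:hol} almost verbatim, replacing the groupoid $\Symp$ by the groupoid $\Symp_\Z$ of integral symplectic spaces and the structure group $\Sp(2n,\R)$ by the automorphism group $\Sp_\frt(2n,\Z)$ of a standard integral symplectic space of type $\frt$. Since the correspondence $\bDelta\mapsto \cT_{\bDelta}$ already furnishes an equivalence of groupoids $\cT:\Dual_\Z(M)\xrightarrow{\sim}[\Pi_1(M),\Symp_\Z]$, I would first reduce the problem to classifying, up to isomorphism, those $\Symp_\Z$-valued local systems $F$ on $M$ whose fibers have type $\frt$.

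Next I would fix a basepoint $m_0\in M$ and form the holonomy representation $\hol_{m_0}(F):\pi_1(M,m_0)\to \Aut(F(m_0))$, defined exactly as in the symplectic case but now landing in the automorphism group of the integral symplectic space $F(m_0)$. Because $M$ is connected and $\cD$ preserves the Dirac lattice, all fibers $F(m)$ are integral symplectic spaces of the same type $\frt$, so each is isomorphic to the standard space $(\R^{2n},\omega_{2n},\Lambda_\frt)$, whose automorphism group is $\Sp_\frt(2n,\Z)$. A choice of such an isomorphism $\phi:F(m_0)\xrightarrow{\sim}(\R^{2n},\omega_{2n},\Lambda_\frt)$ turns $\hol_{m_0}(F)$ into a homomorphism $\rho_F\eqdef \phi\circ \hol_{m_0}(F)\circ \phi^{-1}:\pi_1(M,m_0)\to \Sp_\frt(2n,\Z)$.

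I would then reproduce the two directions of the argument for Proposition \ref{prop:hol}. An isomorphism $f:F\xrightarrow{\sim}F'$ of $\Symp_\Z$-valued local systems is a family of integral symplectic isomorphisms $f(m):F(m)\to F'(m)$ intertwining parallel transport; setting $m=m'=m_0$ shows that $f(m_0)$ is an equivalence between $\hol_{m_0}(F)$ and $\hol_{m_0}(F')$, and conversely any such equivalence extends uniquely to an isomorphism of local systems. Hence $\hol_{m_0}$ restricts to an equivalence from the full subgroupoid of type-$\frt$ local systems in $[\Pi_1(M),\Symp_\Z]$ to the groupoid of integral symplectic representations of $\pi_1(M,m_0)$ on a type-$\frt$ space. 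Passing through $\phi$, a different choice of trivialization conjugates $\rho_F$ by an element of $\Sp_\frt(2n,\Z)$, so the $\Sp_\frt(2n,\Z)$-conjugacy class of $\rho_F$ is the well-defined invariant, identifying the set of isomorphism classes with the character variety \eqref{fRt}.

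The only genuine difference from Proposition \ref{prop:hol} --- and the step deserving care --- is that the relevant conjugation group is the discrete group $\Sp_\frt(2n,\Z)$ rather than $\Sp(2n,\R)$. This rests entirely on the fact recalled above (and established in Appendix \ref{app:symp}) that the automorphism group of an integral symplectic space of type $\frt$ is exactly $\Sp_\frt(2n,\Z)$; it is precisely this replacement of the continuous structure group by its integral counterpart that renders the resulting character variety discrete.
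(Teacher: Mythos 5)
Your proposal is correct and matches the paper's intended argument: the paper itself only says the proof ``is similar to that of Proposition \ref{prop:hol}'', relying exactly on the equivalence $\cT:\Dual_\Z(M)\xrightarrow{\sim}[\Pi_1(M),\Symp_\Z]$ and the fact that the automorphism group of an integral symplectic space of type $\frt$ is $\Sp_\frt(2n,\Z)$. You have simply written out the details that the paper leaves implicit, including the one genuinely new ingredient (the discrete conjugation group), so nothing is missing.
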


\noindent For later reference, we introduce the following:

\begin{definition}
An \emph{integral electromagnetic structure} defined on $M$ is a pair:
\be
\bXi \eqdef (\Xi,\cL)\, ,
\ee
where $\Xi=(\cS,\omega,\cD,\cJ)$ is an electromagnetic structure on $M$ and
$\cL$ is a Dirac system for the duality structure $\Delta=(\cS,\omega,\cD)$.
\end{definition}

 
\subsection{Siegel systems}
\label{subsec:Siegel}


Integral duality structures are associated to certain local systems of
free abelian groups of even rank defined on $M$.

\begin{definition}
Let $n\in \Z_{>0}$. A {\em Siegel system} of rank $2n$ on $M$ is a
bundle $Z$ of free abelian groups of rank $2n$ defined on $M$ equipped
with a reduction of its structure group from $\GL(2n,\Z)$ to a
subgroup of some modified Siegel modular group $\Sp_\frt(2n,\Z)$,
where $\frt\in\Div^n$. The greatest lower bound of the set of those
$\frt\in \Div^n$ with this property is called the {\em type} of $Z$
and is denoted by $\frt_Z$.
\end{definition}

\noindent Let $\Sg(M)$ be the groupoid of Siegel systems on $M$ and
$\Sg_\frt(M)$ be the full sub-groupoid of Siegel systems of type
$\frt$.  

\begin{remark}
Let $\underline{\R}$ be the trivial real line bundle on $M$ and
$U_0:\Pi_1(M)\rightarrow \Phi(\underline{\R})$ be the transport
functor induced by its trivial flat connection. The following
statements are equivalent for a bundle $Z$ of free abelian groups of
rank $2n$ defined on $M$:
\begin{enumerate}[(a)]
\item $Z$ is a Siegel system of type $\frt$ defined on $M$.
\item The vector bundle $\cS\eqdef
Z\otimes_\Z\underline{\R}$ carries a symplectic pairing $\omega$ which
is invariant under the parallel transport $U_Z\otimes_\Z U_0$ of the
flat connection induced from $Z$ and which makes the triplet
$(\cS_m,\omega_m,Z_m)$ into an integral symplectic space of type
$\frt$ for any $m\in M$.
\item For any $m\in M$, the $2n$-dimensional vector space
$\cS_{m}\eqdef Z_m\otimes_\Z \R$ carries a symplectic form
$\omega_{m}$ which makes the triplet $(\cS_{m},\omega_{m},Z_{m})$ into
an integral symplectic space of type $\frt$ and we have
$\Hol_{m}(Z)=\Aut(\cS_m,\omega_{m},Z_{m})$.
\end{enumerate}
\end{remark}

\noindent
By definition, any Siegel system $Z$ of type $\frt$ is isomorphic with
the bundle of groups with fiber $\Z^{2n}$ associated to a principal
$\Sp_\frt(2n,\Z)$-bundle $Q$ through the left action
$\ell :\Sp_\frt(2n,\Z)\rightarrow \Aut_\Z(\Z^{2n})$ of
$\Sp_\frt(2n,\Z)$ on $\Z^{2n}$:
\be
Z\simeq Z(Q)\eqdef Q \times_{\ell} \Z^{2n}\simeq \hM \times_{\ell_\frt\circ
  \alpha_{m}(Q)} \Z^{2n}~~,
\ee
where $\alpha_{m}(Q):\pi_1(M,m)\rightarrow \Sp_\frt(2n,\Z)$ is the
monodromy morphism of $Q$ at $m$ and ${\hat M}$ is the universal cover
of $M$. The monodromy morphism $\sigma_{m}(Z)$ of $Z$ at $m$
identifies with $\ell\circ \alpha_{m}(Q)$ upon choosing an integral
symplectic basis of the integral symplectic space
$(\cS_m,\omega_m,Z_{m})$. This also identifies the monodromy group
$\Hol_{m}(Z)\subset \Aut_\Z(Z_{m})$ with
$\Sp_\frt(2n,\Z)$. Conversely, let $\Fr_\frt(\Z^{2n})$ be the set of
those bases of the free $\Z$-module $\Z^{2n}$ in which the standard
symplectic form $\omega_{2n}$ of $\R^{2n}=\Z^{2n}\otimes_\Z \R$ takes
the form $\omega_\frt$ (see Appendix \ref{app:symp}). Then
$\Sp_\frt(2n,\Z)$ has a natural free and transitive left action
$\mu_\frt$ on this set. Taking the set of bases of each of fiber gives
the {\em bundle of frames} $\Fr(Z)$ of the Siegel system $Z$, which is
a principal $\Sp_\frt(2n,\Z)$-bundle. The previous discussion implies
the following result.

\begin{prop}
\label{prop:PrinSiegel}
The correspondences $Q\mapsto Z(Q)$ and $Z\mapsto \Fr(Z)$ extend to
mutually quasi-inverse equivalence of groupoids between
$\Prin_{\Sp_\frt(2n,\Z)}(M)$ and $\Sg_\frt(M)$.
\end{prop}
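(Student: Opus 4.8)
The plan is to run the standard associated-bundle/frame-bundle reconstruction argument, exactly as in the $\GL(2n,\Z)$ statement proved above, while carrying along the extra symplectic bookkeeping that distinguishes $\Sp_\frt(2n,\Z)$ from $\GL(2n,\Z)$. Throughout I regard an object of $\Sg_\frt(M)$ concretely as a bundle $Z$ of rank-$2n$ free abelian groups whose associated real bundle $\cS\eqdef Z\otimes_\Z\underline{\R}$ carries a monodromy-invariant symplectic pairing $\omega$ making each fiber $(\cS_m,\omega_m,Z_m)$ an integral symplectic space of type $\frt$, as in parts (b)--(c) of the preceding Remark; the reduction of structure group to $\Sp_\frt(2n,\Z)$ is precisely this datum, and it is what the two functors must preserve.

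First I would check the two constructions are well defined on objects. For $Q\mapsto Z(Q)=Q\times_\ell \Z^{2n}$, the key point is that $\Sp_\frt(2n,\Z)\subset\Sp(2n,\R)$ preserves the standard form $\omega_\frt$ on $\R^{2n}$; hence $\omega_\frt$ descends to a well-defined, monodromy-invariant pairing on $Q\times_\ell\R^{2n}=Z(Q)\otimes_\Z\underline{\R}$, and each fiber is, by transport of the standard model $(\R^{2n},\omega_\frt,\Lambda_\frt)$, an integral symplectic space of type $\frt$. Thus $Z(Q)\in\Sg_\frt(M)$. For $Z\mapsto\Fr(Z)$, I would set $\Fr(Z)_m\eqdef\Fr_\frt(Z_m)$ to be the set of integral symplectic bases of type $\frt$ of the fiber $(\cS_m,\omega_m,Z_m)$; the essential input from Appendix \ref{app:symp} is that $\Aut(\cS_m,\omega_m,Z_m)\cong\Sp_\frt(2n,\Z)$ acts freely and transitively on this set, so each fiber is an $\Sp_\frt(2n,\Z)$-torsor. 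Local triviality of $\Fr(Z)$ follows from that of $Z$ together with the local existence of type-$\frt$ symplectic frames recorded in the Remark after Proposition \ref{prop:obstructiondiracsys}, so $\Fr(Z)$ is a principal $\Sp_\frt(2n,\Z)$-bundle.

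Next I would define both functors on morphisms in the evident way (an isomorphism of principal bundles induces one of associated bundles, and an isomorphism of Siegel systems induces one of frame bundles) and record that these assignments respect composition and identities. The heart of the argument is the construction of the two natural isomorphisms. For $Z(\Fr(Z))\xrightarrow{\sim}Z$ I would use the evaluation map $[b,v]\mapsto\sum_i v_i\,b_i$; because $b$ is a symplectic frame of type $\frt$, one has $\omega(\sum_i v_i b_i,\sum_j w_j b_j)=\omega_\frt(v,w)$, so this is not only an isomorphism of bundles of abelian groups but also carries $\omega_\frt$ to $\omega$, hence is an isomorphism in $\Sg_\frt(M)$. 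For $\Fr(Z(Q))\xrightarrow{\sim}Q$ I would send $q\in Q$ to the frame $v\mapsto[q,v]$; this is $\Sp_\frt(2n,\Z)$-equivariant, covers $\id_M$, and lands in type-$\frt$ symplectic frames precisely because the standard basis of $\Z^{2n}$ is of type $\frt$. Naturality of both isomorphisms with respect to morphisms is then a routine diagram chase.

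The only genuinely non-formal step is the simple transitivity used above: that $\Sp_\frt(2n,\Z)$ is exactly the automorphism group of the standard integral symplectic space of type $\frt$ and acts simply transitively on its type-$\frt$ symplectic frames. This is the content recorded in Appendix \ref{app:symp}, and once it is granted the remainder is the familiar reconstruction equivalence between a principal bundle and its associated fiber bundle, refined to keep track of the pairing. I therefore expect no serious obstacle beyond verifying that each reconstruction map intertwines $\omega_\frt$ and $\omega$, which is immediate from the definition of a type-$\frt$ symplectic frame; as a consistency check, both sides are classified by $\fR(\pi_1(M,m_0),\Sp_\frt(2n,\Z))$ via their monodromy, in agreement with the equivalence just constructed.
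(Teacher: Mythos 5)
Your proposal is correct and follows essentially the same route as the paper, which obtains Proposition \ref{prop:PrinSiegel} directly from the preceding discussion: the associated-bundle construction $Q\mapsto Q\times_\ell \Z^{2n}$ and the frame bundle $\Fr(Z)$ of type-$\frt$ symplectic bases, with the free and transitive action of $\Sp_\frt(2n,\Z)$ on such bases as the key input. Your write-up merely makes explicit the naturality checks and the descent of $\omega_\frt$ that the paper leaves implicit.
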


\noindent In particular, the set of isomorphism classes of Siegel
systems of type $\frt$ defined on $M$ is in bijection with the
character variety $\fR(\pi_1(M,m_0),\Sp_\frt(2n,\Z))$ of equation
\eqref{fRt}. Let $\underline{1}$ be the unit section of the trivial
real line bundle $\underline{\R}=M\times \R$.

\begin{prop}
\label{prop:ZDelta}
Let $Z$ be a Siegel system defined on $M$. Then there exists a unique
integral duality structure $\Delta=(\cS,\omega,\cD,\cL)$ such that
$\cS=Z\otimes_\Z \underline{\R}$ and $\cL=Z\otimes_\Z \Z$ and this
duality structure has the same type as $Z$. Moreover, the parallel
transport $U_\cD:\Pi_1(M)\rightarrow \Phi(\cS)$ of $\cD$ is given by:
\ben
\label{UZ}
U_\cD(\bc)=U_Z(\bc)\otimes_\Z U_0(m,m')\, , \quad \forall\,\, \bc\in \Pi_1(M)(m,m')\, , \quad \forall\,\, m,m'\in M~~,
\een
where $U_Z:\Pi_1(M)\rightarrow \Phi(Z)$ is the monodromy transport of
$Z$ and $U_0:\Pi_1(M)\rightarrow \Phi(\underline{\R})$ is the trivial
transport of $\underline{\R}$. 
\end{prop}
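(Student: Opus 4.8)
The plan is to build the three remaining pieces of $\bDelta$ directly out of $Z$ and then argue that each of them is forced. Since $Z$ has discrete structure group $\GL(2n,\Z)$, it is a local system: its projection to $M$ is a covering map with fibre $\Z^{2n}$, and it carries a canonical monodromy connection whose parallel transport $U_Z\colon \Pi_1(M)\to \Phi(Z)$ was described above. I set $\cS\eqdef Z\otimes_\Z\underline{\R}$ and let $\cD$ be the flat connection whose transport is $U_Z\otimes_\Z U_0$; this is exactly the connection obtained by tensoring the monodromy connection of $Z$ with the trivial connection of $\underline{\R}$, so \eqref{UZ} holds by construction. For the symplectic pairing I invoke the implication (a)$\Rightarrow$(b) of the Remark following the definition of a Siegel system: the reduction of the structure group of $Z$ to $\Sp_\frt(2n,\Z)$ equips $\cS$ with a pairing $\omega$ that is invariant under $U_Z\otimes_\Z U_0=U_\cD$ and makes each fibre $(\cS_m,\omega_m,Z_m)$ an integral symplectic space of type $\frt=\frt_Z$. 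Invariance under $U_\cD$ is precisely the statement that $\cD$ preserves $\omega$, so $\cD$ is a flat symplectic connection. (Equivalently, writing $Z\simeq Q\times_\ell\Z^{2n}$ for a principal $\Sp_\frt(2n,\Z)$-bundle $Q$ as in Proposition \ref{prop:PrinSiegel}, one obtains $\cS$, $\omega$ and $\cD$ by the associated-bundle construction from $\R^{2n}$, $\omega_\frt$ and the flat connection of $Q$.)

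Next I set $\cL\eqdef Z\otimes_\Z\Z\subset\cS$, the image of the tautological embedding $z\mapsto z\otimes 1$, so that $\cL_m=Z_m$ is a full lattice of rank $2n$ in $\cS_m$; since $(\cS_m,\omega_m,\cL_m)$ is an integral symplectic space, $\cL_m$ is a full symplectic lattice. Because $U_Z$ acts by $\Z$-linear isomorphisms between the fibres of $Z$, the transport $U_\cD=U_Z\otimes_\Z U_0$ sends $\cL_{c(0)}$ onto $\cL_{c(1)}$ along every path $c$, so $\cD$ preserves $\cL$ and $\cL$ is a Dirac system for $\Delta\eqdef(\cS,\omega,\cD)$. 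This produces an integral duality structure $\bDelta=(\cS,\omega,\cD,\cL)$, and since each of its fibres has type $\frt$, its type is $\frt_Z$. This settles existence together with formula \eqref{UZ}.

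For uniqueness I would take any integral duality structure $\bDelta'=(\cS,\omega',\cD',\cL)$ with the prescribed underlying bundle and Dirac system and show that $\cD'$ is forced. Indeed, $\cD'$ preserves $\cL$, so its parallel transport restricts to a lift of paths along the covering map $\cL\to M$; but path-lifting in a covering space is unique once the starting point is fixed, so this restricted transport must coincide with the intrinsic monodromy $U_Z$ of $\cL=Z$, leaving no freedom. Since $\cL$ is a full lattice in $\cS$ and $\cD'$ acts $\R$-linearly, its transport on all of $\cS$ is the $\R$-linear extension of $U_Z$, namely $U_Z\otimes_\Z U_0=U_\cD$; hence $\cD'=\cD$. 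The pairing $\omega'$ is then $\cD$-flat and restricts on the full lattice $\cL=Z$ to the integral symplectic form carried by the Siegel structure; as a bilinear form on $\cS$ is determined by its values on a full lattice, $\R$-linear extension gives $\omega'=\omega$. Therefore $\bDelta'=\bDelta$.

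The only genuinely delicate step is the uniqueness of $\cD$: one must recognize that preserving the Dirac system removes \emph{all} freedom in the connection, and the clean way to see this is that $\cL$, having discrete fibres, is a covering space of $M$, whose unique path-lifting identifies the restricted transport of any lattice-preserving flat connection with the monodromy of $Z$. Everything else is bookkeeping: the existence half transports the $\Sp_\frt(2n,\Z)$-reduction through the tensor and associated-bundle constructions (smoothness and local triviality being inherited from the local-system structure of $Z$) and checks the integral-symplectic and flatness conditions fibrewise, while the uniqueness of $\omega$ reduces to the fact that a form is recovered from its restriction to a full lattice.
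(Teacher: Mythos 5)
Your proposal is correct and follows essentially the same route as the paper: uniqueness is forced because any lattice-preserving flat transport must restrict on $\cL$ to the monodromy $U_Z$ and then extends $\R$-linearly from the full lattice to give \eqref{UZ}, while existence of $\omega$ comes from the locally-constant symplectic frames supplied by the Siegel reduction (your unique-path-lifting argument simply makes explicit the paper's assertion that the restricted transport ``must agree with'' $U_Z$). The only caveat is your uniqueness step for $\omega$, which tacitly assumes that any competing pairing $\omega'$ restricts on $\cL=Z$ to the form fixed by the Siegel reduction; this is not literally forced by the definition of an integral duality structure (e.g.\ $-\omega$ would also qualify), but the paper's own statement and proof carry exactly the same implicit reading, so your argument matches the intended interpretation at the paper's level of rigor.
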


\begin{remark}
\label{rem:Z}
The fiber of $\cL$ at $m\in M$ is given by:
\ben
\label{LZ}
\cL_m \eqdef \{z\otimes_\Z 1 ~\vert ~z\in Z_m\}\equiv Z_m~~, 
\een
where $1$ is the unit element of the field $\R$. It is clear that the
transport $U_\cD$ defined by \eqref{UZ} gives bijections from $\cL_m$
to $\cL_{m'}$ and hence preserves $\cL$. Any locally-constant frame
$(s_1,\ldots, s_{2n})$ of $Z$ defined above a non-empty open set
$V\subset M$ determines a local flat symplectic frame $(e_1,\ldots,
e_{2n})$ of $\Delta$ defined above $V$ given by:
\ben
\label{eidef}
e_i\eqdef s_i\otimes_\Z 1\, , \quad \forall\,\, i=1,\ldots, 2n~~
\een
and the matrix of $\omega$ with respect to this frame is integer-valued.
\end{remark}

\begin{proof}
The restriction of $U_\cD$ to $\cL$ gives isomorphisms of groups
between the fibers \eqref{LZ} of $\cL$ and hence it must agree with
the monodromy transport $U_Z$ of $Z$ in the sense that:
\be
U_\cD(\bc)(z\otimes_\Z 1)=U_Z(\bc)(z)\otimes_\Z 1\, , \quad \forall\,\, \bc\in
\Pi_1(M)(m,m')\, , \quad \forall\,\, m,m'\in M~~.
\ee
This implies \eqref{UZ} since $U_\cD$ is $\R$-linear and $\cL_m$ are
full lattices in $\cS_m=Z_m\otimes_\Z \R$. Remark \ref{rem:Z}
gives a $\cD$-flat symplectic pairing $\omega_Z$ on $\cS_Z$ such that
the integral symplectic spaces $(\cS_m,\omega,\cL_m)$ have type
$\frt_Z$ and such that $\cL$ is preserved by the parallel transport
of $\cD$. 
\end{proof}

\begin{remark}
Notice that $\cS$ identifies with the vector bundle
associated to the frame bundle $\Fr(Z)$ of $Z$ through the linear
representation $q=\varphi\circ \iota:\Sp_\frt(2n,\Z)\rightarrow
\Aut_\R(\R^{2n})$ defined by the inclusion morphism:
\be
\iota:\Sp_\frt(2n,\Z)\hookrightarrow \Sp(2n,\R)~~,
\ee
where $\Sp(2n,\R)$ acts on $\R^{2n}$ through the fundamental
representation $\varphi:\Sp(2n,\R)\rightarrow \Aut_\R(\R^{2n})$.  
The representation $q$ preserves the canonical symplectic form
$\omega_\frt$ of $\R^{2n}$ and the latter induces the symplectic pairing
$\omega$ of $\cS$. 
\end{remark}

\noindent
We denote by $\bDelta(Z)$ the integral duality structure defined by
$Z$ as in Proposition \ref{prop:ZDelta}. Conversely, any integral
duality structure $\bDelta=(\cS,\omega,\cL,\cD)$ defines a Siegel
system $Z(\bDelta)$ upon setting:
\be
Z(\bDelta)\eqdef \cL
\ee
and it is easy to see that $\bDelta$ is isomorphic with $\bDelta(\cL)$.
Therefore, we obtain the following result:

\begin{prop}
The correspondences $Z\mapsto \bDelta(Z)$ and $\bDelta\rightarrow
Z(\bDelta)$ extend to mutually quasi-inverse equivalences of groupoids
between $\Sg_\frt(M)$ and $\Dual^\frt_\Z(M)$.
\end{prop}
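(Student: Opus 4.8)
The plan is to upgrade the two object-level correspondences of Proposition \ref{prop:ZDelta} and the paragraph following it into functors between the groupoids and then exhibit natural isomorphisms witnessing that the composites are the respective identities. First I would extend $Z\mapsto \bDelta(Z)$ to morphisms: an isomorphism $\phi\colon Z\xrightarrow{\sim} Z'$ in $\Sg_\frt(M)$ is in particular an isomorphism of bundles of free abelian groups intertwining the monodromy transports $U_Z$ and $U_{Z'}$ and respecting the reduction of structure group to $\Sp_\frt(2n,\Z)$ (equivalently, preserving the induced fibrewise integral symplectic structure), so $\bDelta(\phi)\eqdef \phi\otimes_\Z \id_{\underline{\R}}\colon \cS=Z\otimes_\Z\underline{\R}\to \cS'=Z'\otimes_\Z\underline{\R}$ is a based vector bundle isomorphism. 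Using the explicit description \eqref{UZ} of the parallel transport, together with the fact that $\phi$ intertwines $U_Z$ and $U_{Z'}$ and preserves the integral symplectic data, one checks that $\bDelta(\phi)$ satisfies $\omega'\circ(\bDelta(\phi)\otimes \bDelta(\phi))=\omega$, intertwines $\cD$ and $\cD'$, and maps $\cL$ onto $\cL'$; hence it is a morphism in $\Dual^\frt_\Z(M)$. Conversely, a based isomorphism $f\colon \bDelta\xrightarrow{\sim}\bDelta'$ of integral duality structures preserves the Dirac systems by definition, so it restricts to an isomorphism $Z(f)\eqdef f\vert_{\cL}\colon \cL\to \cL'$ which, being the restriction of a symplectomorphism intertwining flat connections, respects both the fibrewise integral symplectic structure and the monodromy transport, and is therefore a morphism in $\Sg_\frt(M)$. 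Functoriality of both assignments is immediate from $(\phi'\circ\phi)\otimes\id=(\phi'\otimes\id)\circ(\phi\otimes\id)$ and $(f'\circ f)\vert_\cL=f'\vert_\cL\circ f\vert_\cL$.

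Next I would produce the two natural isomorphisms. For the composite $Z(-)\circ\bDelta(-)$, Remark \ref{rem:Z} identifies $Z(\bDelta(Z))=\cL$ with $Z$ via the fibrewise map $z\mapsto z\otimes_\Z 1$; I would package these into a natural transformation $\eta\colon \id_{\Sg_\frt(M)}\Rightarrow Z(-)\circ\bDelta(-)$ and check the naturality squares, which reduce to the tautology $(\phi\otimes\id)(z\otimes_\Z 1)=\phi(z)\otimes_\Z 1$. For the composite $\bDelta(-)\circ Z(-)$, given $\bDelta=(\cS,\omega,\cD,\cL)$ I would use the canonical evaluation morphism $\epsilon_{\bDelta}\colon \cL\otimes_\Z\underline{\R}\to \cS$ determined fibrewise by $\ell\otimes_\Z r\mapsto r\,\ell$. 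Since each $\cL_m$ is a full lattice in $\cS_m$, this map is an isomorphism of vector bundles, and it visibly carries the symplectic pairing and Dirac system of $\bDelta(\cL)$ to those of $\bDelta$; the same maps assemble into a natural transformation $\epsilon\colon \bDelta(-)\circ Z(-)\Rightarrow \id_{\Dual^\frt_\Z(M)}$.

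The step I expect to be the crux is verifying that $\epsilon_{\bDelta}$ also intertwines the connection $\cD$ of $\bDelta(\cL)$ with that of $\bDelta$. This is where the uniqueness clause of Proposition \ref{prop:ZDelta} does the real work: by \eqref{UZ} the parallel transport of $\bDelta(\cL)$ is the $\R$-linear extension of the monodromy transport $U_{\cL}$ of $\cL$, while the parallel transport of $\cD$ on $\cS$ restricts to $U_{\cL}$ on the lattice subbundle $\cL$ (because $\cD$ preserves $\cL$) and is itself $\R$-linear; since $\cL_m$ spans $\cS_m$ over $\R$, the two parallel transports agree under $\epsilon_{\bDelta}$, whence $\epsilon_{\bDelta}$ intertwines the connections. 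Once this is established, naturality of $\epsilon$ is a direct check, and the pair $(\eta,\epsilon)$ exhibits $\bDelta(-)$ and $Z(-)$ as mutually quasi-inverse equivalences of groupoids between $\Sg_\frt(M)$ and $\Dual^\frt_\Z(M)$.
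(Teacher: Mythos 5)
Your proposal is correct and takes essentially the same route the paper does, which the paper leaves almost entirely implicit: the paper defines $\bDelta(Z)$ via Proposition \ref{prop:ZDelta}, sets $Z(\bDelta)\eqdef \cL$, and dismisses the isomorphism $\bDelta\simeq\bDelta(\cL)$ as ``easy to see'' --- this isomorphism is exactly the counit $\epsilon_{\bDelta}$ you construct. Your write-up supplies what the paper omits: the extension of both correspondences to morphisms, the naturality checks, and in particular the crux that $\epsilon_{\bDelta}$ intertwines the connections, which you correctly settle by noting that both parallel transports are the $\R$-linear extensions of the same monodromy transport on the full lattice sub-bundle $\cL$.
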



\subsection{Bundles of integral symplectic torus groups}


In the following we use the notions of {\em integral symplectic
torus group} discussed in Appendix \ref{app:symp}.

\begin{definition}
A {\em bundle of integral symplectic torus groups of rank $2n$} is a
bundle $\cA$ of $2n$-dimensional torus groups defined on $M$ whose
structure group reduces from $\GL(2n,\Z)$ to a subgroup of some
modified Siegel modular group $\Sp_\frt(2n,\Z)$, where $\frt\in
\Div^n$. The greatest lower bound $\frt_\cA$ of the set
of elements $\frt\in \Div^n$ with this property is called the {\em
type} of $\cA$.
\end{definition}

\noindent
Let $\cA$ be a bundle of integral symplectic torus groups of type
$\frt$. Then the zero elements of the fibers determine a
section $s_0\in \cC^\infty(M,\cA)$. The structure group $\Sp_\frt(2n,\Z)$
acts on $\cA_m$ preserving the distinguished point $s_0(m)$ and the
abelian group structure of each fiber. Since such a bundle is
associated to a principal $\Sp_\frt(2n,\Z)$-bundle, it carries an
induced flat Ehresmann connection whose holonomy group is a subgroup
of $\Sp_\frt(2n,\Z)$ and whose holonomy representation at $m\in M$ we 
denote by:
\ben
\label{rhoX}
\rho_m(\cA):\pi_1(M,m)\rightarrow \Sp_\frt(2n,\Z)\subset \GL(2n,\Z)~~.
\een
The parallel transport of this connection preserves the image of the
section $s_0$ as well as a fiberwise symplectic structure which makes
each fiber into an integral symplectic torus group of type
$\frt$ in the sense of Appendix \ref{app:symp}. We have:
\be
\cA\simeq {\hat M}\times_\rho \left(\R^{2n}/\Z^{2n}\right)~~.
\ee

\begin{remark}
When $M$ is compact, the fiber bundle $\cA$ is virtually trivial by
\cite[Theorem 1.1.]{CMP}, i.e. the pull-back of $\cA$ to some {\em
finite} covering space of $M$ is topologically trivial.
\end{remark}

\noindent It follows from the above that bundles of integral symplectic torus
groups of type $\frt$ defined on $M$ are classified by group morphisms
\eqref{rhoX}, i.e.  the set of isomorphism classes of such bundles is
in bijection with the character variety
$\fR(\pi_1(M,m_0),\Sp_\frt(2n,\Z)$, which also classifies integral
duality structures. This also follows from the results below.

\begin{prop}
Let $\bDelta=(\cS,\omega,\cD,\cL)$ be an integral duality structure of
type $\frt$ defined on $M$. Then the fiberwise quotient:
\be
\cA(\bDelta) \eqdef \cS/\cL\, ,
\ee
is a bundle of integral symplectic torus groups of type $\frt$ defined on $M$. 
\end{prop}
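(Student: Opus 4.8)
The plan is to reduce the claim to the associated-bundle description established earlier, for which all three requirements in the definition of a bundle of integral symplectic torus groups become transparent. Concretely, I would set $Z\eqdef Z(\bDelta)=\cL$, the Siegel system of type $\frt$ attached to $\bDelta$, and let $Q\eqdef \Fr(Z)$ be its frame bundle, which is a principal $\Sp_\frt(2n,\Z)$-bundle by Proposition \ref{prop:PrinSiegel}. By Proposition \ref{prop:ZDelta} and the associated-bundle construction one has canonical identifications $\cS\simeq Q\times_{\ell}\R^{2n}$ and $\cL\simeq Q\times_{\ell}\Z^{2n}$, where $\ell:\Sp_\frt(2n,\Z)\hookrightarrow\GL(2n,\Z)$ acts on $\R^{2n}$ through the fundamental representation and preserves both the standard lattice $\Z^{2n}$ and the symplectic form $\omega_\frt$.

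The next step is to pass to the fiberwise quotient. Because $\ell$ preserves $\Z^{2n}$, the action of $\Sp_\frt(2n,\Z)$ on $\R^{2n}$ descends to an action on the torus group $T\eqdef\R^{2n}/\Z^{2n}$ by automorphisms of $T$ fixing the identity element. I would then check that the natural map
\be
Q\times_{\ell}\bigl(\R^{2n}/\Z^{2n}\bigr)\longrightarrow \cS/\cL\, ,\qquad [q,[v]]\mapsto [\,[q,v]\,]\, ,
\ee
is a well-defined isomorphism of fiber bundles: it is visibly fiberwise bijective, and it is smooth with smooth inverse because the quotient map $\cS\to\cS/\cL$ is a fiberwise covering and $\cL$ is an embedded lattice sub-bundle. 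This exhibits $\cA(\bDelta)=\cS/\cL$ as the associated bundle $Q\times_{\ell}T$, hence as a smooth bundle of $2n$-dimensional torus groups, with zero section $s_0$ induced by the zero section of $\cS$.

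It then remains to identify the structure-group reduction and the fiberwise symplectic data. Since $\cA(\bDelta)$ is associated to the principal $\Sp_\frt(2n,\Z)$-bundle $Q$ through the torus-automorphism action just described, its structure group reduces from $\GL(2n,\Z)$ to $\Sp_\frt(2n,\Z)$, as required. Moreover $\Sp_\frt(2n,\Z)$ preserves $\omega_\frt$ by definition, so the descended pairing equips each fiber with the structure of an integral symplectic torus group of type $\frt$ in the sense of Appendix \ref{app:symp}, compatibly across the bundle. As $M$ is connected the type is constant and equals $\frt$, which completes the argument.

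As a cross-check, and to make the construction fully explicit, I would also record the direct local-frame description: the $\cD$-flat integral frames $(e_1^{(\alpha)},\dots,e_{2n}^{(\alpha)})$ of $\cS|_{U_\alpha}$ furnished by the remark following Proposition \ref{prop:obstructiondiracsys} trivialize $\cS|_{U_\alpha}\cong U_\alpha\times\R^{2n}$ carrying $\cL|_{U_\alpha}$ to $U_\alpha\times\Z^{2n}$, whence $\cA(\bDelta)|_{U_\alpha}\cong U_\alpha\times(\R^{2n}/\Z^{2n})$, with transition functions the constant matrices $T^{(\alpha\beta)}\in\Sp_\frt(2n,\Z)$ acting as symplectic torus automorphisms. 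The only genuinely delicate point in either route is the verification that the set-theoretic fiberwise quotient $\cS/\cL$ carries a compatible smooth bundle structure; the associated-bundle identification above is designed precisely to make this automatic, so I expect it to be the crux that deserves careful writing.
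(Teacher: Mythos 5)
Your proposal is correct, and it reaches the same conclusion as the paper but organizes the argument through different machinery. The paper's own proof is a direct descent argument: it observes that the fiberwise quotient $\cS/\cL$ is a bundle of torus groups with zero section inherited from $\cS$, that the symplectic pairing $\omega$ descends to translation-invariant symplectic forms on the fibers, and that the parallel transport of $\cD$ (which preserves both $\cL$ and $\omega$) induces a flat Ehresmann connection on the quotient whose holonomy lies in $\Sp_\frt(2n,\Z)$, whence the structure group reduction. You instead reconstruct everything as bundles associated to the frame bundle $Q=\Fr(\cL)$ of the Siegel system (via Propositions \ref{prop:PrinSiegel} and \ref{prop:ZDelta}) and perform the quotient at the level of the model fiber $\R^{2n}\to\R^{2n}/\Z^{2n}$, so that the reduction of structure group and the fiberwise integral symplectic torus group structure become tautological properties of the associated-bundle construction. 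The two routes encode the same underlying fact --- the monodromy of the integral duality structure preserves the lattice and the symplectic form --- but yours buys something the paper glosses over with ``it is clear'': the existence of a smooth bundle structure on the set-theoretic quotient $\cS/\cL$, which in your formulation is automatic from the identification $\cS/\cL\simeq Q\times_{\ell}\bigl(\R^{2n}/\Z^{2n}\bigr)$, whereas the paper's descent argument is shorter and stays intrinsic to $\bDelta$ without passing through a choice of principal bundle presentation.
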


\begin{proof}
It is clear that $\cA(\bDelta)$ is a fiber bundle of even-dimensional
torus groups, whose zero section $s_0$ is inherited from the zero
section of $\cS$. The fiberwise symplectic pairing $\omega$ of $\cS$
descends to a translation-invariant collection of symplectic forms on
the fibers of $\cA(\bDelta)$, making the latter into integral
symplectic torus groups of type $\frt$. Since the parallel transport
of $\cD$ preserves both $\cL$ and $\omega$, this bundle of torus
groups inherits a flat Ehresmann connection which preserves both its
symplectic structure and the image of the section $s_0$ and whose
holonomy group reduces to $\Sp_\frt(2n,\Z)$. In particular, the
structure group of $\cA(\bDelta)$ reduces to $\Sp_\frt(2n,\Z)$.
\end{proof}

\noindent As explained in Appendix \ref{app:symp}, any integral
symplectic torus group $\bA=(A,\Omega)$ of type $\frt$ determines an
integral symplectic space $(H_1(A,\R),\omega,H_1(A,\Z))$, where
$\omega$ is the cohomology class of $\Omega$, viewed as a symplectic
pairing on the vector space $H_1(A,\R)$. 

\begin{prop}
Given a bundle $\cA$ of integral symplectic torus groups of type
$\frt$ defined on $M$, let $\cS_\cA$ be the vector bundle with fiber
at $m\in M$ given by $H_1(\cA_m,\R)$ and $\cL_\cA$ be the bundle of
discrete Abelian groups with fiber at $m\in M$ given by
$H_1(\cA_m,\Z)$. Moreover, let $\omega_\cA$ be the fiberwise
symplectic pairing defined on $\cS_\cA$ through:
\be
\omega_{\cA,m}=\omega_m\, , \quad \forall\,\, m\in M~~,
\ee
where $\omega_m$ is the cohomology class of the symplectic form
$\Omega_m$ of the fiber $\cA_m$. Then the flat Ehresmann connection
of $\cA$ induces a flat linear connection $\cD_\cA$ on $\cS_\cA$ which
makes the quadruplet:
\be
\bDelta(\cA)\eqdef (\cS_\cA,\omega_\cA,\cD_\cA,\cL_\cA)\, ,
\ee
into an integral duality structure of type $\frt$ defined on $M$.
\end{prop}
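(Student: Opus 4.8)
The plan is to produce $\bDelta(\cA)$ by transporting the fiberwise first-homology functors through the flat Ehresmann connection of $\cA$, and then to recognize the resulting lattice bundle as a Siegel system so that Proposition \ref{prop:ZDelta} applies.

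First I would record what the flat Ehresmann connection of $\cA$ provides. Its parallel transport $U_\cA:\Pi_1(M)\rightarrow \Phi(\cA)$ depends only on the homotopy class $\bc\in \Pi_1(M)(m,m')$ (by flatness) and sends $\cA_m$ to $\cA_{m'}$ through an isomorphism of integral symplectic torus groups: it preserves the group structure, the distinguished zero section $s_0$, and the translation-invariant symplectic form $\Omega$, since the structure group is $\Sp_\frt(2n,\Z)$. Applying the covariant functors $H_1(-,\Z)$ and $H_1(-,\R)$ fiberwise then yields transports
\begin{equation*}
U_{\cL_\cA}(\bc)\eqdef H_1(U_\cA(\bc),\Z)\, ,\quad U_{\cS_\cA}(\bc)\eqdef H_1(U_\cA(\bc),\R)
\end{equation*}
which again depend only on $\bc$ and are compatible with the inclusion $H_1(\cA_m,\Z)\hookrightarrow H_1(\cA_m,\R)$. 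By functoriality these are the parallel transports of flat Ehresmann connections; the one on $\cS_\cA$ is $\R$-linear and is the desired $\cD_\cA$, while its restriction preserves $\cL_\cA$. Smoothness and local triviality are transparent in the associated-bundle picture $\cA\simeq \hM\times_\rho(\R^{2n}/\Z^{2n})$: since the $\Sp_\frt(2n,\Z)$-action on the torus induces the fundamental action on $H_1\simeq \Z^{2n}$, one obtains $\cL_\cA\simeq \hM\times_\rho \Z^{2n}$ and $\cS_\cA\simeq \hM\times_\rho\R^{2n}$, where $\rho=\rho_{m}(\cA)$.

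Next I would check the two remaining structural requirements. Because the homology of a torus is torsion-free, the universal-coefficient map gives a natural identification $H_1(\cA_m,\R)=H_1(\cA_m,\Z)\otimes_\Z\R$, so $\cS_\cA=\cL_\cA\otimes_\Z\R$ and $\cL_\cA$ is a full lattice in each fiber. That $U_\cA(\bc)$ preserves $\Omega$ implies it preserves the cohomology class $[\Omega]$, hence $U_{\cS_\cA}(\bc)$ preserves $\omega_\cA$; thus $\cD_\cA$ preserves both $\omega_\cA$ and the lattice $\cL_\cA$, so that $\cL_\cA$ is a Dirac system for $(\cS_\cA,\omega_\cA,\cD_\cA)$. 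By the torus-group/integral-symplectic-space correspondence of Appendix \ref{app:symp}, each fiber $(H_1(\cA_m,\R),\omega_m,H_1(\cA_m,\Z))$ is an integral symplectic space of type $\frt$; since $M$ is connected and the fibers are mutually isomorphic through parallel transport, this common type equals $\frt$. Equivalently, $\cL_\cA$ is a Siegel system of type $\frt$ in the sense of condition (c) of the Remark following the definition of Siegel systems.

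Having arrived at a Siegel system, the cleanest way to finish is to invoke Proposition \ref{prop:ZDelta} with $Z=\cL_\cA$: it yields a unique integral duality structure of type $\frt$ with underlying bundle $\cL_\cA\otimes_\Z\R=\cS_\cA$, lattice $\cL_\cA$, the symplectic pairing induced by $\omega_\cA$, and monodromy connection whose transport is $U_{\cL_\cA}\otimes_\Z U_0$, which coincides with $\cD_\cA$ since the transport of $\cD_\cA$ is $H_1(U_\cA,\R)=U_{\cL_\cA}\otimes_\Z U_0$. This is exactly $\bDelta(\cA)$, completing the proof. The only point demanding care is the very first step, namely verifying that the fiberwise homology functor genuinely converts the flat Ehresmann connection of $\cA$ into a flat linear connection whose parallel transport is $H_1$ of the torus parallel transport; this is where the associated-bundle description does the work, reducing the claim to the elementary observation that $\Sp_\frt(2n,\Z)$ acts on $H_1$ of the torus through its fundamental representation.
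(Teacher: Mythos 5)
Your proposal is correct and follows essentially the same route as the paper, whose proof is a one-line statement that $\cD_\cA$ is the connection induced by the flat Ehresmann connection of $\cA$ on the bundle of first homology groups of the fibers, preserving $\cL_\cA$, with the remaining verifications declared immediate. Your write-up simply fills in those "immediate" steps (functoriality of fiberwise $H_1$, the associated-bundle description $\cA\simeq \hM\times_\rho(\R^{2n}/\Z^{2n})$ for smoothness, the universal-coefficient identification $\cS_\cA=\cL_\cA\otimes_\Z\R$, and preservation of $[\Omega]$), with the final appeal to Proposition \ref{prop:ZDelta} being an optional repackaging rather than a different argument.
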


\begin{proof}
$\cD_\cA$ is the connection induced by the flat Ehresmann connection
of $\cD$ on the bundle of first homology groups of the fibers, which
preserves the bundle $\cL_\cA$ of integral first homology groups of
these fibers. The remaining statements are immediate.
\end{proof}

\noindent The two propositions above imply the following statement.

\begin{prop}
\label{prop:cAclassif}
The correspondences $\bDelta \mapsto \cA(\bDelta)$ and $\cA\mapsto
\bDelta(\cA)$ extend to mutually quasi-inverse equivalences between
the groupoid $\Dual^\frt_\Z(M)$ of integral duality structures of type
$\frt$ defined on $M$ and the groupoid $\T_\frt(M)$ of bundles of
integral symplectic torus groups of type $\frt$ defined on $M$.
\end{prop}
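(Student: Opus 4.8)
The plan is to realize both functors as the fiberwise image of a single equivalence of model fibers, and then to exhibit explicit unit and counit natural isomorphisms. The crucial pointwise input, which I would extract from Appendix~\ref{app:symp}, is that the assignments $(V,\omega,\Lambda)\mapsto (V/\Lambda,\Omega)$ and $(A,\Omega)\mapsto (H_1(A,\R),[\Omega],H_1(A,\Z))$ define mutually quasi-inverse equivalences between the groupoid $\Symp_\Z$ of integral symplectic spaces and the groupoid of integral symplectic torus groups. Here one uses the canonical natural isomorphisms $H_1(V/\Lambda,\Z)\cong \Lambda$ and $H_1(V/\Lambda,\R)\cong V$, so that the homology of the torus recovers its lattice and its ambient vector space, together with the Jacobi isomorphism $A\cong H_1(A,\R)/H_1(A,\Z)$ furnished by the exponential map. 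The matching of symplectic data rests on the fact that a translation-invariant form $\Omega$ on $A=V/\Lambda$ is determined by its cohomology class under $H^2(A,\R)\cong\wedge^2 H^1(A,\R)\cong \wedge^2 V^\ast$, so that $[\Omega]$ is literally the pairing $\omega$ on $V=H_1(A,\R)$; integrality and the elementary divisors $\frt$ are then preserved on the nose.

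With this pointwise equivalence in hand, I would exhibit natural isomorphisms $\eta\colon \bDelta\xrightarrow{\sim}\bDelta(\cA(\bDelta))$ and $\epsilon\colon\cA(\bDelta(\cA))\xrightarrow{\sim}\cA$, natural in $\bDelta$ and in $\cA$ respectively, constructed fiberwise. For a fixed integral duality structure $\bDelta=(\cS,\omega,\cD,\cL)$, the component $\eta$ is the bundle map whose fiber at $m$ is the canonical identification $\cS_m\xrightarrow{\sim} H_1(\cS_m/\cL_m,\R)$, which carries $\cL_m$ onto $H_1(\cS_m/\cL_m,\Z)$; for a bundle $\cA$ of integral symplectic torus groups, the component $\epsilon$ has fiber at $m$ the Jacobi isomorphism $H_1(\cA_m,\R)/H_1(\cA_m,\Z)\xrightarrow{\sim}\cA_m$. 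Because these identifications are functorial in the fiber, they intertwine the relevant parallel transports: the flat connection $\cD$ acts by symplectic isomorphisms between fibers, and $H_1(-)$ turns these into the monodromy transport of $\cA(\bDelta)$, and conversely, so that $\eta$ and $\epsilon$ are \emph{parallel} bundle isomorphisms rather than merely fiberwise bijections. Global smoothness is automatic once one observes, as in the constructions of the two preceding propositions, that all the bundles in sight are associated to one and the same principal $\Sp_\frt(2n,\Z)$-bundle, the equivalences above being induced by an $\Sp_\frt(2n,\Z)$-equivariant equivalence of standard model fibers $(\R^{2n},\omega_\frt,\Z^{2n})$ and $\R^{2n}/\Z^{2n}$.

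Functoriality on morphisms is then routine: a based isomorphism of integral duality structures descends to the quotient tori, and an isomorphism of torus bundles induces an isomorphism on fiberwise first homology preserving the integral lattice and the class of the symplectic form. Once $\eta$ and $\epsilon$ are shown to be natural isomorphisms, the two composites $\bDelta(\cA(-))$ and $\cA(\bDelta(-))$ are naturally isomorphic to the respective identity functors, which is exactly the assertion that $\cA(-)$ and $\bDelta(-)$ are mutually quasi-inverse equivalences; no separate verification of the triangle identities is needed for an equivalence of groupoids. I expect the one genuinely delicate step to be the symplectic compatibility of $\eta$ and $\epsilon$: one must check that transporting $\omega$ through $\eta$ yields precisely the cohomology class of the descended translation-invariant form on $\cS/\cL$, and symmetrically that $\epsilon$ identifies $\omega_\cA=[\Omega]$ with the original fiberwise form $\Omega$ on $\cA$. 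All remaining points, namely preservation of type, flatness, and smoothness, reduce to the two previous propositions and to the functoriality of homology.
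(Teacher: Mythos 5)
Your proof is correct and follows essentially the same route as the paper, whose entire ``proof'' is the remark that the two preceding propositions (the fiberwise quotient construction $\cA(\bDelta)=\cS/\cL$ and the fiberwise homology construction $\bDelta(\cA)$) imply the equivalence. Your explicit unit and counit --- the canonical identifications $\cS_m\cong H_1(\cS_m/\cL_m,\R)$ carrying $\cL_m$ onto $H_1(\cS_m/\cL_m,\Z)$, and the exponential isomorphism $H_1(\cA_m,\R)/H_1(\cA_m,\Z)\cong \cA_m$, verified to be parallel, symplectic and smooth via the common associated-bundle description over a principal $\Sp_\frt(2n,\Z)$-bundle --- supply precisely the verification the paper leaves implicit.
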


\noindent Combining everything, we have a chain of equivalences of
groupoids:
\be
\Prin_{\Sp_\frt(2n,\Z)}(M)\simeq \Sg_\frt(M)\simeq \T_\frt(M)\simeq \Dual^\frt_\Z(M)~~.
\ee


\subsection{The exponential sheaf sequence defined by a Siegel system}


Let $Z$ be a Siegel system of type $\frt\in \Div^n$ on $M$. Let
$\cS_Z\eqdef Z\otimes_\Z\underline{\R}$ be the underlying vector
bundle of the integral duality structure $\bDelta(Z)$ defined by $Z$
and $\cA_Z\eqdef \cS_Z/Z$ be the associated bundle of integral
symplectic torus groups. The exponential sequence the torus group
$\R^{2n}/\Lambda_\frt\simeq \R^{2n}/\Z^{2n}$ (where the canonical
symplectic lattice $\Lambda_\frt$ of type $\frt$ is defined in
Appendix \ref{app:symp}) induces a short exact sequence of bundles of
abelian groups (where $j$ is the inclusion):
\be
\label{ExpSeq}
0 \rightarrow Z\stackrel{j}{\rightarrow} \cS_Z\stackrel{\exp}{\rightarrow} \cA_Z\rightarrow
0~~.
\ee
In turn, this induces an exact sequence of sheaves of abelian groups:
\be
\label{SheafExpSeq}
0 \rightarrow \cC(Z)\stackrel{j}{\rightarrow}
\cC^\infty(\cS_Z)\stackrel{\exp}{\rightarrow} \cC^\infty(\cA_Z)\rightarrow
0~~,
\ee
where $\cC(Z)$ is the sheaf of continuous (and hence locally
constant) sections of $Z$.  This induces a long exact sequence in
sheaf cohomology, of which we are interested in the following piece:
\ben
\label{LongSeq}
H^1(M,\cC(Z))\stackrel{j_\ast}{\rightarrow}
H^1(M,\cC^\infty(\cS_Z))\stackrel{\exp_\ast}{\rightarrow}
H^1(M,\cC^\infty(\cA_Z))\stackrel{\delta}{\rightarrow}H^2(M,\cC(Z))\rightarrow
H^2(M,\cC^\infty(\cS_Z))\, ,
\een
where $\delta$ is the Bockstein morphism. The sheaf
$\cC^\infty(\cS_Z)$ is fine and hence acylic since $M$ is paracompact
and $\cS_Z$ is a vector bundle on $M$. Thus:
\be
H^j(M,\cS_Z)=0\, , \quad \forall j>0\, ,
\ee
which by the long sequence above implies that $\delta$ is an isomorphism of
groups. We also have $H^\ast(M,\cC(Z))= H^\ast(M,Z)$, where in the
right hand side $Z$ is viewed as a local system of $\Z^{2n}$
coefficients. Hence we can view $\delta$ as an isomorphism of abelian
groups:
\ben
\label{delta}
\delta: H^1(M,\cC^\infty(\cA_Z)) \xrightarrow{\sim} H^2(M,Z)~~.
\een


\subsection{The lattice of charges of an integral duality structure}


For every integral duality structure $\bDelta=(\Delta,\cL)$ on $M$, the 
sheaf $\cC_\fl^\infty(\cA)$ of {\em flat} smooth local 
sections of the bundle $\cA \eqdef \cA(\bDelta)=\cS/\cL$ of integral
symplectic torus groups defined by $\bDelta$ fits into the short exact
sequence of sheaves of abelian groups:
\be
1\to \cC(\cL) \xrightarrow{j_0} \cC^\infty_\fl(\cS) \xrightarrow{\exp} \cC^\infty_\fl(\cA) \to 1\, ,
\ee
where $j$ is the inclusion. This induces a long exact sequence in
sheaf cohomology, of which we are interested in the following terms:
\be
\ldots \rightarrow H^1(M,\cC^\infty_\fl(\cA))\xrightarrow{\delta_0} H^2(M, \cC(\cL)) \xrightarrow{j_{0\ast}} H^2(M,\cC^\infty_\fl(\cS))\xrightarrow{\exp_\ast}  H^2(M,\cC^\infty_\fl(\cA))\rightarrow \ldots~~,
\ee
where $\delta_0$ is the Bockstein morphism.
Notice that $H^\ast(M,\cC(\cL))\simeq H^\ast(M,Z)$, where $Z=\cL$ is the
Siegel system defined by $\cL$, which we view of a local system of $\Z^{2n}$
coefficients. Moreover, we have $ H^2(M,\cC^\infty_\fl(\cA))=H^2(M,\cA_\disc)$, the
right hand side being the cohomology with coefficients in the local system
defined by $\cA$ when the fibers of the
latter are endowed with the {\em discrete} topology. Since
$H^\ast(M,\cC_\fl^\infty(\cS))=H^\ast_\cD(M,\cS)$, the sequence
above can be written as:
\ben
\label{FlatHExpSeq}
\ldots \rightarrow H^1(M,\cA_\disc)\xrightarrow{\delta_0} H^2(M,Z) \xrightarrow{j_{0\ast}} H^2_\cD(M,\cS)\xrightarrow{\exp_\ast}  H^2(M,\cA_\disc)\rightarrow \ldots
\een

\noindent
Denote by $H^2(M,Z)^\tf\subset H^2(M,Z)$ the torsion free part of $H^2(M,Z)$.

\begin{definition}
The lattice:
\be
L_\bDelta\eqdef j_{0\ast}(H^2(M,Z))=j_{0\ast}(H^2(M,Z)^\tf)\subset H^2_\cD(M,\cS)
\ee
is called the {\em lattice of charges} defined the integral duality
structure $\bDelta$. Elements of this lattice are called
\emph{integral cohomology classes} or {\em charges}.
\end{definition}

\begin{prop}
\label{prop:HcDiso}
There exists a natural isomorphism:
\ben
\label{cDiso}
H^k_{\cD}(M,\cS)\simeq H^k(M,Z)\otimes_{\Z[\bpi]} \R\simeq H^k(M,Z)^\tf\otimes_{\Z[\bpi]} \R~~,
\een
for all $k$. In particular, the kernel of $j_{0\ast}$ coincides with the
torsion part of $H^2(M,Z)$ and $j_\ast(H^2(M,Z))$ is
a full lattice in $H^2_\cD(M,\cS)$.
\end{prop}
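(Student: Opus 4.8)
The plan is to reduce $H^\ast_\cD(M,\cS)$ to cohomology with local coefficients and then apply flat base change along $\Z\hookrightarrow\R$. First I would use the identification $H^\ast_\cD(M,\cS)\simeq H^\ast(M,\cC^\infty_\fl(\cS))$ already established for the twisted de Rham complex, and observe that the locally constant sheaf $\cC^\infty_\fl(\cS)$ of $\cD$-flat sections is exactly the sheaf of the real local system $Z\otimes_\Z\R$, since $\cS=Z\otimes_\Z\underline{\R}$ with $\cD$ induced from the monodromy of $Z$. Thus $H^k_\cD(M,\cS)\simeq H^k(M,Z\otimes_\Z\R)$, cohomology in the local system $Z\otimes_\Z\R$, and under the inclusion $\cL=Z\hookrightarrow\cS$ of flat sections the morphism $j_{0\ast}$ of \eqref{FlatHExpSeq} becomes the map on local coefficient cohomology induced by the coefficient inclusion $Z\hookrightarrow Z\otimes_\Z\R$.

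The core is the base-change isomorphism $H^k(M,Z)\otimes_\Z\R\simeq H^k(M,Z\otimes_\Z\R)$. On homology this is immediate and unconditional: writing $\hM$ for the universal cover and $\bpi=\pi_1(M,m_0)$, associativity of tensor products gives $C_\ast(\hM)\otimes_{\Z[\bpi]}(Z\otimes_\Z\R)=\bigl(C_\ast(\hM)\otimes_{\Z[\bpi]}Z\bigr)\otimes_\Z\R$, and since $\R$ is a flat $\Z$-module the exact functor $-\otimes_\Z\R$ commutes with homology. For cohomology the analogous step requires commuting $-\otimes_\Z\R$ past $\Hom_{\Z[\bpi]}(C_\ast(\hM),-)$, and this is the step I expect to be the main obstacle: it is a degreewise isomorphism only when the chains $C_k(\hM)$ are finitely generated over $\Z[\bpi]$, whereas $-\otimes_\Z\R$ need not pass through infinite products. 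I would resolve this under the finite-type hypothesis on $M$ (which is in any case what makes the final ``full lattice'' claim literally meaningful, since $H^2_\cD(M,\cS)$ is then finite-dimensional), or, when $M$ is closed, by Poincar\'e duality for the oriented four-manifold $M$, which transports the statement to homology where flat base change holds freely. Since the $\bpi$-actions on $H^k(M,Z)$ and on $\R$ are trivial, $-\otimes_\Z\R$ agrees with $-\otimes_{\Z[\bpi]}\R$, matching the form in the statement, and the cochain-level construction is manifestly natural in the coefficient data.

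Finally I would read off the ``in particular'' clause. The elementary identity $T\otimes_\Z\R=0$ for the torsion subgroup $T\subset H^2(M,Z)$ yields the second isomorphism $H^2(M,Z)\otimes_\Z\R\simeq H^2(M,Z)^\tf\otimes_\Z\R$. Transporting $j_{0\ast}$ through the first isomorphism identifies it with the map $x\mapsto x\otimes 1$, whose kernel is precisely $T$ (as $H^2(M,Z)^\tf$ injects into $H^2(M,Z)^\tf\otimes_\Z\R$) and whose image is $H^2(M,Z)^\tf$ realized as a full lattice in $H^2_\cD(M,\cS)\simeq H^2(M,Z)^\tf\otimes_\Z\R$. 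This is exactly the asserted description of $\ker j_{0\ast}$ and of the lattice of charges $L_\bDelta$.
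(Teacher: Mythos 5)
You take essentially the same route as the paper: identify $H^\ast_\cD(M,\cS)$ with the cohomology of the local system $Z\otimes_\Z\R$ and then base-change along $\Z\hookrightarrow\R$. The difference is in execution. The paper's proof is a two-line appeal to the universal coefficient theorem for local coefficients of \cite{Greenblatt}, which supplies a short exact sequence $0\to H^k(M,Z)\otimes_{\Z[\bpi]}\R\to H^k_{\cD}(M,\cS)\to \Tor_{\Z[\bpi]}(H^{k+1}(M,Z),\R)\to 0$, and then kills the $\Tor$ term; you instead prove the base-change isomorphism at the (co)chain level. Your version makes visible exactly what the citation hides: identifying $\Hom_{\Z[\bpi]}(C_\ast(\hM),Z)\otimes_\Z\R$ with $\Hom_{\Z[\bpi]}(C_\ast(\hM),Z\otimes_\Z\R)$ degreewise requires the chain groups to be finitely generated over $\Z[\bpi]$, i.e.\ a finite-type hypothesis on $M$; once that holds, flatness of $\R$ over $\Z$ finishes the argument with no $\Tor$ term at all. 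Your derivation of the ``in particular'' clause (kernel of $j_{0\ast}$ equals the torsion subgroup, image a full lattice) is correct and agrees with the paper.

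The hypothesis you add is not an artifact of your method: some finiteness is genuinely necessary. If $M$ is an open four-manifold of infinite topological type, say with $H_1(M,\Z)\simeq\bigoplus_{i\in\N}\Z$ and $Z$ trivial, then $H^1(M,Z)$ is a Baer--Specker-type product $\prod_{i\in\N}\Z^{2n}$ while $H^1_{\cD}(M,\cS)\simeq\prod_{i\in\N}\R^{2n}$, and the natural map $\bigl(\prod_i\Z\bigr)\otimes_\Z\R\to\prod_i\R$ is injective but not surjective (its image consists of sequences whose entries lie in a finitely generated subgroup of $\R$). So \eqref{cDiso} fails there, and the paper's stated justification for the vanishing of the $\Tor$ term --- that $\R$ with the trivial $\bpi$-action is a free $\Z[\bpi]$-module --- cannot be taken literally: for $\bpi\neq 1$ that module is neither free nor flat over $\Z[\bpi]$ (e.g.\ $\Tor_1^{\Z[\bpi]}(\Z,\R)\simeq H_1(\bpi,\R)$, which is nonzero whenever $b_1(\bpi)>0$). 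In other words, the finiteness you flag is also implicitly required by the paper's own argument; your proposal, with the hypothesis stated explicitly (or with the Poincar\'e duality detour for closed $M$), is the version of the proof that actually closes.
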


\begin{proof}
Let $\bpi\eqdef \pi_1(M,m)$ and $\Z[\bpi]$ be the group ring of
$\bpi$.  The universal coefficient theorem for cohomology with local
coefficients of \cite{Greenblatt} gives a short exact sequence:
\be
0\to H^k(M,Z)\otimes_{\Z[\bpi]} \R \to H^k_{\cD}(M,\cS)  \to  \Tor_{\Z[\bpi]}(H^{k+1}(M,Z), \R)\to 0\, ,
\ee
where $\R$ is the $\Z[\bpi]$-module corresponding to the trivial
representation of $\bpi$ in $\R$. Since the latter module is free, we
have $\Tor_{\Z[\bpi]}(H^{k+1}(M,Z), \R)=0$ and the sequence above gives the
natural isomorphism \eqref{cDiso}.
\end{proof}

\begin{remark}
We have a commutative diagram with exact rows:
\be
\label{diag1}
\scalebox{1.0}{
\xymatrix{
0 \ar[r] & \cC(Z)~\ar[d]^{\id} \ar[r]^{j_0}~
&~\cC_\fl^\infty(\cS_Z) ~\ar[r]^{~~\exp} \ar[d]^{\tau}
&\cC_\fl^\infty(\cA_Z) \ar[d]^{\iota}~\ar[r] & 0\\ 0 \ar[r]~ &
~\cC(Z)~\ar[r]^j& ~\cC^\infty(\cS_Z) ~\ar[r]^{~~\exp}&
\cC^\infty(\cA_Z) \ar[r]& 0\\ } }~~
\ee
where $j_0,j$ and $\tau,\iota$ are inclusions. In turn, this
induces a commutative diagram with exact rows:
\be
\label{diag2}
\scalebox{1}{ \xymatrix{ &
H^1(M,Z)~\ar[d]^{\id} \ar[r]^{j_{0\ast}}~
&~H^1_{\cD_P}(M,\cS_Z)\ar[d]^{\tau_\ast} ~\ar[r]^{\exp_\ast} &
H^1(M,\cA_{Z,\disc}) \ar[d]^{\iota_\ast}~\ar[r]^{\delta_0} &
H^2(M,Z)\ar[d]^{\id}\ar[r]^{j_{0\ast}}
&~H^2_{\cD_P}(M,\cS_Z)\ar[d]^{\tau_\ast} \\
 &~H^1(M,Z)~\ar[r]^{j_\ast}& ~0
~\ar[r]^{\exp_\ast}& H^1(M,\cC^\infty(\cA_Z))\ar[r]^{\delta}& H^2(M,Z)
\ar[r]^{j_\ast} & 0 \\ } }
\ee
In particular, we have $\delta_0=\delta\circ\iota_\ast$.
\end{remark}


\subsection{The DSZ integrality condition}


Let $(M,g)$ be an oriented and connected Lorentzian four-manifold.
Given an integral duality structure $\bDelta=(\Delta,\cL)$, we
implement the DSZ condition by restricting the configuration space
$\Conf(M,\Delta)$ to a subsets determined by the charge lattice
$L_\bDelta$. We will show in later sections that integral field
strengths are adjoint curvatures of connections defined on a certain
principal bundle.

\begin{definition}
Let $\bDelta=(\Delta,\cL)$ be an integral duality structure on $(M,g)$
with underlying duality structure $\Delta=(\cS,\omega,\cD)$. The {\em
set of integral electromagnetic field strength configurations} defined
by $\bDelta$ on $M$ is the following subset of $\Conf(M,\Delta)$:
\be
\Conf(M,\bDelta) \eqdef \left\{ \cV \in
\Conf(M,\Delta) \,\, \vert \,\, 2\pi [\cV]_{\cD} \in L_\bDelta\right\}\, ,
\ee
where $[\cV]_{\cD} \in H^2_\cD(M,\cS)$ is the $\dd_\cD$-cohomology
class of the $\cS$-valued two-form $\cV\in \Conf(M,\Delta)$ and
$L_\bDelta\subset H^2_\cD(M,\cS)$ is the lattice of charges defined by
$\bDelta$.
\end{definition}

\begin{definition}
Let $\bXi=(\bDelta,\cJ)$ be an integral electromagnetic structure
defined on $(M,g)$ with electromagnetic structure $\Xi=(\Delta,\cJ)$
and integral duality structure $\bDelta=(\Delta,\cL)$. The {\em set of
integral field strength solutions} defined by $\bXi$ on $(M,g)$ is
the subset of $\Sol(M,g,\Xi)$ defined through:
\be
\Sol(M,g,\bXi)\eqdef \Sol(M,g,\Xi)\cap \Conf(M,\bDelta)
\ee
and hence consists of those elements of $\Conf(M,\bDelta)$ which satisfy
the equations of motion (i.e. the polarized self-duality condition) of
the classical abelian gauge theory defined by $\Xi$.
\end{definition}


\subsection{Integral duality groups}


The DSZ integrality condition restricts the classical duality groups
of Subsection \ref{subsec:classdual} to certain subgroups.

\begin{definition}
Fix an integral duality structure $\bDelta$ on $(M,g)$.
\begin{itemize}
\item The {\em integral unbased pseudo-duality group} defined by
$\bDelta$ is the group $\Aut(\bDelta)\subset \Aut(\Delta)$ formed by
those elements $u\in \Aut(\Delta)$ which satisfy
$u(\cL)=\cL$.
\item The {\em integral unbased duality group} defined by $\bDelta$ is
the subgroup $\Aut(g,\bDelta)$ of $\Aut(\bDelta)$ which covers
$\Iso(M,g)$.
\item The {\em integral duality group} defined by $\bDelta$ is the
subgroup $\Aut_b(\bDelta)$ of $\Aut(\bDelta)$ consisting of those
elements which cover the identity of $M$.
\end{itemize}
\end{definition}

\begin{definition}
Fix an integral electromagnetic structure $\bXi=(\bDelta,\cJ)$ on $(M,g)$.
\begin{itemize}
\item The {\em integral unbased unitary pseudo-duality group} defined
by $\bXi$ is the group:
\be
\Aut(\bXi)\eqdef\{u\in \Aut(\bDelta)~\vert~\cJ_u=\cJ\}
\ee
\item The {\em integral unbased unitary duality group} defined by $\bXi$ is:
\be
\Aut(g, \bXi) \eqdef \left\{ u\in \Aut(\bXi)  \,\, \vert \,\, g_u=g\right\}~~.
\ee
\item The {\em integral unitary duality group} defined by
  $\bXi$ is the subgroup $\Aut_b(\bXi)$ of $\Aut(\bXi)$ consisting of those
  elements which cover the identity of $M$.
\end{itemize}
\end{definition}

\noindent It is easy to check that $\A_u$ with $u$ belonging to the
groups defined above restrict to transformations similar to those of
Subsection \ref{subsec:classdual} between the sets of {\em integral}
configurations and solutions. The discrete duality groups introduced
above are the global counterparts of the discrete duality group
considered in the physics literature on local abelian gauge
theory. The latter is usually taken to be $\Sp(2n,\Z)$ due to the
fact that the symplectic lattice of charges appearing in the local 
treatment of abelian gauge theory is traditionally assumed to have 
principal type $\frt=\delta=(1,\ldots,1)$. As explained in \cite{gesm} 
and recalled in Appendix \ref{app:local}, $\Sp(2n,\Z)$ is not always 
the correct duality group even in the local case, since the local lattice of
charges need not be principal. In Section \ref{sec:globaldualitygroups}, 
we consider a natural gauge-theoretic extension of the discrete duality 
groups defined above, which clarifies the geometric origin of electromagnetic 
duality.

\subsection{Trivial integral duality structures}
\label{subsec:trivD}

Let $Z$ be a trivializable Siegel system of type $\frt\in \Div^n$ and
$\Delta=(\cS,\omega,\cD)$ be the associated duality structure, where
$\cS=Z\otimes_\Z\underline{\R}$. Pick a flat trivialization
$\tau:\cS\xrightarrow{\sim} M\times S$ of $\cS$, where $S\simeq
\R^{2n}$. This takes $\omega$ into a symplectic pairing $\omega_S$
on the vector space $S$ and restricts to an isomorphism $\tau_0:Z\xrightarrow{\sim}
M\times \Lambda$ between $Z$ and $M\times \Lambda$, where $\Lambda$ is
a full symplectic lattice in $(S,\omega_S)$. Let $A\eqdef S/\Lambda$
be the torus group defined by $(S,\Lambda)$ and $\cA\eqdef \cS/Z$ be
the bundle of torus groups defined by $(\cS,Z)$. Then $\tau$ induces a
trivialization $\bar{\tau}:\cA\xrightarrow{\sim} M\times A$ of $\cA$,
which fits into a commutative diagram of fiber bundles:
\be
\scalebox{1.0}{
\xymatrix{
Z~\ar[d]^{\tau_0} \ar[r]^{j}~&~ \cS ~\ar[r] \ar[d]^{\tau}
& \cA \ar[d]^{\bar{\tau}}\\
M\times \Lambda~\ar[r]^{i}& ~M\times S ~\ar[r]& M\times A\\
} }~~
\ee
where $i$ and $j$ are inclusions. Since $\tau$ identifies $\cD$ with
the trivial connection on $M\times S$, it induces an isomorphism
of graded vector spaces:
\be
\tau_\ast:H^\ast_{\cD}(M,\cS)\xrightarrow{\sim} H^\ast(M,S)
\ee
whose restriction coincides with the isomorphism of graded abelian
groups:
\be \tau_{0\ast}:H^\ast(M,Z)\xrightarrow{\sim}
H^\ast(M,\Lambda)
\ee
induced by $\tau_0$. Moreover, $\bar{\tau}$ induces an isomorphism of
graded abelian groups $\bar{\tau}_\ast:H^\ast(M,\cA)\xrightarrow{\sim}
H^\ast(M,A)$. Hence the diagram above induces an isomorphism of long
exact sequences of abelian groups:
\be
\label{trivdiag}
\scalebox{1.0}{
\xymatrix{
\ldots \ar[r] & \ar[r] H^1(M,\cA) \ar[d]^{\bar{\tau}_\ast}& H^2(M,Z)~\ar[d]^{\tau_{0\ast}} \ar[r]^{j_\ast}~
&~H^2(M,\cS) ~\ar[r] \ar[d]^{\tau_\ast} & H^2(M,\cA) \ar[d]^{\bar{\tau}_\ast}~\ar[r] & \ldots \\
\ldots \ar[r] & \ar[r] H^1(M,A) & ~H^2(M,\Lambda)~\ar[r]^{i_\ast} & ~H^2(M,S) ~\ar[r]&
H^2(M,A) \ar[r]& \ldots \\ } }~~.
\ee
Since $\Lambda$ is free while $\cS$ is a vector space, we have isomorphisms of abelian groups:
\be
H^\ast(M,S)\simeq H^\ast(M,\R)\otimes_\R S~~,~~H^\ast(M,\Lambda)\simeq_\Z H^\ast(M,\Z)\otimes_\Z \Lambda~~.
\ee
and:
\be
H^\ast(M,S)\simeq H^\ast(M,\Lambda)\otimes_\Z \R\simeq H^\ast(M,\Lambda)^\tf\otimes_\Z \R~~.
\ee
The latter agrees with the isomorphism \eqref{cDiso} through the maps $\tau_{0\ast}$ and $\tau_\ast$. 
The map $i_\ast:H^k(M,\Lambda)\rightarrow H^k(M,S)$ is obtained by
tensoring the map $H^k(M,\Z)\rightarrow H^k(M,\R)$ with the inclusion
$\Lambda\subset S$, while its restriction
$i_\ast^\tf:H^k(M,\Lambda)^\tf\rightarrow H^k(M,S)$ is obtained by
tensoring the inclusion $\Lambda\subset S$ with the map
$H^l(M,\Z)^\tf\rightarrow H^k(M,\R)$. Since the latter is injective,
it follows that $i_\ast^\tf$ is injective and hence
$H^l(M,\Lambda)^\tf$ identifies with a full lattice in $H^k(M,S)$.
Since $A$ and $(S,+)$ are divisible groups while $H_0(M,\Z)=\Z$ and
$\Lambda$ are free, the universal coefficient sequence for cohomology
gives isomorphisms:
\begin{eqnarray}
\label{isos}
& H^k(M,S)\simeq \Hom_\Z(H_k(M,\Z),S)=\Hom_\Z(H_k(M,\Z)^\tf,S)\, , \nonumber\\
& H^k(M,\Lambda)\simeq \Hom_\Z(H_k(M,\Z),\Lambda)=\Hom_\Z(H_k(M,\Z)^\tf,\Lambda)
\end{eqnarray}
and:
\be
H^k(M,A)\simeq \Hom_\Z(H_k(M,\Z),A)\simeq \Hom_\Z(H_k(M,\Z)^\tf,A)
\ee
for all $k$. The first of these is the period isomorphism:
\be
\per(\omega)(c):=\per_c(\omega)=c\cap \omega=\int_c\omega\, , \quad \forall \,\, \omega\in H^k(M,S)\, , \quad \forall\,\, c\in H_k(M,\Z)\, .
\ee
The map $i_\ast:H^k(M,\Lambda)\rightarrow H^k(M,\cS)$ agrees with the
injective map induced by the inclusion $\Lambda\hookrightarrow S$
though the isomorphisms \eqref{isos}. Hence:
\be
H^k(M,\Lambda)\simeq
\per^{-1}(\Hom_\Z(H_k(M,\Z),\Lambda))=i_\ast(H^k(M,\Lambda))=\{\omega\in
H^k(M,S)~\vert~\per_c(\omega)\in \Lambda\,\&\, c\in H_k(M,\Z)\}~~.
\ee


\section{Siegel bundles and connections}
\label{sec:associatedbundle}


In this section we use the notion of {\em integral affine symplectic torus}, 
for which we refer the reader to Appendix \ref{app:symp}.


\subsection{Automorphisms of integral affine symplectic tori}


We denote by $\Aff_\frt$ the group of affine symplectomorphisms of the
integral affine symplectic torus $\bfA_\frt=(\fA,\Omega_\frt)$ of type
$\frt\in \Div^n$. Here $\fA$ is the underlying $2n$-dimensional affine
torus (which is a principal homogeneous space for the torus group
$\U(1)^{2n}\simeq \R^{2n}/\Z^{2n}$), while $\Omega_\frt$ is the
integral symplectic form of type $\frt$ on $\fA$, which is
translationally-invariant. As explained in Appendix \ref{app:symp},
$\Aff_\frt$ is a non-compact disconnected Lie group whose connected
component of the identity is the $2n$-dimensional torus group
$\U(1)^{2n}$. We have $\pi_0(\Aff_\frt)\simeq \Sp_\frt(2n,\Z)$ and:
\ben
\label{Aff}
\Aff_\frt \simeq \U(1)^{2n} \rtimes \Sp_\frt(2n,\Z)\, ,
\een
where $\Sp_\frt(2n,\Z)$ acts on $\U(1)^{2n}$ through the restriction of the
group morphism defined in equation \eqref{Taction} of Appendix
\ref{app:symp}, an action which we denote by juxtaposition. Thus
$\Aff_\frt$ identifies with the set $\U(1)^{2n}\times \Sp_\frt(2n,\Z)$, endowed
with the composition rule:
\be
(a_1,\gamma_1)\,(a_2,\gamma_2 ) =(a _1+ \gamma_1 a_2,
\gamma_1\gamma_2)\, , \quad \forall \,\, a_1,a_2\in \U(1)^{2n}\, , \quad \forall\,\,
\gamma_1,\gamma_2\in \Sp_\frt(2n,\Z)\, .
\ee
Let $\ell:\Aff(t)\rightarrow \Diff(\Aff_\frt)$ be the left action of
$\Aff_\frt$ on itself:
\be
\ell(g)(g')\eqdef g g'\, , \quad \forall\,\, g,g'\in \Aff_\frt\, ,
\ee
and let $\mathrm{pr}_1:\Aff_\frt\rightarrow \U(1)^{2n}$ and $\mathrm{pr}_2:
\Aff_\frt\rightarrow \Sp_\frt(2n,\Z)$ be the projections of the set-theoretic 
decomposition $\Aff_\frt=\U(1)^{2n}\times \Sp_\frt(2n,\Z)$. Notice that $\mathrm{pr}_2$ 
is a morphism of groups. Define left actions $\ell_1$ and $\ell_2$ of $\Aff_\frt$
on $\U(1)^{2n}$ and $\Sp_\frt(2n,\Z)$ through:
\be
\ell_1(g)(a)\eqdef \mathrm{pr}_1(\ell(g)(a,1))\, ,\quad \ell_2(g)(\gamma)\eqdef \mathrm{pr}_2(\ell(g)(0,\gamma))\, ~,~\forall g\in \Aff_\frt~,~\forall a\in \U(1)^{2n}~,~\forall \gamma\in \Sp_\frt(2n,\Z)~~.
\ee
Then $\ell_1$ is given by:
\ben
\label{ell1}
\ell_1(a,\gamma)(a')=a+\gamma a'\, , \quad \forall\,\, \gamma \in \Sp_\frt(2n,\Z)\, , \quad \forall\, \, a, a'\in \U(1)^{2n} ~~.
\een
This action is transitive with stabilizer isomorphic with
$\Sp_\frt(2n,\Z)$. On the other hand, $\ell_2$ is given by:
\ben
\label{ell2}
\ell_2(a,\gamma)(\gamma')=\gamma\gamma'=p_2(a,\gamma)\gamma'\, , \quad \forall \,\, \gamma,\gamma' \in \Sp_\frt(2n,\Z)\, , \quad \forall\,\, a\in \U(1)^{2n}\, 
\een
and is transitive with stabilizer isomorphic to $\U(1)^{2n}$. This gives
the right-split short exact sequence:
\be
0\rightarrow \U(1)^{2n}\rightarrow \Aff_\frt\rightarrow \Sp_\frt(2n,\Z)\rightarrow 1~~.
\ee
Notice that $\ell=\ell_1\times \ell_2$. The Lie algebra $\aff_\frt$ of $\Aff_\frt$ 
is abelian and coincides with the Lie algebra of $\U(1)^{2n}$:
\be
\aff_\frt =\R^{2n}\simeq H_1(\fA_\frt,\R)~~.
\ee
The exponential map $\exp:\aff_\frt\rightarrow \Aff_\frt$ has kernel
$\Lambda_\frt\simeq H_1(\fA_\frt,\Z)$ and image $A$, giving the
exponential sequence:
\ben
\label{ExpSeq0}
0\rightarrow  \Lambda_\frt \rightarrow \aff_\frt\stackrel{\exp}{\rightarrow} \U(1)^{2n} \rightarrow 0~~.
\een

\begin{lemma}
\label{lemma:adjointrep}
The adjoint representation $\Ad\colon \Aff_\frt \to \GL(2n,\R)$ of 
$\Aff_\frt$ coincides with its fundamental linear representation, 
that is:
\ben
\label{adjoint}
\Ad(a,\gamma)(v)=\gamma(v)\, , \quad \forall\,\, (a,\gamma)\in \Aff_\frt\, , \quad \forall \,\, v\in \R^{2n}\, .
\een
In particular, we have $\Ad=j \circ \mathrm{pr}_2$, where 
$j:\Sp_\frt(2n,\Z)\rightarrow \GL(2n,\R)$ is the fundamental 
representation of $\Sp_\frt(2n,\Z)$.
\end{lemma}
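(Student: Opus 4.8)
The plan is to compute the adjoint representation directly from its definition as the differential at the identity of the conjugation action, which is legitimate even though $\Aff_\frt$ is disconnected: for any $g\in \Aff_\frt$ the inner automorphism $C_g(h)\eqdef g h g^{-1}$ is a smooth automorphism fixing the identity, so that $\Ad(g)\eqdef \dd(C_g)_e$ defines a representation $\Ad\colon \Aff_\frt\to \GL(\aff_\frt)=\GL(2n,\R)$ on the Lie algebra $\aff_\frt=\R^{2n}$ of the identity component $\U(1)^{2n}$. First I would record that, under the exponential sequence \eqref{ExpSeq0}, a tangent vector $v\in \aff_\frt=\R^{2n}$ is represented by the one-parameter subgroup $t\mapsto (\exp(tv),1)$, which lies entirely inside the identity component.

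Next I would carry out the conjugation computation in the semidirect product \eqref{Aff}. From the multiplication rule $(a_1,\gamma_1)(a_2,\gamma_2)=(a_1+\gamma_1 a_2,\gamma_1\gamma_2)$ one reads off the inverse $(a,\gamma)^{-1}=(-\gamma^{-1}a,\gamma^{-1})$, and a short calculation gives, for every $(b,\delta)\in\Aff_\frt$,
\begin{equation*}
(a,\gamma)\,(b,\delta)\,(a,\gamma)^{-1}=\big(a+\gamma b-\gamma\delta\gamma^{-1}a,\;\gamma\delta\gamma^{-1}\big)\, .
\end{equation*}
Specializing to $(b,\delta)=(\exp(tv),1)$, the second component becomes trivial and the translational contributions cancel, so that
\begin{equation*}
C_{(a,\gamma)}(\exp(tv),1)=(\gamma\exp(tv),1)\, ,
\end{equation*}
where $\gamma\exp(tv)$ denotes the action \eqref{ell1} of $\gamma\in\Sp_\frt(2n,\Z)$ on $\U(1)^{2n}$ by juxtaposition.

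The final step uses the fact that this action is the descent to $\U(1)^{2n}=\R^{2n}/\Lambda_\frt$ of the fundamental linear action of $\gamma$ on $\R^{2n}$: since $\gamma$ preserves the lattice $\Lambda_\frt\simeq\Z^{2n}$, the descended automorphism is covered by the linear map $\gamma$ under $\exp$, i.e. $\gamma\exp(tv)=\exp(t\,\gamma(v))$. Differentiating $t\mapsto(\exp(t\,\gamma(v)),1)$ at $t=0$ therefore yields $\Ad(a,\gamma)(v)=\gamma(v)$, which is exactly \eqref{adjoint}; as this is independent of $a$ and equals $j(\gamma)(v)$ with $\gamma=\mathrm{pr}_2(a,\gamma)$, one concludes $\Ad=j\circ\mathrm{pr}_2$. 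The only real subtlety — the point I would justify rather than assert — is precisely this intertwining of $\exp$ with the torus action, together with the observation that the disconnectedness of $\Aff_\frt$ is harmless because $\Ad$ is defined through conjugation and the translational part $a$ contributes nothing to the differential, conjugation by a pure translation being trivial on the abelian identity component.
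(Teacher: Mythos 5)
Your proof is correct and follows essentially the same route as the paper: conjugate a curve in the identity component $\U(1)^{2n}$ by an arbitrary element $(a,\gamma)$, observe that the translational part cancels leaving $(\gamma\exp(tv),1)$, and differentiate at $t=0$. The only difference is one of care, not of method — you explicitly justify the intertwining $\gamma\exp(tv)=\exp(t\,\gamma(v))$ via the lattice-preserving lift of the action \eqref{Taction}, a point the paper's proof uses implicitly when it writes $\frac{\dd}{\dd t}(x_2\,\gamma(t))\vert_{t=0}=x_2(v)$.
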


\begin{proof}
Let $\alpha = (\gamma,1) \colon I\to \Aff(\frt) =\U(1)^{2n}\times \Sp_\frt(2n,\Z)$ be a smooth path on $\Aff(\frt)$ such that $\alpha(0) = \mathrm{Id}$. Set:
\be
\frac{\dd}{\dd t} \alpha(t)\vert_{t=0} = v\in \R^{2n}\, .
\ee
	
\noindent
For every $x=(x_1,x_2)\in \Aff(\frt) = \U(1)^{2n}\times\Sp_\frt(2n,\Z)$ we have:
\be
x \, \alpha(t) \, x^{-1} = (x_1,x_2)\, (\gamma, 1) \, (- x^{-1}_2 x_1, x_2^{-1}) = (x_2  \gamma , 1)\, .
\ee
	
\noindent
Hence:
\be
\frac{\dd}{\dd t} (x\,\alpha(t)\, x^{-1})\vert_{t=0} = x_2(v)\, ,
\ee
	
\noindent
which immediately implies:
\be
\Ad(x) = j\circ \mathrm{pr}_2(x)\, ,
\ee
	
\noindent
for every $x\in\Aff(\frt)$ and hence we conclude. 
\end{proof}


\subsection{Siegel bundles}


Let $M$ be a connected manifold.

\begin{definition}
\label{def:affinesymplecticb}
A {\em Siegel bundle} $P$ of rank $n$ and type $\frt\in \Div^n$ is a
principal bundle on $M$ with structure group $\Aff_\frt$. An {\em
isomorphism of Siegel bundles} is a based isomorphism of principal bundles.
\end{definition}

\noindent Let $\Sieg(M)$ be the groupoid of Siegel bundles defined on
$M$ and $\Sieg_\frt(M)$ be the full subgroupoid of Siegel bundles of type $\frt$.
Fix a Siegel bundle $P$ of type $\frt\in \Div^n$, whose projection
we denote by $\pi$. We introduce several fiber bundles 
associated to $P$.


\subsubsection{The bundle of integral affine symplectic tori defined by $P$}


\begin{definition}
A fiber bundle $\mathfrak{A}$ defined on $M$ is called a {\em bundle
  of integral affine symplectic tori of rank $n$} if its fibers are
$2n$-dimensional tori and the structure group of $\mathfrak{A}$
reduces to a subgroup of $\Aff_\frt$ for some $\frt\in \Div^n$. The
smallest element $\frt_\mathfrak{A}\in \Div^n$ with this property is
called the {\em type} of $\mathfrak{A}$.
\end{definition}

\noindent Notice that bundles of integral symplectic torus groups
coincide with those bundles of integral affine symplectic tori which
admit a smooth global section. Indeed, such a section gives a further
reduction of structure group from $\Aff_\frt$ to
$\Sp_\frt(2n,\Z)$. Given a Siegel bundle $P$ of type $\frt\in \Div^n$
defined on $M$, the fiber bundle:
\be
\mathfrak{A}(P)\eqdef P\times_{\ell_1} \U(1)^{2n}
\ee
associated to $P$ through the action \eqref{ell1} is a bundle of
integral affine symplectic tori of type $\frt$. The fibers of the
latter admit integral symplectic forms of type $\frt$ which vary
smoothly over $M$. 
The group $\Sp(V,\omega)$ acts freely and transitively on
the set $\Fr(V,\omega,\Lambda)$ of integral symplectic bases of
any integral symplectic space $(V,\omega,\Lambda)$.
Any bundle $\mathfrak{A}$ of integral affine symplectic tori of type $\frt$ is
associated through the action $\ell_1$ to its Siegel bundle $P(\mathfrak{A})$
of {\em unpointed torus symplectic frames}, which has type $\frt$ and
whose fiber at $m\in M$ is defined through:
\be
P(\mathfrak{A})_m \eqdef \Fr(H_1(\mathfrak{A}_m,\R),H_1(\mathfrak{A}_m,\Z),\omega_m)\times
\mathfrak{A}_m~~.
\ee
Here $\omega_m\eqdef [\Omega_m]\in H^2(\mathfrak{A}_m,\R)\simeq \wedge^2 H_1(\mathfrak{A}_m,\R)^\vee$
is the cohomology class of the symplectic form $\Omega_m$ of $\mathfrak{A}_m$,
viewed as a symplectic pairing defined on $H_1(\mathfrak{A}_m,\R)$.  More
precisely, we have:

\begin{prop}
\label{prop:SiegelBundlesCorrespondence}
The correspondences $P\rightarrow \mathfrak{A}(P)$ and $\mathfrak{A}\rightarrow
P(\mathfrak{A})$ extend to mutually quasi-inverse equivalences of groupoids
between $\Sieg(M)$ and the groupoid of bundles of integral affine symplectic tori  
and these equivalences preserve type.
\end{prop}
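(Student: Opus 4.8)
The plan is to recognize this as an instance of the standard equivalence between principal $G$-bundles and fiber bundles with an effective homogeneous fiber, applied to $G=\Aff_\frt$ acting on $F=\U(1)^{2n}$ through the action $\ell_1$ of \eqref{ell1}. The properties I would record at the outset are the three features of $\ell_1$ visible above: it is transitive, its stabilizer at $0\in\U(1)^{2n}$ is $\Sp_\frt(2n,\Z)$, and it is \emph{effective}. Effectiveness is the only point not already explicit: if $\ell_1(a,\gamma)=\id$ then evaluating at $0$ gives $a=0$, after which $\gamma v=v$ for all $v$ forces $\gamma=\id$, since $v\mapsto \gamma v-v$ is a continuous $\Z^{2n}$-valued map vanishing at the origin. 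Together with the fact that $\Aff_\frt$ preserves the translation-invariant form $\Omega_\frt$ of type $\frt$ by definition, this supplies all the structure the argument requires.

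First I would check that the two functors land where claimed. The associated bundle $\mathfrak{A}(P)=P\times_{\ell_1}\U(1)^{2n}$ has $\U(1)^{2n}$-torsor fibers, hence $2n$-tori; the $\Aff_\frt$-invariance of $\Omega_\frt$ endows each fiber with a smoothly varying integral symplectic form of type $\frt$, and the structure group reduces to $\Aff_\frt$ by construction, so $\mathfrak{A}(P)$ is a bundle of integral affine symplectic tori whose type is exactly $\frt$ (it cannot drop below $\frt$, since the fiberwise symplectic form has type $\frt$, an intrinsic invariant obstructing any reduction to $\Aff_{\frt'}$ with $\frt'<\frt$). Conversely, on each fiber $P(\mathfrak{A})_m=\Fr(H_1(\mathfrak{A}_m,\R),H_1(\mathfrak{A}_m,\Z),\omega_m)\times\mathfrak{A}_m$ the subgroup $\Sp_\frt(2n,\Z)$ acts freely and transitively on integral symplectic frames while $\U(1)^{2n}$ acts by translation on the torsor $\mathfrak{A}_m$; assembling these through the semidirect structure \eqref{Aff} yields a free and transitive right $\Aff_\frt$-action, and local triviality is inherited from $\mathfrak{A}$ together with a local choice of integral symplectic frame, so $P(\mathfrak{A})$ is a Siegel bundle of type $\frt$. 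Both assignments are manifestly functorial on based isomorphisms.

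The core of the argument is the construction of the unit and counit natural isomorphisms $\mathfrak{A}(P(\mathfrak{A}))\simeq\mathfrak{A}$ and $P(\mathfrak{A}(P))\simeq P$. For the former I would use the fiberwise evaluation map sending a class $[(\phi,p),a]$—with $\phi$ an integral symplectic frame, $p\in\mathfrak{A}_m$ a basepoint and $a\in\U(1)^{2n}$—to $p+\phi_\ast(a)\in\mathfrak{A}_m$, where $\phi$ identifies $\U(1)^{2n}$ with the torus group $H_1(\mathfrak{A}_m,\R)/H_1(\mathfrak{A}_m,\Z)$ and the sum uses the torsor structure of $\mathfrak{A}_m$. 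For the latter, a point $u\in P_m$ supplies the tautological frame and basepoint of $\mathfrak{A}(P)_m$ read off from the trivialization $a\mapsto[u,a]$. I expect the main obstacle to be precisely the well-definedness of the evaluation map: one must verify its invariance under the defining $\ell_1$-equivalence relation—that replacing $(\phi,p)$ by $(\phi,p)\cdot(b,\gamma)$ and $a$ by $\ell_1(b,\gamma)^{-1}a$ leaves $p+\phi_\ast(a)$ unchanged—and that the resulting map is an isomorphism of integral affine symplectic tori intertwining the symplectic forms. Keeping the affine bookkeeping straight, namely how the basepoint $p$ interacts with the $\U(1)^{2n}$-coordinate under a change of frame, is the delicate step. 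Once it is settled, naturality, the triangle identities, and preservation of type follow formally, exactly as in the parallel correspondences of Propositions \ref{prop:PrinSiegel} and \ref{prop:cAclassif}.
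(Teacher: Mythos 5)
Your argument is correct, and it is worth noting that there is essentially nothing in the paper to compare it against: the proposition is stated there without proof, accompanied only by the remark that it parallels the correspondence between affine torus bundles and principal bundles due to Baraglia. Your route — the standard equivalence between principal $G$-bundles and fiber bundles with structure group $G$ acting effectively on the fiber, specialized to $G=\Aff_\frt$ acting on $\U(1)^{2n}$ through $\ell_1$, together with the explicit unit and counit — is exactly the argument that citation gestures at, so you have in effect supplied the missing proof rather than an alternative one. The two verifications you defer do go through. For the first, write the right $\Aff_\frt$-action on $\Fr(H_1(\mathfrak{A}_m,\R),H_1(\mathfrak{A}_m,\Z),\omega_m)\times \mathfrak{A}_m$ explicitly: identifying a pair $(\phi,p)$ with the isomorphism $u\colon \bfA_\frt\to\mathfrak{A}_m$ of integral affine symplectic tori given by $u(a)=p+\phi_\ast(a)$, the action is precomposition with $\ell_1$, i.e. $(\phi,p)\cdot(b,\gamma)=(\phi\circ\gamma,\,p+\phi_\ast(b))$; the invariance check then reads $(p+\phi_\ast(b))+(\phi\circ\gamma)_\ast\bigl(\gamma^{-1}(a-b)\bigr)=p+\phi_\ast(a)$, and your counit $u\mapsto(a\mapsto[u,a])$ is visibly equivariant, hence an isomorphism because any based equivariant map of principal bundles is one.

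The one place where genuine care is needed is the point you treat almost in passing: type preservation. Your argument takes the fiberwise integral symplectic forms to be part of the data of a bundle of integral affine symplectic tori, so that reductions of structure group are required to preserve them. That reading is not optional but necessary for the proposition to be true: under the paper's literal definition (a torus bundle whose structure group merely admits a reduction to a subgroup of some $\Aff_{\frt'}$), the image $\mathfrak{A}(P)$ of the trivial Siegel bundle of type $\frt$ is the trivial torus bundle, whose structure group reduces to the trivial subgroup of $\Aff_{\delta(n)}$, so its ``type'' would be $\delta(n)\neq\frt$ and type preservation would fail. With the structure-preserving interpretation you adopt (which also fixes what the morphisms of the target groupoid must be, something the paper leaves unsaid), the type of the fibers is an isomorphism invariant and your obstruction argument is exactly right.
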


\noindent This statement parallels a similar correspondence which
holds for affine torus bundles (see \cite{Baraglia1} as well as
Theorem 2.2. and Remark 2.3 in \cite{Baraglia2}).


\subsubsection{The Siegel system defined by $P$}


Given a Siegel bundle of type $\frt\in \Div^n$, consider the bundle of
discrete abelian groups defined through:
\be
Z(P)_m\eqdef H_1(\mathfrak{A}(P)_m,\Z)\, , \quad \forall\,\, m\in M~~.
\ee
Since torus translations act trivially on $H_1(\mathfrak{A}(P)_m,\Z)$,
the structure group of $Z(P)$ reduces to $\Sp_\frt(2n,\Z)$. Thus
$Z(P)$ is a Siegel system on $M$. Moreover, $Z(P)$ is isomorphic
with the bundle of discrete abelian groups associated to $P$ through the
projection morphism $p_2:\Aff_\frt\rightarrow \Sp_\frt(2n,\Z)$, when
the latter is viewed as a left action of $\Aff_\frt$ through
automorphisms of the group $(\Z^{2n},+)$.

\begin{definition}
$Z(P)$ is called the {\em Siegel system defined by $P$}.
\end{definition}

\noindent Notice that the the monodromy of $P$ at a point $m\in M$ acts
through automorphisms of the integral symplectic space
$(H_1(\mathfrak{A}(P)_m,\R), H_1(\mathfrak{A}(P)_m,\Z),[\Omega_m])$.
 

\subsubsection{The adjoint bundle and integral duality structure of $P$}


The adjoint bundle $\ad(P)$ of $P$ can be identified with the
tensor product $Z(P)\otimes_\Z \underline{\R}$, whose fiber at $m\in
M$ is given by:
\be
\ad(P)_m=Z(P)_m\otimes_\Z \R\simeq H_1(\mathfrak{A}(P)_m,\R)~~.
\ee
Notice that $\ad(P)$ carries the fiberwise symplectic pairing
$\omega_P$ given by $(\omega_P)_m\eqdef [\Omega_m]$ for all $m\in M$
(see Lemma \ref{lemma:adjointrep}).  Since the Lie algebra of
$\Aff_\frt$ is abelian, the structure group of $\ad(P)$ reduces to
$\Sp_\frt(2n,\Z)$. The Siegel system $Z(P)$ is naturally a sub-bundle
of $\ad(P)$ whose fibers $Z(P)_m=H_1(\mathfrak{A}(P)_m,\Z)$ are full
symplectic lattices with respect to $[\Omega_m]$. The monodromy of
$Z(P)$ induces a unique flat connection $\cD_P$ on $\ad(P)$ whose
parallel transport preserves $Z(P)$. Setting $\cS_P\eqdef \ad(P)$, it
follows that the system:
\be
\bDelta(P)\eqdef (\Delta(P),Z(P))~~,~~\mathrm{where}~~ \Delta(P)\eqdef (\cS_P,\omega_P,\cD_P)\, ,
\ee
is an integral duality structure of type $\frt$, whose underlying duality structure 
is $\Delta(P)$.
  
\begin{prop}
There correspondence defined above extends to an essentially surjective functor:
\be
\bDelta\colon \Sieg(M) \to \Dual_\Z(M)\, ,
\ee
which associates to every Siegel bundle $P$ of type $\frt\in \Div^n$
defined on $M$ the integral duality
structure $\bDelta(P)$, which has type $\frt$.
\end{prop}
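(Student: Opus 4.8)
The plan is to supply the action of $\bDelta$ on morphisms, check functoriality together with preservation of type (the object assignment and the fact that $\bDelta(P)$ is an integral duality structure of type $\frt$ having already been established above), and then prove essential surjectivity by reducing it, through the chain of equivalences assembled in this section, to the statement that every principal $\Sp_\frt(2n,\Z)$-bundle is a pushforward of a Siegel bundle along the splitting $\iota$.

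First I would define $\bDelta$ on morphisms. A morphism $\phi\colon P\xrightarrow{\sim} P'$ in $\Sieg(M)$ is a based isomorphism of principal $\Aff_\frt$-bundles. By naturality of the associated-bundle construction it induces a based isomorphism $\mathfrak{A}(\phi)\colon \mathfrak{A}(P)\xrightarrow{\sim}\mathfrak{A}(P')$ of bundles of integral affine symplectic tori covering $\id_M$. Applying the first-homology functor fiberwise produces compatible isomorphisms $Z(\phi)\colon Z(P)\xrightarrow{\sim} Z(P')$ of Siegel systems and $\ad(\phi)\colon \ad(P)\xrightarrow{\sim}\ad(P')$ of the adjoint bundles, with $\ad(\phi)=Z(\phi)\otimes_\Z\id_{\underline{\R}}$. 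Because $\omega_P,\omega_{P'}$ are the fiberwise cohomology classes $[\Omega_m]$ and $\cD_P,\cD_{P'}$ are the monodromy connections of $Z(P),Z(P')$, the map $\ad(\phi)$ intertwines the symplectic pairings, the flat connections, and the lattices. Hence $\bDelta(\phi)\eqdef \ad(\phi)$ is a based isomorphism of integral duality structures from $\bDelta(P)$ to $\bDelta(P')$, and functoriality is inherited from that of $\mathfrak{A}(-)$ and of the homology functor. That $\bDelta(P)$ has type $\frt$ follows since each fiber $(\ad(P)_m,(\omega_P)_m,Z(P)_m)$ is isomorphic, through any frame in $P_m$, to the standard integral symplectic space of type $\frt$ attached to $\bfA_\frt$, and the type of an integral symplectic space is determined by its isomorphism class.

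For essential surjectivity I would use the right-split short exact sequence $0\to\U(1)^{2n}\to\Aff_\frt\xrightarrow{\mathrm{pr}_2}\Sp_\frt(2n,\Z)\to 1$ and its splitting $\iota\colon\gamma\mapsto(0,\gamma)$. Let $\bDelta'\in\Dual_\Z(M)$ be arbitrary; it has some type $\frt\in\Div^n$, so $\bDelta'\in\Dual_\Z^\frt(M)$. Under $\Dual_\Z^\frt(M)\simeq\Sg_\frt(M)\simeq\Prin_{\Sp_\frt(2n,\Z)}(M)$ it corresponds to a Siegel system $Z'=\cL'$ and to the principal bundle $Q\eqdef\Fr(Z')$. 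Extend the structure group of $Q$ along $\iota$ to obtain the Siegel bundle $P\eqdef Q\times_\iota\Aff_\frt$ of type $\frt$. Since $\mathrm{pr}_2\circ\iota=\id_{\Sp_\frt(2n,\Z)}$, the Siegel system $Z(P)$, which is associated to $P$ through $\mathrm{pr}_2$ acting on $\Z^{2n}$, satisfies $Z(P)\simeq Q\times_\ell\Z^{2n}\simeq Z'$. By Proposition \ref{prop:ZDelta} the integral duality structure is recovered from its Siegel system, so $\bDelta(P)=\bDelta(Z(P))\simeq\bDelta(Z')\simeq\bDelta'$, which proves essential surjectivity.

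The main obstacle is the bookkeeping in the essential-surjectivity step: one must verify that the adjoint/homology assignment $P\mapsto\bDelta(P)$ really inverts, up to isomorphism, the composite $\bDelta'\mapsto Z'\mapsto Q\mapsto \iota_\ast Q$, i.e. that extending along $\iota$ and then projecting along $\mathrm{pr}_2$ returns the original $\Sp_\frt(2n,\Z)$-bundle and its Siegel system. This rests on the identity $\mathrm{pr}_2\circ\iota=\id$ and on the compatibility of the associated-bundle construction with composition of structure-group morphisms; once these are recorded, the conclusion follows from the chain of equivalences $\Prin_{\Sp_\frt(2n,\Z)}(M)\simeq\Sg_\frt(M)\simeq\T_\frt(M)\simeq\Dual^\frt_\Z(M)$ already in hand. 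Note that $\bDelta$ is not claimed to be full or faithful, since a Siegel bundle carries the extra $\U(1)^{2n}$-data invisible to $\bDelta(P)$; essential surjectivity is exactly the assertion to be proved.
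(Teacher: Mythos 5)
Your proposal is correct and follows essentially the same route as the paper: the paper's proof likewise takes the frame bundle $Q=\Fr(\cL)$ of the Siegel system and forms the associated Siegel bundle $P=Q\times_l\Aff_\frt$ (extension of structure group along the splitting $\Sp_\frt(2n,\Z)\hookrightarrow\Aff_\frt$), then asserts $\bDelta(P)=\bDelta$. Your additional verifications — the morphism-level definition of $\bDelta$ via $\ad(\phi)$ and the check that $\mathrm{pr}_2\circ\iota=\id$ recovers the Siegel system — simply fill in steps the paper declares ``clear'' or leaves implicit.
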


\begin{proof}
It is clear that the correspondence extends to a functor. Given
$\bDelta=(\Delta,\cL)\in \Dual_\Z(M)$, denote by $Q$ the frame bundle
of the Siegel system defined by $\cL$, which is a principal
$\Sp_{\frt}(2n,\Z)$-bundle (see Proposition
\ref{prop:PrinSiegel}). Let $P$ be the Siegel bundle associated to
$Q$ through the natural left action $l$ of $\Sp_{\frt}(2n,\Z)$ on
$\Aff_\frt$:
\be
P= Q\times_l \Aff_\frt\, .
\ee
Then $\bDelta(P) = \bDelta$, showing that the functor is essentially surjective. 
\end{proof}


\subsubsection{The bundle of integral symplectic torus groups defined by $P$}


Consider a Siegel bundle $P$ defined on $M$.

\begin{definition}
The {\em bundle of integral symplectic torus groups} defined by $P$ is
the bundle:
\be
\cA(P)=\cA(\bDelta(P))=\ad(P)/Z(P)
\ee
of integral symplectic torus groups defined by the integral duality
structure $\bDelta(P)$.
\end{definition}

\subsubsection{Siegel bundles with trivial monodromy}

\begin{prop}
\label{prop:SimplyConnected} Let $P$ be a Siegel bundle of rank $n$
and type $\frt\in \Div^n$ defined on $M$. Then the following
statements are equivalent:
\begin{enumerate}[(a)]
\item The Siegel system $Z(P)$ has trivial monodromy.
\item $Z(P)$ is trivial as a bundle of discrete Abelian groups.
\item The structure group of $P$ reduces to the torus group
$\U(1)^{2n}$.
\item The structure group of the bundle of integral symplectic affine
tori $\mathfrak{A}(P)$ reduces to $\U(1)^{2n}$.
\item The structure group of the bundle of integral symplectic torus
groups $\cA(P)$ is trivial.
\item The duality structure $\Delta(P)$ is holonomy-trivial.
\end{enumerate}
In this case, $\mathfrak{A}(P)$ identifies with a principal torus bundle 
and $\cA(P)$ is a trivial bundle of integral symplectic torus
groups. Moreover, $P$ is isomorphic with the fiber product
$\mathfrak{A}(P)\times_M \underline{\Gamma}$, where $\underline{\Gamma}$ 
is the trivial $\Sp_\frt(2n,\Z)$-bundle defined on $M$. Thus $P$
identifies with a countable collection of copies of the principal
torus bundle $\mathfrak{A}(P)$, indexed by elements of $\Sp_\frt(2n,\Z)$.
\end{prop}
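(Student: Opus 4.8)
The plan is to show that all six conditions are equivalent to a single \emph{master condition}: triviality of the monodromy representation
\be
\rho \eqdef \mathrm{pr}_2\circ \alpha_{m}(P)\colon \pi_1(M,m)\to \Sp_\frt(2n,\Z)\, ,
\ee
where $\alpha_m(P)\colon \pi_1(M,m)\to \Aff_\frt$ is the monodromy morphism of the flat bundle $P$ and $\mathrm{pr}_2\colon\Aff_\frt\to \Sp_\frt(2n,\Z)$ is the projection onto the component group. The guiding observation is that every object attached to $P$ in the statement is associated to $P$ through a representation factoring through $\mathrm{pr}_2$, so the only obstruction to triviality is always $\rho$. First I would record that, since $Z(P)$ is associated to $P$ through $\mathrm{pr}_2$, its monodromy morphism $\sigma_m(Z(P))$ equals $\rho$; this makes (a) literally the master condition. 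The equivalence (a)$\,\Leftrightarrow\,$(b) then follows from Proposition \ref{prop:PrinSiegel}: under the correspondence between Siegel systems and principal $\Sp_\frt(2n,\Z)$-bundles, $Z(P)$ is trivial iff its frame bundle is trivial, and a principal bundle with discrete structure group is trivial iff its monodromy $\rho$ vanishes (one implication being obvious).

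Next I would treat the three ``reduction'' conditions together. For (c), the quotient $P/\U(1)^{2n}$ is a principal $\Sp_\frt(2n,\Z)$-bundle whose monodromy is exactly $\rho$; a reduction of the structure group of $P$ from $\Aff_\frt$ to its identity component $\U(1)^{2n}$ amounts to a global section of $P/\U(1)^{2n}$, i.e. triviality of this principal $\Sp_\frt(2n,\Z)$-bundle, which holds iff $\rho$ is trivial. For (d), the equivalence of groupoids in Proposition \ref{prop:SiegelBundlesCorrespondence} identifies $\mathfrak{A}(P)$ with $P$ and preserves type, and the structure group of $\mathfrak{A}(P)$ reduces from $\Aff_\frt$ (acting by $\ell_1$) to the translation subgroup $\U(1)^{2n}$ precisely when the linear part of its monodromy vanishes; this linear part is again $\rho$. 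For (e), the bundle of torus groups $\cA(P)$ carries the distinguished zero section and has structure group $\Sp_\frt(2n,\Z)$ with monodromy $\rho$, so it is trivial as a bundle of groups iff $\rho$ is trivial. Finally, for (f), Lemma \ref{lemma:adjointrep} gives $\Ad=j\circ \mathrm{pr}_2$ with $j\colon\Sp_\frt(2n,\Z)\hookrightarrow \GL(2n,\R)$ injective, so the holonomy of $\cD_P$ on $\cS_P=\ad(P)$ is $j\circ \rho$; injectivity of $j$ shows it is trivial iff $\rho$ is. This establishes all six equivalences.

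For the closing assertions I would argue under the assumption that $\rho$ is trivial. Then $\alpha_m(P)$ takes values in $\U(1)^{2n}$, which acts on the fibre of $\mathfrak{A}(P)$ by translations through $\ell_1$; hence $\mathfrak{A}(P)$ is a flat \emph{principal} $\U(1)^{2n}$-bundle, though not necessarily trivial, since it may still carry a nonzero Chern class. Since $\ad(P)$ is now holonomy-trivial and $Z(P)$ is the trivial lattice bundle, the quotient $\cA(P)=\ad(P)/Z(P)$ is the trivial bundle of torus groups $M\times(\R^{2n}/\Z^{2n})$. The remaining decomposition $P\simeq \mathfrak{A}(P)\times_M \underline{\Gamma}$ is the delicate point. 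I would use the split structure $\Aff_\frt\simeq \U(1)^{2n}\rtimes\Sp_\frt(2n,\Z)$ together with the triviality of $Q\eqdef P/\U(1)^{2n}=\underline{\Gamma}$: a global section of $Q$ provides a reduction $P'\subset P$ to $\U(1)^{2n}$ with $\mathfrak{A}(P)\simeq P'$, whence $P\simeq P'\times_{\U(1)^{2n}}\Aff_\frt$, and viewing $\Aff_\frt$ as the left $\U(1)^{2n}$-space $\U(1)^{2n}\times \Sp_\frt(2n,\Z)$ identifies this induced bundle with $P'\times_M\underline{\Gamma}$, exhibiting $P$ as a countable family of copies of $\mathfrak{A}(P)$ indexed by $\Sp_\frt(2n,\Z)$. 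The main obstacle I anticipate is precisely this last identification: one must verify that the semidirect-product multiplication is compatible with the chosen section of $Q$, so that the $\Sp_\frt(2n,\Z)$-labelling of the sheets is globally consistent rather than merely fibrewise.
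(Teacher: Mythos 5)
Your overall strategy is sound and is essentially the paper's: every object in the statement is associated to $P$ through a representation factoring through $\mathrm{pr}_2$ (Lemma \ref{lemma:adjointrep}), so all six conditions collapse to triviality of a single $\Sp_\frt(2n,\Z)$-valued monodromy; your treatment of the final decomposition $P\simeq \mathfrak{A}(P)\times_M\underline{\Gamma}$ is in fact more detailed than the paper's, which leaves that part implicit. However, there is a recurring conceptual error that you must repair: $P$ is \emph{not} a flat bundle, so the ``monodromy morphism of the flat bundle $P$'', $\alpha_m(P)\colon\pi_1(M,m)\to\Aff_\frt$, on which your master condition $\rho=\mathrm{pr}_2\circ\alpha_m(P)$ is founded, does not exist. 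A Siegel bundle is just a principal $\Aff_\frt$-bundle with no preferred flat structure; its connections have curvature representing the generally nonzero class $2\pi j_{0\ast}(c(P))$. What does exist canonically is the monodromy of the \emph{discrete remnant} bundle $\Gamma(P)=P\times_q \Sp_\frt(2n,\Z)\simeq P/\U(1)^{2n}$, which has discrete structure group and is therefore canonically flat, or equivalently the monodromy $\sigma_m(Z(P))$ of the Siegel system $Z(P)$. Since these are exactly the objects you actually invoke in steps (a), (c) and (e), the fix is purely definitional: set $\rho$ equal to the monodromy of $\Gamma(P)$ (equivalently of $Z(P)$), delete all references to a monodromy of $P$ itself (including the ``linear part of the monodromy'' of $\mathfrak{A}(P)$ in step (d), which should instead be phrased as: reductions of $\mathfrak{A}(P)$ correspond to reductions of $P$ because $\ell_1$ is faithful), and the body of your argument goes through unchanged.

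The same misconception produces one genuinely false claim in your closing paragraph: when $\rho$ is trivial, $\mathfrak{A}(P)$ is a principal $\U(1)^{2n}$-bundle, but it is \emph{not} flat in general. A flat principal torus bundle has vanishing real Chern class, hence torsion integral Chern class, whereas the Chern class of $\mathfrak{A}(P)$ identifies with $c(P)$ and can be arbitrary; for instance on $M=\rS^2$ (where $\rho$ is automatically trivial) every class in $H^2(\rS^2,\Z^{2n})\simeq\Z^{2n}$ occurs by Theorem \ref{thm:SiegelClassif}. Your parenthetical ``flat \dots\ though not necessarily trivial, since it may still carry a nonzero Chern class'' is thus internally inconsistent. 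Fortunately flatness is never used: all you need is the identification $\mathfrak{A}(P)\simeq P'$, where $P'\subset P$ is the $\U(1)^{2n}$-reduction furnished by a global section of the trivial bundle $\Gamma(P)$, and your subsequent verification that $P'\times_{\U(1)^{2n}}\Aff_\frt\simeq P'\times_M\underline{\Gamma}$ — including the check that left multiplication by $\U(1)^{2n}$ on $\Aff_\frt\simeq\U(1)^{2n}\times\Sp_\frt(2n,\Z)$ touches only the first factor, while the residual right $\Aff_\frt$-action twists the $\U(1)^{2n}$-factor by the $\Sp_\frt(2n,\Z)$-label — is the correct and complete way to finish.
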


\begin{proof}
The fact that $(a)$ implies $(b)$ follows from the standard characterization 
of flat bundles in terms of holonomy representations of the fundamental group of 
the underlying manifold. If $Z(P)$ is trivial as a bundle of discrete groups then
the holonomy representation preserves a global frame of $Z(P)$, which in turn implies,
using the explicit form of the adjoint representation of $\Aff_{\frt}$, that the 
holonomy representation takes values in $\U(1)^{2n} \subset \Aff_{\frt}$. The
associated holonomy bundle defines a reduction of $P$ to a principal torus bundle with 
structure group $\U(1)^{2n}$. This immediately implies $(d)$, $(e)$ and $(f)$. Since 
the flat connection on $\Delta(P)$ is by definition the real linear extension of
the flat connection of $Z(P)$ the latter has trivial monodromy if and only if
$\Delta(P)$ has trivial monodromy, that is, if and only if $\Delta(P)$ is holonomy-trivial.
This proves $(f) \Rightarrow (a)$.
\end{proof}


\subsection{Classification of Siegel bundles}
\label{subsec:SiegelClassif}


Let $P$ be a Siegel bundle of type $\frt\in \Div^n$ defined on $M$ and
$\bDelta:=\bDelta(P)=(\cS_P=\ad(P),\omega_P,\cD_P,Z(P))$ be the
integral duality structure defined by $P$. The Bockstein isomorphism
\eqref{delta} reads:
\be
\delta: H^1(M,\cC^\infty(\cA(P))) \xrightarrow{\sim} H^2(M,Z(P))~~.
\ee
It was shown in \cite{Baraglia1} that $P$ determines a {\em primary characteristic class} $c'(P)\in
H^1(M,\cC^\infty(\cA(P)))$.
\begin{definition}
The {\em twisted Chen class} of $P$ is:
\be
c(P)\eqdef \delta(c'(P))\in H^2(M,Z(P))~~.
\ee
\end{definition}

\noindent
Recall from Proposition \ref{prop:SiegelBundlesCorrespondence} that
isomorphism classes of Siegel bundles defined on $M$ are in bijection
with isomorphism classes of bundles of integral affine symplectic
tori.  This allows one to classify Siegel bundles by adapting the
classification of affine torus bundles given in \cite[Section
2]{Baraglia1} (see also \cite[Theorem 2.2]{Baraglia2}). Since the
modifications of the argument of loc. cit. are straightforward, we
simply describe the result. Adapting the argument of \cite{Baraglia1}
we obtain:

\begin{thm}
\label{thm:SiegelClassif}
Consider the set:
\be
\Sigma(M) \eqdef \left\{ (Z,c) \, \vert \, Z\in \Ob[\Sg(M)] \,\, \& \,\, c\in H^2(M,Z) \right\}/_{\sim},
\ee
where $(Z,c)\sim (Z',c')$ if and only if there exists an isomorphism
of Siegel systems $\varphi:Z\rightarrow Z'$ such that
$\varphi_\ast(c)=c'$. Then the map:
\be
P\mapsto (Z(P), c(P))
\ee
induces a bijection between the set of isomorphism classes of Siegel
bundles defined on $M$ and the set $\Sigma(M)$.
\end{thm}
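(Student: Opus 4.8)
The plan is to reduce the statement to the classification of bundles of integral affine symplectic tori and then invoke (the straightforward adaptation of) Baraglia's classification of affine torus bundles. By Proposition \ref{prop:SiegelBundlesCorrespondence}, the correspondence $P \mapsto \mathfrak{A}(P)$ is an equivalence of groupoids between $\Sieg(M)$ and the groupoid of bundles of integral affine symplectic tori, so it suffices to classify the latter. Such a bundle has structure group $\Aff_\frt = \U(1)^{2n} \rtimes \Sp_\frt(2n,\Z)$, and the projection $\mathrm{pr}_2:\Aff_\frt \to \Sp_\frt(2n,\Z)$ extracts the monodromy, which is precisely the datum of the Siegel system $Z(P)$. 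The classifying space $\rB\Aff_\frt$ sits in a fibration $K(\Z^{2n},2) \to \rB\Aff_\frt \to K(\Sp_\frt(2n,\Z),1)$ with \emph{trivial} $\kappa$-invariant, as recorded in the Introduction; this triviality is the structural fact that makes a Siegel bundle determined by its monodromy together with a single degree-two class.

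First I would verify that the assignment $P \mapsto (Z(P), c(P))$ is well defined on isomorphism classes and lands in $\Sigma(M)$. An isomorphism $\Phi:P \xrightarrow{\sim} P'$ of Siegel bundles induces an isomorphism of the associated affine torus bundles, hence an isomorphism of Siegel systems $\varphi = Z(\Phi): Z(P) \xrightarrow{\sim} Z(P')$ via its action on fiberwise first homology. Since the primary characteristic class $c'(P) \in H^1(M,\cC^\infty(\cA(P)))$ is natural with respect to such isomorphisms and the Bockstein $\delta$ of \eqref{delta} is natural, we obtain $\varphi_\ast(c(P)) = c(P')$, so $(Z(P),c(P)) \sim (Z(P'),c(P'))$.

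For surjectivity, given a pair $(Z,c)$ with $Z \in \Ob[\Sg(M)]$ and $c \in H^2(M,Z)$, I would use Proposition \ref{prop:ZDelta} to form $\bDelta(Z)$ and its bundle $\cA_Z = \cS_Z/Z$ of integral symplectic torus groups. The Bockstein map $\delta: H^1(M,\cC^\infty(\cA_Z)) \xrightarrow{\sim} H^2(M,Z)$ is an isomorphism (the sheaf $\cC^\infty(\cS_Z)$ being fine, hence acyclic), so there is a unique $c' \eqdef \delta^{-1}(c)$. The existence part of Baraglia's classification then produces a bundle of integral affine symplectic tori with monodromy $Z$ and primary characteristic class $c'$; its Siegel bundle $P$ satisfies $Z(P) = Z$ and $c(P) = \delta(c') = c$. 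For injectivity, suppose $(Z(P),c(P)) \sim (Z(P'),c(P'))$ via an isomorphism $\varphi$ of Siegel systems. Using $\varphi$ to identify the two monodromies, the primary characteristic classes agree under the induced natural isomorphism of the groups $H^1(M,\cC^\infty(\cA(-)))$; the uniqueness part of Baraglia's classification then yields a monodromy-compatible isomorphism $\mathfrak{A}(P) \cong \mathfrak{A}(P')$, whence $P \cong P'$ by Proposition \ref{prop:SiegelBundlesCorrespondence}.

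The main obstacle is purely the adaptation of Baraglia's classification from structure group $\U(1)^{2n} \rtimes \GL(2n,\Z)$ to the subgroup $\U(1)^{2n} \rtimes \Sp_\frt(2n,\Z)$, together with the careful bookkeeping of how an isomorphism $\varphi: Z \to Z'$ of Siegel systems transports the coefficient system $\cA_Z$ and hence the primary class. Because $\Sp_\frt(2n,\Z) \subset \GL(2n,\Z)$ and the fiber $K(\Z^{2n},2)$ together with its monodromy action are simply inherited by restriction, the vanishing of the $\kappa$-invariant carries over unchanged; consequently no higher Postnikov obstruction intervenes and the degree-two class $c(P)$ is a complete invariant once the monodromy is fixed. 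This is the step where one must be most careful, but it requires only minor modifications of \cite{Baraglia1,Baraglia2} rather than any new idea.
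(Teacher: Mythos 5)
Your proposal is correct and follows essentially the same route as the paper: reduction to bundles of integral affine symplectic tori via Proposition \ref{prop:SiegelBundlesCorrespondence}, followed by adaptation of Baraglia's classification of affine torus bundles \cite{Baraglia1,Baraglia2} to the structure group $\U(1)^{2n}\rtimes\Sp_\frt(2n,\Z)$. The paper itself merely asserts that the modifications of Baraglia's argument are straightforward and states the result, so your filling in of the well-definedness, surjectivity and injectivity bookkeeping (together with the observation that the vanishing $\kappa$-invariant is inherited, since the fibration over $K(\Sp_\frt(2n,\Z),1)$ is pulled back from the split one over $K(\GL(2n,\Z),1)$) is a faithful expansion of what the authors intend.
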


\noindent 
A more conceptual explanation of this result is given in
\cite{wa}. Let $\rho:\Sp_\frt(2n,\Z)\rightarrow\Aut(\U(1)^{2n})$
denote the action of $\Sp_\frt(2n,\Z)$ on $\U(1)^{2n}$ and
$\rho_0:\Sp_\frt(2n,\Z)\rightarrow \Aut_\Z(\Z^{2n})$ be the
corresponding action on $\Z^{2n}$. Then the classifying space
of the $\Aff_{\frt}$ is a {\em twisted Eilenberg-McLane space}
$L:=L_{\Sp_\frt(2n,\Z)}(\Z^{2n},2)$ in the sense of
\cite{Gitler}, which is a sectioned fibration over $\rB\Gamma\simeq
K(\Gamma,1)$ whose fibers are homotopy equivalent with $\rB
\U(1)^{2n}\simeq K(\Z^{2n},2)\simeq (\C\P^\infty)^{\times 2n}$.
This space is a homotopy two-type with:
\be
\pi_1(L)=\Sp_\frt(2n,\Z)~~,~~\pi_2(L)=\Z^{2n}~~.
\ee
The results of \cite{Gitler} are used in \cite{wa} to show that
isomorphism classes of principal $\Aff_\frt$-bundles $P$ defined on a
pointed space $X$ are in bijection with isomorphism classes of pairs
$(\alpha,c)$, where $\alpha:\pi_1(X)\rightarrow \Sp_{\frt}(2n,\Z)$ is
a morphism of groups and $c\in H^2(X,Z_\alpha)$, where $Z_\alpha$ is
the local system with fiber $\Z^{2n}$ and monodromy action at the
basepoint of $X$ given by $\rho_0\circ \alpha:\pi_1(X)\rightarrow
\Aut_\Z(\Z^{2n})$.  When $X=M$ is a manifold, this local system
coincides with $Z(P)$, while the cohomology class $c$ coincides with
$c(P)$.


\subsection{Principal connections on Siegel bundles}


Let $P$ be a Siegel bundle of type $\frt\in \Div^n$ defined on a
connected manifold $M$, whose projection we denote by $\pi:P\rightarrow M$.  For
ease of notation, we set $G\eqdef \Aff_\frt$, $\Gamma\eqdef
\Sp_\frt(2n,\Z)$ and $A\eqdef \U(1)^n$. We denote the abelian Lie
algebra of $A$ by $\fa\simeq \R^{2n}$.  Let $\bDelta=(\Delta,Z)$ be
the integral duality structure $\bDelta(P)$ determined by $P$, where
$\Delta=\Delta(P)=(\cS,\omega,\cD)$ with $\cS=\cS_P=\ad(P)$,
$\omega=\omega_P$ and $\cD=\cD_P$ and $Z=Z(P)$ is the Siegel system
determined by $P$. Let:
\be
\Conn(P)=\left\{\cA\in \Omega^1(P,\fa) \,\, \vert \,\,
r_{a,\gamma}^\ast(\cA)=\gamma^{-1} \cA~~\,\&\,~~\cA_y(X^a_y)=a~~\,\forall
y\in P~\,\,\forall \,\, (a,\gamma)\in G\right\}\, ,
\ee
be the set of principal connections on $P$, where $r_g$ denotes the
right action of $g\in G$ on $P$ and we used the fact that the
adjoint representation $\Ad:G\rightarrow \Aut(\fa)$ is given by
\eqref{adjoint}. Let:
\be
\Omega_\Ad(P,\fa)\eqdef \left\{\eta\in \Omega(P,\fa)\,\, \vert \,\,
r_{a,\gamma}^\ast(\eta)=\gamma^{-1}\eta\,\, \& \,\, \iota_X\eta=0\,\, \forall
(a,\gamma)\in G~\forall X\in V(P)\right\}\, , 
\ee
be the set of equivariant horizontal forms on $P$, where $V(P)$ is the
space of vertical vector fields defined on $P$. Then
$\Omega_\Ad(P,\fa)$ is naturally isomorphic with $\Omega(M,\cS)$. In
particular, the curvature $\Omega_\cA=\dd_\cA\cA\in
\Omega^2_\Ad(P,\fa)$ of any principal connection $\cA\in \Conn(P)$
identifies with a $\cS$-valued two-form $\cV_\cA\in \Omega^2(M,\cS)$,
which is the adjoint curvature of $\cA$. Since $\fa$ is an abelian Lie
algebra, we are in the situation considered in \cite{wa}. Hence the
covariant exterior derivative defined by $\cA$ restricts to the
ordinary exterior derivative on the space $\Omega_\Ad(P,\fa)$:
\ben
\label{dB}
\dd_\cA\vert_{\Omega_\Ad(P,\fa)}=\dd:
\Omega_\Ad(P,\fa)\rightarrow \Omega_\Ad(P,\fa)~~.
\een
Moreover, the principal curvature of $\cA$ is given by:
\be
\Omega_\cA=\dd\cA
\ee
and the Bianchi identity $\dd_\cA\Omega_\cA=0$ reduces to:
\ben
\label{Bianchi}
\dd \Omega_\cA=0~~.
\een
As explained in \cite{wa}, relation \eqref{dB}
implies that all principal connections on $P$ induce the same linear
connection on the adjoint bundle $\ad(P)=\cS$, which coincides with
the flat connection $\cD$ of the duality structure $\Delta$
defined by $P$. Moreover, the adjoint curvature $\cV_\cA\in
\Omega^2(M,\cS)$ satisfies:
\be
\dd_{\cD}\cV_\cA=0~~.
\ee

\noindent
Let $\Sieg^{c}(M)$ be the groupoid of Siegel bundles with connection,
whose objects are pairs $(P,\cA)$ where $P$ is a Siegel bundle and
$\cA\in \Conn(P)$ and whose morphisms are connection-preserving based
isomorphisms of principal bundles. Let $\Dual_\Z^c(M)$ be the groupoid
of pairs $(\bDelta,\cV)$, where $\bDelta$ is an integral duality
structure on $M$ and $\cV\in \Conf(M,\bDelta)$ is a $\dd_\cD$-closed
$\cS$-valued 2-form whose $\dd_\cD$-cohomology class belongs to the
charge lattice of $\bDelta$. There exists a natural functor:
\be
\bDelta^c\colon \Sieg^{c}(M) \to \Dual_\Z^c(M)
\ee
which sends $(P,\cA)$ to the pair $(\bDelta(P),\cV_\cA)$.  Let
$\Sieg^{c}(M)_0\subset\Sieg^{c}(M)$ be the full subgroupoid consisting
of Siegel bundles with flat connection. 

\begin{thm}
\label{thm:diracquantization}
There exists a short exact sequence of groupoids:
\be
1 \to \Sieg^{c}(M)_0 \xrightarrow{\kappa} \Sieg^{c}(M)
\xrightarrow{\curv} \Dual_\Z^c(M)  \to 1~~,
\ee
where $\kappa$ is the inclusion and $\curv$ is the curvature map. In
particular, for every integral duality structure $\bDelta$ on $M$ and
every $\cV\in \Conf(M,\bDelta)$, there exists a Siegel bundle with
connection $(P,\cA)$ such that:
\be
\cV_\cA  = \cV\, ,
\ee
and the set of pairs $(\cA,\cV_\cA)$ with this property is a torsor
for $\Sieg^{c}(M)_0$.
\end{thm}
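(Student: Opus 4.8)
The plan is to prove the two assertions comprising the theorem: essential surjectivity of $\curv$, which produces a Siegel bundle with connection realizing any prescribed pair $(\bDelta,\cV)$, and the identification of the fibers of $\curv$ as torsors over $\Sieg^c(M)_0$. Exactness of the groupoid sequence then follows, since $\kappa$ is by definition the inclusion of a full subgroupoid and $\Sieg^c(M)_0=\{(P,\cA)\mid \cV_\cA=0\}$ is exactly the preimage of the zero-curvature objects in $\Dual_\Z^c(M)$.

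First I would prove essential surjectivity. Given $(\bDelta,\cV)\in\Dual_\Z^c(M)$, I realize the underlying integral duality structure as $\bDelta\cong\bDelta(P_0)$ for some Siegel bundle $P_0$, using the essential surjectivity of the functor $\bDelta\colon\Sieg(M)\to\Dual_\Z(M)$ established above; this fixes the Siegel system $Z\eqdef Z(P_0)\cong\cL$. The DSZ condition $2\pi[\cV]_\cD\in L_\bDelta=j_{0\ast}(H^2(M,Z))$ yields a class $c\in H^2(M,Z)$ with $j_{0\ast}(c)=2\pi[\cV]_\cD$, and Theorem \ref{thm:SiegelClassif} then furnishes a Siegel bundle $P$ with $(Z(P),c(P))\cong(Z,c)$. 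Since the integral duality structure depends only on the Siegel system, $\bDelta(P)\cong\bDelta$. Because $\fa$ is abelian, the adjoint curvature is $\dd$-linear in the connection by \eqref{dB}, so any reference connection $\cA_0\in\Conn(P)$ satisfies $2\pi[\cV_{\cA_0}]_\cD=j_{0\ast}(c(P))=2\pi[\cV]_\cD$. Hence $\cV-\cV_{\cA_0}$ is $\dd_\cD$-exact, say $\cV-\cV_{\cA_0}=\dd_\cD\beta$ with $\beta\in\Omega^1(M,\cS)\cong\Omega^1_\Ad(P,\fa)$, and the connection $\cA\eqdef\cA_0+\beta$ has adjoint curvature $\cV_\cA=\cV_{\cA_0}+\dd_\cD\beta=\cV$.

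Next I would analyze the fiber of $\curv$ over a fixed $(\bDelta,\cV)$ and exhibit it as a torsor over $\Sieg^c(M)_0$. On a fixed Siegel bundle $P$, two connections have equal adjoint curvature precisely when their difference is a $\dd_\cD$-closed element of $\Omega^1(M,\cS)$; thus the connections with curvature $\cV$ form an affine space over the group $\{\beta\in\Omega^1(M,\cS)\mid\dd_\cD\beta=0\}$, which is the same group over which the flat connections on a fixed bundle form a torsor (relative to the canonical monodromy connection). To handle the remaining freedom in the topological type, I would invoke \eqref{FlatHExpSeq} together with Proposition \ref{prop:HcDiso}: the kernel of $j_{0\ast}$ equals the torsion subgroup of $H^2(M,Z)$, namely $\im\delta_0$, and a Siegel bundle carries a flat connection if and only if its twisted Chern class lies in this kernel. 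Thus the objects of $\Sieg^c(M)_0$ are exactly the flat Siegel bundles, whose Chern classes range over $\ker j_{0\ast}$. Twisting $(P,\cA)$ by such a flat object shifts $c(P)$ within $\ker j_{0\ast}$ and modifies the connection by a $\dd_\cD$-closed one-form, leaving $\bDelta(P)$ and $\cV_\cA=\cV$ unchanged; this action is free and transitive on the fiber, which is the desired torsor property.

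The hard part will be the torsor statement, specifically reconciling the two independent sources of ambiguity in a solution $(P,\cA)$: the affine freedom of the connection on a fixed bundle, governed by the $\dd_\cD$-closed one-forms, and the topological freedom in the choice of $P$, governed by the torsion subgroup $\ker j_{0\ast}=\im\delta_0$. One must verify that these assemble compatibly so that the single groupoid $\Sieg^c(M)_0$ acts freely and transitively on the fiber; this requires a careful diagram chase through \eqref{FlatHExpSeq}, the Bockstein isomorphism \eqref{delta}, and the classification of Theorem \ref{thm:SiegelClassif}, keeping track of the interplay between the cochain-level (connection) and cohomology-level (Chern class) data.
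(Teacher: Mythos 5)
Your overall route coincides with the paper's: classify Siegel bundles by pairs $(Z,c)$ via Theorem \ref{thm:SiegelClassif}, use the DSZ condition to produce the class $c$, then match curvatures inside a fixed $\dd_\cD$-cohomology class; your fiber analysis (connections with equal curvature differ by $\dd_\cD$-closed one-forms, flat Siegel bundles are exactly those with torsion twisted Chern class) is a more explicit rendering of what the paper compresses into one sentence. There is, however, one genuine gap, and it sits exactly at the crux. You justify the identity relating $[\cV_{\cA_0}]_\cD$ to $j_{0\ast}(c(P))$ by saying that, since $\fa$ is abelian, the curvature map is affine with linear part $\dd_\cD$ (equation \eqref{dB}). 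That argument proves only that $[\cV_{\cA_0}]_\cD$ is \emph{independent of the choice of connection}; it says nothing about what this common class is. The identification of that class with the image of the twisted Chern class under $j_{0\ast}$ is a twisted Chern--Weil statement, and it is precisely the ``technical point'' that the paper's own proof defers to the companion paper \cite{wa} (and restates later as part of Lemma \ref{eq:realclassinvariant}). Proving it requires comparing the \v{C}ech-theoretic definition of $c(P)$ --- via the Bockstein of the exponential sequence underlying \eqref{delta} --- with curvature data in local trivializations, e.g.\ through a \v{C}ech--de Rham argument; linearity of the curvature map cannot substitute for this.

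Everything downstream in your proposal hinges on that identity. Without it, your surjectivity step fails: you arrange $j_{0\ast}(c(P))=2\pi[\cV]_\cD$ by construction, but you cannot conclude that $\cV-\cV_{\cA_0}$ is $\dd_\cD$-exact, since a priori $[\cV_{\cA_0}]_\cD$ could be any connection-independent class attached to $P$. Likewise your characterization ``$P$ admits a flat connection iff $c(P)\in\ker j_{0\ast}$'' uses the identity in both directions (together with Proposition \ref{prop:HcDiso} and the sequence \eqref{FlatHExpSeq}). The repair is simply to import the statement as an external input, as the paper does, or to prove it. A smaller secondary point: the torsor claim, which you honestly flag as the hard part, also needs the twisting of a pair $(P,\cA)$ by a flat Siegel bundle with the same Siegel system to be constructed at the level of bundles-with-connection (a contracted-product construction using the fiberwise group structure of $\cA(P)$); you describe its effect on $(c(P),\cA)$ but never define the action itself, though the paper's proof is equally silent on this.
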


\begin{proof}
It is clear that an object in $\Sieg^{c}(M)$ defines an integral duality 
structure and a cohomology class in $H^2_{\cD}(M,\cS)$, whence it defines an 
object in $\Sieg^{c}(M)$. Functoriality of this assignment follows from invariance
of the aforementioned cohomology class under gauge transformations. This is proved 
in Lemma \ref{eq:realclassinvariant}. The key ingredient of the proof is now to show 
that this cohomology class is in fact \emph{integral}, that is, belongs to 
$j_\ast(H^2(M,Z))$, where $Z$ is the Siegel system  defined by $P$. This is a
technical point which is proved in detail in \cite{wa}, to which we refer the reader 
for more details. Once this is proven, it follows from  Theorem \ref{thm:SiegelClassif}  
that the curvature map is surjective onto  $\Dual_\Z^c(M)$ and that its kernel is  
precisely the pairs of integral duality structures and flat connections.
\end{proof}

\noindent The previous theorem shows that \emph{integral} electromagnetic 
field strengths can always be realized as curvatures of principal connections 
defined on Siegel bundles, which therefore provide the geometric realization of
integral configurations of abelian gauge theory.

\begin{remark}
Theorem \ref{thm:diracquantization} can be elaborated to obtain the Dirac
quantization of abelian gauge theory in terms of a certain
\emph{twisted} differential cohomology theory, though we do not pursue
this here. Recall that the DSZ quantization of various gauge
theories using the framework of algebraic quantum field theory and 
differentiable cohomology has been considered before in the literature,
see \cite{Szabo:2012hc,BBSS} and references therein.
\end{remark}


\subsection{Polarized Siegel bundles and polarized self-dual connections}


Let $M$ be a connected manifold.

\begin{definition}
A {\em polarized Siegel bundle} is a pair $\bP=(P,\cJ)$, where
$P$ is a Siegel bundle and $\cJ$ is a taming of the duality
structure $\Delta:=\Delta(P)$ defined by $P$.
\end{definition}

\noindent A polarized Siegel bundle $\bP=(P,\cJ)$ determines an
integral electromagnetic structure $\bXi_\bP\eqdef (\bDelta(P),\cJ)$,
where $\bDelta(P)=(\Delta(P),Z(P))$ is the integral duality structure
defined by $P$.

\begin{definition}
Let $\bP=(P,\cJ)$ be a polarized Siegel bundle. A principal connection $\cA
\in \Conn(P)$ is called \emph{polarized selfdual}, respectively
\emph{polarized anti-selfdual} if its adjoint curvature satisfies:
\be
\star_{g,\cJ}\cV_{\cA} = \cV_{\cA}\, ~~,~~\mathrm{respectively}~~\star_{g,\cJ}\cV_{\cA}  = - \cV_{\cA}\, .
\ee
\end{definition}

\noindent Recall the definitions:
\be
\Omega^2_{\pm,\cJ}(M,\cS)\eqdef \left\{\cV\in \Omega^2(M,\cS)\, \vert \, \star_{g,\cJ}\cV_{\cA} = \pm \cV_{\cA} \right\}~~,
\ee
where $\Delta(P)=(\cS,\omega,\cD)$.

\begin{remark}
The polarized selfduality condition is a first-order partial
differential equation for a connection on a Siegel bundle
which, to the best of our knowledge, has not been studied in the
literature on mathematical gauge theory.
\end{remark}


\section{Prequantum abelian gauge theory}
\label{sec:DSZquantization}


Let $(M,g)$ be an oriented and connected Lorentzian four-manifold.  As
explained in the previous section, imposing the DSZ integrality condition on
an abelian gauge theory allows us to identify its prequantum gauge degrees of
freedom with principal connections on a Siegel bundle. More
precisely, let $\bP=(P,\cJ)$ be a polarized Siegel bundle on $(M,g)$
and $\bDelta:=\bDelta(P)=(\Delta,Z)$ be the integral duality structure
defined by $P$, where $\Delta:=\Delta(P)=(\cS,\omega,\cD)$ (with
$\cS=\ad(P)$) and $Z:=Z(P)$ are the duality structure and Siegel
system defined by $P$. Let $\bXi:=\bXi(P)=(\bDelta,\cJ)$ be the
integral electromagnetic structure defined by $\bP$ and
$\Xi:=\Xi(P)=(\Delta,\cJ)$ be its underlying electromagnetic
structure. By Theorem \ref{thm:diracquantization}, the set of integral
field strength configurations determined by the integral duality
structure $\bDelta=\bDelta(P)$ coincides with the set of adjoint
curvatures of principal connections defined on $P$:
\be
\Conf(M,\bDelta)=\{\cV_\cA\,\vert\,\cA\in \Conn(P)\}~~,
\ee
while the set of integral field strength solutions determined by
$\bXi:=\bXi(P)=(\bDelta,\cJ)$ is:
\be
\Sol(M,g,\bXi)=\{\cV_\cA\,\vert\,\cA\in \Conn(P)~\& ~\star_{g,\cJ} \cV_\cA = \cV_\cA\}~~.
\ee
This motivates the following.

\begin{definition}
\label{def:confabeliangauge}
The {\em set of prequantum gauge configurations} determined by $P$ is the affine set:
\be
\cConf(M,P)\eqdef \Conn(P)
\ee
of principal connections defined on $P$. The adjoint curvature $\cV_\cA\in
\Conf(M,\bDelta)$ of a principal connection $\cA\in \Conn(P)$ is
called the {\em integral field strength configuration} defined $\cA$.
The {\em prequantum abelian gauge theory} defined by
$\bP$ on $(M,g)$ is described by the condition:
\ben
\label{eq:selfdualsymplectic}
\star_{g,\cJ} \cV_\cA = \cV_\cA~~\mathrm{for}~~ \cA\in \Conn(P)\, .
\een
The solutions $\cA$ of this equation are called {\em gauge potentials}
or {\em polarized self-dual connections} and form the set:
\be
\cSol(M,g,\bP)=\cSol(M,g,P,\cJ)\eqdef \{\cA\in \Conn(P)\,\vert\, \star_{g,\cJ} \cV_\cA = \cV_\cA\}\subset \cConf(M,P)~~.
\ee
\end{definition}

\noindent We have:
\be
\Sol(M,g,\bXi)=\{\cV_\cA\,\vert\,\cA\in\cSol(M,g,\bP)\}
\ee
and the adjoint curvature map of $P$ gives surjections:
\be
\cConf(M,P) \to \Conf(M,\bDelta)~~\mathrm{and}~~\cSol(M,g,\bP) \to \Sol_\Z(M,g,\bXi)~~.
\ee

\begin{remark}
Condition \eqref{eq:selfdualsymplectic} reduces locally to a system of
first-order differential equations for $2n$ real-valued one-forms,
which describe the local electromagnetic and magnetoelectric potentials
of the theory (see Appendix \ref{app:local}). Also notice that any
$\cA\in \cSol(M,g,\bP)$ satisfies the following second order
equation of Abelian Yang-Mills type:
\be
\dd_\cD \star_{g,\cJ}\cV_{\cA} = 0\, .
\ee
\end{remark}


\subsection{The duality hierarchy of prequantum abelian gauge theory}
\label{sec:globaldualitygroups}


In this subsection we discuss the duality groups of prequantum abelian
gauge theory. Let $P$ be a Siegel bundle of type $\frt\in \Div^n$
defined on $M$. For simplicity, we use the notations:
\be
A\eqdef \U(1)^{2n}~~,~~\Gamma\eqdef \Sp_\frt(2n,\Z)~~,~~G\eqdef \Aff_\frt=A\rtimes \Gamma
\ee
and denote the Abelian Lie algebra of $G$ by $\fg=\aff_\frt\simeq
\R^{2n}$. Let $q: G \rightarrow \Gamma$ be the epimorphism entering
the short exact sequence of groups:
\be
1\rightarrow A\rightarrow G\xrightarrow{q} \Gamma\rightarrow 1~~,
\ee
which splits from the right.

\begin{definition}
The {\em discrete remnant bundle} of $P$ is the
principal $\Gamma$-bundle $\Gamma(P)\eqdef P\times_q\Gamma$.
\end{definition}

\noindent We denote the adjoint representation of $G$ by
$\Ad:G\rightarrow \Aut_\R(\fg)$ and the adjoint action of $G$ (i.e the
action of $G$ on itself by conjugation) by $\Ad_G:G\rightarrow
\Aut(G)$.  The restriction of the latter to the normal subgroup
$A\subset G$ is denoted by $\Ad_G^A:G\rightarrow \Aut(A)$. Since $A$
is Abelian, this factors through $q$ to the {\em
characteristic morphism} $\rho:\Gamma\rightarrow \Aut(A)$:
\be
\Ad_G^A=\rho\circ q~~,
\ee
while the adjoint representation factors through $q$ to the {\em
reduced adjoint representation} $\brho:\Gamma\rightarrow
\Aut_\R(\fg)$:
\ben
\label{brho}
\Ad=\brho\circ q~~.
\een
This representation of $\Gamma$ on $\fg$ preserves the canonical
symplectic form on $\R^{2n}\simeq \fg$.  The exponential map of $G$
gives a surjective morphism of Abelian groups $\exp_G:\fg\rightarrow
A$ whose kernel is a full symplectic lattice $\Lambda$ of $\fg$ which
identifies with $\Lambda_\frt$. This lattice is preserved by the
reduced adjoint representation, which therefore induces a morphism of
groups:
\be
\rho_0:\Gamma\rightarrow \Aut_\Z(\Lambda)~~.
\ee
Accordingly, $\Ad=\brho\circ q$ also preserves $\Lambda$ and hence induces
a morphism of groups:
\be
\Ad_0=\rho_0\circ q:G\rightarrow \Aut_\Z(\Lambda)~~.
\ee
Let $\bDelta=(\Delta, Z)$ be the duality structure defined by $P$,
where $\Delta=(\cS,\omega,\cD)$ (with $\cS=\ad(P)$) and $Z=Z(P)$ are
the duality structure and Siegel system defined by $P$. We have:
\be
\ad(P)=P\times_\Ad \fg=\Gamma(P)\times_\rho \fg~~,~~Z(P)=P\times_{\Ad_0}\Lambda=\Gamma(P)\times_{\rho_0}\Lambda~~,~~\cA(P)=P\times_{\Ad_G^A} A~~,
\ee
where $\cA(P)=\cA(\bDelta(P))=\ad(P)/Z(P)=\cS/Z$ is the bundle of
integral symplectic torus groups defined by $P$. As shown in \cite{wa},
the connection $\cD$ coincides with the flat connection induced on
$\cS$ by the monodromy connection of $\Gamma(P)$ and the symplectic
pairing $\omega$ of $\cS=\ad(P)=P\times_{\rho_0} \fg$ coincides with
that induced by the canonical symplectic pairing of $\R^{2n}\simeq
\fg$.

Let $\Aut(P)$ be the group of those unbased automorphisms of $P$ which
cover {\em orientation-preserving} diffeomorphisms of $M$. We have a
short exact sequence:
\be
1 \to \Aut_b(P) \to \Aut(P) \to \Diff_P(M)\to 1\, ,
\ee
where $\Diff_P(M)\subset \Diff(M)$ is the group formed by those
orientation-preserving diffeomorphisms of $M$ that can be covered by
elements of $\Aut(P)$. Here $\Aut_b(P)$ is the group of based
automorphisms of $P$. For any $u\in \Aut(P)$, denote by $f_u\in
\Diff(M)$ the orientation-preserving diffeomorphism of $M$ covered by
$u$. Every $u\in \Aut(P)$ induces an unbased automorphism $\ad_u\in
\Aut(\cS)$ of the adjoint bundle $\cS=\ad(P)$ defined through:
\be
\ad_u([y,v])\eqdef [u(y),v]\, , \quad \forall\,\, [y,v]\in \cS=\ad(P)=P\times_\ad \fg\, .
\ee
Notice that $\ad_u$ covers $f_u$. 

\begin{prop}
For every $u\in \Aut(P)$, the map $\ad_u: \cS\rightarrow \cS$ is an
unbased automorphism of the integral duality structure $\bDelta$
defined by $P$. Moreover, the map $\ad_P:\Aut(P) \rightarrow
\Aut(\bDelta)$ defined through:
\be
\ad_P(u)=\ad_u~~\forall u\in \Aut(P)
\ee
is a morphism of groups. 
\end{prop}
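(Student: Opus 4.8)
The plan is to verify the four defining conditions for $\ad_u$ to lie in $\Aut(\bDelta)$ — that it is an unbased vector bundle automorphism and that it is compatible with $\omega$, with $\cD$, and with the Dirac system $\cL=Z(P)$ — and then to dispatch the morphism property by a one-line computation. First I would settle well-definedness: writing points of $\cS=\ad(P)=P\times_\Ad\fg$ as classes $[y,v]$ subject to $(yg,\Ad(g^{-1})v)\sim(y,v)$ for $g\in G$, the $G$-equivariance $u(yg)=u(y)g$ of a principal bundle automorphism gives $\ad_u([yg,\Ad(g^{-1})v])=[u(y)g,\Ad(g^{-1})v]=[u(y),v]$, so $\ad_u$ is independent of the chosen representative. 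Since $u$ covers $f_u$, the map $\ad_u$ is fiberwise linear, covers the orientation-preserving diffeomorphism $f_u$, and has inverse $\ad_{u^{-1}}$; hence $\ad_u\in\Aut(\cS)$.

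For the symplectic and lattice conditions I would use the associated-bundle descriptions recalled just above the statement. The pairing is $\omega([y,v],[y,w])=\omega_{2n}(v,w)$, which is well defined precisely because $\Ad(G)=\brho(q(G))\subset\Sp(2n,\R)$ preserves $\omega_{2n}$, and the Dirac system is $Z(P)=P\times_{\Ad_0}\Lambda$ with $\Ad_0(G)\subset\Aut_\Z(\Lambda)$. Evaluating on two classes sharing the same frame $y$ yields $\omega_{f_u(m)}(\ad_u[y,v],\ad_u[y,w])=\omega_{2n}(v,w)=\omega_m([y,v],[y,w])$, i.e. the pushforward satisfies $\omega_{\ad_u}=\omega$; and $\ad_u$ carries $Z(P)_m=\{[y,\lambda]\mid\lambda\in\Lambda\}$ bijectively onto $Z(P)_{f_u(m)}$, so $\ad_u(\cL)=\cL$.

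The delicate point, which I expect to be the main obstacle, is the flat-connection condition $\cD_{\ad_u}=\cD$; it must be read in the pushforward sense of the definition of $\Aut(\Delta)$ and is complicated by the fact that $f_u$ need not be the identity. Here I would use the canonical origin of $\cD$: by the analysis of principal connections on $P$, the connection $\cD$ is the common adjoint connection of all $\cA\in\Conn(P)$ and coincides with the flat connection carried by the monodromy structure of the discrete remnant bundle $\Gamma(P)=P\times_q\Gamma$. Since $q$ is a group morphism and $u$ is $G$-equivariant, $u$ descends to an automorphism $\bar u$ of $\Gamma(P)$ covering $f_u$, i.e. to an isomorphism of the associated covering space over $f_u$. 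Such an isomorphism sends path-lifts to path-lifts and therefore intertwines the monodromy transport, $\bar u\circ U_{\Gamma(P)}(\bc)=U_{\Gamma(P)}((f_u)_\ast\bc)\circ\bar u$ for every $\bc\in\Pi_1(M)$, where $(f_u)_\ast\bc$ denotes the class of the image path. Applying the associated-bundle functor $-\times_\brho\fg$ (under which $\ad_u$ is the map induced by $\bar u$ and $\cT_\Delta$ is induced by $U_{\Gamma(P)}$) turns this into $\ad_u\circ\cT_\Delta(\bc)=\cT_\Delta((f_u)_\ast\bc)\circ\ad_u$, which is exactly the identity $\cD_{\ad_u}=\cD$. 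Together with the previous two paragraphs this shows $\ad_u\in\Aut(\bDelta)$.

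Finally, the morphism property is immediate from the definition: for $u_1,u_2\in\Aut(P)$ one has $\ad_{u_1\circ u_2}([y,v])=[(u_1\circ u_2)(y),v]=\ad_{u_1}([u_2(y),v])=(\ad_{u_1}\circ\ad_{u_2})([y,v])$, and $\ad_{\id_P}=\id_\cS$, so $\ad_P$ is a homomorphism. The crux of the argument is thus isolating the naturality of the monodromy connection under unbased bundle automorphisms; once the descent $u\mapsto\bar u$ to $\Gamma(P)$ and its covering-space interpretation are in place, the remaining checks are purely formal.
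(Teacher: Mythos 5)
Your proof is correct and follows essentially the same route as the paper: the decisive step in both is that $\cD$ is induced by the monodromy connection of the discrete remnant bundle $\Gamma(P)=P\times_q\Gamma$, to which $u$ descends, so that preservation of $\cD$ reduces to naturality of that connection (the paper invokes uniqueness of connections on a bundle with discrete structure group, while you make the same point explicitly via path-lifting and intertwining of monodromy transport). The remaining verifications you spell out --- well-definedness, preservation of $\omega$ and of $Z(P)$, and the homomorphism property --- are exactly the checks the paper declares ``clear'' and ``immediate,'' and your computations for them are sound.
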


\begin{proof} 
It is clear from its definition that $\ad_u$ preserves $\omega$ and
$Z$. It also preserves $\cD$, since the latter is induced by the
monodromy connection of $\Gamma(P)$, which is unique. The fact that $\ad_P$
is a morphism of groups is immediate. 
\end{proof}

\noindent Notice that $\ad_P$ restricts to a morphism $\ad_P\colon \Aut_b(P) \to
\Aut_b(\bDelta)$. We set:
\be
g_u\eqdef (f_u)_\ast(g)~~\forall u\in \Aut(P)~~.
\ee
Let $\A\colon \Aut(P)\times\Conn(P)\rightarrow \Conn(P)$ be the affine
left action of $\Aut(P)$ on $\Conn(P)$ defined through:
\be
\A_u(\cA)\eqdef u_{\ast}(\cA)~~\forall \cA\in
\Conn(P)~~\forall u\in \Aut(P)~~,
\ee
where $u_{\ast}\colon \cC^\infty(P,T^{\ast}P\otimes
\fg)\to \cC^\infty(P, T^{\ast}P\otimes \fg)$ denotes the push-forward
of $u$ extended trivially to $\fg$-valued forms defined on $P$.

\begin{lemma}
\label{eq:realclassinvariant}
For every $u\in \Aut(P)$, we have a commutative diagram of affine 
spaces and affine maps:
\be
\scalebox{1.0}{
	\xymatrix{
		\Conn(P)\ar[r]^{\A_u}\ar[d]^{\cV} &\Conn(P) \ar[d]^{\cV}\\
		2\pi j_{0\ast}(c(P)) \ar[r]_{\ad_u}& 2\pi j_{0\ast}(c(P))\\
	}
}
\ee
where $\cV:\Conn(P)\rightarrow \Omega_{\dcl}(M,\cS)$ is the adjoint
curvature map of $P$, $c(P)\in H^2(M,Z)$ is the twisted Chern class of
$P$ and the map $j_{0\ast}:H^2(M,Z)\rightarrow
H^2(M,\cC^\infty_\fl(\cS))=H^2_\cD(M,\cS)$ is induced by the sheaf
inclusion $\cC(Z)\hookrightarrow \cC^\infty_\fl(\cS)$ (see the exact
sequence \eqref{FlatHExpSeq}). Here $ 2\pi j_{0\ast}(c(P))$ is viewed
as an affine subspace of $\Omega_{\dcl}(M,\cS)$ consisting of
$\dd_\cD$-closed $\cS$-valued forms which differ by $\dd_\cD$-exact
$\cS$-valued forms and hence as an affine space modeled on the vector
space $\Omega^2_{\dex}(M,\cS)$.
\end{lemma}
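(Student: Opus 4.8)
The plan is to reduce everything to the abelian nature of $\fa=\aff_\frt$ together with the standard correspondence $\Omega_\Ad(P,\fa)\cong\Omega(M,\cS)$. First I would recall that, since $\fa$ is abelian, the adjoint curvature map factors as $\cV_\cA=\overline{\Omega_\cA}$ with $\Omega_\cA=\dd\cA$, and that $\cV_\cA$ is $\dd_\cD$-closed by \eqref{dB}--\eqref{Bianchi}. Two connections on the \emph{same} bundle $P$ differ by an element $\tau\in\Omega^1_\Ad(P,\fa)\cong\Omega^1(M,\cS)$, so their adjoint curvatures differ by $\dd_\cD\bar\tau$; hence the class $[\cV_\cA]_\cD\in H^2_\cD(M,\cS)$ does not depend on $\cA$, and $\cV$ indeed takes values in a single affine coset of $\Omega^2_{\dex}(M,\cS)$. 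The identification of this coset with $2\pi j_{0\ast}(c(P))$ — that the de Rham class of the adjoint curvature is $2\pi$ times the image of the twisted Chern class under $j_{0\ast}$ — is the computation carried out in \cite{wa}, which I would cite rather than reproduce.

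The core of the lemma is the commutativity $\cV\circ\A_u=\ad_u\circ\cV$, which I would prove upstairs on $P$ and then transport to $M$. Because $u\in\Aut(P)$ is $G$-equivariant, $\A_u(\cA)=u_\ast(\cA)=(u^{-1})^\ast\cA$ is again a principal connection on $P$, and since $u_\ast$ acts only on the form part and commutes with $\dd$, the abelian curvature formula $\Omega_\cA=\dd\cA$ yields
\[
\Omega_{\A_u(\cA)}=\dd\big((u^{-1})^\ast\cA\big)=(u^{-1})^\ast\dd\cA=u_\ast\Omega_\cA .
\]
Thus the whole statement about curvatures reduces to the claim that, under the isomorphism $\Omega_\Ad(P,\fa)\cong\Omega(M,\cS)$, the push-forward $u_\ast$ on equivariant horizontal forms corresponds exactly to the push-forward action $\A_{\ad_u}$ on $\cS$-valued forms determined by the adjoint automorphism $\ad_u\in\Aut(\cS)$.

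Establishing this correspondence is where I expect the main difficulty to lie, precisely because $u$ is \emph{unbased} and covers a nontrivial $f_u$, so the bookkeeping of push-forwards along $f_u$ must be handled with the care flagged in the Remark following Definition \ref{def:dualitygroups}. Concretely, for $\eta\in\Omega^k_\Ad(P,\fa)$ with associated form $\bar\eta\in\Omega^k(M,\cS)$, I would evaluate $\overline{u_\ast\eta}$ at $m$ on tangent vectors $X_1,\dots,X_k$ by choosing $y\in P_m$, lifting the $X_i$, and unwinding $(u^{-1})^\ast\eta$ at $u^{-1}(y)\in P_{f_u^{-1}(m)}$; using $G$-equivariance of $u$ this produces $[y,v]$, where $[u^{-1}(y),v]=\bar\eta_{f_u^{-1}(m)}(\dd f_u^{-1}X_1,\dots,\dd f_u^{-1}X_k)$. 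On the other hand, the definition $\ad_u([y',v])=[u(y'),v]$ together with the push-forward formula $\A_{\ad_u}(\bar\eta)_m=(\ad_u)\circ\bar\eta_{f_u^{-1}(m)}\circ(\dd f_u^{-1})^{\times k}$ gives the same element $[y,v]$. Matching the two proves $\overline{u_\ast\eta}=\A_{\ad_u}(\bar\eta)$.

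Finally I would assemble the pieces. Combining the displayed identity $\Omega_{\A_u(\cA)}=u_\ast\Omega_\cA$ with the correspondence just established gives $\cV(\A_u(\cA))=\A_{\ad_u}(\cV_\cA)$, which is exactly the asserted commutativity with $\ad_u$ read as $\A_{\ad_u}$. To see that $\ad_u$ genuinely maps the affine subspace $2\pi j_{0\ast}(c(P))$ to itself, note that $\A_u(\cA)$ is again a connection on $P$, so by connection-independence $[\cV_{\A_u(\cA)}]_\cD=[\cV_\cA]_\cD=2\pi j_{0\ast}(c(P))$; since $\ad_u$ preserves $\cD$ (by the Proposition preceding this lemma) the operator $\A_{\ad_u}$ commutes with $\dd_\cD$ and descends to $H^2_\cD(M,\cS)$, and the previous equality shows that it fixes the class $2\pi j_{0\ast}(c(P))$. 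Hence $\A_{\ad_u}$ restricts to an affine automorphism of that coset, all four maps are affine, and the square commutes.
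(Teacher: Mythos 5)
Your proof is correct and follows essentially the same route as the paper's: both cite \cite{wa} for the identification of the curvature coset with $2\pi j_{0\ast}(c(P))$, both derive connection-independence of the $\dd_\cD$-cohomology class from the fact that two connections on the same bundle $P$ differ by an equivariant horizontal form, and both obtain the commutativity $\cV\circ\A_u=\ad_u\circ\cV$ by computing $\Omega_{\A_u(\cA)}=u_\ast\Omega_\cA$ upstairs on $P$ and descending to $M$. The only difference is one of detail: you spell out the descent identification (that $u_\ast$ on $\Omega_\Ad(P,\fa)$ corresponds under $\Omega_\Ad(P,\fa)\cong\Omega(M,\cS)$ to the push-forward action of $\ad_u$, with the unbased bookkeeping along $f_u$ made explicit), which the paper compresses into ``a direct computation shows.''
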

 
\begin{proof}
It is shown in \cite{wa} that the curvature map $\cV$, which is clearly
affine, takes values in $2\pi j_{\ast}(c(P))$. Therefore, it only remains 
to prove that if:
\begin{equation*}
[\cV_{\cA}] = 2\pi\,j_{\ast}(c(P))\, ,
\end{equation*}

\noindent
then:
\begin{equation*}
[\cV_{\mathbb{A}_u(\cA)}] = 2\pi\,j_{\ast}(c(P))\, ,
\end{equation*}

\noindent
or, equivalently, that $[\cV_{\mathbb{A}_u(\cA)}] = [\cV_{\cA}]$ for
every $u\in \Aut_b(P)$. Since $\cA$ is a connection, $\mathbb{A}_u(\cA)$ 
is also a connection on $P$, 
whence there exists an equivariant and horizontal one-form 
$\hat{\tau} \in \Omega^1(P,\mathfrak{a})$ such that:
\begin{equation*}
	\mathbb{A}_u(\cA) = \cA + \hat{\tau} \, .
\end{equation*}

\noindent
Hence, $\dd\mathbb{A}_u(\cA) = \dd\cA + \dd\hat{\tau}$, which descends 
to $M$ as follows:
\begin{equation*}
	\cV_{\mathbb{A}_u(\cA)} = \cV_{\cA} + \dd_{\cD} \tau\, ,
\end{equation*}

\noindent
where $\tau \in \Omega^1(M,\cS_P)$ denotes the one-form with values 
in $\cS_P$ defined by $\tau$. On the other hand, considering
$\cV_{\A_u(\cA)} \in \Omega^2(P,\aff_\frt)$ as a two-form on $P$
taking values in $\aff_\frt$ a direct computation shows that:
\begin{equation*}
\cV_{\A_u(\cA)} = u_{\ast}(\cV_{\cA}) \in \Omega^2(P,\aff_\frt) \, ,
\end{equation*}
which, by the equivariance properties of the latter, immediately
implies:
\begin{equation*}
	\cV_{\A_u(\cA)} = \ad_u\cdot\cV_{\cA} \in \Omega^2(M,\Delta) \, ,
\end{equation*}
where the \emph{dot} action of an automorphism of $\Delta$ on two-
forms taking values in $\Delta$ was defined in subsection 
\ref{subsec:classdual}.
\end{proof}

\begin{prop}
For any $u\in \Aut(P)$, the map $\A_u:\Conn(P)\rightarrow \Conn(P)$
restricts to a bijection:
\be
\mathbb{A}_u\colon \cSol(M,g,P,\cJ) \to \cSol(M,g_u,P,\cJ_{u})~~.
\ee
\end{prop}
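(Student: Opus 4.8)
The plan is to reduce the statement to the duality-covariance of the classical theory at the level of adjoint curvatures, which has already been worked out. First I would observe that $\A_u$ is automatically a bijection of $\Conn(P)$ onto itself, since $\A$ is a left action of the group $\Aut(P)$ and therefore $\A_u$ has two-sided inverse $\A_{u^{-1}}$. Consequently the only genuine content is that $\A_u$ carries $\cSol(M,g,P,\cJ)$ into $\cSol(M,g_u,P,\cJ_u)$, with the convention that $\cJ_u \eqdef \ad_u\circ \cJ\circ \ad_u^{-1}=\cJ_{\ad_u}$, where $\ad_u\eqdef \ad_P(u)\in \Aut(\bDelta)$ is the induced unbased automorphism of the integral duality structure $\bDelta(P)$.

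The decisive input is the intertwining relation $\cV_{\A_u(\cA)}=\A_{\ad_u}(\cV_\cA)$ established in the proof of Lemma \ref{eq:realclassinvariant}: it transfers the problem from connections on $P$ to their adjoint curvatures, where the classical computation applies directly. Recall from the remark following Definition \ref{def:dualitygroups} that for any $w\in \Aut(\Delta)$ the push-forward $\A_w$ intertwines the polarized Hodge operators, $\star_{g_w,\cJ_w}\circ \A_w=\A_w\circ \star_{g,\cJ}$, and hence restricts to a bijection $\A_w\colon \Sol(M,g,\Delta,\cJ)\xrightarrow{\sim}\Sol(M,g_w,\Delta,\cJ_w)$.

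Applying this with $w=\ad_u$ and using $f_{\ad_u}=f_u$, so that $g_{\ad_u}=g_u$ and $\cJ_{\ad_u}=\cJ_u$, I would argue as follows. If $\cA\in \cSol(M,g,P,\cJ)$ then by definition $\cV_\cA\in \Sol(M,g,\Delta,\cJ)$; hence $\cV_{\A_u(\cA)}=\A_{\ad_u}(\cV_\cA)\in \Sol(M,g_u,\Delta,\cJ_u)$, which means $\star_{g_u,\cJ_u}\cV_{\A_u(\cA)}=\cV_{\A_u(\cA)}$. This is exactly the polarized self-duality condition defining $\cSol(M,g_u,P,\cJ_u)$, so $\A_u(\cA)\in \cSol(M,g_u,P,\cJ_u)$.

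For bijectivity I would rerun the same argument with $u^{-1}$ in place of $u$, starting from the data $(g_u,\cJ_u)$. Since $u\mapsto f_u$ and $u\mapsto \ad_P(u)$ are group morphisms, one has $f_{u^{-1}}=f_u^{-1}$ and $\ad_{u^{-1}}=\ad_u^{-1}$, whence $(g_u)_{u^{-1}}=g$ and $(\cJ_u)_{u^{-1}}=\cJ$; therefore $\A_{u^{-1}}$ restricts to a map $\cSol(M,g_u,P,\cJ_u)\to \cSol(M,g,P,\cJ)$ which is a two-sided inverse of the restricted $\A_u$. The main obstacle is not analytic here, since all the delicate curvature equivariance and the Hodge-intertwining were already disposed of in Lemma \ref{eq:realclassinvariant} and the remark after Definition \ref{def:dualitygroups}; it is rather organizational, namely to track carefully how $g$ and $\cJ$ transform under $u$ as opposed to under $\ad_u$, and to exploit the group-morphism property of $\ad_P$ to guarantee that the inverse lands in the correct solution space.
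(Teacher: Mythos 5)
Your proposal is correct and follows essentially the same route as the paper: both reduce the statement to adjoint curvatures via the equivariance $\cV_{\A_u(\cA)}=\ad_u\cdot\cV_\cA$ from Lemma \ref{eq:realclassinvariant} and then invoke the intertwining relation $\star_{g_u,\cJ_u}\circ\,\ad_u=\ad_u\circ\star_{g,\cJ}$. The only difference is organizational --- the paper re-derives this intertwining by the same one-line computation on decomposable elements that appears in the remark after Definition \ref{def:dualitygroups}, whereas you cite that remark with $w=\ad_u$ (using $f_{\ad_u}=f_u$) and make the bijectivity step via $\A_{u^{-1}}$ explicit, which the paper leaves implicit.
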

 
\begin{proof}
Since $\cV_{\A_u(\cA)}=\ad_u \cdot \cV_\cA$, it suffices to prove the relation:
\ben
\label{starad}
\star_{g_u,\cJ_{u}}\circ \ad_u=\ad_u\circ \star_{g,\cJ}~~.
\een
For $\alpha\in \Omega^k(M)$ and $\xi\in \cC^\infty(M,\cS)$,  we compute:
\be
\star_{g_u,\cJ_{u}} \ad_u\cdot (\alpha\otimes \xi) =
(\ast_{g_u}f_{u \ast}\alpha) \otimes (\cJ_{u}\circ\ad_ u(\xi)\circ
f_u^{-1}) = f_{u\ast}(\ast_{g}\alpha) \otimes \ad_u\circ\cJ(\xi)\circ
f_u^{-1} = \ad_u \cdot (\star_{g_,\cJ} (\alpha\otimes \xi))~~,
\ee
which implies \eqref{starad}
\end{proof}

\begin{definition}
\label{def:dualitygroupscY}
Let $\bP=(P,\cJ)$ be a polarized Siegel bundle defined on $M$.
\begin{itemize}
\item The group $\Aut(P)$ is the {\em unbased gauge group}
of $P$. For any $u\in \Aut(P)$, the map:
\be
\A_{u}\colon \cSol(M,g,P,\cJ) \to \cSol(M,g_u,P,\cJ_{u})
\ee
is the {\em unbased gauge transformation} induced by $u$.
\item The group:
\be
\Aut(g,P)\eqdef \left\{u\in \Aut(P) \, \vert \, f_u\in \Iso(M,g)\right\}
\ee
is the {\em unbased gauge duality group} defined by $P$ and
$g$. For any $u\in \Aut(g,P)$, the map:
\be
\A_u:\cSol(M,g,P,\cJ) \to \cSol(M,g,P,\cJ_{u})
\ee
is the {\em unbased gauge duality transformation} induced by
$u$.
\item The gauge group $\Aut_b(P)$ of $P$ is the {\em gauge 
(electromagnetic) duality group} of the abelian gauge theories with
underlying Siegel bundle $P$. For any $u\in \Aut_b(P)$, the map:
\be
\A_u:\cSol(M,g,P,\cJ) \to \cSol(M,g,P,\cJ_{u})
\ee
is called the {\em gauge duality transformation} induced by $u$.
\end{itemize}
\end{definition}

\noindent
Lemma \ref{eq:realclassinvariant} implies that for any 
$u\in \Aut(P)$ we have a commutative diagram:
\be
\scalebox{1.0}{
\xymatrix{
\cSol(M,g,P,\cJ)\ar[r]^{\A_u}\ar[d]^{\cV} & \cSol(M,g_u,P,\cJ_u) \ar[d]^{\cV}\\
\Sol(M,g,\bDelta,\cJ) \ar[r]_{\ad_u}& \Sol(M,g_u,\bDelta,\cJ_{u})\\
}
}
\ee
and similar diagrams for the other groups in the previous
definition. Hence gauge transformations of $P$ induce integral
pseudo-duality and duality transformations of the abelian gauge theory
defined by $(P,\cJ)$.

\begin{definition}
Let $\bP=(P,\cJ)$ be a polarized Siegel bundle on $M$.
\begin{itemize}
\item The group: 
\ben
\Aut(\bP) \eqdef \left\{ u\in \Aut(P) \,\, \vert \,\ \cJ_{u} = \cJ\right\}
\een
is the {\em unbased unitary gauge group} defined by $\bP$ on
$(M,g)$. For any $u\in \Aut(\bP)$, the map:
\be
\A_u:\cSol(M,g,\bP) \to \cSol(M,g_u,\bP)
\ee
is the {\em unbased unitary gauge transformation} induced by
$u$.
\item The group: 
\ben
\label{eq:relativeunitaryglobalcY}
\Aut(g,\bP) \eqdef \left\{ u\in \Aut(P) \,\, \vert \,\, g_u=g~~\mathrm{and}~~\cJ_{u} = \cJ\right\}
\een
is the {\em unbased unitary gauge duality group} defined by
$\bP$ on $(M,g)$. For any $u\in \Aut(g,\bP)$, the map:
\be
\A_u:\cSol(M,g,\bP) \to \cSol(M,g,\bP)
\ee
is the {\em unbased unitary gauge duality transformation} induced by
$u$.
\item The group:
\be
\Aut_b(\bP) \eqdef \left\{  u \in \Aut_b(P) \,\, \vert \,\,   \cJ_{u} = \cJ  \right\}\, .
\ee
is the {\em unitary gauge group} defined by $\bP$ on $M$. For any $u\in
\Aut_b(\bP)$, the map:
\be
\A_u:\cSol(M,g,\bP) \to \cSol(M,g,\bP)~~. 
\ee
is the  {\em unitary gauge transformation} induced by $u$.
\end{itemize}
\end{definition}

\noindent Let $\bXi\eqdef (\bDelta,\cJ)$. For any $u\in \Aut(\bP)$, we have a
commutative diagram:
\be
\scalebox{1.0}{
\xymatrix{
\cSol(M,g,\bP)\ar[r]^{\A_u}\ar[d]^{\cV} & \cSol(M,g_u,\bP) \ar[d]^{\cV}\\
\Sol(M,g,\bXi) \ar[r]_{\ad_u}& \Sol(M,g_u,\bXi)\\
}
}
\ee
and similar diagrams for the other groups in the previous
definition. We have a short exact sequence:
\ben
\label{eq:dualitygroupsequencecY}
1 \to \Aut_b(P) \rightarrow \Aut(g,P) \rightarrow \Iso_P(M,g)\to 1\, ,
\een
where $ \Iso_P(M,g)\subset \Iso(M,g)$ is the group formed by those
orientation-preserving isometries that can be covered by elements of
$\Aut(g,P)$. Similarly, we have an exact sequence:
\be
1 \to \Aut_b(\bP) \rightarrow \Aut(g,\bP)\rightarrow \Iso_\bP(M,g)\to 1\, ,
\ee
where $\Iso_\bP(M,g)$ is the group formed by those
orientation-preserving isometries of $M$ which are covered by elements
of $\Aut(g, \bP)$.

\begin{definition}
The \emph{standard subgroup} of the unbased gauge group of $P$
is defined through:
\be
\mC(P) \eqdef \ker (\ad_P)\subset \Aut(P)\, .
\ee
\end{definition}

\noindent When $\dim M>0$ and $\dim A>0$, the group $\mC(P)$ is 
infinite-dimensional. The classical duality 
group of a duality structure was shown to be a finite dimensional 
Lie group in Section \ref{sec:classical}. This is no longer true of 
the gauge groups introduced above. Instead, they are infinite-dimensional
extensions of the integral duality groups introduced in Section
\ref{sec:DQsymplecticabelian}.

\begin{prop}
\label{prop:exactseqAut}
The gauge group of $P$ fits into the short exact sequence of groups:
\ben
\label{CPseq}
1 \to \mC(P) \hookrightarrow \Aut_b(P)\xrightarrow{\ad_P} \Aut_b(\bDelta)\to 1~~.
\een
\end{prop}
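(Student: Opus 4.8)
The plan is to prove exactness of \eqref{CPseq} at each term, the only substantial point being surjectivity of $\ad_P$. Exactness at $\mC(P)$ is injectivity of the inclusion and is immediate. For exactness at $\Aut_b(P)$ I would first note that $\ker(\ad_P)=\mC(P)$ automatically lands in $\Aut_b(P)$: if $\ad_u=\id_\cS$ then $\ad_u$ covers $\id_M$, forcing $f_u=\id_M$, so $u$ is based; thus $\ker(\ad_P\vert_{\Aut_b(P)})=\mC(P)$ with nothing further to verify. The remaining and essential claim is that $\ad_P\colon\Aut_b(P)\to\Aut_b(\bDelta)$ is onto.

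The structural step I would carry out first is to factor $\ad_P$ through the discrete remnant bundle. Representing $u\in\Aut_b(P)$ by its conjugation--equivariant gauge map $\tau\colon P\to G$, the group epimorphism $q\colon G=A\rtimes\Gamma\to\Gamma$ yields $q\circ\tau$, which descends to a based automorphism $\bar u\in\Aut_b(\Gamma(P))$ of the discrete remnant bundle. Because $\Ad=\brho\circ q$ factors through $q$ (Lemma \ref{lemma:adjointrep}) and $\brho$ is the faithful fundamental representation of $\Gamma$, this gives $\ad_P=\Theta\circ q_\ast$, where $q_\ast(u)=\bar u$ and $\Theta\colon\Aut_b(\Gamma(P))\xrightarrow{\sim}\Aut_b(\bDelta)$ is a canonical isomorphism. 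The isomorphism $\Theta$ comes from the identifications $\cS=\Gamma(P)\times_{\brho}\fg$ and $Z=\Gamma(P)\times_{\rho_0}\Lambda$ together with Proposition \ref{prop:PrinSiegel}: a based automorphism of $\bDelta$ preserves $\omega$, $\cD$ and the lattice $Z$, hence sends integral symplectic frames to integral symplectic frames and so arises uniquely from an automorphism of the frame bundle $\Fr(Z)\cong\Gamma(P)$. Thus surjectivity of $\ad_P$ is equivalent to surjectivity of the lifting map $q_\ast\colon\Aut_b(P)\to\Aut_b(\Gamma(P))$ along $P\to\Gamma(P)$.

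To handle $q_\ast$ I would pass to conjugation group bundles and use the short exact sequence
\[
1\to\cA(P)\to\mathrm{Ad}(P)\xrightarrow{\bar q}\mathrm{Ad}(\Gamma(P))\to1\, ,
\]
where $\mathrm{Ad}(P)=P\times_cG$, $\mathrm{Ad}(\Gamma(P))=\Gamma(P)\times_c\Gamma$, and the kernel subbundle $P\times_{\rho\circ q}A$ is exactly $\cA(P)$, whose section group is $\mC(P)$. On global sections this recovers $q_\ast$. Over the members of a trivializing cover a based automorphism $\bar u$ admits equivariant lifts --- e.g. left translation by a locally constant element of $\Gamma\subset G$ --- and the discrepancy of two local lifts is a \v{C}ech cocycle valued in $\cC^\infty(\cA(P))$, so the obstruction to a global lift lives in $H^1(M,\cC^\infty(\cA(P)))\cong H^2(M,Z)$ via the Bockstein isomorphism \eqref{delta}. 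I expect this obstruction to be the main obstacle: a short computation identifies it with $\bar u_\ast(c(P))-c(P)$, where $c(P)\in H^2(M,Z)$ is the twisted Chern class, so that $\bar u$ lifts precisely when it fixes $c(P)$. The crux is therefore to show that the flat automorphisms constituting $\Aut_b(\bDelta)$ preserve $c(P)$, making the obstruction vanish; I would establish this using the flatness of $\cD$ and the naturality of the twisted Chern class under the classification of Theorem \ref{thm:SiegelClassif}, along the lines carried out in detail in \cite{wa}. Granting it, $q_\ast$ and hence $\ad_P$ is surjective and \eqref{CPseq} is exact.
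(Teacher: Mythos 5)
Your overall strategy is genuinely different from the paper's. The paper proves surjectivity of $\ad_P$ by direct construction: given $\varphi\in\Aut_b(\bDelta)$ it produces the map $\bar\varphi\colon P\to\Gamma$ with $\varphi([p,v])=[p,\brho(\bar\varphi(p))(v)]$ and declares $f(p)\eqdef(0_A,\bar\varphi(p))$ to be a $G$-equivariant map, hence a gauge transformation $u$ with $\ad_P(u)=\varphi$. You instead factor $\ad_P$ through the discrete remnant bundle and reduce surjectivity to a lifting problem whose obstruction you correctly identify as $\bar u_\ast(c(P))-c(P)\in H^2(M,Z)$. The genuine gap is your final step: the claim that every element of $\Aut_b(\bDelta)$ fixes $c(P)$ is exactly the crux, and you offer for it only an appeal to ``the flatness of $\cD$ and the naturality of the twisted Chern class'' plus a deferral to \cite{wa}. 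Naturality gives the \emph{converse} statement --- any $u\in\Aut_b(P)$ satisfies $\ad_P(u)_\ast(c(P))=c(P)$, which is precisely why your obstruction takes the form it does --- and it gives no control over an arbitrary flat automorphism of $\bDelta$, which acts on the local system of coefficients and has no a priori reason to preserve a given class in $H^2(M,Z)$.

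Moreover, this gap cannot be closed, because the invariance claim is false in general. Take $M=\rS^2$, $n=1$, $\frt=(1)$, and let $P=P_A\times_A\Aff_\frt$ be the Siegel bundle obtained by extending the structure group of a principal $\U(1)^2$-bundle $P_A$ with Chern class $(1,0)\in H^2(\rS^2,\Z)^2$. Then $Z(P)$ is trivial and $\Aut_b(\bDelta)\simeq\SL(2,\Z)$, while a based automorphism $u$ of $P\simeq P_A\times\SL(2,\Z)$ inducing $\gamma\in\SL(2,\Z)$ restricts on the identity component to a map $\psi\colon P_A\to P_A$ with $\psi(pa)=\psi(p)(\gamma a)$, i.e.\ an isomorphism from $P_A$ to its $\gamma$-twist; this exists iff $\gamma$ fixes $(1,0)$, so $\mathrm{im}(\ad_P)$ is the proper subgroup $\Stab_{\SL(2,\Z)}\bigl((1,0)\bigr)$ and $\ad_P$ is not surjective. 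Carried out honestly, your method therefore proves $\mathrm{im}(\ad_P)=\Stab_{\Aut_b(\bDelta)}(c(P))$, not the stated proposition. Note also that the same algebra shows the paper's own argument does not go through as written: for $g=(a,\gamma)\in G$ one has $g^{-1}(0_A,\bar\varphi(p))\,g=\bigl(\gamma^{-1}(\bar\varphi(p)a-a),\,\gamma^{-1}\bar\varphi(p)\gamma\bigr)$, so $f(p)=(0_A,\bar\varphi(p))$ is $G$-equivariant only when $\bar\varphi\equiv 1$, the $\Gamma$-action on $A$ being faithful. Your obstruction is thus not an artifact of your approach; it is real, and it is invisible in the paper's proof only because of this equivariance error.
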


\begin{remark}
There exist similar short exact sequences for the remaining groups introduced 
in Definition \ref{def:dualitygroupscY}.
\end{remark}

\begin{proof}
It suffices to prove that $\ad_P(\Aut_b(P))=\Aut_b(\bDelta)$.  Recall
that $\cL = P\times_{\Ad_0} \Lambda=\Gamma(P)\times_{\rho_0}\Lambda$
and $\cS=P\times_{\Ad}\fg=\Gamma(P)\times_{\brho}\fg$, where
$\Lambda\equiv\Lambda_\frt$ and $\fg\equiv\R^{2n}$.  Also recall that
the gauge group $\Aut_b(P)$ is naturally isomorphic with the group
$\cC^{\infty} (P,G)^G$ of $G$-equivariant maps from $P$ to $G$, where
$G=A\times \Gamma=\U(1)^{2n}\rtimes \Sp_\frt(2n,\mathbb{Z})$ acts on
itself through conjugation. This isomorphism takes $u\in \Aut_b(P)$ to
the equivariant map $f\in \cC^{\infty} (P,G)^G$ which satisfies:
\ben
\label{fdef}
u(p)=pf(p)~~\forall p\in P~~\forall g\in G~~.
\een
Since the action of $G$ on $P$ is free and we have
$\Gamma\equiv\Aut(\R^{2n},\omega_{2n},\Lambda_\frt)\simeq
\Aut(\cS_m,\omega_m,\cL_m)$ for all $m\in M$ while the reduced adjoint
representation $\brho$ is faithful, every automorphism $\varphi\in
\Aut(\bDelta)$ determines a map $\bar{\varphi}:P \to \Gamma$ which
satisfies:
\ben
\label{barphidef}
\varphi([p,v]) = [p , \brho(\bar{\varphi}(p))(v)]~~\forall p\in P~~\forall v\in \fg
\een
as well as:
\be
\bar{\varphi}(pg)=q(g)^{-1}\bar{\varphi}(p)q(g)~~\forall p\in P~~\forall g\in G~~.
\ee
The last relation follows from \eqref{barphidef} and from the
condition $\varphi([pg,v])=\varphi([p,\brho(q(g))(v)])$ of invariance
under change of representative of the equivalence class, where we used
\eqref{brho}. Let $u\in \Aut_b(P)$ be the based automorphism of $P$
which corresponds to the $G$-equivariant map $f:P\rightarrow G$
defined through:
\be
f(p)\eqdef (0_A,\bar{\varphi}(p))\in G=A\rtimes \Gamma~~\forall p\in P~~.
\ee
For any $p\in P$ and $v\in \fg$, we have: 
\be
\ad_P(u)([p,v])=[u(p),v]=[p f(p),v]=[p, \Ad(f(p))(v)]=[p, \brho(\bar{\varphi}(p))(p)]=\varphi([p,v])~~,
\ee
where we used \eqref{fdef} and \eqref{barphidef}. This shows that
$\ad_P(u)=\varphi$. Since $\varphi\in \Aut_b(\bDelta)$ is arbitrary,
we conclude that $\ad_P(\Aut_b(P))=\Aut_b(\bDelta)$.
\end{proof}

\noindent 
The previous proposition clarifies the geometric origin of
electromagnetic duality as a `discrete
remnant' of gauge symmetry, a notion which is discussed in more detail in \cite{wa}. In
particular, $\Aut_b(P)$ is an extension of $\Aut_b(\bDelta)$ by the
continuous group $C(P)$. Intuitively, elements of the latter
correspond to the gauge transformations of a principal torus bundle.


\subsection{Duality groups for Siegel bundles with trivial monodromy}


Let $M$ be a connected and oriented four-manifold.

\begin{lemma}
\label{lemma:trivunbased}
Let $P$ be a trivial principal $G$-bundle over $M$. Then any
trivialization of $P$ induces an isomorphism of groups:
\be
\Aut(P)\simeq \cC^\infty(M,G)\rtimes_\alpha \Diff(M)~~,
\ee
where $\alpha:\Diff(M)\rightarrow \Aut(\cC^\infty(M,G))$ is the
morphisms of groups defined through:
\be
\alpha(\varphi)(f)\eqdef f\circ \varphi^{-1}\, , \quad \forall \,\, \varphi\in
\Diff(M)\, , \quad \forall\,\, f\in \cC^\infty(M,G)~~.
\ee
In particular, we have a short exact sequence of groups:
\be
1\rightarrow \cC^\infty(M,G)\rightarrow \Aut(P)\rightarrow \Diff(M)\rightarrow 1
\ee
which is split from the right.
\end{lemma}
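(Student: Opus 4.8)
The plan is to follow verbatim the strategy of Lemma~\ref{lemma:trivSunbased}, replacing the symplectic gauge group $\Sp(2n,\R)$ by the structure group $G$ and the associated symplectic vector bundle by the principal bundle $P$ itself. First I would fix a trivialization $\tau\colon P\xrightarrow{\sim} M\times G$ of principal $G$-bundles and observe that conjugation by $\tau$, i.e. the map $\Ad(\tau)(u)\eqdef \tau\circ u\circ \tau^{-1}$, is an isomorphism of groups $\Aut(P)\xrightarrow{\sim}\Aut(M\times G)$. Hence it suffices to describe $\Aut(M\times G)$, the group of principal $G$-bundle automorphisms of the trivial bundle $M\times G$ that cover orientation-preserving diffeomorphisms of $M$.

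The key structural step is to parametrize such automorphisms. Given $F\in\Aut(M\times G)$ covering $\varphi\in\Diff(M)$, I would write $F(m,g)=(\varphi(m),\psi(m,g))$ and use right $G$-equivariance of $F$, which forces $\psi(m,gh)=\psi(m,g)h$. Setting $\tilde f(m)\eqdef \pr_G(F(m,e))=\psi(m,e)$, where $e\in G$ is the identity, this gives $\psi(m,g)=\tilde f(m)\,g$ for a smooth map $\tilde f\colon M\to G$, so $F(m,g)=(\varphi(m),\tilde f(m)\,g)$. Reparametrizing by $h\eqdef \tilde f\circ\varphi^{-1}\in\cC^\infty(M,G)$ yields $F(m,g)=(\varphi(m),h(\varphi(m))\,g)$, which is the exact analogue of equation~\eqref{fcomps}. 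The assignment $F\mapsto(h,\varphi)$ is then a bijection between $\Aut(M\times G)$ and $\cC^\infty(M,G)\times\Diff(M)$, with inverse sending $(h,\varphi)$ to the manifestly smooth and $G$-equivariant map $(m,g)\mapsto(\varphi(m),h(\varphi(m))\,g)$.

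It then remains to verify that this bijection intertwines composition in $\Aut(M\times G)$ with the semidirect-product law. If $F_1,F_2$ correspond to $(h_1,\varphi_1)$ and $(h_2,\varphi_2)$, a direct computation gives $(F_1\circ F_2)(m,g)=\big((\varphi_1\circ\varphi_2)(m),\,h_1((\varphi_1\circ\varphi_2)(m))\,h_2(\varphi_2(m))\,g\big)$; reading off the associated pair and using $\varphi_2\circ(\varphi_1\circ\varphi_2)^{-1}=\varphi_1^{-1}$ produces the pair $(h_1\cdot\alpha(\varphi_1)(h_2),\,\varphi_1\circ\varphi_2)$, precisely the multiplication in $\cC^\infty(M,G)\rtimes_\alpha\Diff(M)$. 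This establishes the asserted isomorphism. The short exact sequence is then immediate from projection onto the $\Diff(M)$-factor, whose kernel is the based gauge group $\cC^\infty(M,G)$, and the right splitting is furnished by the group morphism $\varphi\mapsto(e,\varphi)$, i.e. the automorphism $(m,g)\mapsto(\varphi(m),g)$.

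I do not anticipate any genuine obstacle, since the argument is a transcription of Lemma~\ref{lemma:trivSunbased}; the only substantive point is the passage from $\psi$ to $\tilde f$ via equivariance, which here replaces the linear-algebraic step used in the vector-bundle case. The one piece of bookkeeping to handle carefully is the placement of the twist $\alpha$, namely that the first factor of $F_1\circ F_2$ is $h_1\cdot(h_2\circ\varphi_1^{-1})$ rather than $h_1\cdot h_2$; this is exactly the same twist already appearing in Lemma~\ref{lemma:trivSunbased}, and it is what makes the product semidirect rather than direct.
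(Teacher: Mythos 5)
Your proposal is correct and follows essentially the same route as the paper's proof: conjugate by the trivialization to reduce to $\Aut(M\times G)$, use right $G$-equivariance to write an automorphism as $(m,g)\mapsto(\varphi(m),\tilde f(m)\,g)$, reparametrize via $h=\tilde f\circ\varphi^{-1}$, and check that composition yields the twisted product $(h_1\cdot\alpha(\varphi_1)(h_2),\varphi_1\circ\varphi_2)$. The only (harmless) additions are your explicit verification of the evaluation points in the composition formula and the explicit right splitting $\varphi\mapsto(e,\varphi)$, which the paper leaves implicit.
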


\begin{proof}
Let $\tau:P\stackrel{\sim}{\rightarrow} M\times G$ be a trivialization of $P$. Then
the map $\Ad(\tau):\Aut(P)\rightarrow \Aut(M\times G)$ defined through:
\be
\Ad(\tau)(f)\eqdef \tau\circ f\circ \tau^{-1}\, , \quad \forall\,\, f\in \Aut(P)\, ,
\ee
is an isomorphism of groups. Let $f\in \Aut(P)$ be an unbased
automorphism of $P$ which covers the diffeomorphism $\varphi\in
\Diff(M)$.  Then $\Ad(\tau)(f)$ is an unbased automorphism of $M\times
G$ which covers $\varphi$ and hence we have:
\be
\Ad(\tau)(f)(m,g)=(\varphi(m), {\hat f}(m,g))\, , \quad  \forall\,\, (m,g)\in M\times G~~,
\ee
where ${\hat f}:M\times G\rightarrow G$ is a smooth map which satisfies:
\be
{\hat f}(m,g_1g_1)={\hat f}(m,g_1)g_2\, , \quad \forall\,\, m\in M\, , \quad \forall\,\, g_1,g_2\in G~~.
\ee
The last relation is equivalent with the condition that ${\hat f}$ has the form:
\be
{\hat f}(m,g)={\tilde f}(m) g\, , \quad \forall \,\, (m,g)\in M\times G~~,
\ee
where ${\tilde f}:M\rightarrow G$ is a smooth function which can be
recovered from ${\hat f}$ through the relation:
\be
{\tilde f}(m)={\hat f}(m,1)\, , \quad \forall\,\, m\in M~~.
\ee
Setting $h\eqdef {\tilde f}\circ \varphi^{-1}\in \cC^\infty(M,G)$, we have:
\ben
\label{fcomponents}
\Ad(\tau)(f)(m,g)=(\varphi(m), h(\varphi(m))g)\, , \quad \forall\,\, (m,g)\in M\times G\, ,
\een
and the correspondence $f\rightarrow (h,\varphi)$ gives a bijection
between $\Aut(P)\simeq \Aut(M\times G)$ and the set
$\cC^\infty(M,G)\times \Diff(M)$. If $f_1,f_2\in \Aut(P)$ correspond
through this map to the pairs $(h_1,\varphi_1), (h_2,\varphi_2)\in
\cC^\infty(M,G)\times \Diff(M)$, then direct computation using
\eqref{fcomponents} gives:
\be
\Ad(\tau)(f_1\circ f_2)(m,g)=((\varphi_1\circ \varphi_2)(m), h_1(m) (h_2\circ \varphi_1^{-1})(m) g)~~,
\ee
showing that $f_1\circ f_2$ corresponds to the pair $(h_1\cdot
\alpha(\varphi_1)(h_2),\varphi_1\circ \varphi_2)$.
\end{proof}

\noindent Let $P$ be a Siegel bundle or rank $n$ and type $\frt\in
\Div^n$ on $(M,g)$ and let $\bDelta=(\Delta,Z)$ be the integral
duality structure defined by $P$. Suppose that $P$ is topologically
trivial and that $Z=Z(P)$ has trivial monodromy, so that $\Delta$ is
holonomy trivial. Choosing a trivialization of $P$ gives:
\be
P \equiv M\times \Aff_\frt\, , \qquad \bDelta \equiv (M\times \R^{2n},\omega_{2n},\dd,M\times \Lambda_\frt)
\ee
and:
\be
\Aut_b(P) \equiv \cC^\infty(M,\Aff_\frt) \, , \qquad  \Aut(P) \equiv \cC^\infty(M,\Aff_\frt)\rtimes_{\alpha} \Diff(M)~~,
\ee
where the last identification follows from Lemma
\ref{lemma:trivunbased}. Since $\Aff_\frt = \U(1)^{2n}\rtimes \Sp_{\frt}(2n,\Z)$ 
and $\Sp_{\frt}(2n,\Z)$ is discrete, we have:
\be
\cC^\infty(M,\Aff_\frt)=\cC^\infty(M,\U(1)^{2n})\rtimes \Sp_\frt(2n,\Z)~~.
\ee
In particular, maps $h\in \cC^\infty(M,\Aff_\frt)$ can be identified
with pairs $(f,\gamma)$, where $f\in \cC^\infty(M,\U(1)^{2n})$ and $\gamma\in
\Sp_\frt(2n,\Z)$. The unbased gauge duality group is given by:
\be
\Aut(g,P) \equiv \cC^\infty(M,\Aff_\frt)\rtimes_{\alpha} \Iso(M,g)~~.
\ee
The integral pseudo-duality, relative duality and duality groups of
$\bDelta$ are in turn given by:
\beqa
& \Aut(\bDelta)\equiv \Sp_\frt(2n,\Z)\times \Diff(M)\, ,\nn\\
& \Aut(g,\bDelta)\equiv \Sp_\frt(2n,\Z)\times \Iso(M,g)\, ,\nn\\
&\Aut_b(\bDelta)\equiv \Sp_\frt(2n,\Z)\, ,
\eeqa
and we have short exact sequences:
\beqa
& 1\to \cC^\infty(M,\U(1)^{2n}) \to \Aut(P)\to \Aut(\bDelta)\to 1\, , \\
& 1\to \cC^\infty(M,\U(1)^{2n}) \to \Aut(g,P)\to \Aut(g,\bDelta)\to 1\, ,\\
& 1\to \cC^\infty(M,\U(1)^{2n}) \to \Aut_b(P)\to \Aut_b(\bDelta)\to 1\, .
\eeqa
Let us fix a taming of $\Delta$, which we view as a map
$\cJ\in\cC^\infty(M,\Sp(2n,\R))$. Then the unbased unitary gauge group
of the tamed Siegel bundle $\bP=(P,\cJ)$ is:
\be
\Aut(g,\bP) \equiv \left\{(f,\gamma,\varphi)\in
\left[\cC^\infty(M,\U(1)^{2n})\times \Sp_\frt(2n,\Z)\right]\rtimes_\alpha \Iso(M,g)
\, \vert\,\, \gamma \cJ\gamma^{-1}=\cJ\circ \varphi \right\}~~,
\ee
while its unitary gauge group is:
\be
\Aut_b(\bP) \equiv \left\{ (f,\gamma)\in \cC^\infty(M,\U(1)^{2n})\times
\Sp_\frt(2n,\Z) \, \vert \, \gamma \cJ\gamma^{-1}=\cJ \right\}~~.
\ee
The integral unbased unitary duality group of the integral electromagnetic structure
$\bXi=(\bDelta,\cJ)$ is:
\be
\Aut(g,\bXi) \equiv \left\{(\gamma,\varphi) \in \Sp_{\frt}(2n,\Z)\times
\Iso(M,g) \, \, \vert \,\, \gamma \cJ \gamma^{-1} = \cJ\circ
\varphi\right\}\, ,
\ee
while the integral unitary duality group of $\bXi$ is:
\be
\Aut_b(\bXi) \equiv \left\{\gamma \in \Sp_{\frt}(2n,\Z) \, \, \vert \,\, \gamma \cJ \gamma^{-1} = \cJ\right\}\subset
\Sp_\frt(2n,\Z)\, .
\ee
We have short exact sequences:
\beqa
& 1\to \cC^\infty(M,\U(1)^{2n}) \to \Aut(g,\bP)\to \Aut(g,\bXi)\to 1\, \\
& 1\to \cC^\infty(M,\U(1)^{2n}) \to \Aut_b(\bP)\to \Aut_b(\bXi)\to 1~~.
\eeqa


\section{Time-like dimensional reduction and polarized Bogomolny equations}
\label{sec:TimelikeReduction}


This section investigates the time-like dimensional reduction of the
equations of motion of abelian gauge theory on an oriented
static space-time $(M,g)$ of the form:
\be
(M,g) = (\R \times \Sigma, - \dd t^2 \oplus h)\, ,
\ee
where $t$ is the global coordinate on $\R$ and $(\Sigma,h)$ is an oriented 
Riemannian three-manifold. We show that the reduction produces an equation of
Bogomolny type, similar to the dimensional reduction of the ordinary
self-duality equation on a Riemannian four-manifold. Unlike that
well-known case, here we reduce the \emph{polarized} 
self-duality condition on a {\em Lorentzian} four-manifold.


\subsection{Preparations}


Consider the time-like exact one-form $\theta=\dd t\in \Omega^1(M)$.
Let $\nu_h$ be the volume form of $(\Sigma,h)$ and orient $(M,g)$ such
that its volume form is given by:
\be
\nu_g=\theta \wedge \nu_h~~.
\ee
Let $\ast_g$ and $\ast_h$ be the Hodge operators of $(M,g)$ and
$(\Sigma,h)$ and $\langle~,~\rangle_g$, $\langle~,~\rangle_h$ be
the non-degenerate bilinear pairings induced by $g$ and $h$ on
$\Omega^\ast(M)$ and $\Omega^\ast(\Sigma)$. Let $p:M\rightarrow
\Sigma$ be the projection of $M=\R\times \Sigma$ on the second factor
and consider the distribution $\cD=p^\ast(T\Sigma)\subset TM$, endowed
with the fiberwise Euclidean pairing given by the bundle pullback
$h^p$ of $h$. This distribution is integrable with leaves gives by the
spacelike hypersurfaces:
\be
M_t\eqdef \{t\}\times \Sigma\, , \quad \forall\,\, t\in \R~~,
\ee
on which $h^p$ restricts to the metric induced by $g$. Using $h^p$, we
extend $\langle~,~\rangle_h$ and $\ast_h$ in the obvious manner to the
space $\cC^\infty(M, \wedge^\ast\cD^\ast)\subset \Omega^\ast(M)$. We have:
\be
\cC^\infty(M,\wedge^\ast \cD^\ast)=\{\omega\in \Omega^\ast(M)~\vert~\iota_{\partial_t}\omega=0\}~~.
\ee
Since $g$ has signature $(3,1)$ while $h$ has signature
$(3,0)$, we have:
\be
\ast_g\circ \ast_g=-\pi\, , \quad \ast_h\circ \ast_h=\id_{\cC^\infty(M,\wedge\cD^\ast)}~~,
\ee
where $\pi\eqdef \oplus_{k=0}^4{(-1)^k\id_{\Omega^k(M)}}$ is the
signature automorphism of the exterior algebra
$(\Omega^\ast(M),\wedge)$ (see \cite{ga1}). Moreover, we have $\langle
\theta,\theta\rangle_g=-1$ and:
\be
\langle \nu_g,\nu_g\rangle_g=-1\, , \quad \langle \nu_h,\nu_h\rangle_h=+1~~.
\ee
Any polyform $\omega\in \Omega^\ast(M,g)$ has a unique decomposition:
\ben
\label{omegadec}
\omega=\omega_\parallel+\omega_\perp~~,
\een
such that $\omega_\parallel, \omega_\perp\in \Omega^\ast(M)$ satisfy \cite{ga1}:
\ben
\label{omegacomp}
\theta\wedge\omega_\parallel=0\, , \quad \iota_{\partial_t}\omega_\perp=0~~.
\een
The second of these conditions amounts to the requirement
that $\omega_\perp\in \cC^\infty(M,\wedge^\ast \cD^\ast)$, while the
first is solved by:
\be
\omega_\parallel=\theta\wedge \omega_\top~~\mathrm{where} ~~
\omega_\top\eqdef -\iota_{\partial_t}\omega \in \cC^\infty(M,\wedge^\ast\cD^\ast)~~.
\ee
As shown in loc. cit., the map $\omega\rightarrow
(\omega_\top,\omega_\perp)$ gives a linear isomorphism between
$\Omega^\ast(M)$ and $\cC^\infty(M,\wedge^\ast \cD^\ast)^{\oplus
2}$. For any $k=0,\ldots, 4$ and any $\omega,\eta\in \Omega^k(M)$, we
have:
\be
\langle \omega,\eta\rangle_g=-\langle
\omega_\top,\eta_\top\rangle_h+\langle
\omega_\perp,\eta_\perp\rangle_h\, , \quad \langle
\omega_\parallel,\eta_\parallel\rangle_g=-\langle
\omega_\top,\eta_\top\rangle_h~~.
\ee
Hence the isomorphism above identifies the quadratic space
$(\Omega^\ast(M),\langle~,~\rangle_h)$ with the direct sum of
quadratic spaces $(\cC^\infty(M,\wedge^\ast \cD^\ast),
-\langle~,~\rangle_h)\oplus (\cC^\infty(M,\wedge^\ast \cD^\ast),
\langle~,~\rangle_h)$. Notice that $\nu_h=\iota_{\partial_t}\nu_g=-(\nu_g)_\top$.
An easy computation gives:

\begin{lemma}
\label{lemma:astdec}
For any polyform $\omega\in \Omega^\ast(M)$, we have:
\ben
\label{astpt}
(\ast_g\omega)_\top=\ast_h \pi(\omega_\perp)\, , \quad (\ast_g\omega)_\perp=-\ast_h\omega_\top
\een
and hence:
\ben
\label{astdec}
\ast_g\omega=-\ast_h\omega_\top+\theta\wedge \ast_h \pi(\omega_\perp)~~.
\een
\end{lemma}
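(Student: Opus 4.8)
The plan is to exploit the direct-sum decomposition $\Omega^\ast(M)\simeq \cC^\infty(M,\wedge^\ast\cD^\ast)^{\oplus 2}$ recorded just above the lemma, which sends a polyform $\omega$ to the pair of spatial forms $(\omega_\top,\omega_\perp)$ and identifies $\omega$ with $\omega_\perp+\theta\wedge\omega_\top$. Since $\ast_g$, $\ast_h$, the signature automorphism $\pi$, and the two projections $\omega\mapsto\omega_\top$, $\omega\mapsto\omega_\perp$ are all $\R$-linear, and since \eqref{astdec} is just the sum of the two components of \eqref{astpt}, it suffices to verify \eqref{astpt} on each summand separately: (i) purely perpendicular forms $\omega=\omega_\perp$ (so $\omega_\top=0$), and (ii) purely parallel forms $\omega=\theta\wedge\omega_\top$ with $\omega_\top\in \cC^\infty(M,\wedge^\ast\cD^\ast)$ (so $\omega_\perp=0$).

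The computational core is a pair of elementary Hodge identities for a product metric. First I would fix a local $g$-orthonormal coframe of the form $(e^0=\theta,e^1,e^2,e^3)$ with $(e^1,e^2,e^3)$ an $h$-orthonormal coframe of $(\Sigma,h)$, so that the chosen orientation reads $\nu_g=e^0\wedge e^1\wedge e^2\wedge e^3=\theta\wedge\nu_h$, while $\langle e^0,e^0\rangle_g=-1$ and $\langle e^i,e^j\rangle_g=\delta^{ij}$. Evaluating the defining relation $\beta\wedge\ast_g\gamma=\langle\beta,\gamma\rangle_g\,\nu_g$ on monomials then yields, for every spatial $k$-form $\alpha$,
\begin{equation*}
\ast_g\alpha=(-1)^{k}\theta\wedge\ast_h\alpha\,,\qquad \ast_g(\theta\wedge\alpha)=-\ast_h\alpha\,.
\end{equation*}
Both are proved by moving the factor $e^0$ to the front (which contributes the grading sign $(-1)^{k}$) and tracking the single Lorentzian sign coming from $\langle e^0,e^0\rangle_g=-1$; the output is manifestly of the form $\theta\wedge(\text{spatial})$ in the first case and manifestly spatial in the second.

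Assembling these reproduces both lines of \eqref{astpt}. In case (i), $\ast_g\omega_\perp=(-1)^{k}\theta\wedge\ast_h\omega_\perp=\theta\wedge\ast_h\pi(\omega_\perp)$, because $\pi$ acts on the $k$-form $\omega_\perp$ by $(-1)^{k}$; this form is purely parallel, so its perpendicular part vanishes (matching $-\ast_h\omega_\top=0$) and its top part equals $\ast_h\pi(\omega_\perp)$. In case (ii), $\ast_g(\theta\wedge\omega_\top)=-\ast_h\omega_\top$ is purely perpendicular, so its top part vanishes (matching $\ast_h\pi(\omega_\perp)=0$) and its perpendicular part equals $-\ast_h\omega_\top$. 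Summing the two cases gives \eqref{astdec}.

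I expect the only genuine difficulty to be the sign bookkeeping: reconciling the orientation $\nu_g=\theta\wedge\nu_h$, the timelike normalization $\langle\theta,\theta\rangle_g=-1$, the grading signs produced by $\pi$, and the contraction conventions of \cite{ga1} that fix the sign of $\omega_\top$ relative to $\iota_{\partial_t}\omega$. A robust way to pin every sign down at once is to check that the resulting operator squares correctly: applying \eqref{astdec} twice, using $\ast_h\circ\ast_h=\id$ and the fact that $\pi$ multiplies a form of degree $j$ by $(-1)^{j}$, must reproduce the recorded relation $\ast_g\circ\ast_g=-\pi$. A direct verification shows this holds on both a spatial form and a $\theta\wedge(\text{spatial})$ form, leaving no freedom in the signs of the two building-block identities and thereby confirming \eqref{astpt}.
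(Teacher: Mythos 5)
Your proof is correct: the two product-metric identities $\ast_g\alpha=(-1)^{k}\theta\wedge\ast_h\alpha$ and $\ast_g(\theta\wedge\alpha)=-\ast_h\alpha$ for spatial $k$-forms $\alpha$ follow exactly as you compute from $\nu_g=\theta\wedge\nu_h$, $\langle\theta,\theta\rangle_g=-1$ and $\beta\wedge\ast_g\gamma=\langle\beta,\gamma\rangle_g\,\nu_g$, and summing them over the decomposition $\omega=\omega_\perp+\theta\wedge\omega_\top$ yields both lines of \eqref{astpt} and hence \eqref{astdec}. The paper offers no written proof beyond calling this "an easy computation," and your argument (including the consistency check against $\ast_g\circ\ast_g=-\pi$) is precisely that computation, carried out correctly.
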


\noindent Given any vector bundle $V$ defined on $M$, the Hodge
operators of $g$ and $h$ extend trivially to operators
$\ast_g:\Omega^\ast(M,V)\rightarrow \Omega^\ast(M,V)$ and
$\ast_h:\cC^\infty(M,\wedge^\ast \cD^\ast\otimes V)\rightarrow
\cC^\infty(M,\wedge^\ast \cD^\ast\otimes V)$. The decomposition
\eqref{omegadec} holds for any $\omega\in \Omega^\ast(M,V)$, with
components $\omega_\parallel,\omega_\perp\in \Omega^\ast(M,V)$
satisfying \eqref{omegacomp}. We have $\omega_\perp\in
\cC^\infty(M,\wedge^\ast \cD^\ast\otimes V)$ and
$\omega_\parallel=\theta\wedge \omega_\top$ with $\omega_\top\eqdef
-\iota_{\partial_t}\omega \in \cC^\infty(M,\wedge^\ast\cD^\ast\otimes
V)$. Finally, Lemma \ref{lemma:astdec} holds for any $\omega\in
\Omega^\ast(M,V)$.


\subsection{Timelike dimensional reduction of abelian gauge theory}


Let $P$ be a Siegel bundle of type $\frt\in \Div^n$ defined on
$\Sigma$, whose projection we denote by $\pi:P\rightarrow \Sigma$. Let
$\hP\eqdef p^\ast(P)$ be the $p$-pullback of $P$ to $M$, whose
projection we denote by $\hpi$. The map $\varphi:\hP\rightarrow
\R\times P$ defined through:
\be
\varphi(t,\sigma,y)\eqdef (t,y)\, , \quad \forall\,\, (t,\sigma)\in \R\times \Sigma\, , \quad \forall\,\, y\in P_\sigma=\pi^{-1}(\sigma)
\ee
allows us to identify $\hP$ with the principal $\Aff_\frt$-bundle
with total space given by $\R\times P$, base $M=\R\times \Sigma$ and
projection given by $\R\times P \ni (t,y)\rightarrow (t,\pi(y))\in
M$. We make this identification in what follows. Accordingly, we
have:
\be
\hpi(t,y)=(t,\pi(y))\, , \quad \forall\,\, (t,y)\in \hP\equiv \R\times P\, .
\ee
Let $\tau:\hP\rightarrow P$ be the unbased morphism of
principal $\Aff_\frt$-bundles given by projection on the second
factor:
\be
\tau(t,y)\eqdef y\, , \quad \forall\, \, (t,y)\in \hP\, ,
\ee
which covers the map $p:M\rightarrow \Sigma$:
\be
\pi\circ \tau=p\circ \hpi~~.
\ee
Consider the action $\rho:\R\rightarrow \Aut(\hP)$ of $(\R,+)$
through unbased automorphisms of $\hP$ given by:
\be
(t,y)\rightarrow \rho(a)(t,y)\eqdef (t+a,y)\, , \quad \forall\,\, (t,y)\in \hP\equiv \R\times P\, .
\ee
This covers the action of $\R$ on $M=\R\times \Sigma$ given by 
time translations:
\be
\hpi(\rho(a)(t,y))=(t+a,\pi(y))\, , \quad \forall \,\, (t,y)\in \hP\equiv \R\times P~~.
\ee

\begin{definition}
A principal connection on $\hcA\in \Conn(\hP)$ is called {\em
time-invariant} if it satisfies:
\be
\rho(a)^\ast(\hcA)=\hcA\, , \quad \forall\,\, a\in \R~~.
\ee
\end{definition}

\noindent Notice that time-invariant principal connections defined on
$\hP$ form an affine subspace $\Conn^s(\hP)$ of $\Conn(\hP)$. The
timelike one-form $\theta \in \Omega^1(M)$ pulls back through $\hpi$
to an exact one-form defined on $\hP$ which we denote by
$\htheta\eqdef p^\ast(\theta)\in \Omega^1(\hP)$.

Let $\Delta=(\cS,\omega,\cD)$ be the duality structure defined by
$P$ on $\Sigma$. Then it is easy to see that the duality structure
$\hDelta=(\hcS,\homega,\hcD)$ defined by $\hP$ on $M$ is given by:
\be
\hcS=p^\ast(\cS)\, ,\quad \homega=p^{\ast}(\omega)\, , \quad \hcD=p^\ast(\cD)\, .
\ee

\begin{lemma}
\label{lemma:hcAdec}
A connection $\hcA\in \Conn(\hP)$ is time-invariant if and only if it
can be written as:
\ben
\label{hcAdec}
\hcA=-(\Psi^\pi\circ \tau) \htheta+ \tau^\ast(\cA)
\een
for some $\Psi\in \cC^\infty(\Sigma,\cS)$ and some $\cA\in \Conn(P)$.
In this case, $\Psi$ and $\cA$ are determined uniquely by $\hcA$ and
any pair $(\Psi,\cA)\in \cC^\infty(\Sigma,\cS)\times\Conn(P)$ determines
a time-invariant connection on $\hP$ though this relation.  Moreover,
the curvature of $\hcA$ is given by:
\ben
\label{hcAcurv}
\cV_\hcA=\theta\wedge p^\ast(\dd_{\cD} \Psi)+p^\ast(\cV_\cA)\in \Omega^2(\Sigma,\hat{\cS})
\een
and we have:
\ben
\label{cVAflat}
\dd_\cD\cV_\cA=0~~.
\een
\end{lemma}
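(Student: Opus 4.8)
The plan is to establish the normal form \eqref{hcAdec} as an ``if and only if'' statement, then obtain the curvature formula \eqref{hcAcurv} by a direct computation of $\dd\hcA$ that descends to $M$, the descent being controlled by the fact that $\fa$ is abelian; the identity \eqref{cVAflat} will be nothing but the Bianchi identity already proved in Section \ref{sec:associatedbundle}.

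For the ``if'' direction I would first check that, for any pair $(\Psi,\cA)\in\cC^\infty(\Sigma,\cS)\times\Conn(P)$, the one-form $\hcA\eqdef-(\Psi^\pi\circ\tau)\htheta+\tau^\ast(\cA)$ is a time-invariant principal connection, where $\Psi^\pi\colon P\to\fa$ denotes the $\Ad$-equivariant function representing the section $\Psi$ of $\cS=\ad(P)$. Since $\tau\colon\hP\to P$ is an equivariant bundle morphism covering $p$, the term $\tau^\ast(\cA)$ already satisfies both defining properties of a connection: it is equivariant, and it reproduces the generator on fundamental vector fields because $\tau$ carries fundamental fields of $\hP$ to those of $P$. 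Because $\htheta=\hpi^\ast(\theta)$ is basic it vanishes on vertical vectors and is right-invariant, while $\Psi^\pi\circ\tau$ transforms with weight $\Ad(g^{-1})=\gamma^{-1}$ (for $g=(a,\gamma)$) by Lemma \ref{lemma:adjointrep}, so the correction term preserves equivariance and does not disturb the vertical normalization. Time-invariance is then immediate from $\tau\circ\rho(a)=\tau$ and $\rho(a)^\ast\htheta=\htheta$.

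For the converse, given a time-invariant $\hcA$ I would recover $\cA\eqdef\iota_0^\ast(\hcA)$ along the equivariant inclusion $\iota_0\colon P\hookrightarrow\hP$, $y\mapsto(0,y)$, and recover $\Psi$ from the function $-\hcA(\partial_t)\colon\hP\to\fa$. The latter is $\Ad(g^{-1})$-equivariant because $\partial_t$ is right-invariant, and is $\rho$-invariant by hypothesis, so it descends to $\Psi^\pi\circ\tau$ for a unique $\Psi\in\cC^\infty(\Sigma,\cS)$. Evaluating \eqref{hcAdec} on $\partial_t$ (where $\htheta(\partial_t)=1$ and $\tau_\ast\partial_t=0$) and on vectors tangent to $\{0\}\times P$ (where $\htheta$ vanishes and $\tau$ restricts to the canonical identification with $P$) shows that the two time-invariant one-forms agree on a frame of $T\hP$ along that slice, hence everywhere; uniqueness of $(\Psi,\cA)$ is forced by these same evaluations.

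It remains to compute the curvature. As $\fa$ is abelian, $\Omega_\hcA=\dd\hcA$, and since $\dd\htheta=\hpi^\ast\dd\theta=0$ a short calculation gives
\be
\dd\hcA=\tau^\ast\Omega_\cA+\htheta\wedge\dd(\Psi^\pi\circ\tau)\, .
\ee
The step I expect to require the most care is the descent of the mixed term. Writing $f\eqdef\Psi^\pi\circ\tau$ for the equivariant function representing $p^\ast\Psi\in\cC^\infty(M,\hat{\cS})$, the key observation is that $\dd f$ is \emph{horizontal}: on a fundamental vector field $X^\xi$ one has $\dd f(X^\xi)=-\ad_\xi f=0$, since by Lemma \ref{lemma:adjointrep} the identity component acts trivially under $\Ad$, i.e. $\ad$ vanishes on $\fa$. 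This is exactly the weak-abelianness of the structure group, and it is what makes the ``electric'' component come out covariantly. Hence $\dd f\in\Omega^1_\Ad(\hP,\fa)$ descends, and by \eqref{dB} (together with $\hcD=p^\ast\cD$) it descends to $\dd_{\hcD}(p^\ast\Psi)=p^\ast(\dd_\cD\Psi)$. Combined with the descent of $\tau^\ast\Omega_\cA$ to $p^\ast\cV_\cA$ (naturality of the adjoint curvature under the morphism $\tau$ covering $p$) and of $\htheta$ to $\theta$, this yields \eqref{hcAcurv}. Finally, \eqref{cVAflat} holds because $\dd_\cD\cV_\cA=0$ is the Bianchi identity for the connection $\cA\in\Conn(P)$ over $\Sigma$, already established in Section \ref{sec:associatedbundle}.
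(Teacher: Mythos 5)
Your proposal is correct and follows essentially the same route as the paper's proof: the paper likewise splits $\hcA$ uniquely into the temporal part $-\Phi\,\htheta$ (with $\Phi=-\hcA(\partial_t)\in\Omega^0_\Ad(\hP,\fa)$) and a part annihilating $\partial_t$, identifies time-invariance with both pieces being pulled back through $\tau$, and computes $\Omega_\hcA=\dd\hcA$ using $\dd\htheta=0$ and the abelian-ness of $\fa$, citing the Bianchi identity for \eqref{cVAflat}. The only difference is one of detail: you spell out the verifications (equivariance, recovery of $\cA$ along the slice $\{0\}\times P$, horizontality of $\dd(\Psi^\pi\circ\tau)$) that the paper dismisses as ``clear'' or delegates to \cite{wa}.
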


\begin{remark}
Relation \eqref{hcAcurv} gives the decomposition \eqref{omegadec} of
$\cV_\hcA$ since $\iota_{\partial_t}
p^\ast(\cV_A)=\iota_{\partial_t}p^\ast(\dd_\cD\Psi)=0$.  Thus:
\ben
\label{cVtp}
(\cV_\hcA)_\top=p^\ast(\dd_\cD\Psi)\, , \quad (\cV_\hcA)_\perp=p^\ast(\cV_\cA)~~.
\een
\end{remark}

\begin{proof}
Any principal connection $\cA\in \Conn(\hP)\subset \Omega^1(\hP,\fa)$
decomposes uniquely as:
\be
\hcA=-\Phi \htheta+\cA_\perp~~,
\ee
where $\Phi\in \Omega^0_\Ad(\hP,\fa)$ and $\cA_\perp\in
\Conn(\hP,\fa)$ satisfies $\cA_\perp(\partial_t)=0$.  It is clear
that $\hcA$ is time-invariant if and only if $\Phi=\Psi'\circ
\tau$ for some $\Psi'\in \Omega^0_\Ad(P,\fa)$ and
$\hcA_\perp=\tau^\ast(\cA)$ for some $\cA\in \Conn(P)$. Since
$\Omega^0_\Ad(P,\fa)\simeq \cC^\infty(\Sigma,\cS)$, we have
$\Psi'=\Psi^\pi$ for some $\Psi\in \cC^\infty(\Sigma,\cS)$.
Since $\dd\htheta=0$, the principal curvature of $\hcA$ reads:
\be
\Omega_\hcA=\dd \hcA=\dd\Phi\wedge \htheta+\tau^\ast(\Omega_\cA)~~,
\ee
which is equivalent with \eqref{hcAcurv}. Relation \eqref{cVAflat}
follows from the results of \cite{wa}. The remaining
statements are immediate.
\end{proof}

\begin{definition}
The {\em Bogomolny pair} of a time-invariant connection $\hcA\in
\Conn^s(\hP)$ is the pair $(\Psi,\cA)\in
\cC^\infty(\Sigma,\cS)\times\Conn(P)$ defined in Lemma
\ref{lemma:hcAdec}. The section $\Psi\in \cC^\infty(\Sigma,\cS)$ is
called the {\em Higgs field} of the pair.
\end{definition}

\noindent Let $\cJ$ be a taming of $\Delta$. Then the $p$-pullback
$\hcJ$ of $\cJ$ defines a time-invariant taming of $\hcS$, thus
$(\hP,\hcJ)$ is a polarized Siegel bundle. Let
$\star_{h,\cJ}=\ast_h\otimes \cJ$ be the polarized Hodge operator
defined by $h$ and $\cJ$. Since $\cJ^2=-\id_\cS$ while $\ast_h$
squares to the identity on $\Omega^\ast(\Sigma)$, we have:
\be
\star_{h,\cJ}\circ \star_{h,\cJ}=-\id_{\Omega^\ast(\Sigma,\cS)}~~.
\ee

\begin{prop}
A time-invariant connection $\hcA\in \Conn^s(\hP)$ is polarized
self-dual with respect to $\hcJ$ if and only if its Bogomolny pair
$(\Psi,\cA)$ satisfies the {\em polarized Bogomolny equation} with
respect to $\cJ$:
\ben
\label{eq:Bogomolny}
\star_{h,\cJ}\cV_\cA=\dd_\cD\Psi \Longleftrightarrow \star_{h,\cJ}\dd_\cD\Psi=-\cV_\cA~~.
\een
A polarized Bogomolny pair $(\Psi,\cA)$ which satisfies this equation
is called a {\em polarized abelian dyon} relative to $\cJ$.
\end{prop}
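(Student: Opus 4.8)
The plan is to reduce the self-duality condition to a pointwise identity on $\Sigma$ by inserting the explicit curvature of Lemma \ref{lemma:hcAdec} into $\star_{g,\hcJ}\cV_\hcA=\cV_\hcA$ and then separating the resulting equation along the orthogonal splitting $\omega=\omega_\parallel+\omega_\perp$ introduced in the preparations subsection. First I would record from \eqref{cVtp} that the Bogomolny pair realizes the components of the curvature as $(\cV_\hcA)_\top=p^\ast(\dd_\cD\Psi)$ (a one-form) and $(\cV_\hcA)_\perp=p^\ast(\cV_\cA)$ (a two-form), both valued in $\hcS=p^\ast(\cS)$, with the full decomposition being \eqref{hcAcurv}.

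The central computation is to apply the bundle-valued form of Lemma \ref{lemma:astdec} to $\omega=\cV_\hcA$. Since $\omega_\perp$ is a two-form, the signature automorphism acts trivially on it, $\pi(\omega_\perp)=\omega_\perp$, so that
\[
\ast_g\cV_\hcA=-\ast_h\, p^\ast(\dd_\cD\Psi)+\theta\wedge\ast_h\, p^\ast(\cV_\cA)\, .
\]
Because $\hcJ=p^\ast(\cJ)$ is a fiberwise bundle endomorphism acting only on the $\hcS$-values, it commutes with both $\ast_g$ and $\ast_h$ and passes through $p^\ast$; tensoring the display above with $\hcJ$ and using $\star_{g,\hcJ}=\ast_g\otimes\hcJ$, $\star_{h,\cJ}=\ast_h\otimes\cJ$ then gives
\[
\star_{g,\hcJ}\cV_\hcA=-p^\ast(\star_{h,\cJ}\dd_\cD\Psi)+\theta\wedge p^\ast(\star_{h,\cJ}\cV_\cA)\, .
\]

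Comparing this with \eqref{hcAcurv} and invoking uniqueness of the decomposition \eqref{omegadec} into $\parallel$- and $\perp$-parts, the condition $\star_{g,\hcJ}\cV_\hcA=\cV_\hcA$ becomes equivalent to the simultaneous validity of the perpendicular equation $\star_{h,\cJ}\dd_\cD\Psi=-\cV_\cA$ and the parallel equation $\star_{h,\cJ}\cV_\cA=\dd_\cD\Psi$, after cancelling the injective pullback $p^\ast$. Finally I would observe that these two equations are a single equation: applying $\star_{h,\cJ}$ to either and using $\star_{h,\cJ}\circ\star_{h,\cJ}=-\id_{\Omega^\ast(\Sigma,\cS)}$ converts it into the other. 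This is precisely the polarized Bogomolny equation \eqref{eq:Bogomolny}.

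I expect the only genuine subtlety to be careful bookkeeping of the signs and of the signature automorphism $\pi$ in Lemma \ref{lemma:astdec}: one must verify that $\pi$ acts as the identity on the two-form component $\omega_\perp$ while the overall minus sign on the $\ast_h\omega_\top$ term survives, since it is exactly this sign that renders the two halves of \eqref{eq:Bogomolny} mutually consistent rather than contradictory. Everything else is formal, justified by the fact that $\hcJ$ is $p$-pulled back and hence time-invariant and fiberwise, so it commutes with all Hodge operations and descends through $p^\ast$.
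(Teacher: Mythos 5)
Your proposal is correct and follows essentially the same route as the paper's proof: insert the curvature decomposition \eqref{cVtp} into Lemma \ref{lemma:astdec}, tensor with the pulled-back taming to get $\star_{g,\hcJ}\cV_\hcA=-p^\ast(\star_{h,\cJ}\dd_\cD\Psi)+\theta\wedge p^\ast(\star_{h,\cJ}\cV_\cA)$, and compare with \eqref{hcAcurv} using uniqueness of the decomposition \eqref{omegadec}. Your added remarks---that $\pi$ acts trivially on the two-form component and that the two halves of \eqref{eq:Bogomolny} are interchanged by $\star_{h,\cJ}$ via $\star_{h,\cJ}^2=-\id$---are correct and merely make explicit what the paper leaves implicit.
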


\begin{proof}
Relations \eqref{cVtp} and \eqref{astdec} give:
\be
\ast_g\cV_\hcA=-p^\ast(\ast_h\dd_\cD\Psi)+\theta\wedge p^\ast(\ast_h\cV_\cA)~~,
\ee
which implies:
\be
\star_{g,\hcJ}\cV_\hcA=-p^\ast(\star_{h,\cJ}\dd_\cD\Psi)+\theta\wedge p^\ast(\star_{h,\cJ}\cV_\cA)~~.
\ee
Comparing this with \eqref{hcAcurv} shows that the polarized
self-duality condition for $\cV_\hcA$ amounts to \eqref{eq:Bogomolny},
where we used uniqueness of the decomposition \eqref{omegadec}.
\end{proof}

\noindent Let:
\be
\Dyons(\Sigma,h,\bP)\eqdef \{(\Psi,\cA)\in \cC^\infty(\Sigma,\cS)\times \Conn(P)~\vert~\star_{h,\cJ}\cV_\cA=\dd_\cD\Psi\}
\ee
be the set of all polarized abelian dyons relative to $\cJ$.

\begin{remark} Equation \eqref{eq:Bogomolny} is reminiscent of the
usual Bogomolny equations obtained by dimensional reduction of the
self-duality equations for a connection on a principal bundle over a
four-dimensional Riemannian manifold. However, it differs from the
latter in two crucial respects:

\begin{itemize}
\item The usual Bogomolny equations arise by dimensional reduction of
the self-duality equations (which are first order equations for a
connection) in four \emph{Euclidean} dimensions. By contrast, equation
\eqref{eq:Bogomolny} is the reduction along a timelike direction of
the complete second order equations of motion defining abelian gauge
theory in four \emph{Lorentzian} dimensions, once these equations have
been re-written as first-order equations by doubling the number of
variables through the inclusion of both electromagnetic and
magnetoelectric gauge potentials. In particular, our reduction yields
a system of first-order differential equations, despite originating in
a theory that was initially defined by local second-order PDEs (see
Appendix \ref{app:local}).
\item Equation \eqref{eq:Bogomolny} is modified by the action of the
taming $\cJ$, which is absent in the usual Bogomolny equations.
\end{itemize}
\end{remark}


\subsection{Gauge transformations of polarized abelian dyons} 


As explained in Subsection \ref{sec:globaldualitygroups} (see
\cite{wa} for more detail), the gauge group $\Aut_b(P)$ of the Siegel
bundle $P$ over $\Sigma$ has an action:
\be
\ad_P:\Aut_b(P)\rightarrow \Aut_b(\cS)
\ee
through based automorphisms of the vector bundle $\cS=\ad(P)$. This
action agrees through the adjoint curvature map with the pushforward
action:
\be
\mathbb{A}\colon\Aut_b(P)\rightarrow \Aff(\Conn(P))
\ee
of the gauge group on the space of principal connections defined on $P$. For any
principal connection $\cA\in\Conn(P)$ and any $ u\in \Aut_b(P)$, we have:
\be
\cV_{\mathbb{A}_u(\cA)}=\ad_P( u)(\cV_\cA)~~.
\ee
Similar statements hold for the gauge group of the Siegel bundle
$\hP$ defined on $M=\R\times \Sigma$.

\begin{definition}
A gauge transformation $\hpsi\in \Aut_b(\hP)$ of $\hP$ is called
{\em time-invariant} if:
\be
 u\circ \rho(a)=\rho(a)\circ  u\, , \quad \forall \,\, a\in \R~~.
\ee
\end{definition}

\noindent Notice that time-invariant gauge transformations of $\hP$
form a subgroup of $\Aut_b(\hP)$, which we denote by
$\Aut_b^s(\hP)$. Such transformations stabilize the affine
subspace $\Conn^s(P)$ of time-invariant principal connections
defined on $\hP$. Since $\hP=p^\ast(P)$, we have
$\Ad_G(\hP)=p^\ast(\Ad_{P})$.  It is easy to see that $\hpsi\in
\Aut_b(\hP)$ is time-invariant if and only if the corresponding section
$\sigma_{\hpsi}\in \cC^\infty(M,\Ad_G(\hP))$ is the bundle pull-back by
$p$ of a section $\sigma\in \cC^\infty(\Sigma,\Ad_G(P))$. The latter
corresponds to a gauge transformation of $P$ which we denote by
$ u\in \Aut_b(P)$. We have:
\ben
\label{psihpsi}
\sigma_{\hpsi}=(\sigma_ u)^p\, ,
\een
(where the subscript $p$ denotes bundle pullback by $p$) as well as:
\ben
\label{tauhpsi}
\tau\circ \hpsi= u\circ \tau~~.
\een
Conversely, any gauge transformation $ u$ of $P$ determines a gauge
transformation $\hpsi$ of $\hP$ by relation \eqref{psihpsi} and
$\hpsi$ satisfies \eqref{tauhpsi}. The correspondence $ u\rightarrow
\hpsi$ gives an isomorphism of groups between $\Aut_b(P)$ and
$\Aut_b^s(\hP)$.

The following proposition shows that the map which takes a time-invariant
principal connection defined on $\hP$ to its Bogomolny pair
intertwines the action of $\Aut_b^s(\hP)\simeq \Aut(P)$ on $\Conn^s(P)$ with the
action of $\Aut_b(P)$ on the set $\cC^\infty(\Sigma,\cS)\times \Conn(P)$ given by:
\be
\mu\eqdef \ad_P\times \A:\Aut_b(P)\rightarrow \Aut_\R(\cC^\infty(\Sigma,\cS))\times \Aff(\Conn(P))~~.
\ee

\begin{prop}
\label{prop:GaugeBogomolny}
Let $\hcA$ be a time-invariant principal connection on $\hP$ and
$(\Psi,\cA)\in \cC^\infty(\Sigma,\cS)\times \Conn(P)$ be the Bogomolny
pair defined by $\hcA$. For any $ u\in \Aut_b(P)$, the Bogomolny
pair $(\Psi_ u,\cA_ u)$ of the time-invariant connection
$\mathbb{A}_{\hpsi}(\hcA)$ obtained from $\hcA$ by applying the 
time-invariant gauge transformation $\hpsi$ is given by:
\be
\Psi_ u=\ad_P( u)(\Psi)\, , \quad\cA_ u=\mathbb{A}_u(\cA)\, .
\ee
In particular, we have:
\be
\cV_{\cA_ u}=\ad_P( u)(\cV_\cA)~~.
\ee
\end{prop}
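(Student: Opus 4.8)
The plan is to push the canonical decomposition of Lemma~\ref{lemma:hcAdec} through the gauge transformation $\hpsi$ and then read off the Bogomolny pair of the result by uniqueness. First I would fix conventions. Since $\hpsi\in \Aut_b(\hP)$ is \emph{based} (it covers $\id_M$), its affine action on connections is realized by pullback along the inverse, $\A_{\hpsi}(\hcA)=(\hpsi^{-1})^\ast(\hcA)$, and since $\hpsi$ is time-invariant it preserves $\Conn^s(\hP)$; hence $\A_{\hpsi}(\hcA)$ is again time-invariant and has a well-defined Bogomolny pair. I would then apply $(\hpsi^{-1})^\ast$ term by term to $\hcA=-(\Psi^\pi\circ \tau)\htheta+\tau^\ast(\cA)$ from \eqref{hcAdec}.

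The two ``easy'' terms are handled using the intertwining relation \eqref{tauhpsi}, $\tau\circ \hpsi=u\circ \tau$, which gives $\tau\circ \hpsi^{-1}=u^{-1}\circ \tau$. For the horizontal part this yields $(\hpsi^{-1})^\ast\tau^\ast(\cA)=(\tau\circ \hpsi^{-1})^\ast(\cA)=\tau^\ast\big((u^{-1})^\ast\cA\big)=\tau^\ast(\A_u(\cA))$, matching the claim $\cA_u=\A_u(\cA)$. For the one-form factor, $\htheta=\hpi^\ast(\theta)$ is $\hpsi$-invariant because $\hpi\circ \hpsi^{-1}=\hpi$ (again using that $\hpsi$ is based), so $(\hpsi^{-1})^\ast\htheta=\htheta$.

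The heart of the argument is the Higgs term. Its coefficient pulls back to $\Psi^\pi\circ \tau\circ \hpsi^{-1}=(\Psi^\pi\circ u^{-1})\circ \tau$, so the whole proof reduces to the equivariant-function identity
\be
(\ad_P(u)(\Psi))^\pi=\Psi^\pi\circ u^{-1}~~.
\ee
I would prove this by unwinding the definition of $\ad_P$ on $\ad(P)=P\times_\Ad \fa$: writing $\Psi(\pi(y))=[y,\Psi^\pi(y)]$ and using $\ad_u([y,v])=[u(y),v]$ together with $\pi\circ u=\pi$ gives $[u(y),\Psi^\pi(y)]=[u(y),(\ad_P(u)\Psi)^\pi(u(y))]$, whence the identity follows from freeness of the right $G$-action. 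This is the main (if short) obstacle, as it is exactly where the based hypothesis and the isomorphism $\Omega^0_\Ad(P,\fa)\simeq \cC^\infty(\Sigma,\cS)$ interact; everything else is bookkeeping.

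Combining the three computations gives $\A_{\hpsi}(\hcA)=-\big((\ad_P(u)\Psi)^\pi\circ \tau\big)\htheta+\tau^\ast(\A_u(\cA))$, and the uniqueness clause of Lemma~\ref{lemma:hcAdec} identifies its Bogomolny pair as $(\Psi_u,\cA_u)=(\ad_P(u)(\Psi),\A_u(\cA))$, proving the first two formulas. Finally, the curvature identity $\cV_{\cA_u}=\ad_P(u)(\cV_\cA)$ follows immediately from $\cA_u=\A_u(\cA)$ and the compatibility $\cV_{\A_u(\cA)}=\ad_P(u)(\cV_\cA)$ established in Lemma~\ref{eq:realclassinvariant}.
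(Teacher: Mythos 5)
Your proof is correct and follows essentially the same route as the paper's: pull back the decomposition of Lemma~\ref{lemma:hcAdec} along $\hpsi^{-1}$, use $\tau\circ\hpsi^{-1}=u^{-1}\circ\tau$ and $(\hpsi^{-1})^\ast(\htheta)=\htheta$ (both consequences of $\hpsi$ being based and time-invariant), and identify the resulting Bogomolny pair via the uniqueness clause. The only difference is that you explicitly prove the equivariant-function identity $(\ad_P(u)(\Psi))^\pi=\Psi^\pi\circ u^{-1}$ from freeness of the $G$-action, a step the paper uses implicitly when it rewrites $\Psi^\pi\circ u^{-1}\circ\tau$ as $\Psi_u^\pi\circ\tau$; this fills in a detail rather than changing the argument.
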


\begin{proof}
We have $\mathbb{A}_{\hpsi}(\hcA)=( \hpsi^{-1})^\ast(\hcA)$. Using Lemma
\ref{lemma:hcAdec}, this gives:
\be
\mathbb{A}_{\hpsi}(\hcA)=-(\Psi^\pi\circ  u^{-1}\circ \tau) \htheta+
\tau^\ast(( u^{-1})^\ast(\cA))=-(\Psi_ u^\pi\circ \tau) \htheta+
\tau^\ast(\mathbb{A}_u(\cA))~~,
\ee
where we used relation \eqref{tauhpsi} and noticed that
$(\hpsi^{-1})^\ast(\htheta)=\theta$ since $\htheta=\hpi^\ast(\theta)$
and $\hpi\circ\hpsi^{-1}=\hpsi$ because $\hpsi^{-1}$ is a based
automorphism of $\hP$.
\end{proof}

\noindent Notice that the discrete remnant (see \cite{wa})
of any gauge transformation of $\hP$ is time-invariant since $\Sigma$ 
(and hence $M$) is connected and therefore any discrete gauge transformation 
of $\hP$ is constant on $M$.  In particular, the groups of discrete 
gauge transformations of $\hP$ and $P$ can be identified. 
As explained in \cite{wa}, we have:
\be
\ad_P( u)=\ad_{\Gamma(P)}(\bar{ u})\, , \quad \forall\,\,  u\in \Aut_b(P)~~,
\ee
where $\bar{ u}$ is the discrete remnant of $ u$. This implies
that $\Aut_b(P)$ acts on $\cS$ through based automorphism of the
integral duality structure $\bDelta$ determined by $P$:
\be
\ad_P( u)\in \Aut_b(\bDelta)\, , \quad \forall \,\,  u\in \Aut_b(P)
\ee
and hence induces an integral duality transformation of $\Psi$. 

\begin{prop}
Let $\cJ$ be a taming of $\cS$. Then $(\Psi,\cA)\in
\cC^\infty(\Sigma,\cS)\times \Conn(P)$ is a polarized abelian dyon
with respect to $\cJ$ if and only if $(\Psi_ u,\cA_ u)$ is a polarized
abelian dyon with respect to the taming $\cJ_ u\eqdef \ad_P( u)\circ
\cJ\circ \ad_P( u)^{-1}$. In particular, the action $\mu$ of
$\Aut_b(P)$ restricts to bijections:
\be
\mu( u): \Dyons(M,g,P,\cJ)\xrightarrow{\sim} \Dyons(\Sigma,g,P,\cJ_ u)\, , \quad \forall \,\, u\in \Aut_b(P)\, .
\ee
\end{prop}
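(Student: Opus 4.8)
The plan is to reduce the statement to the equivariance of the three-dimensional polarized Hodge operator $\star_{h,\cJ}$ under the conjugation action of $\ad_P(u)$, exactly paralleling the computation \eqref{starad} carried out for the four-dimensional operator. First I would record the ingredients already available. By Proposition \ref{prop:GaugeBogomolny}, the Bogomolny pair $(\Psi_u,\cA_u)=\mu(u)(\Psi,\cA)$ satisfies $\Psi_u=\ad_P(u)(\Psi)$, $\cA_u=\A_u(\cA)$, together with $\cV_{\cA_u}=\ad_P(u)(\cV_\cA)$. Moreover, since $u\in\Aut_b(P)$ is based it covers the identity of $\Sigma$, so $f_u=\id_\Sigma$ and $h_u=h$; consequently $\ad_P(u)\in\Aut_b(\bDelta)$ preserves the flat connection $\cD$, and hence commutes with $\dd_\cD$ on $\cS$-valued forms, giving $\dd_\cD\Psi_u=\ad_P(u)(\dd_\cD\Psi)$. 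These are the only structural facts needed beyond one genuinely new computation.

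That computation is the intertwining identity
\begin{equation*}
\star_{h,\cJ_u}\circ\ad_P(u)=\ad_P(u)\circ\star_{h,\cJ}\, ,
\end{equation*}
which I would verify on decomposable elements $\alpha\otimes\xi$ with $\alpha\in\Omega^\ast(\Sigma)$ and $\xi\in\cC^\infty(\Sigma,\cS)$. Because $u$ is based, $\ad_P(u)$ acts only on the $\cS$-factor, so $\star_{h,\cJ_u}(\ad_P(u)\cdot(\alpha\otimes\xi))=(\ast_h\alpha)\otimes(\cJ_u\,\ad_P(u)\,\xi)$; substituting $\cJ_u\eqdef\ad_P(u)\circ\cJ\circ\ad_P(u)^{-1}$ yields $\cJ_u\,\ad_P(u)=\ad_P(u)\,\cJ$, and the right-hand side collapses to $\ad_P(u)\cdot(\star_{h,\cJ}(\alpha\otimes\xi))$. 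This is the based, three-dimensional analogue of \eqref{starad}, and the essential simplification is that a based gauge transformation leaves $\ast_h$ (and $\dd_\cD$) untouched while conjugating only the taming.

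With these in place the equivalence is immediate: applying $\ad_P(u)$ to the polarized Bogomolny equation \eqref{eq:Bogomolny} for $(\Psi,\cA)$ converts the left-hand side $\ad_P(u)(\star_{h,\cJ}\cV_\cA)$ into $\star_{h,\cJ_u}\cV_{\cA_u}$ and the right-hand side $\ad_P(u)(\dd_\cD\Psi)$ into $\dd_\cD\Psi_u$, so $(\Psi,\cA)$ solves \eqref{eq:Bogomolny} for $\cJ$ precisely when $(\Psi_u,\cA_u)$ solves it for $\cJ_u$. Since $\ad_P$ is a group morphism, $\ad_P(u)$ is invertible and applying the same argument to $u^{-1}$ (noting $(\cJ_u)_{u^{-1}}=\cJ$) furnishes the converse, exhibiting $\mu(u)$ and $\mu(u^{-1})$ as mutually inverse bijections between $\Dyons(\Sigma,h,P,\cJ)$ and $\Dyons(\Sigma,h,P,\cJ_u)$. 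I do not anticipate a real obstacle: the entire proof is equivariance bookkeeping mirroring the four-dimensional case, and the only care required is to track that a based automorphism fixes the metric data $(\ast_h,\dd_\cD)$ while conjugating $\cJ$.
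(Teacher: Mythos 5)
Your proof is correct and follows essentially the same route as the paper's: apply $\ad_P(u)$ to the polarized Bogomolny equation, invoking Proposition \ref{prop:GaugeBogomolny} for the transformed pair and the fact that $\ad_P(u)$ commutes with $\cD$. The paper compresses this into one sentence, while you usefully make explicit the intertwining identity $\star_{h,\cJ_u}\circ\ad_P(u)=\ad_P(u)\circ\star_{h,\cJ}$ and the inverse argument via $u^{-1}$, but the underlying argument is identical.
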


\begin{proof}
Applying $\ad_P( u)$ shows that the polarized Bogomolny equation
\eqref{eq:Bogomolny} relative to $\cJ$ is equivalent with the
polarized Bogomolny equation relative to $\cJ_ u$:
\be
\star_{h,\cJ_ u}\cV_{\cA_ u}=\dd_\cD\Psi_ u~~,
\ee
where we used Proposition \ref{prop:GaugeBogomolny} and the fact that
$\ad_P( u)$ commutes with $\cD$, which is proved in \cite{wa}.
\end{proof}


\subsection{The case when $Z(P)$ has trivial monodromy on $\Sigma$}


Suppose that $Z(P)$ has trivial monodromy on $\Sigma$, so the duality
structure $\Delta:=\Delta(P)$ is holonomy-trivial and its Siegel system is
trivial. Then there exists a flat trivialization of $\Delta$ which
identifies the integral duality structure $\bDelta=(\Delta,\cL)$ of
$P$ with $(\Sigma\times \R^{2n}, \omega_{2n},\dd, \Sigma\times
\Lambda_\frt)$. Notice that a Higgs field identifies with a smooth map
from $\Sigma$ to $\R^{2n}$, which we decompose into maps
$\Phi,\Upsilon:\Sigma\rightarrow \R^n$ according to:
\ben
\label{PsiDec}
\Psi=\begin{pmatrix} -\Phi \\ \Upsilon \end{pmatrix}
\een

\begin{prop}
A pair $(\Psi, \cA)\in \cC^\infty(\Sigma, \R^{2n})\times \Conn(P)$
satisfies the polarized Bogomolny equations iff:
\ben
\label{HiggsPot}
\dd\Psi=\begin{pmatrix} E \\ -\cR E -\cI \ast_h B \end{pmatrix}~~,
\een
where $E\in \Omega^1(\Sigma,\R^n)$ and $B\in \Omega^2(\Sigma,\R^n)$
are determined by $\cV_\cA$ through the relation:
\ben
\label{cVEB}
\cV_\cA=\begin{pmatrix} B \\ -\cR B+\cI \ast_h E\end{pmatrix}\, .
\een
\end{prop}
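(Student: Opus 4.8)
The plan is to trivialize, reduce the polarized Bogomolny equation to an explicit $2n\times 2n$ matrix identity over the splitting $\R^{2n}=\R^n\oplus\R^n$, and then match the two blocks. Since $Z(P)$ has trivial monodromy, the flat trivialization fixed above identifies $\bDelta$ with $(\Sigma\times\R^{2n},\omega_{2n},\dd,\Sigma\times\Lambda_\frt)$, so $\dd_\cD=\dd$ and $\star_{h,\cJ}=\ast_h\otimes\cJ$, with $\cJ\colon\Sigma\to\Sp(2n,\R)$ a taming of $\omega_{2n}$. First I would recall from the local formulation of Appendix \ref{app:local} that, in the electric/magnetic splitting and in terms of the gauge-kinetic matrix $\cN=\cR+\i\cI$ (with $\cR,\cI$ symmetric and $\cI$ positive-definite), such a taming has the block form
\[
\cJ=\begin{pmatrix}\cI^{-1}\cR & \cI^{-1}\\ -(\cI+\cR\cI^{-1}\cR) & -\cR\cI^{-1}\end{pmatrix}.
\]
A short check gives $\cJ^2=-\id$ and shows that $Q_{\cJ,\omega_{2n}}$ is positive-definite exactly when $\cI>0$, confirming that $\cJ$ is a taming; this matrix is the only external input and it drives all the cancellations below.

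Next I would fix block notation. Writing $\cV_\cA=\begin{pmatrix}\cV_1\\ \cV_2\end{pmatrix}$ with $\cV_1,\cV_2\in\Omega^2(\Sigma,\R^n)$, I would define the magnetic field $B\eqdef\cV_1$ and the electric field $E\eqdef\ast_h\cI^{-1}(\cV_2+\cR\cV_1)$. Because $(\Sigma,h)$ is an oriented Riemannian three-manifold, $\ast_h$ is an involution on $\Omega^\ast(\Sigma)$ interchanging one- and two-forms; hence the definition of $E$ is equivalent to $\cV_2=-\cR B+\cI\ast_h E$, which is exactly relation \eqref{cVEB}. Thus $B$ and $E$ are well-defined and determined by $\cV_\cA$, and passing to these variables loses no information.

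The core step is a blockwise evaluation of $\star_{h,\cJ}\cV_\cA=\cJ\,(\ast_h\cV_\cA)$, using that $\cJ$ acts on the fiber index and $\ast_h$ on the form index, so that $\cJ$ may be pulled through $\ast_h$. Substituting $\cV_2=-\cR B+\cI\ast_h E$ and using $\ast_h^2=\id$, the top block reduces to $\cI^{-1}\cR\ast_h B+\cI^{-1}\ast_h(-\cR B+\cI\ast_h E)=E$, and the bottom block reduces to $-(\cI+\cR\cI^{-1}\cR)\ast_h B-\cR\cI^{-1}\ast_h(-\cR B+\cI\ast_h E)=-\cR E-\cI\ast_h B$. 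Therefore $\star_{h,\cJ}\cV_\cA=\begin{pmatrix}E\\ -\cR E-\cI\ast_h B\end{pmatrix}$, so the polarized Bogomolny equation $\star_{h,\cJ}\cV_\cA=\dd_\cD\Psi=\dd\Psi$ is equivalent to the claimed identity \eqref{HiggsPot}. Reading the substitutions backwards yields the converse, completing the equivalence.

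I do not anticipate a genuine obstacle: the statement is an algebraic identity and its content is entirely the cancellation above. The only points demanding care are fixing the sign and ordering conventions of the taming matrix so that they agree with those of Appendix \ref{app:local} and with the chosen form of $\omega_{2n}$, and tracking that on the three-manifold $\ast_h$ shifts form-degree by one while squaring to $+\id$, so that the cancellations $\cI^{-1}\cR\ast_h B$ against $-\cI^{-1}\cR\ast_h B$ in the top block and $-\cR\cI^{-1}\cI\ast_h\ast_h E=-\cR E$ in the bottom block come out with the right signs. Since these cancellations are precisely the imprint of $\cJ^2=-\id$, verifying that identity first is a useful sanity check before matching the blocks.
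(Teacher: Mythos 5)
Your proof is correct, but it takes a genuinely different route from the paper's. You stay entirely in three dimensions: starting from the polarized Bogomolny equation $\star_{h,\cJ}\cV_\cA=\dd_\cD\Psi$ established in the preceding proposition, you observe that \eqref{cVEB} can always be solved for $(E,B)$ in terms of the blocks of $\cV_\cA$ (since $\cI$ is invertible and $\ast_h^2=\id$ on the Riemannian three-manifold), and then you verify by a direct blockwise computation with the taming matrix of Proposition \ref{prop:cNTaming} that $\star_{h,\cJ}\cV_\cA=\begin{pmatrix}E\\ -\cR E-\cI\ast_h B\end{pmatrix}$ holds \emph{identically}, so that the Bogomolny equation is literally \eqref{HiggsPot}; your block cancellations check out. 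The paper instead climbs back up to the four-dimensional Lorentzian picture: it applies Lemma \ref{lemma:twistedselfdual} to the pulled-back taming $\hcJ$ to characterize the polarized self-dual curvature as $\cV_\hcA=\begin{pmatrix}\hF\\ G_g(\hcN,\hF)\end{pmatrix}$, splits $G_g(\hcN,\hF)$ into its $\top$ and $\perp$ components via Lemma \ref{lemma:astdec}, deduces $\hF_\top=p^\ast(E)$ and $\hF_\perp=p^\ast(B)$, and matches the result against the decomposition \eqref{cVtp} of the curvature of a time-invariant connection. Your argument is more elementary and self-contained, its only external input being the block form of the taming; the paper's route buys the explicit dictionary between the three-dimensional data $(E,B)$ and the components of the four-dimensional field strength $\hF$, which is what later justifies calling $E$ and $B$ the electrostatic and magnetostatic field strengths. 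Both arguments ultimately rest on the same cancellation, which is the identity $\cJ^2=-\id$ written in blocks.
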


\begin{proof}
In the chosen trivialization of $\Delta$, tamings identify
with taming maps $\cJ:\Sigma\rightarrow \GL(2n,\R)$. By
Proposition \ref{prop:cNTaming}, the latter have the form:
\be
\cJ=  
\begin{pmatrix} 
  \cI^{-1} \cR & \cI^{-1} \\
-\cI  - \cR\cI^{-1}\cR & - \cR \cI^{-1} 
\end{pmatrix}~~,
\ee
where $\cN=\cR+\i \cI:\Sigma\rightarrow \mathbb{S}\H^n$ is the
corresponding period matrix map. A similar equation relates the taming
$\hcJ=\cJ\circ p$ of $\hDelta$ to the period matrix map $\hcN\eqdef
\cN\circ p=\hcR+\i \hcI:M\rightarrow \GL(2n,\R)$ (where $\hcR\eqdef
\cR\circ p$ and $\hcI\eqdef \cI\circ p$) defined on $M$.  By Lemma
\ref{lemma:twistedselfdual}, the adjoint curvature of $\cA$ is twisted
self-dual with respect to $\hcJ$ if and only if it has the form:
\ben
\label{hcVF}
\cV_\hcA =\begin{pmatrix} \hF\\G_g(\hcN,\hF)\end{pmatrix}
\een
for some $\hF\in \Omega^2(M,\R^n)$, where:
\be
G_g(\hcN,\hF) \eqdef - \hcR\, \hF - \hcI \ast_g \hF\, .
\ee
Lemma \ref{lemma:astdec} gives:
\be
G_g(\hcN,\hF)_\top= - \hcR\hF_\top -\hcI \ast_h\hF_\perp\, , \quad G_g(\hcN,\hF)_\perp= - \hcR\hF_\perp+\hcI \ast_h \hF_\top\, .
\ee
Thus \eqref{hcVF} amounts to:
\ben
\label{eqcVhat}
(\cV_\hcA)_\top =\begin{pmatrix} \hF_\top\\ - \hcR\hF_\top -\hcI \ast_h\hF_\perp \end{pmatrix}\, , \quad (\cV_\hcA)_\perp =\begin{pmatrix} \hF_\perp\\ - \hcR\hF_\perp+\hcI \ast_h \hF_\top\end{pmatrix}~~. 
\een 
These relations imply:
\be
\hF_\top=p^\ast(E)\, , \quad \hF_\perp=p^\ast(B)
\ee
for some $E\in \Omega^1(\Sigma,\R^n)$ and $B\in
\Omega^2(\Sigma,\R^n)$. Using \eqref{eqcVhat}, we conclude that
\eqref{cVtp} is equivalent with:
\be
\dd\Psi=\begin{pmatrix} E \\ - \cR E -\cI \ast_h B \end{pmatrix}\, , \quad \cV_\cA=\begin{pmatrix} B \\ -\cR B+\cI \ast_h E\end{pmatrix}~~.
\ee
Notice that the pair $(E,B)$ is uniquely determined by $\cV_\cA$
through the second of these relations.
\end{proof}

\noindent
We will refer to the forms $E\in \Omega^1(\Sigma,\R^n)$ and $B\in
\Omega^2(\Sigma,\R^n)$ as the {\em electrostatic} and {\em magnetostatic} 
field strengths of $\cA$. Equation \eqref{HiggsPot} 
is equivalent to the system:
\be
\dd \Phi=E\, , \quad \dd\Upsilon= - \cR E -\cI \ast_h B\, ,
\ee
which in turn amounts to:
\ben
\label{eq:sem}
\vec{E}=-\grad_h\Phi\, , \quad \cI \vec{B} + \cR\vec{E}=-\grad_h\Upsilon\, ,
\een
where we defined $\vec{E},\vec{B}\in \cC^\infty(\Sigma,T\Sigma)\otimes
\R^n$ through:
\be
\vec{E}=E^\sharp\, , \quad \vec{B}=(\ast_h B)^\sharp\, .
\ee
Here $\sharp$ denotes the musical isomorphism given by raising of
indices with the metric $h$. Equation $\dd \cV_\cA=0$ amounts to
the system:
\ben
\label{eq:M}
\dd B=0\, , \quad \dd(-\cR B+\cI \ast_h E)=0\Longleftrightarrow \div_h \vec{B}=0\, ,\quad \div_h(\cR\, \vec{B} + \cI \, \vec{E})=0\, .
\een


\subsection{Polarized dyons in prequantum electrodynamics}


Prequantum electrodynamics defined on $M$ corresponds to setting
$n=1$ and $\cR=\frac{\theta}{2\pi}, \cI=\frac{4\pi}{g^2}$ with constant $\theta\in \R$ in the
previous subsection (see Appendix \ref{app:local}). Then:
\be
\cJ=\cJ_\theta\eqdef \begin{pmatrix}  \frac{g^2\theta}{8\pi^2} &
\frac{g^2}{4\pi}\\ -\frac{4\pi}{g^2}-\frac{g^2\theta^2}{16\pi^3} &
-\frac{g^2\theta}{8\pi^2}\end{pmatrix}~~,
\ee
and relation \eqref{cVEB} becomes:
\ben
\label{cVEB2}
\cV_\cA=\begin{pmatrix} B \\ - \frac{\theta}{2\pi} B+\frac{4\pi}{g^2} \ast_h E\end{pmatrix}~~.
\een
In this case, relations \eqref{eq:sem} reduce to:
\ben
\label{potentials}
\vec{E}=-\grad_h \Phi\, , \quad \frac{4\pi}{g^2}\vec{B} + \frac{\theta}{2\pi}\vec{E}=-\grad_h\Upsilon\, ,
\een
which imply $\vec{B}=-\frac{g^2}{4\pi} \, \grad_h(\Upsilon - \frac{\theta}{2\pi} \Phi)$ and:
\ben
\label{M1}
\curl_h\vec{E}=\curl_h\vec{B}=0~~.
\een
On the other hand, relations \eqref{eq:M} become:
\ben
\label{M2}
\dd B=\dd (\ast_h E)=0\Longleftrightarrow \div_h \vec{B}=\div_h \vec{E}=0~~.
\een
Equations \eqref{M1} and \eqref{M2} describe source-free Maxwell
electromagnetostatics on $\Sigma$, where the vector fields
$\vec{E},\vec{B}\in \cA(\Sigma)$ are the classical static electric and
magnetic fields. Relation \eqref{potentials} shows that $\Phi$ and
$\frac{g^2}{4\pi} \Upsilon-\frac{g^2\theta}{8\pi^2} \Phi$ are globally-defined
classical electrostatic and magnetostatic scalar potentials. Since
$H^1(\Sigma,\R)$ need not vanish, relation \eqref{M1} need not imply
\eqref{potentials}. Hence polarized dyons describe special
potential electromagnetostatic configurations, i.e.  solutions of the
static Maxwell equations on $\Sigma$ which admit both a
globally-defined electric scalar potential and a globally-defined
magnetic scalar potential. Even though $\vec{E}$ and $\vec{B}$ satisfy
\eqref{M2}, such solutions need {\em not} admit a globally-defined
vector electric or vector magnetic potential, since $H^2(\Sigma,\R)$
may be non-zero. The condition that the configuration
$(\vec{E},\vec{B})$ admits globally-defined electric and magnetic
scalar potentials is a consequence of the fact that a polarized
dyon originates from a principal connection defined on $\hP$,
which itself is a consequence of the DSZ integrality condition. We
formalize this as follows.

\begin{definition}
An {\em electromagnetostatic configuration} defined on $(\Sigma,h)$ is
a pair of vector fields $(\vec{E},\vec{B})\in \cA(\Sigma)\times
\cA(\Sigma)$ which satisfies the static source-free Maxwell equations
\eqref{M1} and \eqref{M2}. Such a configuration is said to be {\em
potential} if there exist real-valued smooth functions $\Phi,\Upsilon$
defined on $\Sigma$ such that relations \eqref{potentials} hold.
\end{definition}

\noindent Let $\EMC(\Sigma,h)$ denote the vector space of all
electromagnetostatic configurations defined on $(\Sigma,h)$ and
$\EMC_\pot(\Sigma,h)$ denote the subspace of those configurations
which are potential. Consider the map:
\be
H_\theta:\Dyons(\Sigma,h,P,\cJ_\theta)\rightarrow \EMC_\pot(\Sigma,h)
\ee
which associates the electromagnetostatic configuration
$(\vec{E},\vec{B})$ defined through relation \eqref{cVEB2} to the
polarized dyon $(\Psi,\cA)$. As in Subsection \ref{subsec:trivD}, the
chosen trivialization of $\Delta$ induces isomorphisms:
\be
H^2_{\cD}(\Sigma,\cS)\simeq H^2(M,\R^{2n})\, , \quad H^2(\Sigma,\cL)\simeq H^2(\Sigma,\Lambda_\frt)\, ,
\ee
which allow us to identify the morphism $j:H^2(\Sigma,\cL)\rightarrow
H^2_{\cD}(\Sigma,\cS)$ with the morphism $\iota$ appearing in the long
exact sequence:
\be
\ldots \rightarrow H^1(\Sigma,A)\rightarrow H^2(\Sigma,\Lambda_\frt)\xrightarrow{i} H^2(\Sigma,\R^{2n})\rightarrow H^2(\Sigma,A)\rightarrow \ldots
\ee
induced by the exponential sequence:
\be
0\rightarrow \Lambda_\frt \rightarrow \R^{2n} \rightarrow A \rightarrow 0~~.
\ee
The relation $[\cV_\cA]_{\cD}=2\pi j_\ast(c(P))$ becomes:
\be
[\cV_\cA]= 2\pi i_\ast(c(P))\, ,
\ee
where $[\omega]$ denotes the de Rham cohomology class of a form
$\omega\in \Omega^k(M,S)$.  Conversely, any closed two-form $\cV\in
\Omega^2_\cl(\Sigma,\R^{2n})$ which satisfies $[\cV]= 2\pi
i_\ast(c(P))$ identifies with the curvature of some principal
connection $\cA$ on $P$. For any $(\vec{E},\vec{B})\in
\EMC(\Sigma,h)$, let:
\be
\cV^\theta_{\vec{E},\vec{B}}\eqdef \begin{pmatrix} B \\ - \frac{\theta}{2\pi} B+\frac{4\pi}{g^2} \ast_h E\end{pmatrix}~~,
\ee
where $E\eqdef \vec{E}^\flat$ and $B\eqdef -\ast_h (\vec{B}^\flat)$.

\begin{thm}
The image of the map $H_\theta$ coincides with the set:
\be
\EMC_\pot(\Sigma,h;\theta, c(P))\eqdef \{(\vec{E},\vec{B})\in
\EMC_\pot(\Sigma,h)~\vert~[\cV^\theta_{\vec{E},\vec{B}}]= 2\pi  \iota_\ast(c(P))\}~~.
\ee
Moreover, the $H_\theta$-preimage of a configuration
$(\vec{E},\vec{B})\in \EMC_\pot(\Sigma,h;\theta, c(P))$ is given by:
\be
H_\theta^{-1}(\{(\vec{E},\vec{B})\})=\Big\{(\Psi,A)\in
\Dyons(\Sigma,h,\cJ_\theta)~\vert~\cV_\cA=\cV^\theta_{\vec{E},\vec{B}}
~\&~\grad_h\Psi=\begin{pmatrix}\vec{E}\\
-\frac{\theta}{2\pi} \vec{E}-\frac{4\pi}{g^2}\vec{B}\end{pmatrix}\Big\}~~.
\ee
\end{thm}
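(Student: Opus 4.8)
The plan is to establish the asserted set equality by a double inclusion together with a direct identification of the fibers, all carried out inside the chosen flat trivialization of $\Delta:=\Delta(P)$ so that the formulas of the previous subsection apply verbatim. The conceptual engine is that the assignment $\cV_\cA\mapsto(\vec{E},\vec{B})$ extracted from \eqref{cVEB2} and the reconstruction $(\vec{E},\vec{B})\mapsto\cV^\theta_{\vec{E},\vec{B}}$ are mutually inverse, since both merely read off the two components of an $\R^2$-valued two-form and apply the invertible musical isomorphisms and $\ast_h$. Moreover \emph{potentiality} of a configuration is precisely the solvability of the first-order system \eqref{HiggsPot} for a Higgs field. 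Thus the whole statement reduces to translating the polarized Bogomolny equation into the $(\vec{E},\vec{B})$ language and tracking the cohomological constraint coming from the DSZ condition.

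First I would prove $\im H_\theta\subseteq \EMC_\pot(\Sigma,h;\theta,c(P))$. Let $(\Psi,\cA)$ be a polarized dyon and $(\vec{E},\vec{B})=H_\theta(\Psi,\cA)$. Decomposing $\Psi$ as in \eqref{PsiDec} and specializing $\cR=\tfrac{\theta}{2\pi}$, $\cI=\tfrac{4\pi}{g^2}$, the Bogomolny equation becomes exactly \eqref{HiggsPot}, which upon raising indices is the pair of potentiality relations \eqref{potentials}; hence $(\vec{E},\vec{B})$ is potential. The Bianchi identity $\dd\cV_\cA=0$ gives the static Maxwell equations \eqref{M2}, and \eqref{potentials} forces \eqref{M1}, so $(\vec{E},\vec{B})\in\EMC(\Sigma,h)$. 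Finally, since $\cA\in\Conn(P)$, Theorem \ref{thm:diracquantization} (in the holonomy-trivial form recorded just before the statement) yields $[\cV_\cA]=2\pi\,\iota_\ast(c(P))$, and because $\cV_\cA=\cV^\theta_{\vec{E},\vec{B}}$ by the mutual-inverse property noted above, this is the required cohomological condition.

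For the reverse inclusion I would start from $(\vec{E},\vec{B})\in\EMC_\pot(\Sigma,h;\theta,c(P))$ and set $\cV:=\cV^\theta_{\vec{E},\vec{B}}$. Equations \eqref{M2} say precisely that $\cV$ is $\dd$-closed, and by hypothesis $[\cV]=2\pi\,\iota_\ast(c(P))$; the converse half of Theorem \ref{thm:diracquantization} then produces a connection $\cA\in\Conn(P)$ with $\cV_\cA=\cV$. Since the configuration is potential, I choose $\Phi,\Upsilon$ realizing \eqref{potentials} and set $\Psi=(-\Phi,\Upsilon)^{\mathrm t}$ as in \eqref{PsiDec}; a short gradient computation shows $\dd\Psi$ equals the right-hand side of \eqref{HiggsPot}, so $(\Psi,\cA)$ solves the polarized Bogomolny equation and $H_\theta(\Psi,\cA)=(\vec{E},\vec{B})$, giving surjectivity onto the claimed set.

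The fiber description then falls out: $H_\theta(\Psi,\cA)=(\vec{E},\vec{B})$ is equivalent to $\cV_\cA=\cV^\theta_{\vec{E},\vec{B}}$ by injectivity of the component-reading map, and given this, the Bogomolny equation \eqref{HiggsPot} is equivalent to prescribing $\grad_h\Psi$ exactly as in the statement. The step I expect to be the main obstacle is not any single computation but the careful bookkeeping of all identifications — the flat trivialization of $\Delta$, the period-matrix form of $\cJ_\theta$, the Hodge decomposition \eqref{cVtp}, and the matching of the exponential long exact sequences — so that the existence input ``$\dd$-closed two-form with integral class $=$ curvature of a connection on $P$'' furnished by Theorem \ref{thm:diracquantization} can be invoked cleanly; in particular the sign conventions relating $B$ to $\ast_h(\vec{B}^\flat)$ in the definition of $\cV^\theta_{\vec{E},\vec{B}}$ must be handled with care so that the two extraction/reconstruction maps are genuinely inverse.
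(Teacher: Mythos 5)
Your proposal is correct and follows essentially the same route as the paper's own proof: both directions rest on translating the polarized Bogomolny equation into the potentiality relations \eqref{potentials} and on the realization statement (recorded just before the theorem, as a consequence of Theorem \ref{thm:diracquantization}) that a closed $\R^{2n}$-valued two-form with class $2\pi\,\iota_\ast(c(P))$ is the curvature of some connection on $P$, with the fiber description reducing to the freedom in choosing the potentials $\Phi$ and $\Upsilon$. Your write-up is merely more explicit than the paper's about which of equations \eqref{HiggsPot}, \eqref{M1}, \eqref{M2} enter at each step, but the underlying argument is identical.
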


\begin{proof}
Let $(\Psi,\cA)$ be a polarized abelian dyon. Then
$H_{\theta}(\Psi,\cA)\in\EMC_\pot(\Sigma,h)$ and by equation
\eqref{cVEB2}, there exists an electromagnetostatic configuration
$(\vec{E},\vec{B})$ such that:
\be
\cV^\theta_{\cA} = \begin{pmatrix} B \\ - \frac{\theta}{2\pi} B+\frac{4\pi}{g^2} \ast_h E\end{pmatrix}\, .
\ee
Since $\cV^\theta_{\cA}$ is the curvature of a connection $\cA$ on $P$, 
it satisfies $[\cV^{\theta}_\cA]_{\cD}= 2\pi j_\ast(c(P))$ and hence 
$(\vec{E},\vec{B}) \in \EMC_\pot(\Sigma,h;\theta, c(P))$. 
On the other hand, if $(\vec{E},\vec{B}) \in \EMC_\pot(\Sigma,h;\theta, c(P))$, 
equation $[\cV^\theta_{\vec{E},\vec{B}}]=\iota_\ast(c(P))$ 
implies that there exists a connection $\cA$ on $P$ whose curvature 
is $\cV^\theta_{\vec{E},\vec{B}}$ and since $(\vec{E},\vec{B})$ is a 
potential electromagnetostatic solution, choosing potentials $\Phi$ 
and $\Upsilon$ and defining:
\ben
\Psi=\begin{pmatrix} -\Phi \\ \Upsilon \end{pmatrix}
\een

\noindent
we conclude that $(\Psi,\cA)$ is a polarized abelian dyon whence 
$(\vec{E},\vec{B}) \in \mathrm{Im}(H_{\theta})$. It is now clear that 
the only freedom in choosing a preimage of $(\vec{E},\vec{B})$ by $H_{\theta}$ 
lies only in choosing the potentials $\Phi$ and $\Upsilon$ as prescribed in the 
statement of the theorem and hence we conclude.
\end{proof}


\subsection{Polarized Bogomolny equations on the punctured Euclidean 3-space}


In this subsection, we construct families of solutions (which generalize the
dyons of ordinary electromagnetism) on the punctured Euclidean space
$\Sigma = \R^3_0 \eqdef \R^3\backslash \left\{0\right\}$. Spherical
coordinates give a diffeomorphism:
\be
\R^{3}_0 \simeq \R_{>0}\times \rS^2\, ,
\ee
and we denote by $r\in \R_{>0}$ the radial coordinate. The metric $h$
reads:
\be
h = \dd r^2 + r^2 h_{\rS^2}\, ,
\ee
where $h_{\rS^2}$ is the unit round metric of $\rS^2$. We have
$\nu_h=r^2\dd r\wedge \nu_{\rS^2}$, where $\nu_{\rS^2}$ is the volume
form of $\rS^2$. Let $q:\R^3_0\rightarrow \rS^2$ be the map defined
though:
\be
q(x)\eqdef \frac{x}{||x||}~~,
\ee
where $||x||$ is the Euclidean norm of $x$. Up to isomorphism, any
Siegel bundle of rank $n$ and type $\frt\in \Div^n$ defined on $P$
on $\R^3_0$ is the pull-back through $q$ of a Siegel bundle of
the same rank and type defined on $\rS^2$, which reduces to a
principal torus bundle since $\rS^2$ is simply connected. Accordingly,
the duality structure $\Delta$ of $P$ is holonomy trivial and we can
fix a global flat trivialization in which $\Delta$ identifies with:
\ben
\label{DeltaTriv}
\Delta \equiv (\Sigma\times \R^{2n}, \omega_{2n},\dd)\, ,
\een
and the Dirac system $\cL$ determined by $P$ identifies with
$\Sigma\times \Lambda_\frt$. Hence a Bogomolny pair consists
of a map $\Psi\in \cC^\infty(\Sigma,\R^{2n})$ and a connection $\cA\in
\Conn(P)$ whose curvature can be viewed as a vector-valued two-form
$\cV_\cA\in \Omega^2(\Sigma,\R^{2n})$. Moreover, a taming of $\Delta$
can be viewed as a map $\cJ\in \cC^\infty(\Sigma,\GL(2n,\R))$ which
squares to minus the identity everywhere and satisfies
$\cJ^t(\sigma){\hat \omega}_{2n}\cJ(\sigma)=\hat{\omega}_{2n}$ for all
$\sigma\in \Sigma$.

Let us assume that $\cJ$ and $\Psi$ depend only on $r$. Then
$\dd\Psi=\partial_r\Psi\dd r$ and $\ast_h(\dd r)=\iota_{\partial_r}
\nu_h=r^2 q^\ast(\nu_{\rS^2})$, hence the polarized Bogomolny equation reads:
\ben
\label{eq:eqRS2}
\cV_\cA = - r^2\cJ(r)\frac{\dd\Psi}{\dd r}\, q^\ast(\nu_{S^2})\, ,
\een
Since $\cV_\cA$ and $\nu_{\rS^2}$ are closed, a necessary condition
for this equation to admit solutions $\cA\in \Conn(P)$ is:
\be
\frac{\dd}{\dd r}(r^2\cJ\frac{\dd}{\dd r}\Psi\,) = 0\, .
\ee
The general solution of this integrability condition is:
\be
\Psi = - \int \frac{\cJ v}{2 r^2} \dd r + v^{\prime}\, ,
\ee
for constant vectors $v , v^{\prime} \in \R^{2n}$. Using this in
\eqref{eq:eqRS2} gives:
\be
\cV_\cA = - \frac{1}{2} v\, q^\ast(\nu_{\rS^2})\, .
\ee
The last relation determines $\cA$ up to a transformation of the form:
\be
\cA\rightarrow \cA+\omega~~
\ee
where $\omega\in \Omega^1_\Ad(P,\R^{2n})$ is closed and hence
corresponds to a closed 1-form $\omega'\in
\Omega^1(\Sigma,\R^{2n})$ (recall that $\dd_{\cD}=\dd$ in our
trivialization of $\cS$). Notice that $\omega'$ need not be exact
since $\Sigma=\R^3_0$ is not contractible.

\subsubsection{The integrality condition for $v$}

Since $\rS^2$ is a deformation retract of $\Sigma$, the map $q^\ast:
H^\ast(\rS^2,\mathbb{R}^{2n}/\Lambda_{\frt})\rightarrow 
H^\ast(\Sigma,\mathbb{R}^{2n}/\Lambda_{\frt})$ induced by $q$ on
cohomology is an isomorphism of groups for any abelian group of
coefficients $A$. Since $H_1(\rS^2,\Z)=0$, the universal coefficient
theorem for cohomology gives:
\ben
\label{iso1}
H^2(\rS^2,\Lambda_\frt)\simeq_\Z \Hom_\Z(H_2(\rS^2,\Z), \Lambda_\frt)\simeq_\Z \Lambda_\frt~~,
\een
where the last isomorphism is given by evaluation on the fundamental
class $[\rS^2]\in H_2(\rS^2,\Z)$. This allows us to view the
characteristic class $c(P)=q^\ast(c(P_0))$ as an element of
$\Lambda_\frt$.  Moreover, we have isomorphisms of vector spaces:
\ben
\label{iso2}
H^2(\rS^2,\R^{2n})\simeq_\R H^2(\rS^2,\Z)\otimes_\Z \R^{2n} \simeq_\R \R^{2n}~~,
\een
the last of which takes $u\otimes_\Z x$ to $x$ for all $x\in \R^{2n}$.
Through the isomorphisms \eqref{iso1} and \eqref{iso2}, the map
$i_\ast:H^2(\rS^2,\Lambda_\frt)\rightarrow H^2(\rS^2,\R^{2n})$
corresponds to the inclusion $\Lambda_\frt\subset \R^{2n}$ and
$2\pi c(P)$ identifies with the de Rham cohomology class of
$\cV_\cA$.  The free abelian group $H^2(\rS^2,\Z)$ is
generated by half of the Euler class $e(\rS^2)$
of $\rS^2$, which satisfies:
\be
\int_{\rS^2} e(\rS^2)=\chi(\rS^2)=1+(-1)^2=2~~.
\ee
On the other hand, the de Rham cohomology class $u\eqdef
\left[\frac{\nu_{\rS^2}}{4\pi}\right]\in H^2(\rS^2,\R^{2n})$
satisfies:
\be
u=i_\ast\left(\frac{1}{2}e(\rS^2)\right)~~.
\ee
We have:
\be
[\cV_\cA] = 2\pi i_\ast(c(P))~~,
\ee
which implies $v\in \Lambda_\frt$ as well as:
\be
c(P_0)=\frac{1}{2} v \, e(\rS^2)~~\mathrm{and}~~i_\ast(c(P_0))=v u~~.
\ee
Hence we obtain an integrality condition for the integration constant 
$v$, which guarantees the existence of a solution and determines through the
previous equation the topological type of the bundle $P$ carrying the solution.


\appendix


\section{Local abelian gauge theory}
\label{app:local}


This appendix discusses the duality-covariant formulation and global
symmetries of source-free $\U(1)^n$ abelian gauge theory on a
{\em contractible} oriented four-manifold $M$ endowed with an arbitrary
metric of signature $(3,1)$, with the goal of motivating the geometric
model introduced in Section \ref{sec:classical} and of making contact
with the physics literature. The local study of electromagnetic
duality for Maxwell electrodynamics in four Lorentzian dimensions has
a long history, see \cite{Deser:1976iy,Deser:1981fr} as well as
\cite{Olive:1995sw} and its references and citations. Let $\ast_g$
be the Hodge operator of $(M,g)$.

The prototypical example of local abelian gauge theory is given by
classical electrodynamics, which is defined by the Lagrangian density
functional:
\be
\mathfrak{L}[A] = - \frac{4\pi}{g^2} (F_A)_{ab}
(F_A)^{ab}  + \frac{\theta}{2\pi} (F_A)_{ab}\, (\ast_g F_A)^{ab}\, ,
\qquad F=\dd A~~\mathrm{with}~~A\in \Omega^1(M)~~,
\ee 
where $\theta\in\R$ is the theta angle. Classical Maxwell theory is
obtained for $\theta=0$. Below, we discuss the more general
construction of local abelian gauge theories, which is motivated by
the local structure of four-dimensional supergravity and
supersymmetric field theory and which involves a finite number of
abelian gauge fields.


\subsection{Lagrangian formulation}


Given $n\in \Z_{>0}$ and a Lorentzian metric $g$ on $M$, consider
the following generalization of the previous Lagrangian density, which
allows all couplings to be smooth real-valued functions:
\be
\mathfrak{L}[A^1,\hdots, A^n] = - \cI_{\Lambda \Sigma}\,
\langle F^{\Lambda}_A, F^{\Sigma}_A\rangle_g + \cR_{\Lambda \Sigma}\,
\langle F^{\Lambda}_A, \ast_g F^{\Sigma}_A\rangle_g~~.
\ee
Here $\Lambda,\Sigma = 1,\hdots , n$ and :
\be
F^{\Lambda}_A = \dd A^{\Lambda}\, , \qquad \Lambda = 1, \hdots n\, ,
\ee
where $A^\Lambda\in \Omega^1(M)$ and $\cR_{\Lambda \Sigma},
\cI_{\Lambda \Sigma} \in \cC^\infty(M)$ are the components of an
$\R^n$-valued one-form $A\in \Omega^1(M,\R^n)$ and of functions
$\cR,\cI\in \cC^\infty(M, \Mat_s(n,\R))$ valued in the space
$\Mat_s(n,\R)$ of symmetric square matrices of size $n$ with real
entries. To guarantee a positive-definite kinetic term we must assume
that $\cI$ is positive-definite everywhere. Notice that classical
electrodynamics corresponds to $n=1$ with $\cR=\frac{\theta}{2\pi}$
and $\cI=\frac{4\pi}{g^2}$. Since $\ast_g^2F^\Lambda=-F^\Lambda$, the
action is equivalent to:
\be
\cS[A^1,\ldots, A^n]=\int \mathfrak{L}[A^1,\hdots, A^n]\vol_g= - \int
\left(\cI_{\Lambda \Sigma}\, F^{\Lambda}_A\wedge \ast_g F^{\Sigma}_A
+ \cR_{\Lambda \Sigma}\, F^{\Lambda}_A\wedge F^{\Sigma}_A\right)~~.
\ee
The partial differential equations obtained as critical points of the
variational problem with respect to compactly supported variations
are:
\be
\dd(\cR_{\Lambda\Sigma} F^{\Sigma}_A + \cI_{\Lambda \Sigma} \ast_g F^{\Sigma}_A) = 0\, ,
\ee
or, in matrix notation:
\be
\dd(\cR F_A + \cI \ast_g F_A) = 0\, ,
\ee
where $F_A=\dd A\in \Omega^2_\cl(M,\R^n)=\Omega^2_\ex(M,\R^n)$ is an $\R^n$-valued
closed (and hence exact) two-form. These equations define {\em
classical local abelian gauge theory}. Since both $\cR$ and $\cI$ are
symmetric and $\cI$ is positive definite, the pair $(\cR,\cI)$ is
equivalent to a map:
\be
\cN \eqdef \cR + \i\cI \colon M\to \mathbb{SH}_n\, , 
\ee
where $\mathbb{SH}_n$ denotes the Siegel upper half space of symmetric
$n\times n$ complex matrices with positive definite imaginary part.

\begin{definition}
A {\em period matrix map} of size $n$ on $M$ is a smooth function
$\cN\in\cC^\infty(M,\mathbb{SH}_n)$ valued in $\mathbb{SH}_n$. We
denote the set of such maps by $\Per_n(M)$.
\end{definition}

\noindent When the metric $g$ is fixed, classical local abelian gauge
theory is uniquely determined by a choice of period matrix map.
 
\begin{definition}
\label{def:localabeliangaugeth}
Let $\cN = \cR + \i\cI$ be a period matrix map of size $n$. The {\em
local abelian gauge theory} associated to $\cN$ is defined through the
following system of equations:
\ben
\label{eq:localabelianeqsI}
\dd(\cR\, F_A + \cI \ast_g F_A) = 0\, , 
\een
with unknowns given by the vector valued one-form $A\in
\Omega^1(M,\R^n)$, where $F_A \eqdef \dd A \in \Omega^2_\cl(M,\R^n)$.
\end{definition}

\begin{remark}
The equations of motion of local abelian gauge theory have a
\emph{gauge symmetry} consisting of transformations of the type:
\be
A\mapsto A + \dd \alpha\, , \qquad \alpha\in \cC^{\infty}(M,\R^n)\, .
\ee
Variables $A\in \Omega^1(M,\R^n)$ related by such
gauge transformations should be viewed as physically
equivalent. 
\end{remark}

\noindent Let us write the equations of motion
\eqref{eq:localabelianeqsI} in a form amenable to geometric
interpretation. Given a period matrix map $\cN = \cR + \i\cI$ and a
vector of two-forms $F\in \Omega^2(M,\R^n)$, define:
\be
G_g(\cN,F) \eqdef - \cR\, F - \cI \ast_g F\, .
\ee
Then the condition $\dd F=0$ together with the equations of motion
\eqref{eq:localabelianeqsI} are equivalent with the single equation:
\ben
\label{eq:localVeq}
\dd \cV = 0\, , 
\een
where the $\R^{2n}$-valued two-form $\cV$ is related to $F$ by:
\ben
\label{eq:localVeqII}
\cV = \begin{pmatrix} 
F  \\
G_g(\cN,F) 
\end{pmatrix} \in \Omega^2(M,\R^{2n}) = \Omega^2(M,\R^{n}\oplus \R^n)\, .
\een
As the following lemma shows, not every vector-valued two-form $\cV
\in \Omega^2(M,\R^{2n})$ can be written as prescribed by equation
\eqref{eq:localVeqII}.

\begin{lemma}
\label{lemma:twistedselfdual}
Let $\cN=\cR+ \i \cI\in \Per_n(M)$ be a period matrix map of size $n$
on $M$. A vector valued two-form $\cV\in \Omega^2(M,\R^{2n})$ can be
written as:
\be
\cV =   
\begin{pmatrix} 
F  \\
G_g(\cN,F)  
\end{pmatrix} 
\ee
for some $F\in \Omega^2(M,\R^{n})$ if and only if:
\ben
\label{eq:twistedselfdualitylocal}
\ast_g\cV = - \cJ \cV \, , 
\een
where $\cJ\in \cC^\infty(M, \GL(2n,\R))$ is the
matrix-valued map defined through:
\be
\cJ=  
\begin{pmatrix} 
\cI^{-1} \cR & \cI^{-1} \\
-\cI  - \cR\cI^{-1}\cR & - \cR \cI^{-1} 
\end{pmatrix}~~.
\ee
We have $\cJ^2 = -1$. Moreover, $F$ with the property above is
uniquely determined by $\cV$.
\end{lemma}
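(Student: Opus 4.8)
The plan is to verify the claimed equivalence by a direct block-matrix computation, treating the "if" and "only if" directions together. First I would observe that the defining equation \eqref{eq:twistedselfdualitylocal}, namely $\ast_g \cV = -\cJ\cV$, is pointwise-linear in $\cV$ and local on $M$, so it suffices to work at a single point and treat $\cR,\cI$ as fixed symmetric matrices with $\cI$ positive-definite (hence invertible). Writing $\cV = \begin{pmatrix} F \\ H \end{pmatrix}$ with $F,H\in\Omega^2(M,\R^n)$, the block structure of $\cJ$ turns \eqref{eq:twistedselfdualitylocal} into a pair of matrix equations for $\ast_g F$ and $\ast_g H$ in terms of $F$ and $H$.

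The key computation is to expand $-\cJ\cV$ using the given block form of $\cJ$. I would compute the two blocks of $-\cJ\cV$ explicitly:
\be
-\cJ\cV = \begin{pmatrix} -\cI^{-1}\cR\, F - \cI^{-1} H \\ (\cI + \cR\cI^{-1}\cR)F + \cR\cI^{-1} H \end{pmatrix}.
\ee
Equation \eqref{eq:twistedselfdualitylocal} then reads $\ast_g F = -\cI^{-1}\cR\, F - \cI^{-1} H$ in the top block and $\ast_g H = (\cI + \cR\cI^{-1}\cR)F + \cR\cI^{-1} H$ in the bottom block. Solving the top equation for $H$ gives $H = -\cR\, F - \cI\ast_g F = G_g(\cN,F)$, which is exactly the defining relation \eqref{eq:localVeqII}. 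This already proves the "only if" direction and shows that $F$ is recovered as the top block of $\cV$, establishing uniqueness. For the "if" direction I would substitute $H = G_g(\cN,F) = -\cR F - \cI\ast_g F$ into the bottom-block equation and check that it reduces to an identity; here the only nontrivial input is $\ast_g^2 F = -F$ (valid since $g$ has Lorentzian signature $(3,1)$ and $\cV$ is a two-form on a four-manifold), together with the symmetry of $\cR$ and $\cI$.

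The main obstacle — though a mild one — is bookkeeping: one must confirm that the bottom-block equation is genuinely \emph{implied} by the top-block relation rather than imposing a new constraint, and this is precisely where $\ast_g^2 = -\id$ on two-forms is essential. Concretely, substituting $H$ into the bottom block and applying $\ast_g$ to the top relation yields $\ast_g H = -\cR\ast_g F - \cI\ast_g^2 F = -\cR\ast_g F + \cI F$; one then rewrites $\ast_g F = -\cI^{-1}(\cR F + H)$ and checks that the result matches $(\cI + \cR\cI^{-1}\cR)F + \cR\cI^{-1}H$ after cancellation. Finally, I would record the claim $\cJ^2 = -1$ as a separate short verification: multiplying the block form of $\cJ$ by itself and using the symmetry of $\cR,\cI$, the off-diagonal blocks cancel and the diagonal blocks collapse to $-\id_n$, confirming that $\cJ$ is a (pointwise) complex structure, consistent with its role as a taming in the sense of the earlier definition.
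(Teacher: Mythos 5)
Your proof is correct and takes essentially the same approach as the paper's: a direct block-matrix computation in which the top block of $\ast_g\cV = -\cJ\cV$ is solved for the second component to give $H = -\cR F - \cI \ast_g F = G_g(\cN,F)$ (yielding one implication and uniqueness of $F$ as the top block), while the bottom block is checked to reduce to an identity using $\ast_g^2 = -\id$ on two-forms. Two minor remarks: your ``if''/``only if'' labels are swapped relative to the statement's phrasing (harmless, since you prove both directions), and your expansion of $-\cJ\cV$ carries the correct signs, whereas the paper's own displayed intermediate equation has a sign slip in its second row; also, $\cJ^2=-1$ follows from formal cancellation alone and does not actually need the symmetry of $\cR$ and $\cI$.
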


\begin{remark}
Notice that $\cJ$ can be viewed as a complex structure defined
on the trivial real vector bundle of rank $2n$ over $M$. For
classical electrodynamics, the taming map is constant and given by:
\be
\cJ=\begin{pmatrix} \frac{g^2\theta}{8\pi^2} &
\frac{g^2}{4\pi}\\ -\frac{4\pi}{g^2}-\frac{g^2\theta^2}{16\pi^3} &
- \frac{g^2\theta}{8\pi^2}\end{pmatrix}~~.
\ee
In this case, the period matrix map is constant and given by: 
\be
\cN=\frac{4\pi}{g^2}+\i\frac{\theta}{2\pi}~~,
\ee
being traditionally denoted by $\tau$. 
\end{remark}

\begin{proof}
If:
\be
\cV =   
\begin{pmatrix} 
F  \\
G_g(\cN,F)  
\end{pmatrix} \, ,
\ee
then direct computation using the fact that $\ast_g^2 = -1$ on
two-forms shows that $\cV$ satisfies $\ast_g\cV = - \cJ \cV$. On the
other hand, writing $\cV=\begin{pmatrix} F \\ G
\end{pmatrix}$ with $F,G\in \Omega^2(M,\R^n)$ shows that the
equation $\ast_g\cV = - \cJ \cV$ is equivalent to:
\be
\begin{pmatrix} \ast_g F \\ \ast_g G \end{pmatrix}
=\begin{pmatrix} -\cI^{-1}\cR F - \cI^{-1}G \\ -(\cI  + \cR
  \cI^{-1} \cR)F  - \cR \cI^{-1} G\end{pmatrix} \, ,
\ee
which in turn amounts to $G = G_g(\cN,F)$.
\end{proof}

\noindent
Let $\omega_{2n}$ be the standard symplectic form on $\R^{2n}$, which
in our conventions has the following matrix in the canonical basis
$\cE = (e_1 , \hdots , e_{n} , f_1,\hdots , f_{n})$ of the latter:
\ben
{\hat \omega}_{2n} =  
\begin{pmatrix} 
0 & I_n\\
- I_n & 0
\end{pmatrix}~~. 
\een
Here $I_n$ is the identity matrix of size $n$. We have:
\ben
\label{eq:omegastandard}
\omega_{2n} \eqdef \sum_{a=1}^n  e_a^{\ast} \wedge f_a^{\ast}\, ,
\een
where $\cE^{\ast} = (e_1^{\ast} , \hdots , e^{\ast}_{n} ,
f^{\ast}_1,\hdots , f^{\ast}_{n})$ is the basis dual to $\cE = (e_1 ,
\hdots , e_{n} , f_1,\hdots , f_{n})$. The following result gives a
geometric interpretation of the equations of motion
\eqref{eq:twistedselfdualitylocal}. Recall that an almost complex
structure $J$ on $\R^{2n}$ is called a {\em taming} of the standard
symplectic form $\omega_{2n}$ if:
\be
\omega_{2n}(J \xi_1, J \xi_2 ) = \omega_{2n}(\xi_1,\xi_2)\, , \quad \forall\,\, \xi_1 , \xi_2 \in \R^{2n}\, ,
\ee
and:
\be
\omega_{2n}(J\xi,\xi) > 0\, , \quad \forall\,\, \xi\in \R^{2n}\backslash\left\{0\right\}\, .
\ee

\begin{definition}
A {\em taming map} of size $2n$ defined on $M$ is a smooth map $\cJ\in
\cC^\infty(M,\GL(2n,\R))$ such that $\cJ(m)$ is a taming of
$\omega_{2n}$ for every $m\in M$. We denote the set of all such maps
by $\fJ_n(M)$.
\end{definition}

\begin{prop}
\label{prop:cNTaming}
A matrix-valued map $\cJ\in \cC^\infty(M, \GL(2n,\R))$ can be
written as:
\ben
\label{eq:Jlocaltaming}
\cJ= 
\begin{pmatrix} 
 \cI^{-1} \cR & \cI^{-1} \\
- \cI - \cR\cI^{-1}\cR & - \cR \cI^{-1} \, ,
\end{pmatrix}
\een
in terms of a period matrix map $\cN = \cR + \i\cI\in \Per_n(M)$ iff
$\cJ\in\fJ_n(M)$.
\end{prop}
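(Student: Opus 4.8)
The plan is to observe that the asserted equivalence is \emph{pointwise}: for each fixed $m\in M$ it is a statement of linear algebra about the single invertible matrix $\cJ(m)$, with the smooth dependence on $m$ recovered only at the very end. Concretely, I would prove that the block formula defines a bijection between the set $\mathbb{SH}_n$ of symmetric complex matrices with positive-definite imaginary part and the set of tamings of the standard symplectic form $\omega_{2n}$. This is the classical identification of the space of $\omega_{2n}$-compatible complex structures of a symplectic vector space with the Siegel upper half space, dual to the description via positive Lagrangian polarizations recalled in the Remark following the definition of electromagnetic structures. Throughout I write $\cJ\xi$ as a matrix product and use $\omega_{2n}(\xi_1,\xi_2)=\xi_1^t\,{\hat\omega}_{2n}\,\xi_2$.

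For the forward implication, assume $\cN=\cR+\i\cI\in\mathbb{SH}_n$, so that $\cR=\cR^t$, $\cI=\cI^t$ and $\cI>0$. First, $\cJ^2=-\id$ is already recorded in Lemma \ref{lemma:twistedselfdual}. Next, a direct block computation of $\cJ^t{\hat\omega}_{2n}\cJ$, using only the symmetry of $\cR$ and $\cI$, yields ${\hat\omega}_{2n}$, so $\cJ\in\Sp(2n,\R)$. Finally, the positivity requirement $\omega_{2n}(\cJ\xi,\xi)>0$ is equivalent to positive-definiteness of the symmetric matrix $Q=-{\hat\omega}_{2n}\cJ$; computing its blocks and factoring them as
\[
Q=S^t\begin{pmatrix}\cI & 0\\ 0 & \cI^{-1}\end{pmatrix}S,\qquad S=\begin{pmatrix} I_n & 0\\ \cR & I_n\end{pmatrix},
\]
exhibits $Q$ as a congruence of a block-diagonal positive-definite matrix by the invertible $S$, hence $Q>0$. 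This shows $\cJ\in\fJ_n(M)$.

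For the converse, let $\cJ$ be a taming and write it in $n\times n$ blocks $\cJ=\begin{pmatrix} a & b\\ c & d\end{pmatrix}$. The three defining properties translate into $\cJ^2=-\id$, the symplectic relations coming from $\cJ^t{\hat\omega}_{2n}\cJ={\hat\omega}_{2n}$, and positive-definiteness of $Q=-{\hat\omega}_{2n}\cJ=\begin{pmatrix} -c & -d\\ a & b\end{pmatrix}$. Symmetry of $Q$ forces $b=b^t$, $c=c^t$ and $d=-a^t$, and its positive-definiteness forces the block $b$ to be symmetric positive-definite, in particular invertible. I then set $\cI:=b^{-1}$ and $\cR:=\cI a$. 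The symmetry of $\cR$ is the one point that genuinely combines the hypotheses: from the symplectic relation $b^t d=d^t b$ together with $d=-a^t$ one obtains $ab=ba^t$, i.e. $ab$ is symmetric, which is exactly the assertion that $\cR=b^{-1}a$ is symmetric. Matching the remaining blocks with the target formula is then a short check — the top row and the $d$-block agree by construction and by $ab=ba^t$, while the $c$-block agrees because $\cJ^2=-\id$ gives $bc=-\id-a^2$, whence $c=-\cI-\cR\cI^{-1}\cR$. Since $\cJ$ is smooth and $b$ is invertible at every point, $\cN=b^{-1}a+\i\,b^{-1}$ depends smoothly on $m$ and lands in $\mathbb{SH}_n$, so it is a genuine period matrix map.

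I expect the only real obstacle to lie in the converse, specifically in the simultaneous verification that $\cR$ is symmetric and that all four blocks of $\cJ$ are reproduced by the formula: this is where one must invoke the symplectic condition, the symmetry of $-{\hat\omega}_{2n}\cJ$, and $\cJ^2=-\id$ together rather than one at a time. The positive-definiteness of $\cI$, which is what places $\cN$ in the Siegel upper half space, is precisely the step that consumes the taming positivity hypothesis $\omega_{2n}(\cJ\xi,\xi)>0$.
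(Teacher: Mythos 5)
Your proof is correct, and it takes a genuinely different route from the paper's. The paper's converse is the classical period-matrix construction: it views $(\R^{2n},\cJ(m))$ as a complex vector space, observes that $(f_1,\ldots,f_n)$ is a basis over $\C$, defines $\tau(m)$ by the complex-linear expansion $e_a=\tau(m)_{ab}\,f_b$, and extracts symmetry and positive-definiteness of $\Im(\tau)$ and symmetry of $\Re(\tau)$ from the taming identities, recovering the block formula only at the very end. You never complexify: you work with the real block matrix directly, deriving $b=b^t$, $c=c^t$, $d=-a^t$ from symmetry of $Q=-{\hat \omega}_{2n}\cJ$, invertibility of $b$ from positivity of $Q$, and then reconstructing $\cI=b^{-1}$, $\cR=b^{-1}a$, with the symplectic relation $b^td=d^tb$ yielding $ab=ba^t$ (hence symmetry of $\cR$ and the $d$-block) and $\cJ^2=-\id$ yielding the $c$-block; all of these block identities check out. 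Each approach has its merits: the paper's meshes with the period-matrix/abelian-variety picture it invokes elsewhere, while yours is entirely self-contained block algebra, and your congruence $Q=S^t\,\diag(\cI,\cI^{-1})\,S$ is a cleaner proof of positivity in the forward direction, which the paper dismisses as a ``direct computation''. One line you should add: symmetry of $Q$ is not among the taming axioms (the positivity axiom only constrains the quadratic form, i.e.\ the symmetric part of $Q$), but it follows from compatibility together with $\cJ^{-1}=-\cJ$ via $Q^t=\cJ^t{\hat \omega}_{2n}={\hat \omega}_{2n}\cJ^{-1}=-{\hat \omega}_{2n}\cJ=Q$; since all three of your block constraints flow from this symmetry, that derivation is needed. (The paper's proof carries an analogous gloss: the claim that $(f_a)$ is a $\C$-basis of $(\R^{2n},\cJ(m))$ itself uses the taming positivity, since a merely compatible complex structure can preserve the Lagrangian $\mathrm{span}(f_1,\ldots,f_n)$.)
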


\begin{proof} 
If $\cJ$ is taken as in equation \eqref{eq:Jlocaltaming} for a period
matrix map $\cN$ then direct computation shows that $\cJ(m)$ is a
taming of $\omega_{2n}$ for all $m\in M$. For the converse, assume
that $\cJ(m)\in \GL(2n,\R)$ is a taming of $\omega_{2n}$ for
all $m\in M$ (we omit to indicate the evaluation at $m$ for
ease of notation). Let $\cE = (e_1 , \hdots , e_{n} , f_1,\hdots ,
f_{n})$ the canonical basis of $\R^{2n}$. The vectors $\cE_f = ( f_1,\hdots , f_{n})$
form a basis over $\C$ of the complex vector space
$(\R^{2n},\cJ(m))\simeq \C^n$, hence there exists a unique map
$\tau\in \cC^\infty(M,\Mat(n,\C))$ which satisfies:
\ben
\label{eq:localperiod}
e_{a} = \tau(m)_{ab}\, f_{b}\, , \quad \forall\,\, a = 1, \hdots , n\, ,
\een
where we use Einstein summation over repeated indices. Thus:
\be
\delta_{ab}=\omega_{2n}(e_{a} , f_{b}) = \omega_{2n}(\tau(m)_{ac}\, f_{c} , f_{b}) =
 \Im(\tau(m))_{ac}\,\omega_{2n}(\cJ(m) f_{c} , f_{b})~~,
\ee
which implies that $\Im(\tau(m))$ is symmetric and
positive-definite. Using the previous equation and compatibility of
$\cJ(m)$ with $\omega_{2n}$, we compute:
\be
0 = \omega_{2n}(e_a , e_b) = \mathrm{Re}(\tau(m))_{ba} -
\mathrm{Im}(\tau(m))_{bc}\, \omega_{2n}(\cJ(m)(e_a),f_c) = \mathrm{Re}(\tau(m))_{ba}
- \mathrm{Re}(\tau(m))_{ab}\, ,
\ee
which shows that $\Re(\tau(m))$ is symmetric. Hence the smooth map
$\cN\in\cC^\infty(M, \mathbb{SH}^n)$ defined through $\cN=\cR+\i
\cI$, where:
\be
\cR \eqdef  \mathrm{Re}(\tau)~~, \qquad \cI \eqdef \mathrm{Im}(\tau)\, ,
\ee
is a period matrix map. Equation \eqref{eq:localperiod} gives:
\beqa
&&\cJ(m)(e_a) = \cR(m)_{ab} \cI^{-1}(m)_{bc}\, e_c
-\cR(m)_{ab}\cI(m)^{-1}_{bc}\cR(m)_{cd}\, f_d - \cI(m)_{ad}\, f_d\\
&&\cJ(m)(f_a) = \cI(m)^{-1}_{ab}\, e_b -
\cI(m)^{-1}_{ab}\cR(m)_{bc}\,f_c\, ,
\eeqa
which is equivalent to \eqref{eq:Jlocaltaming}.  
\end{proof}
 
\begin{prop}
\label{prop:1to1electromagnetic}
The map $\Theta\colon \Per_n(M) \to \fJ_n(M)$ defined through:
\be
\Per_n(M)\ni \cN= \cR + \i \cI \mapsto
\Theta(\cN)\eqdef
\begin{pmatrix} 
\cI^{-1} \cR & \cI^{-1} \\
- \cI - \cR\cI^{-1}\cR & - \cR \cI^{-1}
\end{pmatrix}  \in \fJ_n(M)
\ee
is a bijection between $\Per_n(M)$ and $\fJ_n(M)$.
\end{prop}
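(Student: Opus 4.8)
The plan is to deduce most of the statement directly from Proposition \ref{prop:cNTaming}, which already contains the two substantive inclusions, and then to supply the one genuinely new ingredient, injectivity, by a short block-extraction argument. First I would check that $\Theta$ is well-defined as a map into $\fJ_n(M)$. For any $\cN=\cR+\i\cI\in\Per_n(M)$ the matrix $\Theta(\cN)$ has smooth entries, since $\cI$ is pointwise symmetric positive-definite and hence pointwise invertible, so that $\cI^{-1}$ is smooth; its pointwise invertibility follows from the relation $\Theta(\cN)^2=-\id$ recorded in Lemma \ref{lemma:twistedselfdual}; and by the forward implication of Proposition \ref{prop:cNTaming}, $\Theta(\cN)(m)$ is a taming of $\omega_{2n}$ for every $m\in M$. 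Hence $\Theta(\cN)\in\fJ_n(M)$.

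Surjectivity is then immediate from the converse implication of Proposition \ref{prop:cNTaming}: given any $\cJ\in\fJ_n(M)$, that proposition produces a period matrix map $\cN=\cR+\i\cI\in\Per_n(M)$ such that $\cJ=\Theta(\cN)$, so every taming map lies in the image of $\Theta$. The only remaining task, which I expect to be the sole new computation and which is elementary, is injectivity. Writing $\Theta(\cN)$ in $n\times n$ block form, its upper-right block equals $\cI^{-1}$ and its upper-left block equals $\cI^{-1}\cR$. Thus, if $\Theta(\cN)=\Theta(\cN')$ with $\cN=\cR+\i\cI$ and $\cN'=\cR'+\i\cI'$, comparison of upper-right blocks gives $\cI^{-1}=(\cI')^{-1}$ and hence $\cI=\cI'$, while comparison of upper-left blocks gives $\cI^{-1}\cR=(\cI')^{-1}\cR'$, which after left-multiplication by $\cI=\cI'$ yields $\cR=\cR'$; therefore $\cN=\cN'$.

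The hard part is essentially absent, because the real analytic content, namely that the block formula characterizes tamings of $\omega_{2n}$ and that every such taming arises in this way, has already been established in Proposition \ref{prop:cNTaming}. The only point that requires any care is confirming that $\cR$ and $\cI$ can be recovered uniquely from the blocks of $\cJ$, which is exactly the extraction step above; this simultaneously exhibits the inverse of $\Theta$ explicitly, sending a taming map $\cJ$ to the period matrix map whose imaginary part is the inverse of the upper-right block of $\cJ$ and whose real part is that inverse multiplied by the upper-left block of $\cJ$.
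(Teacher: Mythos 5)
Your proposal is correct, and its substantive content coincides with the paper's: both arguments reduce well-definedness and surjectivity of $\Theta$ to Proposition \ref{prop:cNTaming}, so nothing new is needed there. Where you genuinely differ is in how injectivity (equivalently, the inverse map) is handled. The paper does not isolate injectivity as a separate step; it describes the inverse geometrically, sending a taming map $\cJ$ to the period matrix map $\tau$ determined at each $m\in M$ by the expansion $e_a=\tau_{ab}f_b$ of $e_a$ over $\C$ when $\R^{2n}$ carries the complex structure $\cJ(m)$, with the fact that this is a two-sided inverse left implicit in the computations of the proof of Proposition \ref{prop:cNTaming}. You instead extract $\cN$ directly from the $n\times n$ blocks of $\Theta(\cN)$: the upper-right block is $\cI^{-1}$, which determines $\cI$, and the upper-left block $\cI^{-1}\cR$ then determines $\cR$. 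This is more elementary and self-contained, requiring nothing beyond the explicit block formula, and it produces the inverse in closed form in terms of the blocks of $\cJ$; the paper's route buys a more intrinsic, basis-expansion description of $\Theta^{-1}$ that connects directly to the complex-geometric meaning of the period matrix, but compresses the verification that it actually inverts $\Theta$. Both arguments are complete; yours makes explicit a step the paper leaves to the reader.
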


\begin{proof}
Follows directly from the proof of Proposition
\ref{prop:cNTaming}. The inverse of $\Theta$ takes a taming map $\cJ
\in \fJ_n(M)$ to the period matrix $\Theta^{-1}(\cJ) =
\mathrm{Re}(\tau) + \i\, \mathrm{Im}(\tau)$, where, for all $m\in
M$, $\tau(m)$ is the complex symmetric matrix of size $n$
uniquely determined by the expansion $e_a = \tau_{ab} f_b$ of
$e_a$ over $\C$ when $\R^{2n}$ is endowed with the complex structure
$\cJ(m)$.
\end{proof}

\noindent Since $M$ is contractible, we have
$\Omega^2_\cl(M,\R^{2n})=\Omega^2_\ex(M,\R^{2n})$. By the discussion
above, this implies that local abelian gauge theory can be formulated
equivalently as a theory of closed $\R^{2n}$-valued two-forms $\cV\in
\Omega^2_\cl(M,\R^{2n})$ satisfying the condition:
\be
\ast_g \cV = -\cJ \cV
\ee
with respect to a fixed taming map $\cJ\in\fJ_n(M)$. Consequently, the 
theory is uniquely determined by the choice of taming map. 
The condition $\dd\cV = 0$ is equivalent with $\cV = \dd \cA$, where $\cA\in
\Omega^1(M,\R^{2n})$ is considered modulo gauge transformations
$\cA\mapsto \cA + \dd\alpha$ with $\alpha\in \cC^\infty(M,\R^{2n})$.
The map $[\cA] \mapsto \dd\cA$ gives a well-defined bijection $
\Omega^1(M,\R^{2n})\slash\Omega^1_\cl(M,\R^{2n}) \xrightarrow{\sim}
\Omega^2_\cl(M,\R^{2n})$. Thus we can formulate classical local
abelian gauge theory either in terms of \emph{electromagnetic gauge
potentials} $\cA\in \Omega^1(M,\R^{2n})$ taken modulo
gauge-equivalence or in terms of {\em electromagnetic field strengths}
$\cV\in \Omega^2_\cl(M,\R^{2n})$.

\begin{definition}
Let $\cJ\in \fJ_n(M)$ be a taming map. The space of {\em electromagnetic
gauge configurations} of the $\U(1)^n$ local abelian gauge is $\Omega^1(M,\R^{2n})$.
 
Two gauge configurations are called {\em gauge equivalent} if they differ by an exact
one-form. The theory is defined by the {\em polarized self-duality condition} for
$\cA\in \Omega^1(M,\R^{2n})$:
\ben
\label{eq:abeliangaugelocal}
\ast_g \cV_{\cA} = - \cJ \cV_{\cA}~~,~~\mathrm{where}\quad \cV_{\cA} \eqdef \dd\cA\, .
\een
The {\em space of electromagnetic gauge fields} (or {\em electromagnetic gauge
potentials}) of the theory is the linear subspace of
$\Omega^1(M,\R^{2n})$ consisting of those elements which satisfy
\eqref{eq:abeliangaugelocal}:
\be
\cSol_n(M,g,\cJ) \eqdef \left\{ \cA \in \Omega^1(M,\R^{2n})
\,\, \vert\,\, \ast_g\cV_{\cA} = - \cJ \cV_{\cA} \right\}\, .
\ee
Elements $\cA\in \Omega^1(M,\R^{2n})$ are called (electromagnetic) 
\emph{gauge potentials} or \emph{gauge fields}. The space of 
{\em field strength configurations} is the vector space:
\be
\Conf_n(M)\eqdef \Omega^2_\cl(M,\R^{2n})~~,
\ee
while the space of {\em field strengths} is defined through:
\be
\Sol_n(M,g,\cJ)\eqdef \{\cV\in \Conf_n(M)~\vert~\ast_g\cV=\cJ\cV\}~~.
\ee
\end{definition}

\noindent The map $[\cA] \mapsto \dd\cA$ gives a bijection $
\Omega^1(M,\R^{2n})/\Omega^1_\cl(M,\R^{2n})\xrightarrow{\sim} \Conf_n(M)$,
which restricts to a bijection
$\cSol_n(M,g,\cJ)/\Omega^1_\cl(M,\R^{2n})\xrightarrow{\sim}
\Sol_n(M,g,\cJ)$.


\subsection{Duality groups}
\label{sec:symmetrieslocal}


Let $\Diff(M)$ be the group of orientation-preserving
diffeomorphisms of $M$ and $\cJ\in \fJ_n(M)$ be a taming map of rank
$2n$ defined on $M$. For $(\gamma,f)\in \GL(2n , \R)\times \Diff(M)$,
consider the linear isomorphism:
\ben
\label{eq:Action}
\A_{\gamma,f} \colon \Omega^k(M,\R^{2n})\xrightarrow{\sim}
\Omega^k(M,\R^{2n})\, , \quad \omega \mapsto \gamma (f_{\ast}\omega)\, ,
\een
where $f_{\ast}\colon \Omega^k(M,\R^{2n})\to \Omega^k(M,\R^{2n})$ is
the push-forward through the diffeomorphism $f$. This gives a linear
action of $\GL(2n,\R)\times \Diff(M)$ on
$\Omega^k(M,\R^{2n})$. Since this action commutes with the exterior
derivative, it preserves the space $\Conf_n(M)$ of field strength
configurations.

For any $\gamma\in \Sp(2n,\R)$, the map:
\be
\cJ_{\gamma,f}\eqdef \gamma(\cJ\circ f^{-1}) \gamma^{-1}\, ,
\ee
is a taming map. This gives an action $\mu$ of $\Sp(2n,\R)\times
\Diff(M)$ on $\fJ_n(M)$ defined through:
\be
\mu(\gamma,f)(\cJ)\eqdef \cJ_{\gamma,f}\, , \quad \forall\,\, (\gamma,f)\in \Sp(2n,\R)\times
\Diff(M)\, .
\ee

\begin{prop}
\label{prop:SolAction}
For every $(\gamma,f)\in \Sp(2n,\R) \times \Diff(M)$, the map
$\A_{\gamma,f}$ induces by restriction a linear isomorphism:
\be
\A_{\gamma,f}\colon \Sol_n(M,g,\cJ)\xrightarrow{\sim} \Sol_n(M,f_{\ast}(g),\cJ_{\gamma,f})~~,
\ee
where $f_{\ast}(g)$ denotes the push-forward of $g$ by $f\in
\Diff(M)$.
\end{prop}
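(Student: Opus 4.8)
The plan is to reduce the statement to two facts: that $\A_{\gamma,f}$ preserves closedness (already recorded, since the action commutes with $\dd$) and that it intertwines the polarized self-duality conditions on the source and target. Since $\A_{\gamma,f}$ is a linear isomorphism of $\Omega^2(M,\R^{2n})$ whose inverse is $\A_{\gamma^{-1},f^{-1}}$ (one checks directly that $\A_{\gamma_1,f_1}\circ \A_{\gamma_2,f_2}=\A_{\gamma_1\gamma_2,\,f_1\circ f_2}$, using that constant matrices on the coefficient factor commute with push-forward of forms), once I show it maps $\Sol_n(M,g,\cJ)$ into $\Sol_n(M,f_*(g),\cJ_{\gamma,f})$, applying the same argument to the inverse pair $(\gamma^{-1},f^{-1})$ — noting that $\cJ_{\gamma,f}$ is again a taming map by the remark preceding the proposition, so the target solution space is well defined — yields the asserted bijection.

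First I would take $\cV \in \Sol_n(M,g,\cJ)$, so that $\dd\cV = 0$ and $\ast_g\cV = \cJ\cV$, and compute $\ast_{f_*(g)}(\A_{\gamma,f}\cV)$. Because $\gamma \in \Sp(2n,\R)$ acts as a constant linear map on the $\R^{2n}$ coefficient factor while $\ast_{f_*(g)}$ acts on the exterior-algebra factor, the two commute, giving $\ast_{f_*(g)}(\gamma\, f_*\cV) = \gamma\,\ast_{f_*(g)}(f_*\cV)$. Closedness of $\A_{\gamma,f}\cV$ is immediate from the stated fact that the action commutes with $\dd$, so the only content is the self-duality identity.

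The crux is the naturality of the Hodge operator under the push-forward metric. Since $f_*(g)$ is defined precisely so that $f\colon (M,g)\to (M,f_*(g))$ is an orientation-preserving isometry, the Hodge operators are intertwined by push-forward: $\ast_{f_*(g)}\circ f_* = f_*\circ \ast_g$. Combined with the compatibility of push-forward with pointwise matrix multiplication, namely $f_*(\cJ\cV) = (\cJ\circ f^{-1})\,(f_*\cV)$ — which follows from $f_*(\phi\,\alpha)=(\phi\circ f^{-1})\,f_*\alpha$ for a function $\phi$ and a form $\alpha$, applied componentwise to $\cJ\cV$ — I obtain
\[
\ast_{f_*(g)}(f_*\cV) = f_*(\ast_g\cV) = f_*(\cJ\cV) = (\cJ\circ f^{-1})\,(f_*\cV).
\]
Substituting this and inserting $\gamma^{-1}\gamma$ gives $\ast_{f_*(g)}(\A_{\gamma,f}\cV) = \gamma(\cJ\circ f^{-1})\gamma^{-1}\,\gamma(f_*\cV) = \cJ_{\gamma,f}\,\A_{\gamma,f}\cV$, which is exactly the self-duality condition defining membership in $\Sol_n(M,f_*(g),\cJ_{\gamma,f})$.

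I expect the main obstacle to be bookkeeping rather than conceptual: carefully justifying the two naturality identities — the commutation of $\ast$ with push-forward under the isometry $f$, and the behavior of push-forward on the product $\cJ\cV$ of a matrix-valued function with a vector-valued form — while keeping precise track of where each factor is evaluated (in particular the appearance of $\cJ\circ f^{-1}$). No deeper input is needed, since closedness and the linear-isomorphism structure are immediate and the taming property of $\cJ_{\gamma,f}$ has already been established.
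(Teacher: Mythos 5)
Your proof is correct and follows essentially the same route as the paper's: the paper's one-line chain of equivalences rests on exactly the two naturality identities you establish, namely $\ast_{f_\ast(g)}\circ f_\ast = f_\ast\circ \ast_g$ for the orientation-preserving isometry $f\colon (M,g)\to (M,f_\ast(g))$ and $f_\ast(\cJ\cV)=(\cJ\circ f^{-1})\,f_\ast\cV$, combined with the commutation of the constant matrix $\gamma$ past $\ast$ and $f_\ast$. The only cosmetic difference is that you write the self-duality condition as $\ast_g\cV=\cJ\cV$ while the paper's proof carries it as $\ast_g\cV=-\cJ\cV$ (the paper's own definitions are inconsistent on this sign), which is immaterial since the sign propagates identically through both sides of the computation.
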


\begin{remark}
\label{remark:locallagrangian}
If we consider a pair $(\gamma,f)\in \GL(2n,\R)\times \Diff(M)$ with
$\gamma\not\in \Sp(2n,\R)$, then $\cJ_{\gamma,f}$ is not a taming map, so it
does not define a local abelian gauge theory. From a different point
of view, such a transformation would not preserve the energy momentum
tensor of the theory and its Lagrangian formulation. See
\cite{Olive:1995sw} and references therein for more details about this
point.
\end{remark}

\begin{proof}
For any $\cV\in \Omega^2_\cl(M,\R^{2n})$, we have:
\be
\ast_g \cV=-\cJ\cV \Longleftrightarrow \A_{\gamma,f} (\ast_g
\cV)=-\A_{\gamma,f}(\cJ\cV) \Longleftrightarrow \ast_{f_\ast(g)}
(\A_{\gamma,f}\cV)=-\cJ_{\gamma,f}\A_{\gamma,f}(\cV)~~.
\ee
\end{proof}

\noindent Consider the infinite rank vector bundle with total
space:
\be
\Sol_n(M)\eqdef \prod_{(g,\cJ)\in \Met_{3,1}(M)\times \fJ_n(M)}\Sol_n(M,g,\cJ)~~,
\ee
and infinite-dimensional base $B_n(M)\eqdef \Met_{3,1}(M)\times
\fJ_n(M)$, with the natural projection. Let $\sigma$ be the action of
$\Sp(2n,\R)\times \Diff(M)$ on $B_n(M)$ defined though
$\sigma=f_\ast\times \mu$, i.e.:
\be
\sigma(\gamma,f)(g,\cJ)=(f_\ast(g),\cJ_{\gamma,f})~~.
\ee
Then Proposition \ref{prop:SolAction} shows that the restriction of
$\A$ gives a linearization of $\sigma$ on the vector bundle
$\Sol_n(M)$. In particular, each fiber of $\Sol_n(M)$ carries a
linear representation of the isotropy group of the corresponding point
in the base. Let $\Iso(M,g)$ be the group of
orientation-preserving isometries of $(M,g)$. Then:
\be
\Stab_{\Sp(2n,\R)\times \Diff(M)}(g,\cJ)=\{(\gamma,f)\in \Sp(2n,\R)\times \Iso(M,g)~\vert~\cJ_{\gamma,f}=\cJ\}
\ee
and we have:

\begin{cor}
\label{cor:equivarianceccJlocal}
Let $(\gamma,f)\in \Sp(2n,\R)\times \Iso(M,g)$ such that $\cJ_{\gamma,f}
= \cJ$, i.e. $\cJ\circ f=\gamma\cJ \gamma^{-1}$. Then $\A_{\gamma,f}$ is a linear
automorphism of $\Sol_n(M,g,\cJ)$.
\end{cor}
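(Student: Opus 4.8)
The plan is to deduce this directly from Proposition \ref{prop:SolAction}, which already supplies, for every $(\gamma,f)\in\Sp(2n,\R)\times\Diff(M)$, a linear isomorphism $\A_{\gamma,f}\colon \Sol_n(M,g,\cJ)\xrightarrow{\sim}\Sol_n(M,f_\ast(g),\cJ_{\gamma,f})$. The only work left is to recognize that, under the two hypotheses of the corollary, the target space of this isomorphism coincides with its source, so that $\A_{\gamma,f}$ becomes a self-map.

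First I would use the assumption $f\in\Iso(M,g)$ to show that the metric in the target is again $g$. An orientation-preserving isometry satisfies $f^\ast g=g$; applying $(f^{-1})^\ast$ to both sides and using $(f^{-1})^\ast\circ f^\ast=\id$ gives $f_\ast(g)=(f^{-1})^\ast(g)=g$. Next I would invoke the remaining hypothesis $\cJ_{\gamma,f}=\cJ$ (equivalently $\cJ\circ f=\gamma\cJ\gamma^{-1}$), which identifies the taming map appearing in the target with $\cJ$ itself. Combining the two observations yields $\Sol_n(M,f_\ast(g),\cJ_{\gamma,f})=\Sol_n(M,g,\cJ)$, so the isomorphism of Proposition \ref{prop:SolAction} is a linear automorphism of $\Sol_n(M,g,\cJ)$, as claimed.

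There is essentially no obstacle here: the entire content is already contained in Proposition \ref{prop:SolAction}, and the corollary is merely its specialization to the stabilizer of the pair $(g,\cJ)$ inside $\Sp(2n,\R)\times\Diff(M)$. The only point worth recording carefully is the passage from $f^\ast g=g$ to $f_\ast g=g$, which is valid precisely because $f$ is a diffeomorphism (so that $f_\ast=(f^{-1})^\ast$); this is the reason the isometry hypothesis, rather than a weaker condition, is the correct one to impose.
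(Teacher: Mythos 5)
Your proposal is correct and follows essentially the same route as the paper: the corollary is stated there as an immediate specialization of Proposition \ref{prop:SolAction} to the stabilizer of the pair $(g,\cJ)$ in $\Sp(2n,\R)\times\Diff(M)$, which is exactly what you do. Your explicit verification that $f_\ast(g)=(f^{-1})^\ast(g)=g$ for an isometry is a small detail the paper leaves implicit, but it is the same argument.
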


\begin{definition}
\label{def:dualitygroupslocal}
Let $\cJ\in \fJ_n(M)$ be a taming map and $g$ be a Lorentzian metric on $M$.
\begin{itemize}
\item The group $\Sp(2n,\R)\times \Diff(M)$ is called the {\em
unbased pseudo-duality group}. The linear isomorphism:
\be
\A_{\gamma,f}\colon \Sol_n(M,g,\cJ) \xrightarrow{\sim} \Sol_n(M,f_{\ast}(g), \cJ_{\gamma,f})\, ,
\ee
induced by an element of this group is called a {\em unbased
pseudo-duality transformation}.
\item The group $\Sp(2n,\R)\times \Iso(M,g)$ is called the {\em
unbased duality group}. The linear isomorphism:
\be
\A_{\gamma,f}\colon \Sol_n(M,g,\cJ) \xrightarrow{\sim} \Sol_n(M,g, \cJ_{\gamma,f})\, ,
\ee
induced by an element $(\gamma,f)$ of this group is called a {\em unbased
duality transformation}.
\item The group $\Sp(2n,\R)$ is called the {\em duality group}. The
linear isomorphism:
\be
\A_{\gamma,\id_M}\colon \Sol_n(M,g,\cJ) \xrightarrow{\sim} \Sol_n(M,g, \cJ_\gamma)\, ,
\ee
where $\cJ_\gamma\eqdef \cJ_{\gamma,\id_M}=\gamma \cJ \gamma^{-1}$, is called a {\em classical duality transformation}.
\end{itemize}
\end{definition}

\begin{definition}
Let $\cJ\in \fJ_n(M)$ be a taming map and $g$ be a Lorentzian metric on $M$.
\begin{itemize}
\item The stabilizer:
\ben
\label{eq:unitary}
\cU(M,\cJ) \eqdef \left\{ (\gamma,f)\in \Diff(M,g)\times
  \Sp(2n,\R)\,\, \vert ~\cJ \circ f=\gamma \cJ \gamma^{-1} \right\}\, ,
\een
of $\cJ$ in $\Sp(2n,\R)\times \Diff(M)$ with respect to the
representation $\mu$ is called the {\em unbased unitary pseudo-duality
group}. The linear isomorphism:
\be
\A_{\gamma,f}\colon \Sol_n(M,g,\cJ) \xrightarrow{\sim} \Sol_n(M,f_{\ast}(g), \cJ)\, ,
\ee
induced by an element of this group is called an {\em unbased unitary
pseudo-duality transformation}. 
\item The stabilizer:
\ben
\cU(M,g,\cJ) \eqdef \left\{ (\gamma,f)\in \Sp(2n,\R)\times \Iso(M,g)\,\, \vert ~\cJ \circ f=\gamma \cJ \gamma^{-1} \right\}\, ,
\een
of $\cJ$ in $\Sp(2n,\R)\times \Iso(M)$ with respect to the
representation $\mu$ is called the {\em unbased unitary
duality group}. The linear isomorphism:
\be
\A_{\gamma,f}\colon \Sol_n(M,g,\cJ) \xrightarrow{\sim} \Sol_n(M,g, \cJ)\, ,
\ee
induced by an element of this group is called an {\em unbased unitary
duality transformation}.
\item The stabilizer:
\be
\U_\cJ(n) \eqdef \left\{ \gamma \in \Sp(2n,\R)\,\,
\vert \,\, \gamma \cJ \gamma^{-1} = \cJ \right\}\, .
\ee
of $\cJ$ in $\Sp(2n,\R)$ with respect to the action $\cJ\rightarrow
\gamma\cJ \gamma^{-1}$ is called the {\em unitary duality group}. The linear
automorphism:
\be
\A_{\gamma,f}\colon \Sol_n(M,g,\cJ) \xrightarrow{\sim} \Sol_n(M,g, \cJ)\, 
\ee
of $\Sol_n(M,g, \cJ)$ induced by an element of this group is called a
{\em unitary duality transformation}.
\end{itemize}
\end{definition}

\noindent
We have inclusions:
\be
\U_\cJ(n) \subset \cU(M,g,\cJ) \subset \cU(M,\cJ)
\ee
and short exact sequences:
\beqan
\label{eq:shortUduality}
& 1 \to \U_\cJ(n) \to \cU(M,g,\cJ) \to \Iso_\cJ(M,g)\to 1\, ,\\
& 1 \to \Iso(M,g)_\cJ \to \cU(M,\cJ) \to \Sp_\cJ(2n,\R) \to 1\, ,
\eeqan
where $\Iso_\cJ(M,g)$ is the subgroup of those $f\in
\Iso(M,g)$ for which there exists $\gamma\in \Sp(n,\R)$ such that
$\cJ\circ f=\gamma \cJ \gamma^{-1}$, while $\Sp_\cJ(2n,\R)$ is the subgroup of
those $\gamma\in \Sp(2n,\R)$ for which there exists $f\in \Iso(M,g)$
such that $\cJ\circ f=\gamma\cJ \gamma^{-1}$. Finally, the group:
\be
\Iso(M,g)_\cJ\eqdef \{f\in\Iso(M,g)~\vert~\cJ\circ f=\cJ\}
\ee
is the stabilizer of $\cJ$ in $\Iso(M,g)$. In particular, we have:

\begin{cor}
If $\U_\cJ(n) = 1$ then $\cU(M,g,\cJ) = \Iso_\cJ(M,g)$. If
$\Iso(M,g)_\cJ=1$ then $\cU(M,g,\cJ)=\Sp_\cJ(2n,\R)$.
\end{cor}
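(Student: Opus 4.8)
The plan is to deduce both statements directly from the two short exact sequences recorded in \eqref{eq:shortUduality}, via the elementary principle that a short exact sequence of groups $1\to K\to G\to H\to 1$ with $K=1$ identifies the surjection $G\to H$ with an isomorphism. To this end I first regard $\cU(M,g,\cJ)$ as a subgroup of the direct product $\Sp(2n,\R)\times\Iso(M,g)$ and consider its two coordinate projections $p_1(\gamma,f)=\gamma$ and $p_2(\gamma,f)=f$.

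The first step is to confirm that $p_1$ and $p_2$ are group homomorphisms with the asserted images and kernels. That $\cU(M,g,\cJ)$ is a subgroup — and hence that the projections are homomorphisms — follows because the defining relation $\cJ\circ f=\gamma\,\cJ\,\gamma^{-1}$ is multiplicative: for two such elements one computes $\cJ\circ(f_1 f_2)=(\cJ\circ f_1)\circ f_2=\gamma_1(\cJ\circ f_2)\gamma_1^{-1}=\gamma_1\gamma_2\,\cJ\,(\gamma_1\gamma_2)^{-1}$, using that the constant matrix $\gamma_1$ commutes with precomposition by the diffeomorphism $f_2$. By the very definitions of $\Iso_\cJ(M,g)$ and $\Sp_\cJ(2n,\R)$, the images of $p_2$ and $p_1$ are exactly these two groups. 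Putting $f=\id_M$ in the defining relation identifies $\ker p_2$ with $\U_\cJ(n)$, and putting $\gamma=\id$ identifies $\ker p_1$ with $\Iso(M,g)_\cJ$; these are precisely the kernels appearing in \eqref{eq:shortUduality}.

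The corollary then follows at once. When $\U_\cJ(n)=1$ the sequence built from $p_2$ has trivial kernel, so $p_2\colon\cU(M,g,\cJ)\to\Iso_\cJ(M,g)$ is injective and, being surjective by construction, is an isomorphism — this is the first assertion. When $\Iso(M,g)_\cJ=1$ the sequence built from $p_1$ has trivial kernel, so $p_1\colon\cU(M,g,\cJ)\to\Sp_\cJ(2n,\R)$ is an isomorphism, which is the second assertion.

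Because the proof is purely formal, I expect no serious obstacle; the only point demanding a little care is the kernel computation, namely checking that for fixed $f$ the partner $\gamma$ is unique up to $\U_\cJ(n)$ — two choices $(\gamma,f)$ and $(\gamma',f)$ differ by $(\gamma'\gamma^{-1},\id_M)\in\cU(M,g,\cJ)$, forcing $\gamma'\gamma^{-1}\in\U_\cJ(n)$ — and dually that for fixed $\gamma$ the partner $f$ is unique up to $\Iso(M,g)_\cJ$. One should also keep the conventions for $\mu$ straight, so that the relevant group structure on $\Sp(2n,\R)\times\Iso(M,g)$ is the direct product — rather than a semidirect one — which is what makes these two projections homomorphic.
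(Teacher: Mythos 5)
Your proof is correct and takes essentially the same route as the paper: the corollary is read off from the short exact sequences \eqref{eq:shortUduality}, where a trivial kernel forces the surjective coordinate projection to be an isomorphism, and your preliminary verification (that $\cU(M,g,\cJ)$ is a subgroup of the direct product, with projections having images $\Iso_\cJ(M,g)$, $\Sp_\cJ(2n,\R)$ and kernels $\U_\cJ(n)$, $\Iso(M,g)_\cJ$) simply spells out the content of those sequences, which the paper asserts without proof. One point in your favor: your reading, in which \emph{both} sequences have $\cU(M,g,\cJ)$ as middle term, is the one the corollary actually requires -- the occurrence of $\cU(M,\cJ)$ in the second sequence of \eqref{eq:shortUduality} is evidently a typo, since its stated kernel and cokernel are exactly the kernel and image of the projection $p_1$ restricted to $\cU(M,g,\cJ)$.
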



\subsection{Gluing local abelian gauge theories}
\label{sec:symplecticgluing}


Let now $M$ be an arbitrary oriented manifold admitting Lorentzian metrics.
Let $\cU=(U_{\alpha})_{\alpha\in I}$ be a good open cover of $M$, where $I$ is an
index set. Denote by $g_{\alpha}$ the restriction of $g$ to
$U_{\alpha}$. Roughly speaking, the definition of abelian gauge theory 
on $(M,g)$ given in Section \ref{sec:classical} is the result of gluing the
local $\U(1)^n$ abelian gauge theories defined on the contractible
Lorentzian four-manifolds $(U_{\alpha}, g_{\alpha})$ using
electromagnetic dualities. In order to implement this idea, we choose
a locally constant $\Sp(2n,\R)$-valued \u{C}ech cocycle for $\cU$:
\be
u_{\alpha \beta} \colon U_{\alpha}\cap U_{\beta} \to \Sp(2n,\R)
\ee
and a family of taming maps $\cJ_{\alpha}\colon U_{\alpha} \to
\R^{2n}$ for $\omega_{2n}$ such that:
\be
\cJ_{\beta} = u_{\alpha \beta}\, \cJ_{\alpha} u^{-1}_{\alpha \beta}
\ee
on double overlaps. The collection:
\be
\left\{(U_{\alpha})_{\alpha\in I}, (g_{\alpha})_{\alpha\in I} ,
(\cJ_{\alpha})_{\alpha\in I}, (u_{\alpha\beta})_{\alpha, \beta\in I}
\right\}\, ,
\ee
is equivalent to a flat symplectic vector bundle $(\cS,\omega,\cD)$
with symplectic form $\omega$ and symplectic flat connection $\cD$,
equipped with an almost complex structure $\cJ$ which is a taming of
$\omega$. A family $(\cV_{\alpha})_{\alpha\in I}$ of solutions of the
local abelian gauge theories defined by $(\cJ_{\alpha})_{\alpha\in I}$
on $(U_{\alpha}, g_{\alpha})$ which satisfies:
\be
\cV_{\beta} = u_{\alpha\beta} \cV_{\alpha}
\ee
corresponds to an $\cS$-valued two-form  $\cV\in \Omega^2(M,\cS)$ which obeys:
\be
\dd_\cD \cV = 0\, ,
\ee
where $\dd_\cD\colon \Omega^\ast(M,\cS)\to \Omega^\ast(M,\cS)$ is the
exterior differential twisted by $\cD$. This construction motivates
the global geometric model introduced in \cite{gesm} and further elaborated
in Section \ref{sec:classical}.


\section{Integral symplectic spaces and integral symplectic tori}
\label{app:symp}


This appendix recalls some notions from the theory of symplectic
lattices and symplectic tori which are used throughout the paper.
We also introduce the notion of integral symplectic torus. 

\begin{definition}
\label{def:integralsymplecticspace} An integral symplectic space is a
triple $(V,\omega,\Lambda)$ such that:
\begin{itemize}
\item $(V,\omega)$ is a finite-dimensional symplectic vector space
over $\R$.
\item $\Lambda\subset V$ is full lattice in $V$, i.e. a lattice in $V$
such that $V = \Lambda\otimes_\Z \R$.
\item $\omega$ is integral with respect to $\Lambda$, i.e. we have
  $\omega(\Lambda,\Lambda)\subset \Z$.
\end{itemize}
\end{definition}

\noindent An isomorphism of integral symplectic spaces $f\colon (V_1 ,
\omega_1 , \Lambda_1) \to (V_2 , \omega_2 , \Lambda_2)$ is a bijective
symplectomorphism from $(V_1 , \omega_1)$ to $(V_2 , \omega_2)$ which
satisfies:
\be
f(\Lambda_1) =\Lambda_2\, .
\ee
Denote by $\Symp_\Z$ the groupoid of integral symplectic spaces and
isomorphisms of such. Let $\Aut(V)$ be the group of linear
automorphisms of the vector space $V$ and $\Sp(V,\omega)\subset
\Aut(V)$ be the subgroup of symplectic transformations. Then the
automorphism group of the integral symplectic space
$(V,\omega,\Lambda)$ is denoted by:
\be
\Sp(V,\omega,\Lambda) = \left\{T\in \Sp(V,\omega)\,\,\vert\,\, T(\Lambda) = \Lambda \right\}\, .
\ee 

\begin{definition}
An \emph{integral symplectic basis} of a $2n$-dimensional integral
symplectic space $(V,\omega,\Lambda)$ is a basis $\cE=
(\xi_1,\hdots , \xi_n, \zeta_1, \hdots ,\zeta_n)$ of $\Lambda$ (as
a free $\Z$-module) such that:
\be
\omega(\xi_i , \xi_j) = \omega(\zeta_i , \zeta_j) = 0\, , \qquad
\omega(\xi_i , \zeta_j) =  t_i \delta_{ij}~~,~~\omega(\zeta_i , \xi_j) = - t_i \delta_{ij}\, , \qquad \forall\,\, i,j
= 1,\hdots , n\, ,
\ee
where $t_1 , \hdots , t_n \in \Z$ are strictly positive
integers satisfying the divisibility conditions:
\be
t_1\vert t_2\vert\hdots \vert t_n\, .
\ee
\end{definition}

\noindent By the \emph{elementary divisor theorem}, see \cite[Chapter
VI]{Debarre}, every integral symplectic space admits an integral
symplectic basis and the positive integers $t_1,\ldots, t_n$ (which
are called the {\em elementary divisors} of $(V,\omega,\Lambda)$) do
not depend on the choice of such a basis. Define:
\be
\Div^n \eqdef \left\{ (t_1, \hdots ,t_n) \in
\Z_{>0}^n\,\, \vert\,\, t_1\vert t_2\vert \hdots \vert t_n
\right\}\, ,
\ee
and:
\be
\delta(n) \eqdef (1,\hdots , 1)\in \Div^n\, .
\ee
Let $\leq$ be the partial order relation on $\Div^n$ defined through:
\be
(t_1, \hdots ,t_n)\leq (t'_1, \hdots ,t'_n)~~\mathrm{iff}~~ t_i\vert t'_i~~ \forall i=1,\ldots, n~~.
\ee
Then $\delta(n)$ is the least element of the ordered set
$(\Div^n,\leq)$. Notice that this ordered set is directed, since any
two elements $t,t'\in \Div^n$ have an upper bound given by
$(t_1t'_1,\ldots, t_nt'_n)$.  In fact, $(\Div^n,\leq)$ is a lattice
with join and meet given by:
\be
t\vee t'=(\lcm(t_1,t'_1),\ldots, \lcm(t_n,t'_n))~~,~~t\wedge t'=(\gcd(t_1,t'_1),\ldots, \gcd(t_n,t'_n))~~.
\ee
This lattice is semi-bounded from below with bottom element given by
$\delta(n)$ and it is complete for meets (i.e., it is a complete meet
semi-lattice).

\begin{definition}
\label{def:type}
The \emph{type} of an integral symplectic space $(V,\omega,\Lambda)$
is the ordered system of elementary divisors of $(V,\omega,\Lambda)$,
which we denote by:
\be
\frt(V,\omega,\Lambda) = (t_1 , \hdots , t_n)\in \Div^n\, .
\ee 
The integral symplectic space $(V,\omega,\Lambda)$ is called \emph{principal} if:
\be
\frt(V,\omega,\Lambda) = \delta(n) \in \Div^n\, .
\ee
\end{definition}

\noindent Let $\omega_{2n}$ denotes the standard symplectic pairing on $\R^{2n}$. 

\begin{prop}
Two integral symplectic spaces have the same type if and only if they are
isomorphic. Moreover, every element of $\Div^n$ is the type of an
integral symplectic space. Hence the type induces a bijection between the
set of isomorphism classes of integral symplectic spaces and the set
$\Div^n$.
\end{prop}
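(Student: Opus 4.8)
The plan is to base everything on the existence and uniqueness of integral symplectic bases, which is already guaranteed by the elementary divisor theorem recalled above; once these are in hand the statement reduces to careful bookkeeping. First I would show that the type is an isomorphism invariant. Given an isomorphism $f\colon (V_1,\omega_1,\Lambda_1)\to (V_2,\omega_2,\Lambda_2)$ of integral symplectic spaces and an integral symplectic basis $\cE_1$ of the first with divisors $(t_1,\ldots,t_n)$, the image $f(\cE_1)$ is again a $\Z$-basis of $\Lambda_2$, since $f(\Lambda_1)=\Lambda_2$, and its symplectic pairings coincide with those of $\cE_1$, since $f$ is a symplectomorphism. Hence $f(\cE_1)$ is an integral symplectic basis of the second space with the same divisors, and by the uniqueness clause of the elementary divisor theorem the two types agree. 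This shows that isomorphic spaces have the same type, so the type descends to a well-defined map on isomorphism classes.

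For the converse I would construct an explicit isomorphism out of a pair of integral symplectic bases. If $(V_1,\omega_1,\Lambda_1)$ and $(V_2,\omega_2,\Lambda_2)$ both have type $\frt=(t_1,\ldots,t_n)$, I would choose integral symplectic bases $\cE_1$ and $\cE_2$ and let $f\colon V_1\to V_2$ be the $\R$-linear map sending the $k$-th vector of $\cE_1$ to the $k$-th vector of $\cE_2$. Since both are bases of the respective real vector spaces, $f$ is a linear isomorphism; since both are $\Z$-bases of the respective lattices, $f(\Lambda_1)=\Lambda_2$; and since the two bases have identical symplectic pairing tables (these are fixed by the common type through the defining relations of an integral symplectic basis), $f$ intertwines $\omega_1$ and $\omega_2$. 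Thus $f$ is an isomorphism of integral symplectic spaces, so the type is injective on isomorphism classes.

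To prove surjectivity of the type onto $\Div^n$, I would exhibit the standard model. For $\frt=(t_1,\ldots,t_n)\in\Div^n$, take $V=\R^{2n}$ with standard basis $(e_1,\ldots,e_n,f_1,\ldots,f_n)$, lattice $\Lambda=\Z^{2n}$, and the antisymmetric bilinear form $\omega_\frt$ specified by $\omega_\frt(e_i,f_j)=t_i\delta_{ij}$ and $\omega_\frt(e_i,e_j)=\omega_\frt(f_i,f_j)=0$ for $1\le i,j\le n$. This form is integral on $\Z^{2n}$ by construction, and its Gram matrix in the standard basis has determinant $(t_1\cdots t_n)^2\neq 0$, so $\omega_\frt$ is nondegenerate; the standard basis is then manifestly an integral symplectic basis, exhibiting an integral symplectic space of type $\frt$. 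Combining the three steps, the type is a well-defined, injective and surjective map from isomorphism classes of integral symplectic spaces to $\Div^n$, which is the asserted bijection.

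I do not expect a genuine obstacle here: all the real content is carried by the elementary divisor theorem, which supplies both the existence and the uniqueness of integral symplectic bases. The only points requiring care are the bookkeeping that an isomorphism carries one integral symplectic basis to another with the same divisors (so that uniqueness may be invoked) and the verification that the standard form $\omega_\frt$ is nondegenerate; both are routine.
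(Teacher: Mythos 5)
Your proof is correct and follows essentially the same route as the paper: the equivalence of type and isomorphism class rests on the elementary divisor theorem (the paper dismisses this part as ``obvious'', whereas you spell out the basis-transport argument), and surjectivity onto $\Div^n$ is established by exhibiting a standard model. The only cosmetic difference is that your model $(\R^{2n},\omega_\frt,\Z^{2n})$ rescales the symplectic form and keeps the lattice $\Z^{2n}$, while the paper's model $(\R^{2n},\omega_{2n},\Lambda_\frt)$ keeps the standard form $\omega_{2n}$ and rescales the lattice; these are identified by the isomorphism $\xi_i\mapsto e_i$, $\zeta_j\mapsto f_j$ that the paper itself records immediately after the proposition.
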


\begin{proof}
The first statement is obvious. For the second statement, fix $\frt
\eqdef (t_1 , \hdots , t_n) \in\Div^n$. Consider the full lattice
$\Lambda_\frt \subseteq \R^{2n}$ defined as follows:
\ben
\label{Lambdafrt}
\Lambda_{\frt} \eqdef \left\{ (l_1 , \hdots ,l_n , t_1 l_{n+1}
, \hdots , t_n l_{n+1})\,\, \vert\,\, l_1 , \hdots , l_n
\in\Z\right\}\, .
\een
Then $(\R^{2n} , \omega_{2n} , \Lambda_{\frt})$ 
is an integral symplectic space of type $\frt$.
\end{proof}

\begin{definition}
The lattice $\Lambda_{\frt}$ defined in \eqref{Lambdafrt} is called
the \emph{standard symplectic lattice} of type $\frt$ and
$(\R^{2n},\omega_{2n},\Lambda_\frt)$ is called the {\em standard
integral symplectic space} of type $\frt$.
\end{definition}

\noindent We have $\Lambda_{\delta(n)} =\Z^{2n}$. Moreover,
$\Lambda_\frt$ is a sub-lattice of $\Z^{2n}$ and we have
$\Z^{2n}/\Lambda_\frt\simeq \Z_{t_1}\times \ldots \times \Z_{t_n}$ for
all $\frt\in \Div(n)$. For $\frt,\frt'\in \Div^n$, we have
$\Lambda_{\frt'}\subset \Lambda_{\frt}$ if and only if $\frt \leq \frt'$.  The
lattice $\Lambda_\frt$ admits the basis:
\beqa
&&\xi_1=e_1=(1,0,\ldots,0),\ldots, \xi_n=e_n=(0,\ldots,0,1,0,\ldots,0)\nn\\
&&\zeta_1=t_1 f_1=(0,\ldots, 0, t_1,0,\ldots, 0),\ldots, \zeta_n=t_n
f_n=(0,\ldots,0,t_n)~~,
\eeqa
in which the standard symplectic form of $\R^{2n}$ has coefficients:
\beqa
&&\omega_{2n}(\xi_i,\xi_j)=\omega_{2n}(\zeta_i,\zeta_j)=0~~\nn\\
&&\omega_{2n}(\xi_i,\zeta_j)= t_i\delta_{ij}~~,~~\omega_{2n}(\zeta_i,\xi_j)=-t_i\delta_{ij}~~.
\eeqa
The isomorphism which takes $\xi_i$ to $e_i$ and $\zeta_j$ to $f_j$
identifies $(\R^{2n},\omega_{2n},\Lambda_\frt)$ with the integral
symplectic space $(\R^{2n},\omega_\frt,\Z^{2n})$, where $\omega_\frt$
is the symplectic pairing defined on $\R^{2n}$ by:
\ben
\label{omegat}
\omega_\frt(e_i,e_j)=\omega_\frt(f_i,f_j)=0\, , \quad \omega_\frt(e_i,f_j)= \delta_{ij}\, , \quad \omega_\frt(f_i,e_j)=- \delta_{ij}\, , \quad \forall \,\, i=1,\ldots, n~~.
\een
Given $\frt = (t_1 , \hdots , t_n)\in \Div^n$, consider the diagonal $n\times n$ matrix:
\be
\mathrm{D}_{\frt} \eqdef \mathrm{diag}(t_1 , \hdots , t_n) \in \Mat(n,\Z)\, ,
\ee
as well as:
\be
\Gamma_{\frt} \eqdef 
\begin{pmatrix} 
\mathrm{I}_n & 0 \\
0 & \mathrm{D}_{\frt} 
\end{pmatrix} \in \Mat(2n,\Z) \, .
\ee

\begin{definition}
\label{def:Siegel}
The {\em modified Siegel modular group} of type $\frt \in \Div^n$ is
the subgroup of $\Aut(\R^{2n},\omega_{2n})\simeq \Sp(2n,\R)$ defined
through:
\be
\Sp_{\frt}(2n,\Z) \eqdef\left\{ T\in \Aut(\R^{2n},\omega_{2n}) \,\,
\vert \,\, T(\Lambda_\frt)=\Lambda_\frt\right\} \simeq \left\{ T\in
\Sp(2n,\R) \,\, \vert \,\, \Gamma_{\frt} T \Gamma_{\frt}^{-1}=T
\right\}\, .
\ee
\end{definition}

\noindent Since $(\R^{2n},\omega_{2n},\Lambda_\frt)\simeq
(\R^{2n},\omega_\frt,\Z^{2n})$, we have $\Sp_\frt(2n,\Z)\simeq
\Aut(\R^{2n},\omega_\frt,\Z^{2n})$. Hence $\Sp_\frt(2n,\Z)$ is a
subgroup of $\GL(2n,\Z)$. The remarks above give:

\begin{prop}{\rm \cite[Proposition F.12]{gesm}}
Let $(V,\omega,\Lambda)$ be an integral symplectic space of dimension
$2n$. Any integral symplectic basis of this space induces an
isomorphism of integral symplectic spaces between $(V,\omega,\Lambda)$
and $(\R^{2n},\omega_{2n},\Lambda_{\frt})$ as well as an isomorphism
of groups between $\Sp(V,\omega,\Lambda)$ and $\Sp_{\frt}(2n,\Z)$.
\end{prop}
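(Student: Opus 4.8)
The plan is to construct the desired isomorphism directly from the given integral symplectic basis and then to obtain the isomorphism of automorphism groups by conjugation. Write the integral symplectic basis of $(V,\omega,\Lambda)$ as $\cE=(\xi_1,\ldots,\xi_n,\zeta_1,\ldots,\zeta_n)$, so that $\omega(\xi_i,\xi_j)=\omega(\zeta_i,\zeta_j)=0$ and $\omega(\xi_i,\zeta_j)=t_i\delta_{ij}$, where $\frt=(t_1,\ldots,t_n)$ is the type. First I would define $\Phi\colon V\to\R^{2n}$ to be the unique $\R$-linear map sending $\xi_i\mapsto e_i$ and $\zeta_i\mapsto t_i f_i$ for all $i$, where $(e_1,\ldots,e_n,f_1,\ldots,f_n)$ is the canonical basis of $\R^{2n}$. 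Since $\cE$ is a $\Z$-basis of the full lattice $\Lambda$ it is also an $\R$-basis of $V=\Lambda\otimes_\Z\R$, and the target family $(e_1,\ldots,e_n,t_1f_1,\ldots,t_nf_n)$ is exactly the $\Z$-basis of $\Lambda_\frt$ exhibited in \eqref{Lambdafrt}; hence $\Phi$ carries an $\R$-basis to an $\R$-basis and so is an $\R$-linear isomorphism, and it restricts to a group isomorphism $\Lambda\xrightarrow{\sim}\Lambda_\frt$.

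The substantive step is then to check that $\Phi$ is a symplectomorphism, i.e. $\omega_{2n}(\Phi x,\Phi y)=\omega(x,y)$ for all $x,y\in V$. By bilinearity this reduces to a comparison on basis vectors, and here the point is that the structure constants coincide by construction: on the image side one computes $\omega_{2n}(e_i,e_j)=\omega_{2n}(t_i f_i,t_j f_j)=0$ and $\omega_{2n}(e_i,t_j f_j)=t_j\delta_{ij}=t_i\delta_{ij}$, which match precisely the prescribed values $\omega(\xi_i,\xi_j)$, $\omega(\zeta_i,\zeta_j)$ and $\omega(\xi_i,\zeta_j)$ of an integral symplectic basis. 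Thus $\Phi$ intertwines $\omega$ and $\omega_{2n}$ on all of $V$, and combining this with the previous paragraph shows that $\Phi\colon(V,\omega,\Lambda)\xrightarrow{\sim}(\R^{2n},\omega_{2n},\Lambda_\frt)$ is an isomorphism of integral symplectic spaces, which is the first assertion.

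For the second assertion I would use conjugation by $\Phi$, defining $\Ad(\Phi)\colon \Sp(V,\omega,\Lambda)\to\Sp(\R^{2n},\omega_{2n},\Lambda_\frt)$ through $T\mapsto \Phi\circ T\circ\Phi^{-1}$. This is well defined: for $T\in\Sp(V,\omega,\Lambda)$ one has $\omega_{2n}(\Phi T\Phi^{-1}x,\Phi T\Phi^{-1}y)=\omega(T\Phi^{-1}x,T\Phi^{-1}y)=\omega(\Phi^{-1}x,\Phi^{-1}y)=\omega_{2n}(x,y)$, so $\Phi T\Phi^{-1}$ preserves $\omega_{2n}$, while $\Phi T\Phi^{-1}(\Lambda_\frt)=\Phi T(\Lambda)=\Phi(\Lambda)=\Lambda_\frt$ shows it preserves the lattice; hence $\Phi T\Phi^{-1}\in\Sp(\R^{2n},\omega_{2n},\Lambda_\frt)$, which equals $\Sp_\frt(2n,\Z)$ by Definition \ref{def:Siegel}. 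That $\Ad(\Phi)$ is a homomorphism is immediate from $\Phi(T_1T_2)\Phi^{-1}=(\Phi T_1\Phi^{-1})(\Phi T_2\Phi^{-1})$, and $\Ad(\Phi^{-1})$ furnishes a two-sided inverse, so $\Ad(\Phi)$ is a group isomorphism.

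Since the whole argument is a change of coordinates adapted to the basis, there is no serious obstacle; the only point carrying content is the matching of structure constants in the second paragraph, where the divisibility data $\frt$ must be transported consistently between the $\Z$-basis of $\Lambda$ and the scaled basis $(e_1,\ldots,e_n,t_1f_1,\ldots,t_nf_n)$ of $\Lambda_\frt$. Everything else is formal.
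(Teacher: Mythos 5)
Your proof is correct and takes essentially the same route as the paper, which obtains this proposition directly from the preceding remarks identifying $(e_1,\ldots,e_n,t_1f_1,\ldots,t_nf_n)$ as a $\Z$-basis of $\Lambda_\frt$ realizing the structure constants of an integral symplectic basis of type $\frt$ (the detailed verification being deferred to the cited reference). Your explicit construction of the change-of-basis map $\Phi$ and the conjugation isomorphism $\Ad(\Phi)$ simply writes out what those remarks assert.
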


\noindent We have $\Sp_{\delta(n)}(2n,\Z) = \Sp(2n,\Z)$ and
$\Sp_{\frt}(2n,\Z)\subseteq \Sp_{\frt'}(2n,\Z)$ when $\frt\leq \frt'$.
Hence $\Sp_\frt(2n,\Z)$ forms a direct system of groups and we have
$\Sp(2n,\Z)\subset \Sp_\frt(2n,\Z)$ for all $\frt\in \Div^n$. The
direct limit $\varinjlim_{\frt\in \Div^n}\Sp_\frt(2n,\Z)$ identifies
with the following subgroup of $\Sp(2n,\R)$:
\be
\Sp_\infty(2n,\Z)\eqdef \{T\in \GL(2n,\R)\,\, \vert\,\, \exists \,\,\frt_T\in \Div^n : T\in \Sp_{\frt_T}(2n,\Z) \}\, ,
\ee
through the isomorphism of groups
$\varphi:\Sp_\infty(2n,\Z)\rightarrow \varinjlim_{\frt\in
\Div^n}\Sp_\frt(2n,\Z)$ which sends $T\in \Sp_\infty(2n,\Z)$ to the
equivalence class $[\alpha(T)]\in \varinjlim_{\frt\in
\Div^n}\Sp_\frt(2n,\Z)$ of the family $\alpha(T)\in \sqcup_{\frt\in
\Div^n} \Sp_\frt(2n,\Z)$ defined through:
\be
\alpha(T)_\frt\eqdef \twopartdef{T}{\frt_T\leq \frt}{1}{\frt_T \nleq \frt}\, .
\ee
Notice that $\Sp(2n,\Z)=\Sp_{\delta(n)}(2n,\Z)$ is a subgroup of $\Sp_\infty(2n,\Z)$.

\begin{definition}
The {\em type} of an element $T\in \Sp_\infty(2n,\Z)$ is defined as the
greatest lower bound $\frt(T)\in \Div^n$ of the finite set $\{\frt\in
\Div^n \vert \frt| \frt_T\}$, where $\frt_T$ is any element of
$\Div^n$ such that $T\in \Sp_\frt(2n,\Z)$.
\end{definition}

\noindent Notice that the type of $T$ is well-defined and that we have 
$T\in \Sp_{\frt(T)}(2n,\Z)$.

\subsection{Integral symplectic tori}

\noindent The following definition distinguishes between a few closely
related notions.

\begin{definition}
A $d$-dimensional {\em torus} is a smooth manifold $\rT$ diffeomorphic
with the {\em standard $d$-torus} $\rT^d\eqdef (\rS^1)^d$.  A
$d$-dimensional {\em torus group} is a compact abelian Lie group $A$
which is isomorphic with the {\em standard $d$-dimensional torus
group} $\U(1)^d$ as a Lie group. A $d$-dimensional {\em affine torus}
is a principal homogeneous space $\fA$ for a $d$-dimensional torus
group. The {\em standard affine $d$-torus} is the $d$-dimensional
affine torus $\fA_d$ defined by the right action of $\U(1)^d$ on
itself.
\end{definition}

\noindent The underlying manifold of the standard $d$-dimensional
torus group is the standard $d$-torus while the underlying manifold of
a $d$-dimensional torus group is a $d$-torus. Moreover, any
$d$-dimensional affine torus group is isomorphic with a standard
affine $d$-torus. The transformations of an affine torus given the
right action of its underlying group will be called {\em
translations}. Choosing a distinguished point in any affine torus
makes it into a torus group having that point as zero
element. Conversely, any torus group defines an affine torus obtained
by `forgetting' its zero element. The singular homology and cohomology
groups of a $d$-torus $\rT$ are the free abelian groups given by:
\beqa
&& H_k(\rT,\Z)\simeq \wedge^k H_1(\rT,\Z)\\
&& H^k(\rT,\Z)\simeq \wedge^k H^1(\rT,\Z) \simeq \wedge^k H_1(\rT,\Z)^\vee
\eeqa
for all $k=0,\ldots, d$, where $H_1(\rT,\Z)\simeq H^1(\rT,\Z)\simeq
\Z^d$. The underlying torus group of any affine torus $\fA$ is
isomorphic with $A\eqdef H_1(\fA,\R)/H_1(\fA,\Z)$. The group
of automorphisms of a $d$-dimensional torus group $A$ is given by:
\be
\Aut(A)=\Aut(H_1(A,\R), H_1(A,\Z))\simeq \GL(d,\Z)\, .
\ee
Note that the group of automorphisms of the $d$-dimensional affine torus
is isomorphic to $ \U(1)^d\rtimes \Aut(\U(1)^d)\simeq \U(1)^d\rtimes \GL(2n,\Z)$,
where $\GL(2n,\Z)$ acts on $\U(1)^d$ through the morphism of groups
$\rho\colon \GL(2n,\Z)\rightarrow \Aut(A)$ given by:
\ben
\label{Taction}
\rho(T)(\exp(2\pi \i x))=\exp(2\pi \i T(x))\, , \quad \forall\,\, T\in
\GL(d,\Z)\, , \quad \forall\,\, x\in \R^d~~.
\een
Here $\exp:\R^d\rightarrow \U(1)^d$ is the exponential map of
$\U(1)^d$, which is given by:
\be
\exp(v)=(e^{v_1},\ldots, e^{v_d})\, , \quad \forall\,\, v=(v_1,\ldots, v_d)\in \R^d\, .
\ee

\begin{definition}
Let $\rT$ be a torus of dimension at least two. A subtorus $\rT'\subset \rT$
is called {\em primitive} if $H_1 (\rT',\Z)$ is a primitive sub-lattice of
$H_1(\rT,\Z)$, i.e. if the abelian group $H_1(\rT,\Z)/H_1(\rT',\Z)$ is
torsion-free.
\end{definition}

\begin{definition}
A {\em symplectic torus} is a pair $\bT=(\rT,\Omega)$, where $\rT$ is
an even-dimensional torus and $\Omega$ is a symplectic form defined on
$\rT$.  A {\em symplectic torus group} is a pair $\bA=(A,\Omega)$,
where $A$ is an even-dimensional torus group and $\Omega$ is a
symplectic form defined on the underlying torus which is
invariant under translations by all elements of $A$. An {\em affine
symplectic torus} is a pair $\bfA=(\fA,\Omega)$, where $\fA$ is an
even-dimensional affine torus and $\Omega$ is a symplectic form on
$\fA$ which is invariant under translations.
\end{definition}

\begin{definition}
A symplectic torus $\bT=(\rT,\Omega)$ is called {\em integral} if the
symplectic area $\int_{T'}\Omega$ of any of its primitive
two-dimensional subtori $T'$ is an integer. 
\end{definition}

\noindent Let $(\rT,\Omega)$ be a symplectic torus. The cohomology
class of $\Omega$ is a non-degenerate element $\omega\in
H^2(\rT,\R)\simeq \wedge^2 H_1(\rT,\R)^\vee$, i.e a symplectic pairing
on the vector space $H_1(\rT,\R)$. The symplectic torus $(T,\Omega)$
is integral if and only if the triplet $(H_1(\rT,\R),H_1(\rT,\Z),\omega)$ is an
integral symplectic space. In this case, $\omega$ descends to a
symplectic form ${\hat \Omega}$ which makes $H_1(\rT,\R)/H_1(\rT,\Z)$
into an integral symplectic torus group. If $\bfA=(\fA,\Omega)$ is an integral
affine symplectic torus, then $\Omega$ is determined by its
cohomology class $\omega$, hence $\bfA$ can also be viewed as a pair
$(\fA,\omega)$ where $\fA$ is an affine torus and $\omega$ is a
symplectic form on $H_1(\fA,\R)$ which is integral for the lattice
$H_1(\fA,\Z)$. In this case, any choice of a point in $\fT$ allows
us to identify $\bfA$ with the integral symplectic torus group
$(H_1(\fA,\R)/H_1(\fA,\Z),{\hat \Omega})$.

Let $(\R^{2n},\omega_{2n},\Lambda_\frt)$ be the standard
integral symplectic space of type $\frt\in \Div^n$
and $\Omega_\frt$ be the translationally invariant symplectic form
induced by $\omega_{2n}$ on the torus group
$\R^{2n}/\Lambda_\frt$. Then the symplectic torus group
$\left(\R^{2n}/\Lambda_\frt,\Omega_\frt\right)$ is integral.

\begin{definition}
The $2n$-dimensional {\em standard integral symplectic torus group} of type
$\frt\in \Div^n$ is:
\be
\bA_\frt\eqdef \left(\R^{2n}/\Lambda_\frt,\Omega_\frt\right)~~.
\ee
The integral affine symplectic torus $\bfA_\frt$ obtained from 
$\bA_\frt$ by forgetting the origin is the {\em standard integral affine 
symplectic torus} of type $\frt$.
\end{definition}

\noindent 
Note that every integral affine symplectic torus $\bfA$ is affinely
symplectomorphic to a standard affine symplectic torus $\bfA_\frt$,
whose type $\frt$ is uniquely-determined and called the {\em type} of
$\bfA$.  Similarly, every integral symplectic torus group $\bA$ is
isomorphic with a standard integral symplectic torus group $\bA_\frt$
whose type $\frt$ is uniquely determined by $\bA$ and called the {\em type}
of $\bA$. The group of automorphisms of $\bA_\frt$ for $\frt \in
\Div^n$ is given by:
\be
\Aut(\bA_\frt)=\Sp_\frt(2n,\Z)~~,
\ee
while the group of automorphisms of an integral symplectic affine torus
$\bfA_\frt =(\fA_\frt ,\Omega)$ of type $\frt\in \Div^n$ is:
\be
\Aut(\bfA_\frt)=A\rtimes \Sp_\frt(2n,\Z)~~,
\ee
where $A=H_1(\fA,\R)/H_1(\fA,\Z)$ is the underlying torus group of $\fA$
and the action of $\Sp_\frt(2n,\Z)\subset \GL(2n,\Z)\simeq \Aut(A)$ on $A$
coincides with that induced from the action on $H_1(\fA,\R)\simeq \R^{2n}$. We 
denote by $\Aff_\frt\eqdef \Aut(\bfA_\frt)$ the automorphism group of 
the integral affine symplectic torus of type $\frt$. We have:
\be
\Aff_\frt=\U(1)^{2n}\rtimes  \Sp_\frt(2n,\Z)~~,
\ee
where $\Sp_\frt(2n,\Z)\subset \GL(2n,\Z)$ acts on $\U(1)^{2n}$ through
the restriction of \eqref{Taction}.

\begin{definition}
Let $\bA=(A,\Omega)$ and $\bA'=(A',\Omega')$ be two integral symplectic
torus groups. A {\em symplectic isogeny} from $\bA$ to $\bA'$ is a
surjective morphism of groups $f:\bA\rightarrow \bA'$ with finite
kernel such that $f^\ast(\Omega')=\Omega$.
\end{definition}

\noindent The following statement is immediate. 

\begin{prop}
\label{prop:isogenies}
Let $\frt,\frt'\in \Div^n$ be such that $\frt\leq \frt'$, namely
$t'_i=q_i t_i$ (where $q_i\in \Z_{>0}$) for all $i=1,\ldots,n$. Then
the map $f:\bA_{\frt'} \rightarrow \bA_\frt$ defined through:
\be
f(x+\Lambda_{\frt'})=x+\Lambda_{\frt}~~\forall x\in \R^{2n}
\ee
is a symplectic isogeny whose kernel is given by:
\be
\ker(f)\simeq \Z_{q_1}\times \ldots \times \Z_{q_n}~~.
\ee
\end{prop}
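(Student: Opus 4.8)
The plan is to verify directly that $f$ is a well-defined surjective group morphism with the claimed finite kernel, and that it pulls $\Omega_\frt$ back to $\Omega_{\frt'}$. The starting point is the observation that $\frt\leq\frt'$ forces an inclusion of lattices $\Lambda_{\frt'}\subseteq\Lambda_\frt$. Indeed, using the explicit description $\Lambda_\frt=\Z^n\oplus(t_1\Z\oplus\cdots\oplus t_n\Z)$ coming from \eqref{Lambdafrt} together with $t'_i=q_i t_i$, one has $\Lambda_{\frt'}=\Z^n\oplus(q_1t_1\Z\oplus\cdots\oplus q_nt_n\Z)$, and each $q_it_i\Z\subseteq t_i\Z$. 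Hence the identity map of $\R^{2n}$ descends through the quotient projections to the map $f$, which is therefore a well-defined morphism of groups; surjectivity is immediate since $f\bigl(x+\Lambda_{\frt'}\bigr)=x+\Lambda_\frt$ for every $x\in\R^{2n}$.

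For the kernel, I would identify
\[
\ker(f)=\{x+\Lambda_{\frt'}\mid x\in\Lambda_\frt\}\simeq \Lambda_\frt/\Lambda_{\frt'}.
\]
The direct-sum descriptions above give $\Lambda_\frt/\Lambda_{\frt'}\simeq\bigoplus_{i=1}^n t_i\Z/q_it_i\Z$, and multiplication by $t_i$ yields $t_i\Z/q_it_i\Z\simeq\Z/q_i\Z=\Z_{q_i}$. This produces the asserted isomorphism $\ker(f)\simeq\Z_{q_1}\times\cdots\times\Z_{q_n}$ and in particular shows the kernel is finite of order $q_1\cdots q_n$.

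It remains to check $f^\ast(\Omega_\frt)=\Omega_{\frt'}$. Let $\pi_\frt$ and $\pi_{\frt'}$ denote the quotient covering maps of $\R^{2n}$ onto $\R^{2n}/\Lambda_\frt$ and $\R^{2n}/\Lambda_{\frt'}$ respectively. Both $\Omega_\frt$ and $\Omega_{\frt'}$ are by definition the translation-invariant forms obtained by descending the constant form $\omega_{2n}$ on $\R^{2n}$, so that $\pi_\frt^\ast(\Omega_\frt)=\omega_{2n}=\pi_{\frt'}^\ast(\Omega_{\frt'})$. Since $f\circ\pi_{\frt'}=\pi_\frt$, I would compute $\pi_{\frt'}^\ast(f^\ast\Omega_\frt)=\pi_\frt^\ast(\Omega_\frt)=\omega_{2n}=\pi_{\frt'}^\ast(\Omega_{\frt'})$, and conclude $f^\ast(\Omega_\frt)=\Omega_{\frt'}$ by injectivity of $\pi_{\frt'}^\ast$ (valid because $\pi_{\frt'}$ is a surjective local diffeomorphism). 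Together with the kernel computation, this exhibits $f$ as a symplectic isogeny.

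The whole argument is routine once the lattice inclusion is in place; the only point requiring mild care is the form-pullback identity, where one must keep track of which invariant symplectic form lives on which torus and use that descent along a covering map is injective on differential forms. I expect no genuine obstacle here, consistent with the statement being labelled immediate.
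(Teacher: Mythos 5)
Your proof is correct: the paper states this proposition without proof (labelling it immediate), and your direct verification — lattice inclusion $\Lambda_{\frt'}\subseteq\Lambda_\frt$, kernel identification $\Lambda_\frt/\Lambda_{\frt'}\simeq\Z_{q_1}\times\cdots\times\Z_{q_n}$, and the pullback identity $f^\ast(\Omega_\frt)=\Omega_{\frt'}$ via $f\circ\pi_{\frt'}=\pi_\frt$ and injectivity of $\pi_{\frt'}^\ast$ — is precisely the argument the authors intend. No gaps; all three conditions in the paper's definition of symplectic isogeny are checked.
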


\noindent In particular, $\bA_\frt$ is isogenous with
$\bA_{\delta(n)}$ for all $\frt\in \Div^n$.

\subsection{Tamings}

\begin{definition}
A \emph{tamed integral symplectic space} is a quadruple
$(V,\omega,\Lambda,J)$, where $(V,\omega,\Lambda)$ is an integral
symplectic space and $J$ is a taming of the symplectic space
$(V,\omega)$. The \emph{type} of a tamed integral symplectic space is
the type of its underlying integral symplectic space.
\end{definition}

\noindent Given a tamed integral symplectic space
$(V,\omega,\Lambda,J)$ of type $\frt\in \Div^n$, the taming $J$ makes
$V$ into a $n$-dimensional complex vector space, which we denote by
$V_J$. The symplectic pairing $\omega$ induces a Kahler form $\Omega$
which makes the complex torus $V_J/\Lambda$ into a (generally
non-principal) polarized abelian variety whose underlying symplectic
torus coincides with $\bA_\frt$.  We refer the reader to \cite[Appendix
F]{gesm} for details on the relation between tamed integral symplectic
spaces and (generally non-principal) abelian varieties.


\begin{thebibliography}{100}

\bibitem{Aharonov:1959fk}{Y.~Aharonov and D.~Bohm, {\em Significance
of electromagnetic potentials in the quantum theory}, Phys. Rev.
{\bf 115} (1959) 485--491.}

\bibitem{Andrianopoli:1996cm}{L.~Andrianopoli, M.~Bertolini,
A.~Ceresole, R.~D'Auria, S.~Ferrara, P.~Fre and T.~Magri, {\em N=2
supergravity and N=2 superYang-Mills theory on general scalar
manifolds: Symplectic covariance, gaugings and the momentum map}, J.
Geom. Phys. {\bf 23} (1997) 111--189.}

\bibitem{Andrianopoli:1996ve}{L.~Andrianopoli, R.~D'Auria and
S.~Ferrara, {\em U duality and central charges in various dimensions
revisited}, Int. J. Mod. Phys. A {\bf 13} (1998) 431--490.}

\bibitem{Argyres:2016yzz}{P.~C.~Argyres and M.~Martone, 
\emph{4d $ \mathcal{N} $ =2 theories with disconnected gauge groups},
JHEP \textbf{03} (2017), 145.}

\bibitem{Argyres:2019yyb}{P.~C.~Argyres, A.~Bourget and M.~Martone, 
\emph{On the moduli spaces of 4d $\mathcal{N} = 3$ SCFTs I: triple 
special K\"ahler structure}, preprint \arxiv{1912.04926}.}

\bibitem{Argyres:2015ffa}{P.~Argyres, M.~Lotito, Y.~L\"u and M.~Martone, 
\emph{Geometric constraints on the space of $ \mathcal{N} $ = 2 SCFTs. 
Part I: physical constraints on relevant deformations}, JHEP \textbf{02} 
(2018), 001.}

\bibitem{Argyres:2015gha}{P.~C.~Argyres, M.~Lotito, Y.~L\"u and M.~Martone, 
\emph{Geometric constraints on the space of $ \mathcal{N} $ = 2 SCFTs. 
Part II: construction of special K\"ahler geometries and RG flows}, JHEP 
\textbf{02} (2018), 002.}

\bibitem{Argyres:2020wmq}{P.~C.~Argyres and M.~Martone, \emph{Towards a 
classification of rank $r$ $\mathcal{N}=2$ SCFTs Part II: special Kahler 
stratification of the Coulomb branch}, preprint \arxiv{2007.00012}.}
	
\bibitem{Aschieri:2008ns}{P.~Aschieri, S.~Ferrara and B.~Zumino, {\em
Duality Rotations in Nonlinear Electrodynamics and in Extended
Supergravity}, Riv. Nuovo Cim. {\bf 31} (2008) 625--708.}

\bibitem{FrancisHitchin}{M. F. Atiyah and N. Hitchin, \emph{The
Geometry and Dynamics of Magnetic Monopoles}, Princeton Legacy
Library, Porter Lectures, 1988.}

\bibitem{Baraglia1}{D. Baraglia, {\em Topological T-duality for
general circle bundles}, Pure Appl. Math. Q. {\bf 10} (2014) 3, 367--438.}

\bibitem{Baraglia2}{D. Baraglia, {\em Topological T-duality for torus
bundles with monodromy}, Rev. Math. Phys. {\bf 27} (2015) 3 ,
1550008.}

\bibitem{BBSS}{C.~Becker, M.~Benini, A.~Schenkel and R.~J.~Szabo, 
	\emph{Abelian duality on globally hyperbolic spacetimes},
Commun. Math. Phys. \textbf{349} (2017) no.1, 361--392.}

\bibitem{Caorsi:2018zsq}{M.~Caorsi and S.~Cecotti, \emph{Geometric 
classification of 4d $\mathcal{N}=2$ SCFTs}, JHEP \textbf{07} (2018), 
138.}

\bibitem{Caorsi:2019vex}{M.~Caorsi and S.~Cecotti, \emph{Homological 
classification of 4d $ \mathcal{N} $ = 2 QFT. Rank-1 revisited},
JHEP \textbf{10} (2019), 013.}

\bibitem{Cecotti}{S.~Cecotti, {\em Supersymmetric Field Theories:
Geometric Structures and Dualities}, Cambridge, 2015.}
	
\bibitem{CMP}{I.~Chatterji, J. Mislin, C. Pittet, {\em Flat bundles
with complex analytic holonomy}, Quart. J. Math. {\bf 00} (2016),
1--13.}

\bibitem{Cortes:2018lan}{V.~Cort\'es, C.~I.~Lazaroiu and C.~S.~Shahbazi,
{\em $\mathcal{N}=1$ Geometric Supergravity and chiral triples on
Riemann surfaces}, Commun. Math. Phys.  {\bf 375} (2020) 429--478.}

\bibitem{Debarre}{O. Debarre, {\em Tores et vari\'et\'es ab\'eliennes
complexes}, EDP Sciences, 2000.}

\bibitem{DeligneFlat}{P. Deligne, {\em Extensions centrales non
residu\'ellement finies de groupes arithm\'etiques}, C. R.
Acad. Sci. Paris S\'er. A-B {\bf 287} (1978), A203--A208.}

\bibitem{Deser:1976iy}{S.~Deser and C.~Teitelboim, {\em Duality
Transformations of abelian and Nonabelian Gauge Fields}, Phys. Rev.
D {\bf 13} (1976) 1592--1597.}

\bibitem{Deser:1981fr}{S.~Deser, {\em Off-Shell Electromagnetic
Duality Invariance}, J. Phys. A {\bf 15} (1982) 1053--1054..}

\bibitem{Dirac:1931kp}{P.~A.~M.~Dirac, \emph{Quantised singularities
in the electromagnetic field}, Proc. Roy. Soc. Lond. A {\bf 133}
(1931) 821, 60--72.}

\bibitem{Donaldson}{S.~K.~Donaldson, P.~B.~Kronheimer, {\em The
Geometry of Four-Manifolds}, Oxford Mathematical Monographs, 1997.}

\bibitem{Gaillard:1981rj}{M.~K.~Gaillard and B.~Zumino, {\em Duality
Rotations for Interacting Fields}, Nucl. Phys. B {\bf 193} (1981) 221--244.}

\bibitem{Eilenberg}{S. Eilenberg, {\em Homology of Spaces with
Operators I}, Trans. AMS {\bf 61} (1947) 378--417.}
	
\bibitem{Gitler}{S. Gitler, {\em Cohomology operations with local
coefficients}, Amer. J. Math, {\bf 85} (1963) 2, 156--188.}

\bibitem{Hatcher}{A.~Hatcher, {\em Algebraic topology}, Cambridge
University Press, 2002.}

\bibitem{Greenblatt}{A.~Hatcher, {\em Homology with local coefficients 
and characteristic classes}, Homology, Homotopy and Applications, 
{\bf 8}(2) (2006) 91--103.}

\bibitem{Husemoller}{D.~Husem\"oller, {\em Fiber bundles}, Graduate
Texts in Mathematics, 3rd ed., Springer, 1994.}

\bibitem{Kobayashi}{S.~Kobayashi, {\em Transformation Groups in
Differential Geometry}, Springer, 1995.}
	
\bibitem{gesm}{C.~I.~Lazaroiu and C.~S.~Shahbazi, {\em
Generalized Einstein-Scalar-Maxwell theories and locally geometric
U-folds}, Rev.\ Math.\ Phys.\ Vol. 30, No. 05, 1850012 (2018).}

\bibitem{section}{C.~I.~Lazaroiu and C.~S.~Shahbazi, {\em
Section sigma models coupled to symplectic duality bundles on
Lorentzian four-manifolds}, J.\ Geom.\ Phys.\ {\bf 128} (2018) 58--86.}

\bibitem{gesmproc}{C.~I.~Lazaroiu and C.~S.~Shahbazi, {\em The global
formulation of generalized Einstein-Scalar-Maxwell theories}, Springer
Proceedings in Mathematics \& Statistics, Quantum Theory and
Symmetries with Lie Theory and Its Applications in Physics, vol. 2
(2018) 217--231}

\bibitem{wa}{C.~I.~Lazaroiu, C.~S.~Shahbazi, {\em The classification of 
      weakly abelian principal bundles}, in preparation.}
  
\bibitem{ga1}{C.~I.~Lazaroiu, E.~M.~Babalic, I.-A. Coman, {\em
Geometric algebra techniques in flux compactifications}, Adv. High
Energy Phys. 2016, 7292534.}

\bibitem{Massey}{W. S.~Massey, {\em Obstructions to the existence of
almost complex structures}, Bull. Amer. Math. Soc.  vol. 67
{\bf 6} (1961) 559--564.}

\bibitem{Michor}{P.~W.~Michor, {\em Topics in differential geometry},
A.M.S., 2008.}

\bibitem{MilnorFlat}{J. Milnor, {\em On the existence of a connection
with curvature zero}, Comment. Math. Helv. {\bf 32} (1958) 215--223.}

\bibitem{MilsonFlat}{J. Millson, {\em Real vector bundles with
discrete structure group}, Elsevier, NY, Topology {\bf 18} (1979) 83--89.}

\bibitem{Olive:1995sw}{D.~I.~Olive, {\em Exact electromagnetic
duality}, Nucl. Phys. Proc. Suppl. {\bf 45A} (1996) 88--102.}

\bibitem{Schwinger:1966nj}{J.~S.~Schwinger, \emph{Magnetic charge and
quantum field theory}, Phys. Rev. {\bf 144} (1966) 1087--1093.}

\bibitem{Spanier}{E.~H.~Spanier, {\em Algebraic Topology}, Springer,
1966.}

\bibitem{Steenrod}{N.~E.~Steenrod, {\em Homology with Local
Coefficients}, Annals of Math. {\bf 44} (1943) 610--627.}
 
\bibitem{Szabo:2012hc}{R.~J.~Szabo, {\em Quantization of Higher
abelian Gauge Theory in Generalized Differential Cohomology}, PoS
ICMP2012 (2012) 9--67.}
 
\bibitem{Szamuely}{T.~Szamuely, {\em Galois groups and fundamental
groups}, Cambdridge U.P., 2009.} 

\bibitem{Zwanziger:1968rs}{D.~Zwanziger, {\em Quantum field theory of
particles with both electric and magnetic charges},
Phys. Rev. {\bf 176} (1968), 1489--1495.}

\bibitem{Whitehead}{G.~W.~Whitehead, {\em Elements of Homotopy
Theory}, Graduate Texts in Mathematics {\bf 61}, Springer, 1978.}
	
\end{thebibliography}
\end{document}